\setlist[enumerate]{label=(\alph*)}
\numberwithin{equation}{section}
\definecolor{Darkgreen}{rgb}{0,0.4,0}
\newtheorem{theorem}{Theorem}[section]
\newtheorem{proposition}[theorem]{Proposition}
\newtheorem{lemma}[theorem]{Lemma}
\newtheorem{corollary}[theorem]{Corollary}
\theoremstyle{definition}
\theoremstyle{remark}
\newtheorem{remark}[theorem]{Remark}
\newcommand{\R}{\mathbb{R}}
\newcommand{\N}{\mathcal{N}}
\newcommand{\F}{\mathcal{F}}
\newcommand{\Q}{Q}
\newcommand{\T}{\mathbb{T}}
\newcommand{\defeq}{\coloneqq}
\newcommand{\eqdef}{\eqqcolon}
\newcommand{\D}{\mathop{}\!\mathrm{d}}
\let\varphi\varphi
\DeclareMathOperator{\spn}{span}
\DeclareMathOperator{\diam}{diam}
\DeclareMathOperator{\skel}{skel}
\DeclareMathOperator{\Var}{Var}
\DeclareMathOperator{\desc}{desc}
\DeclareMathOperator{\sib}{sib}
\DeclarePairedDelimiter{\abs}{\lvert}{\rvert}
\DeclarePairedDelimiter{\ceil}{\lceil}{\rceil}
\DeclarePairedDelimiter{\floor}{\lfloor}{\rfloor}
\DeclarePairedDelimiter{\norm}{\lVert}{\rVert}
\DeclarePairedDelimiter{\set}{\lbrace}{\rbrace}
\DeclarePairedDelimiterX{\inp}[2]{\langle}{\rangle}{#1, #2}
\begin{document}

\title[Scaling limits for the critical GFF]
{Scaling limits for the critical level-set percolation
  of the Gaussian free field on regular trees}

\author{Jiří Černý, Ramon Locher}

\begin{abstract}
  We continue the study of the level-set percolation of the discrete
  Gaussian free field (GFF) on regular trees in the critical regime,
  initiated in \cite{CerLoc23}. First, we derive a sharp asymptotic
  estimate for the probability that the connected component of the
  critical level set containing the root of the tree reaches generation
  $n$. In particular, we show that the one-arm exponent satisfies
  $\rho =1$. Next, we establish a Yaglom-type limit theorem for the
  values of the GFF at generation $n$ within this component. Finally, we
  show that, after a correct rescaling, this component conditioned on
  reaching generation $n$ converges, as $n\to\infty$, to Aldous'
  continuum random tree.
\end{abstract}

\maketitle

\section{Introduction}

The Gaussian free field's level-set percolation, especially on
$\mathbb{Z}^d$, is a significant model in percolation theory that is
characterised by its long-range dependencies. Initial investigations into
this model trace back to the 1980s, with pioneering studies
\cite{BriLebMae87, MolSte83,LebSal86}. Over the last decade, renewed
interest has been ignited by the findings in \cite{RodSzn13},
demonstrating that on $\mathbb{Z}^d$, the model undergoes a distinctive
percolative phase transition at a critical threshold $h^* = h^*(d)$ for
any dimension $d \ge 3$. Follow-up research, including papers
\cite{DreRatSap14, PopRat15, DrePreRod18, Szn19.2, ChiNit20, GosRodSev22,
PanSev22}, has provided a comprehensive understanding of the
model's behaviour in both subcritical and supercritical phases, often
making use of additional natural critical points in order to work in
strongly sub-/super-critical regime. Notably, \cite{DGRS23} confirmed the
alignment of these critical points with $h^*$, indicating a precise phase
transition.

In this paper, we continue the study of level-set percolation of the
discrete Gaussian free field (GFF) on regular trees in the critical
regime, building on the work initiated in \cite{CerLoc23}. Our main
contributions are threefold: First, we derive a sharp asymptotic estimate
for the one-arm probability. Second, we establish a Yaglom-type limit
theorem for the field at vertices located at distance $n$ from the root.
Finally, we show that the connected component of the critical level set,
conditioned to be large, converges to Aldous' Continuum Random Tree.
These results are stated precisely in Theorems~\ref{thm:diameter},
\ref{thm:conv-to-exp}, and~\ref{thm:inv-principle}, respectively.

The considered model was initially studied in \cite{Szn15} where the
critical value $h^*$ was identified in terms of the largest eigenvalue of
a specific integral operator. Additionally, a comparison with random
interlacements was employed to establish bounds on $h^*$, notably
demonstrating that $0 < h^* < \infty$. Subsequently, in \cite{AbaCer19},
sub- and supercritical phases of the model were studied in detail. The
main results of this paper include the continuity of the percolation
probability outside the critical level $h^*$, and accurate estimates on
the size of connected components of the level sets in both phases. Later,
in our previous paper \cite{CerLoc23}, properties of the critical and the
near-critical model were investigated. It was proved there that there is
no percolation at the critical point $h^*$, and that the percolation
probability is continuous also at this point, with a precise asymptotics
formula for this probability in the regime $h\uparrow h^*$. Further, we
provided rather precise estimates on the tail of the size of the
connected component at criticality. Here, we complement these findings
with further results concerning the model at the critical level $h^*$.

As in \cite{Szn15, AbaCer19, CerLoc23}, we will strongly rely on the fact
that the model admits a representation as a branching process with an
uncountable and unbounded type space. For the critical single-type
Galton-Watson process, analogous versions of Theorems~\ref{thm:diameter}
and~\ref{thm:conv-to-exp} go back to Kolmogorov~\cite{Kol38} and
Yaglom~\cite{Yag47}. In the more general case of branching processes with
a finite type space, similar results are also well known. General scaling
limits in the spirit of Theorem~\ref{thm:inv-principle} were first
introduced for critical Galton-Watson processes in~\cite{Ald91}, and
later extended to branching processes with finite or countably infinite
types~\cite{Mie08, Rap17}. More recently, \cite{Pow19} shows a similar
scaling limit result in the setting of critical branching diffusion
on bounded domains.

Recent advances in branching process theory have extended these classical
results in several important directions. \cite{CJP24} obtained refined
convergence rates for Yaglom limits in varying environments, providing
Wasserstein metric bounds that may also be useful for analyses in our
unbounded type-space setting. \cite{BDIM23} established Yaglom-type
theorems for branching processes in sparse random environments, an
intermediate framework that connects the classical Galton-Watson model
with fully random environments. \cite{BFS24arXiv} proved a Yaglom-type
theorem for near-critical branching processes in random environments and
further showed that, under survival conditioning, the genealogical
structure of the population at a fixed time horizon converges to a
time-changed Brownian coalescent point process.

Due to the nature of the branching process appearing in our model (in particular
because its type space is uncountable and unbounded), no previous results are
directly applicable to our setting. In this article, we adapt and extend the
strategy of \cite{Pow19}, addressing the fundamental challenge of unbounded type
spaces.

\section{Model and results}
\label{sec:model}

We start with the definition of the model. Let $\T$ be the infinite
$(d+1)$-regular tree, $d\ge 2$, rooted at an arbitrary fixed vertex
$o\in \T$, endowed with the usual graph distance $d(\cdot,\cdot)$. On $\T$,
we consider the Gaussian free field $\varphi = (\varphi_v)_{v\in \T}$
which is a centred Gaussian process whose covariance function agrees with
the Green function of the simple random walk on $\T$ (see
  \eqref{def:green-func} for the precise definition). We use $P$ to
denote the law of this process on $\R^\T$. For $x \in \R$, we write $P_x$
for the conditional distribution of $\varphi$ given that $\varphi_o=x$,
\begin{equation}
  \label{def:conditional-prob}
  P_x[\, \cdot\, ] \defeq P[\, \cdot\mid \varphi_o = x].
\end{equation}
(For an explicit construction of $P_x$, see \eqref{eq:BP-representation}
  and the paragraph below it.) Furthermore, let $\bar{o} \in \T$ be an
arbitrary fixed neighbour of the root $o$, and define the forward tree
$\T^+$ by
\begin{equation}
  \label{def:Tplus}
  \T^+ \defeq \set{v \in \T :
    \bar{o}\text{~is not contained in the geodesic path from $o$ to $v$}}.
\end{equation}

We analyse the percolation properties of the (super-)level sets of
$\varphi$ above level $h\in \mathbb{R}$,
\begin{equation}
  \label{eq:levelset}
  E_{\varphi}^h \defeq \set{v \in \T : \varphi_v \geq h}.
\end{equation}
In particular, we are interested in the connected component of this set
containing the root~$o$,
\begin{equation}
  \label{eq:Cxh}
  \mathcal{C}_o^h \defeq \set{v \in \T :
    v \text{ is connected to } o \text{ in } E_\varphi^h},
  \quad h\in\R.
\end{equation}
The critical height $h^*$ of the level-set percolation is defined by
\begin{equation}
  \label{def:hstar}
  h^* = h^*(d) \defeq
  \inf \set[\big]{h \in \R : P[\abs{\mathcal{C}_o^h} = \infty] = 0}.
\end{equation}
It is well known that $h^*$ is non-trivial and strictly positive (see
  \cite[Corollary 4.5]{Szn15}). Moreover, as proved in \cite{Szn15}, $h^*$
can be characterized with the help of the operator norms of a certain
family of non-negative operators $(L_h)_{h\in \R}$ acting on the space
$L^2(\nu)$, where $\nu$ is a centred Gaussian measure with variance
$\sigma^2_\nu = d/(d-1)$. We provide more details on this
characterization in Section~\ref{sec:notation-and-results} below. Here,
we only define $\lambda_h$ to be the largest eigenvalue of $L_h$ and
$\chi_h$ the corresponding normed eigenfunction, and recall that $h^*$ is
the unique solution to
\begin{equation}
  \lambda_{h^*} = 1.
\end{equation}
The scalar product on $L^2(\nu )$ will be denoted by $\inp{\cdot}{\cdot}$.

We use $N_n^h$ to denote the set of vertices in $\mathcal C_o^h$ that are
at a distance $n$ from the root,
\begin{equation}
  \label{eq:N_n^h-def}
  N_n^h = \set{v \in \mathcal{C}_o^{h} : d(v, o)=n}.
\end{equation}
and set
\begin{equation}
  \label{eq:N_n^hplus-def}
  N_n^{h,+} = N_n^h \cap \T^+.
\end{equation}
Since we almost exclusively deal with the critical case, we abbreviate
$\chi \defeq \chi_{h^*}$, $\mathcal C_o = \mathcal C_o^{h^*}$,
$L \defeq L_{h^*}$, $N_n \defeq N_n^{h^*}$ and
$N_n^+  \defeq N_n^{h^*,+}$.

Our first result describes the exact asymptotic behaviour of the
probabilities (conditional and unconditional) that $N_n$ and $N_n^+$ are
non-empty, that is, that $\mathcal C_o$ has diameter at least~$n$.
\begin{theorem}
  \label{thm:diameter}
  For every $x \ge h^*$, as $n \to \infty$,
  \begin{align}
    \label{eq:diameter-1}
    P_x[N_n^+ \neq \emptyset ]& = C_1 \chi(x) n^{-1}(1 + o(1)), \\
    \label{eq:diameter-3}
    P[N_n^+\neq \emptyset]
    &= C_1 \inp{1}{\chi}n^{-1}(1 + o(1)),
  \end{align}
  where
  \begin{equation}
    \label{eq:C_1-def}
    C_1 = \frac{2d}{d-1} \frac{1}{\inp{\chi^2}{\chi}}.
  \end{equation}
  If the event $\{N_n^+\neq \emptyset\}$ is replaced by
  $\{N_n\neq \emptyset\}$, the same results hold with $C_1$ replaced by
  $\widetilde{C}_1 = C_1(d+1)/d$.
\end{theorem}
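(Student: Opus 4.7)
The plan is to use the classical Kolmogorov--Yaglom strategy for critical branching processes, adapted to the unbounded, uncountable type space via spectral analysis of the operator $L$ on $L^2(\nu)$, in the spirit of \cite{Pow19}. Under $P_x$ with $x\ge h^*$, the cluster $\mathcal{C}_o\cap\T^+$ is a multi-type branching process in which each surviving vertex of type $y$ produces $d$ i.i.d.\ children whose types are drawn from the GFF conditional law. Writing $M_n(x)=P_x[N_n^+\neq \emptyset]$ and denoting by $\tilde L$ the associated one-child sub-probability kernel restricted to $[h^*,\infty)$ (so that $L=d\tilde L$ is its linearisation), a one-step decomposition over the $d$ forward children of $o$ gives
\begin{equation*}
  M_{n+1}(x)=1-\bigl(1-\tilde L M_n(x)\bigr)^{d}=LM_n(x)-\frac{d-1}{2d}\bigl(LM_n(x)\bigr)^{2}+O\bigl((LM_n(x))^{3}\bigr).
\end{equation*}

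I would then establish a shape theorem: $M_n(x)/c_n\to \chi(x)$ for a scalar sequence $c_n\to 0$, in a sense strong enough both to project and to integrate. The absence of percolation at $h^*$ from \cite{CerLoc23} gives $M_n\to 0$; combined with $L$ being a self-adjoint non-negative operator on $L^2(\nu)$ with simple top eigenvalue $1$ and positive eigenfunction $\chi$, a power-iteration argument forces alignment with $\chi$, while the smallness of $M_n$ makes the quadratic correction subdominant. Projecting the Taylor recursion against $\chi$ and using self-adjointness $\inp{LM_n}{\chi}=\inp{M_n}{\chi}$ yields the scalar ODE-like recursion
\begin{equation*}
  c_{n+1}=c_n-\frac{d-1}{2d}\frac{\inp{\chi^{2}}{\chi}}{\inp{\chi}{\chi}}c_n^{2}+o(c_n^{2}),
\end{equation*}
from which $c_n\sim\tfrac{2d}{(d-1)\inp{\chi^{2}}{\chi}}\,n^{-1}$ follows by a standard comparison argument (normalising $\inp{\chi}{\chi}=1$). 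Combined with the shape theorem, this yields \eqref{eq:diameter-1}; the unconditional bound \eqref{eq:diameter-3} then follows by integrating $M_n(x)$ against the law of $\varphi_o$, with a uniform domination $M_n(x)\le C\chi(x)/n$ justifying the limit exchange. For the $N_n$ variant, $o$ has $d+1$ neighbours in $\T$ rather than $d$ in $\T^+$, and the analogous decomposition gives $P_x[N_n\neq \emptyset]=1-(1-\tilde L M_{n-1}(x))^{d+1}$, whose leading-order term equals $\tfrac{d+1}{d}LM_{n-1}(x)\sim\tfrac{d+1}{d}M_n(x)$, producing the factor $\widetilde C_1=C_1(d+1)/d$.

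The main obstacle is the unboundedness of the type space. The eigenfunction $\chi(x)$ grows in $x$, so finiteness of $\inp{\chi^{2}}{\chi}$, justification of the Taylor expansion uniformly in $x\ge h^*$, and the uniform tail estimate needed to pass to the limit inside $\int M_n(x)\,d\nu(x)$ all require careful moment and tail controls on $\chi$ and on $M_n$. I expect these to follow by combining the eigenvalue equation $L\chi=\chi$ with the Gaussian nature of the kernel, extending the moment-control arguments of \cite{CerLoc23} and the framework of \cite{Pow19} to the present unbounded setting.
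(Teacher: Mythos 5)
Your proposal takes essentially the same route as the paper: a one-step branching recursion $M_{n+1}=1-(1-\tfrac1dL M_n)^d$, spectral decomposition of $L$ on $L^2(\nu)$ to show the shape theorem $M_n/c_n\to\chi$ by controlling the $\chi$-orthogonal component separately (the paper shows $\|\beta[u_n]\|\lesssim n^{-2}$ against $\inp{\chi}{u_n}\asymp n^{-1}$), projection against $\chi$ to obtain the scalar recursion $c_{n+1}=c_n-\tfrac{d-1}{2d}\inp{\chi^2}{\chi}c_n^2+o(c_n^2)$ and hence $c_n\sim C_1 n^{-1}$, then $L^2$-dominated integration over $\nu$ for \eqref{eq:diameter-3} and the $(d+1)$-vs-$d$ counting at the root for $\widetilde C_1$. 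The technical obstacles you flag (integrability of $\chi^k$, uniform control of the Taylor remainder, and a tail bound allowing the limit to pass under the integral) are precisely where the paper's effort lies, and they are resolved as you anticipate, via the eigenvalue equation, hypercontractivity of $L$, and the lower bound $a_n^0\gtrsim n^{-1}$ obtained from the known volume asymptotics in \cite{CerLoc23}.
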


In particular, Theorem~\ref{thm:diameter} proves that the one-arm
exponent of the critical level set, defined by
$\rho = -\lim_{n \to \infty}\log n/\log(P[N_n\neq \emptyset])$ (see,
  e.g., \cite[Section 9.1]{Gri99}) satisfies
\begin{equation}
  \rho = 1.
\end{equation}
This complements the values of two other important critical exponents for
our model given in \cite[(2.22)]{CerLoc23}, where it was shown that
$\delta = 2$ and $\beta = 1$.

The one-arm probability and the associated critical exponent is an
actively studied quantity in several prominent percolation models.
Notably, in the context of level-set percolation of the GFF on the metric
graph of $\mathbb{Z}^d$, recent work has led to a detailed understanding
of the one-arm probability across all dimensions. This includes the
derivation of bounds for $d=3$ in \cite{DPR25}, for $d>6$ in
\cite{CD25}, and for the intermediate regime $3\le d \le 6$ in
\cite{CaiDin24arXiv}.

Our remaining main results consider the critical component conditioned on
being large, more precisely conditioned on the rare event
$\set{N_n^+ \neq \emptyset}$. The first such result is a Yaglom-type
limit theorem for the GFF restricted to $N_n^+$.

\begin{theorem}
  \label{thm:conv-to-exp}
	For $f \in L^2(\nu)$,
  $n\ge 1$, and $x \ge h^*$, let $Z_{n}^{f,x}$, resp.~ $Z_n^f$, be a
  random variable distributed as $n^{-1} \sum_{v \in N_n^+}f(\varphi_v)$
  under the conditional measure $P_x[\,\cdot \,| N_n^+ \neq \emptyset]$,
  resp.~$P[\,\cdot\,| N_n^+ \neq \emptyset]$. Let further $Z$ be an
  exponential random variable with mean one. Then, with $C_1$ as in
  \eqref{eq:C_1-def},
  \begin{equation}
    \label{eq:conv-to-exp}
    \lim_{n\to\infty }Z_{n}^{f,x}  =
    \lim_{n\to\infty}Z_{n}^f = C_1^{-1}\inp\chi f Z \qquad\text{in
      distribution}.
  \end{equation}
\end{theorem}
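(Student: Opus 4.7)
The plan is to compute the conditional Laplace transform of $Z_n^{f,x}$ and identify its limit with the Laplace transform of $C_1^{-1}\inp{\chi}{f}Z$. For clarity I first assume $f\ge 0$; the general $f\in L^2(\nu)$ follows by splitting $f = f^+ - f^-$ with joint convergence of the two pieces, or by running the same analysis with characteristic functions. For $\theta\ge 0$ and $x\ge h^*$, set
\[
  w_n(x)\defeq E_x\bigl[e^{-\theta n^{-1}\sum_{v\in N_n^+}f(\varphi_v)}\bigr],\quad
  v_n(x)\defeq 1-w_n(x),\quad
  u_n(x)\defeq P_x[N_n^+\neq\emptyset].
\]
A short manipulation gives $E_x[e^{-\theta Z_n^{f,x}}\mid N_n^+\neq\emptyset]=1-v_n(x)/u_n(x)$, and since $E[e^{-\theta C_1^{-1}\inp{\chi}{f}Z}]=(1+\theta C_1^{-1}\inp{\chi}{f})^{-1}$, the convergence \eqref{eq:conv-to-exp} for $Z_n^{f,x}$ reduces to
\[
  \lim_{n\to\infty}\frac{v_n(x)}{u_n(x)}=\frac{\theta\inp{\chi}{f}}{C_1+\theta\inp{\chi}{f}}\qquad\text{for each fixed }\theta\ge 0.
\]

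Both $w_n$ and $1-u_n$ satisfy the same branching recursion $h_{n+1}(x) = (E_x[h_n(Y)\,1_{Y\ge h^*}]+P_x[Y<h^*])^d$, where $Y\sim\varphi_w\mid\varphi_o=x$ for a neighbour $w$ of $o$ in $\T^+$; this is the tree Markov property of $\varphi$ applied at the $d$ children of $o$ in $\T^+$. Substituting $h_n = 1-\xi_n$ and Taylor-expanding $(1-t)^d$ gives the critical recursion
\[
  \xi_{n+1}(x) = (L\xi_n)(x) - \tfrac{d-1}{2d}\bigl((L\xi_n)(x)\bigr)^{2}+O(\xi_n^{3}),
\]
where $L=L_{h^*}$. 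Since $L$ is self-adjoint and compact on $L^2(\nu)$ with simple leading eigenvalue $1$, positive eigenfunction $\chi$ and a spectral gap (all from \cite{Szn15,CerLoc23}), I decompose $\xi_n = a_n\chi+\xi_n^\perp$ with $\xi_n^\perp\perp\chi$. The spectral gap makes $\norm{\xi_n^\perp}$ negligible compared with $a_n\chi$; projecting the recursion onto $\chi$ and using $\inp{\chi^2}{\chi}=2d/((d-1)C_1)$ then produces the scalar recursion
\[
  a_{n+1} = a_n - C_1^{-1}a_n^{2}+O(a_n^{3}),\qquad\text{hence}\qquad \tfrac{1}{a_n} = \tfrac{1}{a_0}+\tfrac{n}{C_1}+o(n).
\]

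Applied to $\xi_n^u = u_n$, the initial value $a_0^u = \inp{\chi}{1_{\{\cdot\ge h^*\}}}$ is a fixed positive constant, so $a_n^u\sim C_1/n$ and $u_n(x)\sim C_1\chi(x)/n$, consistent with \eqref{eq:diameter-1}. For $\xi_n^v = v_n$, the initial datum $(1-e^{-\theta f(\cdot)/n})\,1_{\{\cdot\ge h^*\}}$ is itself of order $1/n$, yielding $a_0^v = \theta\inp{\chi}{f}/n+o(1/n)$; solving the same recursion gives
\[
  v_n(x)\sim\frac{\theta C_1\inp{\chi}{f}\,\chi(x)}{n\bigl(C_1+\theta\inp{\chi}{f}\bigr)},
\]
and the ratio $v_n/u_n$ converges to the required limit. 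The unconditional statement about $Z_n^f$ is then deduced by integrating in $x$ against the Gaussian marginal of $\varphi_o$ and using \eqref{eq:diameter-3} with dominated convergence.

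The main obstacle is that $\nu$ is supported on the whole real line, so neither $\chi$ nor $f$ is bounded and the quadratic term $(L\xi_n)^2$ and the cubic remainder involve high powers of $\chi$ that cannot be controlled by naive $L^\infty$ or even $L^2$ estimates. The proof must therefore carry out the $L^2(\nu)$ spectral projection in parallel with a pointwise a priori bound of the form $\xi_n(x)\le C\chi(x)/n$, supported by integrability estimates such as $\chi\in L^p(\nu)$ for every finite $p$ (a consequence of the sub-Gaussian decay of the kernel of $L$). Combining these pointwise and $L^2$ ingredients to justify both the Taylor expansion and the reduction to scalar dynamics in the unbounded type space, thereby extending the bounded-type-space strategy of \cite{Pow19}, is where the bulk of the technical work sits.
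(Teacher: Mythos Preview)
Your overall strategy---Laplace transform, branching recursion, spectral decomposition onto $\chi$, and the scalar recursion $a_{n+1}=a_n-C_1^{-1}a_n^2+\dots$---is exactly the paper's. The substantive difference is in how the reduction to a one-parameter family of initial data is organised, and this difference matters.

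The paper does \emph{not} run the Laplace-transform analysis for general $f\ge 0$. Instead it first proves (Lemma~\ref{lem:conv-to-exp_prep}) that for any $g\in L^2(\nu)$ with $\inp{\chi}{g}=0$ one has $n^{-1}\sum_{v\in N_n^+}g(\varphi_v)\to 0$ in conditional probability, by a straightforward second-moment bound using Proposition~\ref{pro:Harris_L2}(b). Writing $f=\inp{\chi}{f}\chi+\beta[f]$, this immediately reduces the theorem to the single case $f=\chi$. Only then is the Laplace transform computed, and the family of initial data becomes $f_\lambda=e^{-\chi/\lambda}$, $\lambda>0$. Because $\chi$ is non-decreasing these $f_\lambda$ are non-increasing, which is exactly what is needed for the comparison \eqref{eq:u_n^f-bound}: $u_n^\lambda\ge(1-f_\lambda(h^*))\,u_n^0\ge c\lambda^{-1}u_n^0$. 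That lower bound is the crux of Lemma~\ref{lem:b_n^k/a_n^k-boundedness} (the uniform control $b_n^\lambda\le c\,a_n^\lambda$), which in turn feeds Proposition~\ref{prop:u_n^fk-L2-asymtote} and the uniform-in-$\lambda$ asymptotic of Lemma~\ref{lem:a^f-asymtote}. Uniformity in $\lambda$ is not cosmetic: the initial datum depends on $n$ (one substitutes $\lambda=n/\alpha$), so the $o(n)$ in your formula $1/a_n=1/a_0+n/C_1+o(n)$ must be uniform over all admissible $a_0$, and the paper's monotone family is precisely what makes that uniformity accessible.

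Your direct route with general $f\ge 0$ loses this monotonicity (nothing forces $e^{-\theta f/n}$ to be non-increasing), so the lower bound on $a_n$ that drives Lemma~\ref{lem:b_n^k/a_n^k-boundedness} is no longer available for free; you would need a genuinely different argument to control $b_n/a_n$ uniformly over the $n$-dependent initial data. Your $f^{+}/f^{-}$ splitting then inherits the same problem, and the characteristic-function alternative would destroy the positive multiplicative structure of the recursion. None of this is fatal, but it is real extra work that the paper's decomposition $f=\inp{\chi}{f}\chi+\beta[f]$ sidesteps in one line.
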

In the case of the critical single-type Galton-Watson process, results
analogous to Theorem~\ref{thm:conv-to-exp} trace back to the classical
work of Yaglom \cite{Yag47}. For more general branching processes with a
finite type space, similar theorems are also well established; see, for
instance, \cite[Theorem~10.1]{Mode71} and other foundational texts in the
branching process literature. In recent years, Yaglom-type limits have
been proven in various extended settings. These include critical
non-local branching Markov processes \cite{HHK22}, branching Brownian
motion with absorption \cite{MaiSch22}, and branching diffusions in
bounded domains \cite{Pow19}. Additionally, \cite{GLL22} establishes a
Yaglom-type result for critical branching processes in a random Markovian
environment with finite state space.

Our third result concerns a scaling limit for the
critical component $\mathcal{C}_o \cap \mathbb{T}^+$, viewed as a metric
space, under the conditional law
$P_x[\, \cdot \,| N_n^+ \neq \emptyset]$. To this end, let
$(T_{n,x}, d_{n,x})$ be a random
compact metric space whose law coincides with that of
$(\mathcal{C}_o \cap \mathbb{T}^+, n^{-1} d)$ under
$P_x[\, \cdot \,| N_n^+ \neq \emptyset]$ (recall that $d$ denotes the
  distance on $\mathbb T$). We show that the sequence
$(T_{n,x},d_{n,x})$ converges in distribution to a conditioned Brownian
continuum random tree
$(T_{\mathbf{e}}, d_{\mathbf{e}})$, whose contour function $\mathbf e$ is
a Brownian excursion conditioned to reach height at least $1$.

\begin{theorem}
  \label{thm:inv-principle}
  For every $x \ge h^*$, as $n \to \infty$,
  \begin{equation}
    (T_{n, x}, d_{n,x}) \to (T_{\mathbf{e}}, d_{\mathbf{e}})
  \end{equation}
  in distribution, with respect to the Gromov-Hausdorff topology.
\end{theorem}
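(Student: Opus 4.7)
The approach follows the general strategy introduced by Aldous and developed by Le Gall and later by \cite{Pow19} in the setting of critical branching diffusions on bounded domains: prove convergence of the finite-dimensional reduced trees together with a suitable tightness estimate, and then invoke the characterization of the Brownian CRT via its reduced subtrees.

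First, I would extend Theorem~\ref{thm:conv-to-exp} to a multilevel statement. For any $k\ge 1$, any $0 < t_1 < \cdots < t_k \le 1$ and any bounded continuous $f_1,\ldots,f_k$, the joint law of
\[
  \Bigl(n^{-1}\sum_{v\in N_{\lfloor t_j n\rfloor}^+} f_j(\varphi_v)\Bigr)_{j=1}^{k}
\]
under $P_x[\,\cdot\,\mid N_n^+\neq\emptyset]$ should converge to $(C_1^{-1}\inp{\chi}{f_j}\, Z_{t_j})_{j=1}^{k}$, where $(Z_t)_{0\le t\le 1}$ is the local-time-at-height-$t$ process of a Brownian excursion conditioned to reach height at least one. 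The proof is a recursion on $k$: after conditioning on $N_{\lfloor t_1 n\rfloor}^+$ and the field values there, the subtrees rooted at the $v\in N_{\lfloor t_1 n\rfloor}^+$ are, by the branching-process representation of \cite{Szn15,CerLoc23}, independent copies of the original process started from the respective type $\varphi_v$. Combined with Theorem~\ref{thm:diameter} and Theorem~\ref{thm:conv-to-exp}, the inductive step reduces to first- and second-moment computations governed by iterates of the operator $L$ and by its spectral gap.

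Second, using the multilevel convergence I would derive convergence of the \emph{reduced tree}: for any $k$ and $t_1<\cdots<t_k$, choose $v_j$ uniformly from $N_{\lfloor t_j n\rfloor}^+$ independently (given nonemptiness), and let $R_{n,k}$ be the subtree of $\mathcal C_o\cap \T^+$ spanned by $o$ and $v_1,\ldots,v_k$, metrized by $n^{-1}d$. The joint distribution of its edge lengths (equivalently, of the pairwise common-ancestor heights) should converge to the law of the corresponding reduced tree of $(T_\mathbf e,d_\mathbf e)$. The spine/size-biased change of measure induced by $\chi$, under which $\sum_{v\in N_n^+}\chi(\varphi_v)$ becomes a nonnegative martingale, makes the type-profile average against $\chi$ in the limit, so the limit depends only on generation counts, matching the excursion local-time structure.

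Third, I would prove tightness in the Gromov-Hausdorff topology. For every $\varepsilon>0$ one needs to show that the $k$-point reduced tree is, with probability close to one for $k$ large, an $\varepsilon$-net for $T_{n,x}$. This amounts to bounding long branches leaving the reduced skeleton: conditionally on $\varphi_v=y$ at a vertex $v$ at generation $m\le n$, Theorem~\ref{thm:diameter} gives that the subtree rooted at $v$ reaches depth $\ell$ with probability of order $\chi(y)/\ell$, so a union bound along the skeleton closes the estimate. Together with the convergence of finite-dimensional reduced trees, this yields GH convergence via the criterion going back to \cite{Ald91}.

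The main obstacle throughout is the \emph{unboundedness and noncompactness of the type space}. Unlike in \cite{Pow19}, the eigenfunction $\chi$ is unbounded on $\R$, so second-moment quantities and subtree-height bounds pick up prefactors of $\chi(\varphi_v)$ or $\chi(\varphi_v)^2$ that blow up along atypical spine trajectories. Controlling these uniformly, via careful truncation combined with the Gaussian tails of $\nu$ and the integrability of powers of $\chi$ established in \cite{CerLoc23}, is what makes both the multilevel step and the tightness step delicate and constitutes the core technical work of the proof.
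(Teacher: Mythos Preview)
Your proposal takes a genuinely different route from the paper. You outline a \emph{population-profile/reduced-tree} strategy in the spirit of Aldous: establish a multilevel Yaglom theorem for the rescaled generation sizes, deduce convergence of reduced subtrees spanned by sampled vertices, and close with a tightness estimate on excursions off the skeleton. The paper instead follows the \emph{Lukasiewicz-path/height-process} route of \cite{Pow19}: it introduces a martingale $S_n$ built from the depth-first traversal of an i.i.d.\ forest of copies of $\mathcal C_o\cap\T^+$ (Section~\ref{sec:S_n-process}), proves a functional CLT $n^{-1/2}S_{\lfloor n\cdot\rfloor}\to\sigma B$ via a martingale invariance principle (Proposition~\ref{prop:S_n-scaling-limit}), upgrades this to convergence of the conditioned process $\bar S$ to a Brownian excursion (Proposition~\ref{prop:conditioned-scaling-limit}), and then shows in Section~\ref{sec:H_n-S_n-connection} that the height process satisfies $H_n\approx C_1\bar S_n$ along the traversal, so that the random distance matrices $D_n^H$ and $C_1 D_n^{\bar S}$ are close (Proposition~\ref{prop:S-H-mat}). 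Gromov--Prokhorov convergence then follows from the convergence of $D_n^{\bar S}$, and Gromov--Hausdorff--Prokhorov tightness is proved separately (Lemma~\ref{lem:GHP-tightness}). No multilevel Yaglom theorem is used or proved.

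Your strategy is plausible in principle, but two points would need adjustment. First, in your second step you sample $v_j$ uniformly from $N_{\lfloor t_j n\rfloor}^+$, i.e.\ from prescribed \emph{levels}; the Aldous-type characterization of the CRT, and the Gromov--Prokhorov test functions the paper actually uses, are based on i.i.d.\ samples from the \emph{mass measure} on the tree (uniform vertices of $T$). Converting between the two samplings is nontrivial in this multitype setting and would require an extra argument. Second, your tightness sketch (``union bound along the skeleton'') is where the unbounded type space bites hardest, and the paper's GHP-tightness proof (Lemma~\ref{lem:GHP-tightness}) is a careful covering argument that does not reduce to a simple union bound; you should expect comparable work there. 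What your approach would buy is a perhaps more probabilistically transparent limit identification (via local times of the excursion rather than via the contour/height coupling); what the paper's approach buys is that the hard step becomes a single \emph{law-of-large-numbers along the depth-first path} (Proposition~\ref{prop:Q-limit}) plus an ergodicity statement along the spine, which fits neatly into the spine machinery already developed for Theorems~\ref{thm:diameter} and~\ref{thm:conv-to-exp}.
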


General scaling limits of critical Galton-Watson processes, in the spirit
of Theorem~\ref{thm:inv-principle}, were first introduced in
\cite{Ald91}. A corresponding result for critical multi-type processes
with finitely many types was established in \cite{Mie08}, and later
extended to processes with a countably infinite type space in
\cite{Rap17}. More recently, \cite{CKM24} extended the classical
continuous random tree convergence to Galton-Watson trees evolving in a
random environment, where each generation has a random offspring
distribution with mean one and finite expected variance. In the context
of critical branching diffusions, \cite{Pow19} proves an invariance
principle under the assumptions of a bounded domain, finite second moment
of the offspring distribution, and an elliptic diffusion generator.
However, for branching diffusions in general (unbounded) domains,
analogous results are not yet available (see \cite[Question 1.8]{Pow19}).

\begin{remark}
  Theorems~\ref{thm:conv-to-exp} and \ref{thm:inv-principle} hold without
  any further change if the conditioning therein is changed from
  $P_x[\,\cdot\, | N_n^+\neq \emptyset ]$ to
  $P_x[\,\cdot\, | N_n\neq \emptyset ]$. For the sake of brevity, we
  refrain from providing detailed proofs of these results.
\end{remark}

We briefly discuss the organisation of this article. In
Section~\ref{sec:notation-and-results}, we collect relevant background on
the GFF on regular trees along with the framework of branching processes
with spines. Section~\ref{sec:proof_of_diameter}
presents the proof of Theorem~\ref{thm:diameter}. In
Section~\ref{sec:proof-of-thm2}, we prove Theorem~\ref{thm:conv-to-exp}
and, in a dedicated subsection, establish additional results for the
model conditioned on the event $\set{N_n^+ \neq\emptyset}$.
Section~\ref{sec:S_n-process} introduces an auxiliary martingale $S_n$
and establishes scaling limit results for this martingale and some
related processes. Section~\ref{sec:H_n-S_n-connection} focuses on the
``height process'' $H_n$, defined via the distance to the origin in a
depth-first traversal of $\mathcal{C}_o \cap \mathbb{T}^+$, and
investigates its connection to the martingale $S_n$. Finally,
Section~\ref{sec:inv-principle} concludes the article with the proof of
Theorem~\ref{thm:inv-principle}, which combines topological arguments
with the results from Sections~\ref{sec:S_n-process} and
\ref{sec:H_n-S_n-connection}.

\section{Notation and useful results}
\label{sec:notation-and-results}

In this section we introduce the notation used throughout the paper and
recall some known facts about the level set percolation of the
Gaussian free field on trees. We then briefly present the formalism of
branching processes with spines and apply it to our model.

As already stated in the introduction, we use $\T$ to denote the $(d+1)$-regular
tree, $d\ge 2$,  that is an infinite tree whose every vertex has exactly
$d+1$ neighbours. For two vertices $v,w \in \T$ we use $d(v, w)$ to
denote the usual graph distance. The tree is rooted at an arbitrary fixed
vertex $o \in \T$, $\bar{o} \in \T$ denotes a fixed neighbour of $o$, and
$\T^+$ stands for the forward tree,  see \eqref{def:Tplus}. We set
$\abs v = d(o,v)$ and write
\begin{equation}
  \label{eq:spheres}
  S_n = \{v\in \T: \abs v = n\}, \qquad S_n^+ = S_n \cap \T^+
\end{equation}
for the spheres with radius $n$ centred at $o$. For every
$v\in \T \setminus \set{o}$  we use $p(v)$ to denote its parent in
$\mathbb T$, that is the only vertex on the geodesic path from $v$ to $o$
with $\abs{p(v)}= \abs v-1$. We write $\desc(v)$ for the set of direct
descendants of $v$, and $\sib(v) = \desc(p(v))$ for the set of its
siblings, including itself. Finally, if $w$ is an ancestor of $v$, that is
$w$ lies on the geodesics from $o$ to $v$, we write $w \preceq v$.

Throughout the paper we use the usual notation for the asymptotic
relation of two functions $f$ and $g$: We will write $f(s) \sim g(s)$ as
$s \to \infty$ if $\lim_{s\to \infty} {f(s)}/{g(s)} = 1$, $f(s) = o(g(s))$
as $s \to \infty$ if $\lim_{s \to \infty} {\abs{f(s)}}/{g(s)} = 0$, and
$f(s) = O(g(s))$ as $s \to \infty$ if
$\limsup_{s \to \infty} {\abs{f(s)}/{g(s)}} < \infty$. We use
$c,c',c_1,\dots$ to denote finite positive constants whose value may
change from place to place and which can only depend on $d$. The
dependence of these constants on additional parameters is explicitly
mentioned.

\subsection{Properties of the GFF}
\label{ss:GFF}

We consider the Gaussian free field
$\varphi = (\varphi_v)_{v\in \mathbb T}$ which is the centred Gaussian
process on $\mathbb T$ whose covariance function is the Green function of
the simple random walk on $\T$,
\begin{equation}
  \label{def:green-func}
  E[\varphi_v \varphi_w] = g(v, w)
  \defeq
  \frac{1}{d+1} \mathbb{E}_v\Big[\sum_{k=0}^\infty 1_{X_k = w}\Big],
  \qquad v,w \in \T,
\end{equation}
where $\mathbb E_v$ stands for the expectation with respect to the simple
random walk $(X_k)_{k \geq 0}$ on $\mathbb T$ starting at $v\in \T$.

We frequently use the fact that the Gaussian free field on $\T$ can
be viewed as a multi-type branching process with a continuous type space
(see \cite[Section~3]{Szn15} and \cite[Section 2.1]{AbaCer19}). To this
end, we define
\begin{equation}
  \label{eq:sigmas}
  \sigma_\nu^2 \defeq \frac{d}{d-1}
  \quad \text{and} \quad
  \sigma_Y^2 \defeq \frac{d+1}{d},
\end{equation}
and let $(Y_v)_{v\in\T}$ be a collection of independent centred Gaussian
random variables on some auxiliary probability space such that
$Y_o \sim \N(0, \sigma_\nu^2)$ and $Y_v \sim \N(0, \sigma_Y^2)$ for
$v \neq o$. We then define another field $\widetilde\varphi $ on
$\mathbb T$ by
\begin{equation}
  \label{eq:BP-representation}
  \begin{split}
    &\text{(a)}\quad \widetilde{\varphi}_o \defeq Y_o, \\ &\text{(b)}
    \quad\text{for $v\neq o$ we recursively set
      $\widetilde{\varphi}_v \defeq d^{-1} \widetilde{\varphi}_{p(v)} +
      Y_v$.}
  \end{split}
\end{equation}
As explained, e.g., in~\cite[(2.9)]{AbaCer19}, the law of
$(\widetilde{\varphi}_v)_{v\in\T}$ agrees with the law $P$ of the
Gaussian free field $\varphi$. Therefore, we will always assume that the
considered Gaussian free field is constructed in this way and will not
distinguish between $\varphi$, and $\widetilde \varphi$ from now on.

Representation \eqref{eq:BP-representation} of $\varphi$ can be used to
give a concrete construction for the conditional probability $P_x$
introduced in \eqref{def:conditional-prob}: It suffices to replace (a) in
\eqref{eq:BP-representation} by $\widetilde \varphi_o= x$. In addition,
\eqref{eq:BP-representation} can directly be used to construct a monotone
coupling of $P_x$ and $P_y$. As a consequence:
\begin{equation}
  \label{eq:stoch_dom}
  \text{If $x<y$, then $P_y$ stochastically dominates $P_x$,}
\end{equation}
that is $E_x[f(\varphi)] \le E_y[f(\varphi)]$ for every bounded increasing
function $f:\mathbb R^{\mathbb T}\to \mathbb R$.

From the construction \eqref{eq:BP-representation} it follows that the
GFF on $\T$ can be viewed as a multi-type branching process where the
type corresponds to the value of the field $\varphi$. The type of the
initial individual $o$ of this branching process is distributed as $Y_o$.
Every individual $v$ in this branching process then has $d$ descendants
($d+1$ if $v = o$) whose types are independently given by
$d^{-1}{\varphi}_{v} + Y$, with $Y\sim N(0,\sigma_Y^2)$. The branching
process point of view can be adapted to the connected component
$\mathcal{C}_o^h$ (defined in \eqref{eq:Cxh}) by considering the same
multi-type branching process but killing immediately all individuals with
type smaller than $h$ (and not allowing them to have descendants
  themselves).

We now recall in more detail the spectral machinery introduced in
\cite{Szn15} in order to characterise the critical value $h^*$. Let $\nu$
be a centred Gaussian measure on $\mathbb R$ with variance $\sigma_\nu^2$
(as defined in \eqref{eq:sigmas}), and let $Y$ be a centred Gaussian
random variable with variance $\sigma_Y^2$. The expectation with respect
to this random variable is denoted by $E_Y$. We consider the Hilbert
space $L^2(\nu) \defeq L^2(\R, \mathcal B(\R), \nu)$, and for every
$h \in \R$, define the operator $L_h$ acting on $L^2(\nu)$ by
\begin{equation}
  \label{eq:L_h}
  \begin{split}
    L_h[f](x)
    &\defeq 1_{[h,\infty)}(x)\, d\, E_Y\Big[1_{[h,\infty)}
      \Big(Y + \frac{x}{d}\Big)f\Big(Y+\frac{x}{d}\Big)\Big]\\
    &= 1_{[h,\infty)}(x)\, d \int_{[h, \infty)} f(y)
    \rho_Y\Big(y - \frac{x}{d}\Big) \D y,
  \end{split}
\end{equation}
where $\rho_Y$ denotes the density of $Y$. From the branching process
construction \eqref{eq:BP-representation} of $\varphi $, it follows that
\begin{equation}
  \label{eq:L_h-BP1}
  L_h[f](x) = E_x\bigg[\sum_{v \in N_1^{h,+}} f(\varphi_v)\bigg],
\end{equation}
where $N_n^{h,+}$ is defined in \eqref{eq:N_n^hplus-def}. Iterating this
expression one also obtains
\begin{equation}
  \label{eq:L_h-BPn}
  L_h^n[f](x) = E_x\bigg[\sum_{v \in N_n^{h,+}} f(\varphi_v)\bigg], \qquad
  n\in \mathbb N.
\end{equation}
Finally, we let $\lambda_h$ stand for the operator norm of $L_h$ in
$L^2(\nu )$,
\begin{equation}
  \label{eq:altdef-lambda_h}
  \lambda_h \defeq \norm{L_h}_{L^2(\nu)\to L^2(\nu)}.
\end{equation}

The following proposition summarises some known properties of the
operator $L_h$ as well as the connection between $L_h$ and the critical
height $h^*$.

\begin{proposition}[\cite{Szn15} Propositions 3.1, 3.3,  Corollary 4.5]
  \label{prop:L_h-chi-connection}
  For all $h\in\R$, $L_h$ is a self-adjoint non-negative Hilbert-Schmidt
  operator on $L^2(\nu)$, $\lambda_h$ is a simple eigenvalue of $L_h$,
  and there exists a unique $\chi_h \geq 0$ with unit $L^2(\nu)$-norm,
  which is continuous, strictly positive on $[h, \infty)$, vanishing on
  $(-\infty,h)$, and such that
  \begin{equation}
    \label{eq:chieigenfunction}
    L_h[\chi_h] = \lambda_h \chi_h.
  \end{equation}
  Additionally, the map $h \mapsto \lambda_h$ is a decreasing homeomorphism
  from $\R$ to $(0, d)$ and $h^*$ is the unique value in $\R$ such that
  $\lambda_{h^*} = 1$. Finally, for every $d\ge 2$,
  \begin{equation}
    0 < h^* < \infty.
  \end{equation}
\end{proposition}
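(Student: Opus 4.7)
The plan is to proceed in four stages, following the path of Sznitman \cite{Szn15}: (i) establish the basic analytic properties of $L_h$; (ii) extract the spectral picture from compactness and positivity; (iii) analyse $h\mapsto\lambda_h$ as a function of $h$; and (iv) bridge the spectral and percolative definitions of $h^*$ and secure $0<h^*<\infty$.

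First I would read off the structure of $L_h$ from its integral kernel. Denoting by $\rho_\nu$ and $\rho_Y$ the densities of $\nu$ and $Y$, the operator can be written as $L_h[f](x)=\int K_h(x,y)f(y)\nu(\D y)$ with
\begin{equation*}
  K_h(x,y)=d\,\rho_Y(y-x/d)\,\rho_\nu(y)^{-1}\,1_{[h,\infty)}(x)\,1_{[h,\infty)}(y).
\end{equation*}
A short Gaussian calculation using the precise values $\sigma_Y^2=(d+1)/d$ and $\sigma_\nu^2=d/(d-1)$ verifies the identity $\rho_Y(y-x/d)\rho_\nu(x)=\rho_Y(x-y/d)\rho_\nu(y)$; this is exactly the detailed-balance relation for the AR(1) recursion $v\mapsto v/d+Y$ underlying \eqref{eq:BP-representation}, and gives $K_h(x,y)=K_h(y,x)$, hence self-adjointness. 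Squaring this identity shows that $K_h(x,y)^2\rho_\nu(x)\rho_\nu(y)$ is itself an integrable Gaussian in $(x,y)$, so $L_h$ is Hilbert--Schmidt. Non-negativity as an operator follows by writing $\langle f,L_hf\rangle=d\,\langle\tilde f,T\tilde f\rangle$ with $\tilde f=f\,1_{[h,\infty)}$ and $T[g](x)=E_Y[g(Y+x/d)]$, and noting that $T$ is diagonalised by the Hermite polynomials adapted to $\nu$, with non-negative eigenvalues $d^{-k}$.

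Since $L_h$ is compact, self-adjoint and its kernel is strictly positive on $[h,\infty)^2$, Jentzsch's theorem (or Krein--Rutman) yields that $\lambda_h=\norm{L_h}$ is a simple eigenvalue, spanned by a non-negative eigenfunction $\chi_h$ that is unique after the unit $L^2(\nu)$-norm normalisation. Continuity of $\chi_h$ on $[h,\infty)$ follows by bootstrapping \eqref{eq:chieigenfunction}: the right-hand side is a convolution of $\chi_h 1_{[h,\infty)}$ with the smooth Gaussian density $\rho_Y$. Strict positivity of $\chi_h$ on $[h,\infty)$ is then immediate from strict positivity of the kernel. Strict monotonicity of $h\mapsto\lambda_h$ follows from the variational formula $\lambda_h=\sup\{\langle f,L_hf\rangle:\norm{f}=1\}$ and the pointwise inequality $L_{h_1}\ge L_{h_2}$ on non-negative $f$ for $h_1<h_2$; continuity in $h$ follows from continuity of $h\mapsto L_h$ in Hilbert--Schmidt norm (the two kernels differ on a strip of vanishing $\nu\otimes\nu$-mass). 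Finally, $L_{-\infty}$ admits $1$ as eigenfunction with eigenvalue $d$, while $\norm{L_h}_{HS}\to 0$ as $h\to+\infty$ by dominated convergence; combined, this makes $h\mapsto\lambda_h$ a decreasing homeomorphism $\R\to(0,d)$, giving a unique $h_{\mathrm{sp}}$ with $\lambda_{h_{\mathrm{sp}}}=1$ and automatically $h_{\mathrm{sp}}<\infty$.

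The main obstacle is matching this spectrally defined $h_{\mathrm{sp}}$ with the percolative $h^*$ of \eqref{def:hstar} and ruling out $h_{\mathrm{sp}}\le 0$. The bridge is the iteration formula \eqref{eq:L_h-BPn}. For $\lambda_h<1$, choosing $f\equiv 1$ gives $\sum_{n\ge 0}E_x[\abs{N_n^{h,+}}]=\sum_n L_h^n[1](x)<\infty$, so $\mathcal{C}_o^h$ is almost surely finite under $P_x$ and hence $h\ge h^*$; for $\lambda_h>1$, the positive process $M_n=\lambda_h^{-n}\sum_{v\in N_n^{h,+}}\chi_h(\varphi_v)$ is a martingale by \eqref{eq:chieigenfunction}, and a second-moment computation (again reducible to $L^2(\nu)$-estimates on $L_h$ applied to $\chi_h^2$) shows that $M_n$ has a non-degenerate limit with positive probability, forcing $h\le h^*$. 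This identifies $h_{\mathrm{sp}}=h^*$. What is \emph{not} covered by the spectral/branching machinery above is the strict lower bound $\lambda_0>1$, i.e.\ $h^*>0$, and this is the genuinely delicate step: here I would reproduce the random-interlacements comparison of \cite{Szn15} leading to Corollary~4.5 there, which provides a quantitative lower bound on the percolation probability of level sets at small positive heights uniformly in $d\ge 2$.
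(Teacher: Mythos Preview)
The paper does not prove this proposition at all; it is simply quoted from \cite{Szn15} (Propositions~3.1, 3.3 and Corollary~4.5 there) as background. Your outline is a faithful and essentially correct reconstruction of Sznitman's arguments: the detailed-balance identity for self-adjointness, the Hilbert--Schmidt estimate, the Jentzsch/Krein--Rutman step, the variational monotonicity of $h\mapsto\lambda_h$, and the first/second-moment branching dichotomy identifying $h_{\mathrm{sp}}$ with the percolative $h^*$ are all right.

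One remark on your last paragraph: you overstate the difficulty of $h^*>0$. Once the identification $h^*=h_{\mathrm{sp}}$ is in place, $h^*>0$ is equivalent to $\lambda_0>1$, and this follows from a one-line Rayleigh-quotient computation rather than interlacements. With test function $f=1_{[0,\infty)}$ one has $L_0[f](x)=d\,P[Y\ge -x/d]>d/2$ for every $x>0$, hence
\[
  \lambda_0 \;\ge\; \frac{\langle f,L_0 f\rangle}{\|f\|^2}
  \;>\; \frac{(d/2)\,\nu([0,\infty))}{\nu([0,\infty))}
  \;=\; \frac{d}{2}\;\ge\;1
  \qquad\text{for all }d\ge 2.
\]
The random-interlacements comparison in \cite{Szn15} yields sharper quantitative information on $h^*$, but the bare positivity asserted in the proposition is already a consequence of the spectral set-up you have built.
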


Combining Proposition~\ref{prop:L_h-chi-connection} with
\eqref{eq:L_h-BPn} gives that for every $n \in \mathbb{N}$,
\begin{equation}
  \label{eq:chi-is-eigenfunc}
  E_x\bigg[\sum_{w \in N_n^{h,+}} \chi_h(\varphi_w)\bigg]
  = \lambda_h^n \chi_h(x).
\end{equation}

We will require estimates on the norms of $L_h[f]$, which follow from the
hypercontractivity of the Ornstein-Uhlenbeck semigroup. For part (a) of
the following proposition, we refer to (3.14) in \cite{Szn15} or (2.14)
in \cite{AbaCer19}. Part (b) then follows directly from part (a) in
combination with generalized Hölder's inequality.

\begin{proposition}
  \label{prop:hypercontractivity}
  (a) For every $f \in L^2(\nu)$, $h \in \mathbb{R}$, $1 < p < \infty$
  and $q \le (p-1)d^2+1$,
  \begin{equation}
    \norm{L_h[f]}_{L^q(\nu)} \le d \norm{f}_{L^p(\nu)}.
  \end{equation}

  (b) For every $1 \le k \le (d^2+1)/2$ and $f_1,\dots,f_k \in L^2(\nu)$
  \begin{equation}
    \norm[\Big]{\prod_{i=1}^k L_h[f_i]}_{L^2(\nu)}
    \le d^k \prod_{i=1}^k\norm{f_i}_{L^2(\nu)}.
  \end{equation}
\end{proposition}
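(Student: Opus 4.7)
The plan is to treat part (a) and part (b) separately: (a) will be an appeal to the literature, and (b) will be a short deduction from (a) via generalised Hölder.

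For part (a), I would simply cite equation (3.14) of \cite{Szn15} or (2.14) of \cite{AbaCer19}, as the paper itself suggests. The underlying idea is that $L_h$ is, up to the indicator $1_{[h,\infty)}$ and the multiplicative factor $d$, a convolution against the Gaussian density $\rho_Y$ that corresponds to a single step of the Ornstein-Uhlenbeck semigroup on $\R$. Classical Nelson-Gross hypercontractivity then delivers the stated $L^p(\nu)\to L^q(\nu)$ contraction whenever $q-1 \le (p-1)d^2$, the factor $d^2$ reflecting the contraction coefficient $1/d$ appearing in step (b) of \eqref{eq:BP-representation}. The indicator factor only improves the bound, so (a) follows.

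For part (b), the plan is to apply the generalised Hölder inequality with all exponents chosen equal to $2k$. Since $\sum_{i=1}^k \frac{1}{2k} = \frac{1}{2}$, this yields
\begin{equation*}
  \norm[\Big]{\prod_{i=1}^k L_h[f_i]}_{L^2(\nu)}
  \le \prod_{i=1}^k \norm{L_h[f_i]}_{L^{2k}(\nu)}.
\end{equation*}
I would then invoke part (a) with $p=2$ and $q=2k$: the admissibility condition $q \le (p-1)d^2+1 = d^2+1$ reads $2k \le d^2+1$, which is exactly the hypothesis $k \le (d^2+1)/2$. Thus $\norm{L_h[f_i]}_{L^{2k}(\nu)} \le d\,\norm{f_i}_{L^2(\nu)}$ for each $i$, and multiplying these $k$ estimates produces the claimed bound $d^k \prod_{i=1}^k \norm{f_i}_{L^2(\nu)}$.

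There is essentially no obstacle: the whole substance of the proposition sits in (a), which is quoted, and (b) is a one-line application of Hölder. The only point requiring care is that the common choice $q_i = 2k$ simultaneously saturates the Hölder summation constraint $\sum_i 1/q_i = 1/2$ and the hypercontractivity admissibility $q \le d^2+1$; the sharpness of this joint constraint is exactly what determines the upper bound on $k$ in the hypothesis.
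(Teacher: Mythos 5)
Your proposal is correct and matches the paper's approach exactly: part (a) is quoted from \cite{Szn15}/\cite{AbaCer19} as hypercontractivity of the underlying Ornstein--Uhlenbeck-type kernel, and part (b) follows by generalized H\"older with all exponents equal to $2k$ together with part (a) at $p=2$, $q=2k$, which is admissible precisely when $k \le (d^2+1)/2$.
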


The next proposition recalls known properties of the critical component
$\mathcal C_o$ from \cite{CerLoc23}.

\begin{proposition}[Theorem~2.1 and Theorem~2.3 in \cite{CerLoc23}]
  \label{pro:cl23}
  There is $C\in (0,\infty)$ such that
  for every $x \in \R$, as $n \to \infty$,
  \begin{equation}
    P_x[\abs{\mathcal{C}_o \cap \mathbb{T}^+} > n]
    = C \chi(x)n^{-1/2}(1+o(1)).
  \end{equation}
  In particular, for every $x\in \mathbb R$,
  \begin{equation}
    P_x[\abs{\mathcal{C}_o \cap \T^+} = \infty]
    = 0.
  \end{equation}
\end{proposition}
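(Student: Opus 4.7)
My plan is to analyse the generating function
\begin{equation*}
G(s, x) \defeq E_x\bigl[s^{\abs{\mathcal{C}_o \cap \mathbb{T}^+}}\bigr], \qquad s\in(0,1],
\end{equation*}
(with the convention $G(s, x) = 1$ for $x < h^*$) and extract its behaviour as $s \uparrow 1$. By the branching representation \eqref{eq:BP-representation}, the $d$ forward children of $o$ are i.i.d.\ given $\varphi_o = x$, so writing $H\defeq 1-G$ (extended by $0$ on $\{x<h^*\}$) the branching property yields the exact recursion
\begin{equation*}
  1 - H(s, x) = s\,\bigl(1 - L[H(s, \cdot)](x)/d\bigr)^d, \qquad x \ge h^*,
\end{equation*}
which Taylor-expands to
\begin{equation*}
  H(s, x) = (1 - s) + s L[H](x) - \tfrac{s(d-1)}{2d}\, L[H](x)^2 + O\bigl(L[H](x)^3\bigr).
\end{equation*}
By analogy with the classical tail asymptotic for critical Galton--Watson processes, the natural ansatz is $H(s, x) = c\,\chi(x)\sqrt{1 - s} + (1-s)\phi(x) + o(1-s)$ as $s \uparrow 1$, for some $c>0$ and $\phi\in L^2(\nu)$. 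The $\sqrt{1-s}$-order is automatic from $L\chi = \chi$, while matching the $(1-s)$-order produces the Fredholm equation $(I - L)\phi = 1 - \tfrac{d-1}{2d}\,c^2\chi^2$. Since $\ker(I - L) = \spn\{\chi\}$ by Proposition~\ref{prop:L_h-chi-connection}, the self-adjoint Fredholm alternative forces $\inp{1 - \tfrac{d-1}{2d}c^2\chi^2}{\chi} = 0$, giving $c^2 = 2d\,\inp{1}{\chi}/\bigl((d-1)\inp{\chi^2}{\chi}\bigr)$.

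To make this rigorous, I would first establish the finiteness $H(1, x) = P_x[\abs{\mathcal{C}_o \cap \mathbb{T}^+} = \infty] = 0$. Setting $p\defeq H(1,\cdot)$, the recursion at $s = 1$ gives $p = L[p] - R[p]$ with $R[p](x) > 0$ whenever $L[p](x) > 0$ (since $dy - 1 + (1-y)^d > 0$ for $y\in(0,1]$). Integrating against $\chi$ and using $\inp{L[p]}{\chi} = \inp{p}{L\chi} = \inp{p}{\chi}$ forces $\inp{R[p]}{\chi} = 0$, hence $L[p] = 0$ on $\{\chi > 0\} = [h^*, \infty)$, and then $p \equiv 0$ by the recursion. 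Once finiteness is known, I would write $H(s, \cdot) = c\sqrt{1-s}\chi + (1-s)\phi + R(s, \cdot)$ and exploit the spectral gap of $L$ on $\chi^\perp \subset L^2(\nu)$ together with the hypercontractivity bounds of Proposition~\ref{prop:hypercontractivity} to control the quadratic and cubic remainders $L[H]^k$ and prove $\norm{R(s,\cdot)}_{L^2(\nu)} = o(1-s)$. This yields
\begin{equation*}
  H(s, x) = c\,\chi(x)\sqrt{1 - s}\,(1 + o(1)) \qquad (s \uparrow 1),
\end{equation*}
in a sense strong enough to invoke Karamata's Tauberian theorem (for the monotone sequence $P_x[\abs{\mathcal{C}_o\cap\mathbb{T}^+}>n]$), delivering $P_x[\abs{\mathcal{C}_o \cap \mathbb{T}^+} > n] = (c/\sqrt\pi)\,\chi(x)\,n^{-1/2}(1 + o(1))$.

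The main obstacle is the unbounded type space. The eigenfunction $\chi$ is unbounded, and the quadratic and cubic remainders in the expansion involve $L[H(s,\cdot)]^2$ and $L[H(s,\cdot)]^3$, which cannot be controlled by pointwise bounds alone. The key technical tool is hypercontractivity (Proposition~\ref{prop:hypercontractivity}): iterating $\chi = L\chi$ via Proposition~\ref{prop:hypercontractivity}(a) yields $\chi \in L^q(\nu)$ for every $q < \infty$, and Proposition~\ref{prop:hypercontractivity}(b) then furnishes the product estimates needed to bound the remainders in $L^2(\nu)$. A secondary technical point is that the spectral-gap argument gives only $L^2(\nu)$ control on $R(s, \cdot)$; upgrading this to a uniform-on-compacts statement compatible with the Tauberian step is achieved by one additional application of $L$, exploiting the smoothing effect of its continuous Hilbert--Schmidt kernel.
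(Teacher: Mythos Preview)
The paper does not prove this proposition at all: as the header indicates, it is imported wholesale from \cite{CerLoc23} (Theorems~2.1 and~2.3 there) and used as a black box throughout. There is consequently no in-paper argument to compare your sketch against.

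That said, your outline is a viable and essentially classical route to the result, so a few comments are in order. Your argument for non-percolation at criticality is correct and clean: pairing the fixed-point equation $p=L[p]-R[p]$ with $\chi$ and using self-adjointness together with $L\chi=\chi$ annihilates the linear part and forces the strictly positive remainder $\langle R[p],\chi\rangle$ to vanish, whence $p\equiv 0$. For the tail asymptotic, the ansatz and the Fredholm solvability condition fixing $c^2 = 2d\langle 1,\chi\rangle/((d-1)\langle\chi^2,\chi\rangle)$ are exactly right. The genuine work, which you correctly flag, is controlling the remainder $R(s,\cdot)$ in $L^2(\nu)$ when the type space and eigenfunction are unbounded; the combination of hypercontractivity (Proposition~\ref{prop:hypercontractivity}) for the product terms, the spectral gap on $\chi^\perp$, and one smoothing step by $L$ to upgrade $L^2$ convergence to pointwise (as in Lemma~\ref{lem:L-pointwise}) is precisely the toolkit the present paper deploys for the analogous diameter analysis in Section~\ref{sec:proof_of_diameter}, so your plan is consistent with the machinery available. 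One small point: Karamata gives only the partial-sum asymptotic $\sum_{k\le n}P_x[\,\cdot>k\,]\sim C'n^{1/2}$; extracting $P_x[\,\cdot>n\,]\sim \tfrac12 C' n^{-1/2}$ requires the monotone density theorem, which your invocation of monotonicity implicitly supplies.
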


We will also need the following three properties of $\chi$, the first one
is proved in Remark~2.5 of \cite{CerLoc23}, the second is a direct
consequence of Proposition~3.1 in \cite{AbaCer19}, and the last one is
proved in Appendix~\ref{app:proof-chi-lipsch}:
\begin{gather}
  \label{eq:chi_non-dec}
  x \mapsto \chi(x) \text{ is non-decreasing},\\
  \label{eq:crit-chi_bounds}
  c_1 x \le \chi(x) \le c_2 x \quad \text{for all $x \ge h^*$ and some
    $c_1,c_2\in (0,\infty)$},\\
  \label{eq:chi-Lipschitz}
  x \mapsto \chi(x) \text{ is Lipschitz on $[h^*, \infty)$.}
\end{gather}

\subsection{Branching processes with spines}
\label{ss:spined-bp}

We now recall the machinery of branching processes with spines which is
frequently used in the theory of branching processes, and specialize it
to our model, in order to study the critical component $\mathcal C_o$. We
then state many-to-few formulas that express certain moments for the
original branching process in terms of the dynamics along the spines, see
Proposition~\ref{pro:many-to-few_model} below. Later, in
Section~\ref{subsec:further-cond-res}, we will see that the processes
with spines can be used to describe the distribution of $\mathcal C_o$
conditioned on being infinite. The content of this section is mostly
based on \cite{HarRob17}.

The main idea of the machinery is to designate several lines of
descent in the branching process, called spines. These spines are then
used to introduce a certain change of measure, under which the vertices
on the spine exhibit modified branching behaviour, while the non-spine
vertices behave as in the original process.

For the construction, we need some notation. For $x\in \mathbb R$ and
$k\in \mathbb N$, we first introduce measures $P^k_x$ under which the
behaviour of the field is the same as under $P_x$, but in addition there
are $k$ distinguished spines. Formally, the measure $P^k_x$  is the
distribution of a
$(\mathbb{R}\cup \set{\dagger}) \times \set{0, 1, \dots, k}$-valued
stochastic process $(\varphi_v,l_v)_{v\in \T^+}$. This process assigns to
every $v \in \mathbb{T}^+$ a field value
$\varphi_v \in \R \cup \set{\dagger}$ (where $\dagger$ is a cemetery
  state) and a number $l_v \in \set{0, \dots, k}$ that represents the
number of spine marks on $v$. Under $P^k_x$ , the law of
$(\varphi_v)_{v \in \mathbb{T}^+}$ is a straightforward modification of
the original measure $P_x$, the only difference is that we set
$\varphi_v = \dagger$ for every $v \not\in \mathcal{C}_o$. The random
variables $(l_v)_{v\in \mathbb T^+}$ are independent of the field values
$(\varphi_v)_{v\in \mathbb T^+}$. The root node $o$ has exactly $k$
marks, $l_o = k$. The remaining random variables $(l_v)_{v\neq o}$ are
constructed recursively as follows: If a node $v\in\T^+$ carries $j$
marks, then each of its $j$ marks `moves' to one of its $d$ direct
descendants independently uniformly at random. (Note that nodes in the
  cemetery state can carry marks.) As consequence, under $P^k_x$, in
every generation $n$, there are exactly $k$ marks present, that is
$\sum_{v\in S^+_n } l_v = k$. We use $P^k_x$ also for the corresponding
expectations.

For $i = 1, \dots, k$, we denote by $\sigma_n^i$ the node that carries
the $i$-th mark in generation $n$ (that is $\abs{\sigma_n^i} = n$), and
set $\xi_n^i = \varphi_{\sigma_n^i}$ to be its type.
We also define $\skel(n) = \{v\in \T^+: \abs v \le n, l_v \ge 1\}$ to be
the set of nodes of generation at most $n$ having at least one mark. We let
$\F_n$ stand for the natural filtration of the branching process, and
$\F_n^k$ for the filtration containing in addition the information about
the $k$ spine marks,
\begin{equation}
  \label{eq:filtrations}
  \mathcal{F}_n = \sigma(\varphi_v : v\in \mathbb T^+, \abs v \le n)
  \quad \text{and} \quad
  \mathcal{F}^k_n
  = \sigma(\varphi_v, l_v :v\in \mathbb T^+, \abs v \le n).
\end{equation}
Any $f:\R\to\R$ is extended to $\R \cup \set{\dagger}$ by setting
$f(\dagger) = 0$. Then, by definition of $P^k_x$,
\begin{equation}
  \label{eq:N^prime-N_equality}
  E_x\bigg[\sum_{v \in N_n^+} f(\varphi_v)\bigg]
  = P_x^k\bigg[\sum_{v \in S_n^+} f(\varphi_v)\bigg].
\end{equation}

We now define another measure $\Q_x^k$, where the nodes without a spine
mark behave as under $P_x^k$ but the nodes with a mark have a modified
branching behaviour: Under $\Q_x^k$ the movement of the marks, and thus
the distribution of $(l_v)_{v\in \T^+}$, is exactly the same as under
$P_x^k$: If a node $v$ carries $k$ marks, each of the marks is given to
one of its $d$ direct descendants independently uniformly at random. To
describe the distribution of the field $\varphi $ under $\Q_x^k$, we first
define a transition kernel (recall $\rho_Y$ from \eqref{eq:L_h})
\begin{equation}
  \label{eq:K-def}
  \mathcal{K}(x, \D y)
  = d \frac{\chi(y)}{\chi(x)}\rho_Y\Big(y-\frac{x}{d}\Big)\D y,
  \qquad x\ge h^*, y\in \mathbb R.
\end{equation}
Note that, by \eqref{eq:L_h} and
Proposition~\ref{prop:L_h-chi-connection}, for every $x\ge h^*$,
$\mathcal K(x,\cdot)$ is a probability measure with support
$[h^*, \infty)$. Conditionally on the marks $(l_v)_{v\in \T^+}$, the
field $(\varphi_v)_{v \in \mathbb{T}^+}$ under $\Q^k_x$ is recursively
constructed by
\begin{equation}
  \label{eq:Q-BP}
  \parbox{14.5cm}{
    \begin{enumerate}
      \item $\varphi_o \defeq x$,
      \item If $v \neq o$ and $l_v= 0$, then
      $\varphi_v = d^{-1} \varphi_{p(v)}+Y_v$ (as under $P_x^k$).
      \item If $v \neq o$ and $l_v\ge 1$, then
      $\varphi_v$ is $\mathcal K(\varphi_{p(v)}, \cdot)$-distributed,
      independently of previous randomness.
    \end{enumerate}
  }
\end{equation}
To simplify notation, we write $\Q_x$ instead of $\Q_x^1$; in this case
we also set $\sigma_n = \sigma^1_n$ and $\xi_n = \xi^1_n$.

Note that, unlike under $P_x^k$, under the measure $\Q_x^k$ the nodes in
the cemetery state cannot carry any mark. Consequently, $\Q_x^k$-a.s.,
there are nodes not in the cemetery state in every generation.

By construction, under $\Q_x^k$, the process $(\xi^i_n)_{n\in \mathbb N}$
recording the value of the field along the $i$-th spine is a Markov chain
with transition kernel $\mathcal K$. This chain never enters the cemetery
state. The following lemma determines its invariant distribution.

\begin{lemma}
  \label{lem:xi_inv-measure}
  The Markov chain $(\xi_n)_{n\in \mathbb N}$ with the transition kernel
  $\mathcal K$ has a unique
  invariant distribution $\pi $ given by
  \begin{equation}
    \label{eq:inv-measure}
    \pi(\D x) = \chi(x)^2\nu(\D x).
  \end{equation}
\end{lemma}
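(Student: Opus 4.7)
The plan is to verify by direct computation that $\pi$ is a probability measure invariant under the kernel $\mathcal K$, and then to deduce uniqueness from irreducibility. The fact that $\pi$ is a probability measure is immediate from $\norm{\chi}_{L^2(\nu)}=1$ in Proposition~\ref{prop:L_h-chi-connection}, and its support is contained in $[h^*,\infty)$ since $\chi$ vanishes below $h^*$.

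For invariance, I would test $\int \pi(\D x)\int f(y)\,\mathcal K(x,\D y)$ against a bounded Borel $f$ supported in $[h^*,\infty)$. Using the explicit form \eqref{eq:K-def} of $\mathcal K$, the factor $\chi(x)^2$ coming from $\pi$ combines with $1/\chi(x)$ coming from $\mathcal K$ to leave a single $\chi(x)$, and the resulting integral rearranges into $\int \chi(x)\,L[\chi f](x)\,\nu(\D x)$; the indicator $1_{[h^*,\infty)}$ built into the definition \eqref{eq:L_h} of $L=L_{h^*}$ comes for free because $\chi$ vanishes outside $[h^*,\infty)$. The key step is then to invoke self-adjointness of $L$ on $L^2(\nu)$ together with the eigenequation $L[\chi]=\chi$, which holds by Proposition~\ref{prop:L_h-chi-connection} since $\lambda_{h^*}=1$. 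Moving $L$ onto $\chi$ gives $\int L[\chi](x)\,\chi(x)f(x)\,\nu(\D x)=\int \chi(x)^2 f(x)\,\nu(\D x)=\int f\,\D\pi$, proving invariance.

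For uniqueness, I would observe that the transition density $y\mapsto d\chi(y)/\chi(x)\cdot\rho_Y(y-x/d)$ of $\mathcal K(x,\cdot)$ is strictly positive on $[h^*,\infty)$ for every $x\in[h^*,\infty)$, because $\chi>0$ on $[h^*,\infty)$ by Proposition~\ref{prop:L_h-chi-connection} and $\rho_Y>0$ everywhere. Hence $\mathcal K(x,\cdot)$ charges every Borel subset of $[h^*,\infty)$ of positive Lebesgue measure in a single step starting from any $x\in[h^*,\infty)$, and standard Markov chain theory (a Doeblin-type minorisation applied after one step) then rules out any second invariant probability measure.

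There is no substantive obstacle in this scheme; the only care required is bookkeeping the support $[h^*,\infty)$, to ensure that the divisions by $\chi(x)$ are legitimate and that the indicator inside $L$ is consistent with the support of $\pi$.
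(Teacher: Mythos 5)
Your proposal is correct and follows essentially the same route as the paper: invariance is verified by rewriting $\pi\mathcal K$ in terms of $L$ (via $\mathcal K(x,\D y)=d\,\chi(y)\rho_Y(y-x/d)\D y/\chi(x)$, so that $\int f\,\D(\pi\mathcal K)=\inp{\chi}{L[\chi f]}$), then applying self-adjointness of $L$ together with $L[\chi]=\chi$, and uniqueness is deduced from irreducibility. One small caution on the uniqueness step: since $\rho_Y(y-x/d)$ translates with $x$, the one-step density has no uniform lower bound over $x\in[h^*,\infty)$, so there is no global Doeblin minorisation; what strict positivity actually yields is $\psi$-irreducibility with $\psi$ Lebesgue measure on $[h^*,\infty)$, and that, combined with the existence of an invariant probability measure, already forces uniqueness—this is exactly the irreducibility appeal made in the paper.
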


\begin{proof}
  We first show that $\pi$ is invariant for $\mathcal K$. Comparing
  \eqref{eq:L_h} and \eqref{eq:K-def} yields that
  $\mathcal{K}(x, A) = L[1_A \chi](x)/\chi(x)$ for every $x \ge h^*$.
  Therefore, writing the action of $\mathcal{K}$ on $\pi$ as inner
  product on $L^2(\nu)$, using that $L$ is self-adjoint and $\chi$ is its
  eigenfunction with eigenvalue $1$ (see
    Proposition~\ref{prop:L_h-chi-connection}),
  \begin{equation}
    (\pi\mathcal{K})(A) = \inp{\mathcal{K}(\cdot,A)}{\chi^2}_{\nu}
    = \inp{L[1_A \chi]}{\chi}_{\nu} = \inp{1_A \chi}{L[\chi]}_{\nu}
    = \inp{1_A \chi}{\chi}_{\nu} = \pi(A),
  \end{equation}
  which shows that $\pi$ is an invariant measure. The uniqueness follows
  from the irreducibility of $(\xi_n)_{n\in \mathbb N}$.
\end{proof}

Next, we state several moment formulas that are frequently used throughout
the paper. Such formulas are well understood in the theory of branching
processes, see, e.g., \cite{HarRob17} and references therein. The proof
of the following proposition is based on Lemma 8 of that paper and can be
found in Appendix~\ref{sec:B}.

\begin{proposition}
  \label{pro:many-to-few_model}
  For all functions $f, g: [h^*, \infty) \to \R$ for which the
  expectations below are well defined,
  \begin{align}
    \label{eq:many-to-few_model1}
    E_x\Big[\sum_{v\in N^+_n} f(\varphi_v)\Big]
    &= \Q_x\Big[f(\xi_n) \frac{\chi(x)}{\chi(\xi_n)}\Big],
    \\
    \label{eq:many-to-few_model2}
    E_x\Big[\sum_{v,w\in N_n^+} f(\varphi_v)  g(\varphi_w)\Big]
    &= \begin{aligned}[t]&\chi(x) \frac{d-1}{d} \sum_{k=0}^{n-1}
      \Q_x\bigg[\chi(\xi_{k})
        \Q_{\xi_k}\Big[\frac{f(\xi_{n-k})}{\chi(\xi_{n-k})}\Big]
        \Q_{\xi_k}\Big[\frac{g(\xi_{n-k})}{\chi(\xi_{n-k})}\Big]\bigg]
      \\&\quad + \chi (x) \Q_x\Big[\frac{f(\xi_n)g(\xi_n)}{\chi
          (\xi_n)}\Big].
    \end{aligned}
  \end{align}
\end{proposition}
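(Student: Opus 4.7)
The plan is to interpret both identities as instances of many-to-few formulas for branching processes with spines (cf.\ Lemma~8 in \cite{HarRob17}), specialised to the multi-type branching process induced by the GFF. The two formulas start from the identity \eqref{eq:N^prime-N_equality} with $k=1$ and $k=2$ spines, respectively, and are obtained via a change of measure from $P_x^k$ to $\Q_x^k$.

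For \eqref{eq:many-to-few_model1}: under $P_x^1$ the single mark is uniformly distributed on $S_n^+$ with probability $d^{-n}$, so \eqref{eq:N^prime-N_equality} (with the convention $f(\dagger)=0$) gives
\begin{equation*}
  E_x\Big[\sum_{v\in N_n^+}f(\varphi_v)\Big] = d^n\, P_x^1[f(\xi_n)].
\end{equation*}
Along the spine, the $P_x^1$-transition density is $\rho_Y(y-x/d)$, while by \eqref{eq:K-def} the $\Q_x$-transition density is $d\chi(y)\chi(x)^{-1}\rho_Y(y-x/d)$, so the corresponding Radon-Nikodym derivative telescopes to
\begin{equation*}
  \frac{d\Q_x}{dP_x^1}\bigg|_{\F_n^1}=d^n\,\frac{\chi(\xi_n)}{\chi(x)}.
\end{equation*}
Inverting this relation yields \eqref{eq:many-to-few_model1}.

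For \eqref{eq:many-to-few_model2}: since under $P_x^2$ the two marks move independently and uniformly, $(\sigma_n^1,\sigma_n^2)$ is uniform on $S_n^+\times S_n^+$ with probability $d^{-2n}$, and \eqref{eq:N^prime-N_equality} gives
\begin{equation*}
  E_x\Big[\sum_{v,w\in N_n^+}f(\varphi_v)g(\varphi_w)\Big] = d^{2n}\, P_x^2[f(\xi_n^1)g(\xi_n^2)].
\end{equation*}
Let $\tau\defeq\max\{k\le n:\sigma_k^1=\sigma_k^2\}$ be the split depth of the two spines. Then $\skel(n)$ is a Y-shape with a common trunk of length $\tau$ and two branches of length $n-\tau$, and the $\mathcal{K}$-kernel is applied along all $2n-\tau$ skeleton edges. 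Telescoping as above on each of the three segments gives
\begin{equation*}
  \frac{d\Q_x^2}{dP_x^2}\bigg|_{\F_n^2} = d^{2n-\tau}\,\frac{\chi(\xi_n^1)\,\chi(\xi_n^2)}{\chi(x)\,\chi(\xi_\tau)}.
\end{equation*}

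It remains to invert this identity and decompose by $\tau$. On the event $\{\tau\ge k\}$ (which has $\Q_x^2$-probability $d^{-k}$), the trunk $(\xi_0,\ldots,\xi_k)$ has the law of $(\xi_0,\ldots,\xi_k)$ under $\Q_x$, and it is independent of the marks' choices at step $k+1$, which make $\tau=k$ with conditional probability $(d-1)/d$. Conditionally on $\tau=k$ and $\xi_k$, the two post-split chains are independent $\mathcal{K}$-chains of length $n-k$ started at $\xi_k$, so the contribution of $\{\tau=k\}$ to the inverted RHS equals, for each $k<n$,
\begin{equation*}
  \chi(x)\,\frac{d-1}{d}\,\Q_x\!\Big[\chi(\xi_k)\,\Q_{\xi_k}\!\Big[\tfrac{f(\xi_{n-k})}{\chi(\xi_{n-k})}\Big]\,\Q_{\xi_k}\!\Big[\tfrac{g(\xi_{n-k})}{\chi(\xi_{n-k})}\Big]\Big],
\end{equation*}
which is exactly the $k$-th summand in \eqref{eq:many-to-few_model2}. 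The degenerate case $\tau=n$ (where $\xi_n^1=\xi_n^2=\xi_n$ and $P[\tau=n]=d^{-n}$) collapses to the final diagonal term $\chi(x)\,\Q_x[f(\xi_n)g(\xi_n)/\chi(\xi_n)]$. The main obstacle is the careful bookkeeping of the change-of-measure factor along the Y-shaped skeleton together with the conditional-independence structure of the two branches given $\xi_\tau$; this is where the factor $(d-1)/d$ in \eqref{eq:many-to-few_model2} originates.
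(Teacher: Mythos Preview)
Your proposal is correct and takes essentially the same approach as the paper: both arguments compute the change of measure from $P_x^k$ to $\Q_x^k$ along the spine skeleton (telescoping the factors $d\chi(\xi_i)/\chi(\xi_{i-1})$), and for $k=2$ both decompose according to the split time $\tau$ (the paper calls it $s$), obtaining the same probabilities $\Q_x^2[\tau=k]=d^{-k}(d-1)/d$ for $k<n$ and $d^{-n}$ for $k=n$, and the same conditional independence of the two branches given $\xi_\tau$. The only cosmetic difference is that the paper packages the change of measure through the general many-to-few Lemma~8 of \cite{HarRob17} (restated as Lemma~\ref{lem:many-to-few_general}, with the martingale $\zeta_n=d^n\chi(\xi_n)$ verifying its hypothesis), whereas you compute the Radon--Nikodym derivative directly; the resulting computations are identical.
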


\subsection{Asymptotic behaviour of the moments}

The main result of this section is Proposition~\ref{pro:Harris_L2} giving
precise asymptotic estimates on the quantities appearing in
\eqref{eq:many-to-few_model1}, \eqref{eq:many-to-few_model2}. To prove
them, we could, in principle, use this proposition together with the
known results on the convergence of Markov chains. However, it is easier,
and for our purposes slightly more practical, to use
formula~\eqref{eq:L_h-BPn} together with $L^2$-estimates on the
operator~$L$.

Since $L$ is a self-adjoint Hilbert-Schmidt operator (see
  Proposition~\ref{prop:L_h-chi-connection}), $L^2(\nu)$ has an
orthonormal basis consisting of the eigenfunctions $\set{e_k}_{k\geq 1}$
of $L$ corresponding to the eigenvalues $\set{\lambda_k}_{k\geq 1}$. By
Proposition~\ref{prop:L_h-chi-connection} we may assume that
$1=\lambda_1 > \abs{\lambda_2} \geq \abs{\lambda_3}\geq \dots $, and
$e_1 = \chi$. We set $\gamma  = \abs{\lambda_2}\in (0,1)$. By decomposing
$f \in L^2(\nu)$ as
\begin{equation}
  \label{eq:f-decomp}
  f = \sum_{k\geq 1} \inp{e_k}{f} e_k
  = \inp{\chi}{f}\chi + \sum_{k\geq 2} \inp{e_k}{f} e_k
  \eqdef \inp{\chi}{f}\chi + \beta[f],
\end{equation}
for every $n\in \mathbb N$, it holds
\begin{equation}
  \label{eq:decomp_Ly}
  L^n[f] = \sum_{k\geq 1} \lambda_k^n \inp{e_k}{f} e_k
  = \inp{\chi}{f}\chi + \sum_{k\geq 2} \lambda_k^n \inp{e_k}{f} e_k
  = \inp{\chi}{f}\chi + L^n[\beta[f]],
\end{equation}
with
\begin{equation}
  \label{eq:norm-beta}
  \norm{L^n[\beta[f]]}
  \le \gamma^n \norm{\beta[f]} \le \gamma^n \norm f.
\end{equation}

The following simple lemma will later be used to deduce the pointwise
convergence from the $L^2(\nu )$-one.
\begin{lemma}
  \label{lem:L-pointwise}
  There is a function $q: [h^*, \infty) \to [0, \infty)$
  such that for every $x \ge h^*$ and $f\in L^2(\nu )$
  \begin{equation}
    \abs{L[f](x)} \le \norm{f}q(x).
  \end{equation}
\end{lemma}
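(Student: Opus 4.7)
The plan is to obtain the pointwise bound by rewriting $L[f](x)$ as an inner product against an explicit kernel and then applying the Cauchy--Schwarz inequality in $L^2(\nu)$. Concretely, for $x\ge h^*$, equation~\eqref{eq:L_h} gives
\begin{equation*}
  L[f](x) = d\int_{h^*}^{\infty} f(y)\, \rho_Y\!\Big(y-\frac{x}{d}\Big)\,\D y
  = d\int_{h^*}^{\infty} f(y)\, \frac{\rho_Y(y-x/d)}{\rho_\nu(y)}\,\nu(\D y),
\end{equation*}
where $\rho_\nu$ is the density of $\nu$. Cauchy--Schwarz then yields $|L[f](x)| \le \|f\|\, q(x)$ with
\begin{equation*}
  q(x) \defeq d\bigg(\int_{h^*}^{\infty} \frac{\rho_Y(y-x/d)^2}{\rho_\nu(y)}\,\D y\bigg)^{1/2}.
\end{equation*}

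The only thing that needs checking is that $q(x) < \infty$ for every fixed $x\ge h^*$. Since $\rho_Y(\cdot-x/d)^2$ decays like $\exp(-(y-x/d)^2/\sigma_Y^2)$ while $1/\rho_\nu$ grows like $\exp(y^2/(2\sigma_\nu^2))$, the integrability at $y\to\infty$ reduces to showing that the coefficient of $y^2$ in the exponent is strictly negative. Using $\sigma_Y^2 = (d+1)/d$ and $\sigma_\nu^2 = d/(d-1)$ from~\eqref{eq:sigmas}, one computes
\begin{equation*}
  -\frac{1}{\sigma_Y^2} + \frac{1}{2\sigma_\nu^2}
  = -\frac{d}{d+1} + \frac{d-1}{2d}
  = -\frac{d^2+1}{2d(d+1)} < 0,
\end{equation*}
so the integrand has Gaussian decay at $+\infty$ and integrability over $[h^*,\infty)$ is immediate.

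There is no real obstacle here: the statement is essentially a Cauchy--Schwarz exercise, and the only delicate point is the comparison of the variances $\sigma_Y^2$ and $\sigma_\nu^2$, which is precisely arranged by the definitions in~\eqref{eq:sigmas} to ensure the required decay. Note, in particular, that we do not need any quantitative control of $q(x)$ as $x\to\infty$; the lemma only asks for pointwise finiteness, which is all that will be used when upgrading $L^2(\nu)$-convergence to pointwise convergence via~\eqref{eq:decomp_Ly} and~\eqref{eq:norm-beta}.
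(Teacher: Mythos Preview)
Your proof is correct and follows essentially the same approach as the paper: rewrite the kernel against $\nu$ and apply Cauchy--Schwarz in $L^2(\nu)$, then check finiteness of the resulting Gaussian integral via the variance comparison. The only cosmetic differences are that the paper integrates over all of $\mathbb{R}$ (a harmless enlargement) and simply appeals to $\sigma_\nu > \sigma_Y$ rather than computing the exponent coefficient explicitly as you do.
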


\begin{proof}
  Using the definition \eqref{eq:L_h} of $L$ and the Cauchy--Schwarz
  inequality,
  \begin{equation}
    \begin{split}
      \abs{L[f](x)}
      &\le d\int_{\R}\abs{f(y)}\rho_Y\Big(y-\frac xd\Big)\D y
      = d\int_{\R}\abs{f(y)}\frac{\rho_Y(y-x/d)}{\rho_\nu(y)} \nu(\D y)
      \\&\le d\norm{f}_{L^2(\nu)}
      \norm[\Big]{\frac{\rho_Y(\boldsymbol{\cdot} - x/d)}
        {\rho_\nu(\boldsymbol{\cdot})}}_{L^2(\nu)}
      \eqdef \norm{f}_{L^2(\nu)} q(x).
    \end{split}
  \end{equation}
  By \eqref{eq:sigmas}, $\sigma_\nu  > \sigma_Y$. Using this one can
  easily check by a direct computation that $ q(x) < \infty$ for every
  $x\ge h^*$.
\end{proof}

\begin{remark}
  The same computation also shows that $q(x) \in L^2(\nu )$, but we will
  not use this fact.
\end{remark}

We can now give the asymptotic estimates on moments appearing in
Proposition~\ref{pro:many-to-few_model}.

\begin{proposition}
  \label{pro:Harris_L2}
  (a) For every $f \in L^2(\nu)$, $x\ge h^*$, and $n\in \mathbb N$,
  \begin{equation}
    \label{eq:Harris_L2a}
    E_x\Big[\sum_{v \in N_n^+} f(\varphi_v)\Big]
    = \chi(x) \inp{\chi}{f} + \varepsilon_n^f(x),
  \end{equation}
  where the error term $\varepsilon_n^f$ satisfies
  (with $q$ as in Lemma~\ref{lem:L-pointwise})
  \begin{equation}
    \label{eq:errorsa}
    \norm{\varepsilon_n^f} \le
    \gamma^n \norm{f}
    \qquad \text{and}\qquad
    \abs{\varepsilon_n^f(x)} \le \gamma^{n-1}\norm{f}q(x).
  \end{equation}

  (b) For every  $f,g \in L^2(\nu )$, $x\ge h^*$, and $n\in \mathbb N$
  \begin{equation}
    \begin{split}
      \label{eq:Harris_L2b}
      E_x\Big[\sum_{v,w\in N_n^+} f(\varphi_v) g(\varphi_w)\Big]
      &- E_x\Big[\sum_{v \in N_n^+} f(\varphi_v)g(\varphi_v)\Big]
      \\&= \chi(x)\frac{d-1}{d}\inp{\chi}{f}
      \inp{\chi}{g}\inp{\chi^2}{\chi} n + \varepsilon_n^{f,g}(x),
    \end{split}
  \end{equation}
  where the error term $\varepsilon_n^{f,g}$ satisfies
  \begin{equation}
    \norm{\varepsilon_n^{f,g}} \le C \norm{f}\norm{g}
    \qquad\text{and}\qquad
    \varepsilon_n^{f,g}(x) \le \norm{f}\norm{g}\tilde q(x)
  \end{equation}
  with $\tilde q(x) \le C ( q(x) + \chi (x) )<\infty$.
\end{proposition}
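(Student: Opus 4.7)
\textbf{Part (a)} is immediate. Combining the branching-to-operator identity~\eqref{eq:L_h-BPn} at $h=h^*$ with the spectral decomposition~\eqref{eq:decomp_Ly} yields
\begin{equation*}
E_x\Bigl[\sum_{v\in N_n^+} f(\varphi_v)\Bigr] = L^n[f](x) = \chi(x)\inp{\chi}{f} + L^n[\beta[f]](x),
\end{equation*}
so setting $\varepsilon_n^f \defeq L^n[\beta[f]]$, the bound $\norm{\varepsilon_n^f}\le\gamma^n\norm{f}$ is exactly~\eqref{eq:norm-beta}. The pointwise bound follows by writing $\varepsilon_n^f(x) = L[L^{n-1}[\beta[f]]](x)$ and applying Lemma~\ref{lem:L-pointwise}: $\abs{\varepsilon_n^f(x)}\le q(x)\norm{L^{n-1}[\beta[f]]}\le q(x)\gamma^{n-1}\norm{f}$.

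For \textbf{part (b)}, the starting point is Proposition~\ref{pro:many-to-few_model}: applying~\eqref{eq:many-to-few_model1} to the single function $fg$ cancels the final term of~\eqref{eq:many-to-few_model2}, leaving a sum of the form $\chi(x)\tfrac{d-1}{d}\sum_{k=0}^{n-1}\Q_x[\chi(\xi_k) G^f_{n-k}(\xi_k) G^g_{n-k}(\xi_k)]$, where $G^\phi_m(y)\defeq\Q_y[\phi(\xi_m)/\chi(\xi_m)]$. To work purely in terms of $L$ I would use the intertwining $\mathcal K^m[\phi](y) = L^m[\phi\chi](y)/\chi(y)$ already visible in the proof of Lemma~\ref{lem:xi_inv-measure}, which gives $G^\phi_m(y) = L^m[\phi](y)/\chi(y)$; a second application to the outer $\Q_x$-expectation then yields the clean identity
\begin{equation*}
\text{LHS of~\eqref{eq:Harris_L2b}} = \tfrac{d-1}{d}\sum_{k=0}^{n-1} L^k\bigl[L^{n-k}[f]\,L^{n-k}[g]\bigr](x).
\end{equation*}

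I would then expand each factor via $L^{n-k}[f] = \inp{\chi}{f}\chi + L^{n-k}[\beta[f]]$ (and likewise for $g$). The main contribution is $\inp{\chi}{f}\inp{\chi}{g}\sum_{k=0}^{n-1}L^k[\chi^2]$; applying~\eqref{eq:decomp_Ly} once more to $\chi^2$ yields $\sum_{k=0}^{n-1}L^k[\chi^2] = n\inp{\chi}{\chi^2}\chi + \sum_{k=0}^{n-1}L^k[\beta[\chi^2]]$, producing the advertised leading term $\tfrac{d-1}{d}n\inp{\chi}{\chi^2}\inp{\chi}{f}\inp{\chi}{g}\chi(x)$. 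Every remaining term in the expansion contains at least one factor $L^m[\beta[f]]$ or $L^m[\beta[g]]$ whose $L^2(\nu)$-norm is $\le\gamma^m\norm{f}$ or $\gamma^m\norm{g}$ by~\eqref{eq:norm-beta}. The $L^2$-bound $\norm{\varepsilon_n^{f,g}}\le C\norm{f}\norm{g}$ follows by applying the hypercontractive estimate Proposition~\ref{prop:hypercontractivity}(b) with $k=2$ to products of the form $L[\chi]\cdot L[L^{n-k-1}[\beta[g]]]$, which are bounded by $d^2\norm{\chi}\gamma^{n-k-1}\norm{g}$, and summing a geometric series in $k$; the $\beta$-$\beta$ product is handled identically.

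For the pointwise bound one inserts Lemma~\ref{lem:L-pointwise} at the outermost step whenever $k\ge 1$, converting the $L^2$-bounds above into $q(x)$ times those bounds. \emph{The main obstacle} is the pointwise control of the $k=0$ boundary contributions---in particular $\beta[\chi^2](x)$ and the cross-term $\chi(x) L^n[\beta[g]](x)$---for which Lemma~\ref{lem:L-pointwise} does not directly supply an extra factor $q(x)$. The key observation is that the explicit formula for $q$ in the proof of Lemma~\ref{lem:L-pointwise} shows that $q(x)$ grows super-polynomially in $x$ (like $e^{cx^2}$, as $\sigma_\nu>\sigma_Y$), while $\chi(x)$ grows only linearly by~\eqref{eq:crit-chi_bounds}; consequently $\chi(x)^2\le Cq(x)$ on $[h^*,\infty)$, allowing the $\chi(x)^2$-type residuals to be absorbed into $q(x)$. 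Combined with a careful regrouping of the $k=0$ cross-terms, all remaining residuals fit inside the target profile $\tilde q(x)\le C(q(x)+\chi(x))$.
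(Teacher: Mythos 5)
Parts (a), the derivation of the identity $\frac{d-1}{d}\sum_{k=0}^{n-1}L^k[L^{n-k}[f]L^{n-k}[g]]$, and the $L^2$-bound on $\varepsilon_n^{f,g}$ all follow essentially the same route as the paper: \eqref{eq:L_h-BPn} plus the spectral decomposition for (a), the many-to-few formula reduced to powers of $L$ for (b), and hypercontractivity plus a geometric series in $k$. The only organizational difference is that the paper applies part (a) to $L^k[\,\cdot\,]$ first and then expands the inner product $\inp{\chi}{L^{n-k}[f]L^{n-k}[g]}$, whereas you expand $L^{n-k}[f]=\inp{\chi}{f}\chi+L^{n-k}[\beta[f]]$ inside the product before applying $L^k$; the two bookkeepings are equivalent.

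The one place where your argument has a gap is the pointwise bound, precisely at the $k=0$ boundary term. You correctly observe that the cross-term $\chi(x)L^n[\beta[g]](x)$ is not covered by Lemma~\ref{lem:L-pointwise}, but your proposed absorption $\chi(x)^2\le Cq(x)$ handles only the $\beta[\chi^2](x)$ piece. The cross-term is $O(\chi(x)q(x))$, not $O(\chi(x)^2)$, and the $\beta\beta$ piece $L^n[\beta[f]](x)L^n[\beta[g]](x)$ is $O(q(x)^2)$; neither of these is dominated by $C(q(x)+\chi(x))$, and "careful regrouping" cannot change this since $L^n[f](x)L^n[g](x)$ genuinely requires two applications of Lemma~\ref{lem:L-pointwise} and thus two factors of $q(x)$. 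The achievable profile is $\tilde q(x)\le C(q(x)^2+\chi(x))$, which is still finite for each $x\ge h^*$ — the only property used downstream. You should either state the bound with $q(x)^2$ or supply a different argument for the $k=0$ term. (For what it is worth, the paper's own proof of the pointwise bound invokes the part (a) estimate with exponent $\gamma^k$ at $k=0$, which $\varepsilon_0^h=\beta[h]$ does not satisfy for general $h$; the stated $\tilde q\le C(q+\chi)$ appears to share the same imprecision, so your instinct that there is something to worry about here is correct — but your specific fix is not sufficient.)
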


\begin{proof}
  (a) By \eqref{eq:L_h-BPn}, the left-hand side of \eqref{eq:Harris_L2a}
  equals $L^n[f]$. Therefore, by \eqref{eq:decomp_Ly},
  $\varepsilon^f_n = L^n[\beta[f]]$. \eqref{eq:norm-beta} then implies the
  first claim in \eqref{eq:errorsa}. The second one follows from the
  first one and Lemma~\ref{lem:L-pointwise}.

  (b) Combining \eqref{eq:many-to-few_model2} with
  \eqref{eq:many-to-few_model1} and \eqref{eq:L_h-BPn}, we obtain that
  \begin{equation}
    \label{eq:Harris_doublesum-equation}
    E_x\Big[\sum_{v,w \in N^+_n}f(\varphi_v) g(\varphi_w)\Big]
    = \frac{d-1}{d} \sum_{k=0}^{n-1} L^{k}\Big[L^{n-k}[f]L^{n-k}[g]\Big](x)
    + L^n[fg](x).
  \end{equation}
  By \eqref{eq:Harris_L2a},
  \(
    L^{n-k}[f]L^{n-k}[g] = (\inp{\chi}{f}\chi +
      \varepsilon^f_{n-k})(\inp{\chi}{g}\chi + \varepsilon^g_{n-k})
  \)
  and thus, again by \eqref{eq:Harris_L2a},
  \begin{equation}
    \begin{split}
      \label{eq:Harris_L^k-term}
      L^{k}\Big[L^{n-k}[f]L^{n-k}[g]\Big]
      &= \inp{\chi}{L^{n-k}[f]L^{n-k}[g]}\chi
      + \varepsilon_{k}^{L^{n-k}[f]L^{n-k}[g]} \\
      &= \Big(\inp{\chi}{f}\inp{\chi}{g}\inp{\chi}{\chi^2} +
        \inp{\chi}{f}\inp{\chi}{\chi \varepsilon_{n-k}^g}
        + \inp{\chi}{g}\inp{\chi}{\chi \varepsilon_{n-k}^f}  \\
        &\quad\quad
        + \inp{\chi}{\varepsilon_{n-k}^f \varepsilon_{n-k}^g} \Big) \chi
      + \varepsilon_{k}^{L^{n-k}[f]L^{n-k}[g]}.
    \end{split}
  \end{equation}
  Note also that the $L^n[fg](x)$ summand in
  \eqref{eq:Harris_doublesum-equation} equals
  $E_x[\sum_{v \in N_n^+} f(\varphi_v)g(\varphi_v)]$. Therefore, the
  error term in \eqref{eq:Harris_L2b} satisfies
  \begin{equation}
    \begin{split}
      \label{eq:Harris_L2b-error}
      \varepsilon_n^{f,g}(x)
      &= \frac{d-1}{d}\sum_{k = 0}^{n-1}
      \Big(\big(\inp{\chi}{f}\inp{\chi}{\chi \varepsilon_{n-k}^g}
          + \inp{\chi}{g}\inp{\chi}{\chi \varepsilon_{n-k}^f}
          + \inp{\chi}{\varepsilon_{n-k}^f \varepsilon_{n-k}^g}\big) \chi(x)
        \\&\quad\quad + \varepsilon_{k}^{L^{n-k}[f]L^{n-k}[g]}(x)\Big).
    \end{split}
  \end{equation}
  To bound this expression, note that  $\abs{\inp{\chi}{f}} \le \norm{f}$ and
  \(
    \abs{\inp{\chi}{\chi \varepsilon_{n-k}^g}}
    \le \norm{\chi^2}\norm{\varepsilon_{n-k}^g}
    \le C \gamma^{n-k}\norm{g}
  \),
  by part (a). Therefore, the first summand in
  \eqref{eq:Harris_L2b-error} satisfies
  \(
    \abs{\inp{\chi}{f}\inp{\chi}{\chi \varepsilon_{n-k}^g}}
    \le C \gamma^{n-k}\norm{f}\norm{g}.
  \)
  Analogously,
  \(
    \abs{\inp{\chi}{g}\inp{\chi}{\chi \varepsilon_{n-k}^f}}
    \le C \gamma^{n-k}\norm{f}\norm{g}
  \).
  To estimate the third summand,  we observe that by
  \eqref{eq:Harris_L2a} $\varepsilon_{n-k}^f = L^{n-k}[\beta[f] ]$.
  Therefore, using Proposition~\ref{prop:hypercontractivity}(b),
  \(
    \norm{\varepsilon_{n-k}^f \varepsilon_{n-k}^g}
    \le d^2 \norm{\varepsilon_{n-k-1}^f}
    \norm{\varepsilon_{n-k-1}^g}
  \)
  and thus
  \begin{equation}
    \inp{\chi}{\varepsilon_{n-k}^f\varepsilon_{n-k}^g}
    \le d^2 \norm{\varepsilon_{n-k-1}^f} \norm{\varepsilon_{n-k-1}^g}
    \le d^2 \gamma^{2(n-k-1)}\norm{f}\norm{g}.
  \end{equation}
  Finally, for the fourth summand, using again
  Proposition~\ref{prop:hypercontractivity}(b) and part (a), since the
  operator norm of $L$ equals $1$,
  \begin{equation}
    \begin{split}
      \norm{\varepsilon_k^{L^{n-k}[f]L^{n-k}[g]}}
      &\le \gamma^k \norm{L^{n-k}[f]L^{n-k}[g]}
      \\&\le d^2 \gamma^k \norm{L^{n-k-1}[f]}\norm{L^{n-k-1}[g]}
      \le C \gamma^k \norm{f}\norm{g}.
    \end{split}
  \end{equation}
  Taking the norm in \eqref{eq:Harris_L2b-error} and making use of the above
  estimates yields
  \begin{equation}
    \norm{\varepsilon_n^{f,g}} \le C \norm{f}\norm{g}
    \sum_{k=0}^{n-1} (\gamma^{n-k} +  \gamma^{2(n-k-1)} +  \gamma^k)
    < C \norm f \norm g,
  \end{equation}
  since $\gamma < 1$. This establishes the bound on
  $\norm{\varepsilon_n^{f,g}}$.

  To establish the claimed pointwise bound for
  $\abs{\varepsilon_n^{f,g}(x)}$, observe that by part (a), we have
  \(
    \abs[\big]{\varepsilon_k^{L^{n-k}[f]L^{n-k}[g]}(x)}
    \le \gamma^k \norm{L^{n-k}[f]L^{n-k}[g]}q(x)
    \le C \gamma^k \norm{f}\norm{g} q(x)
  \).
  Combining this with \eqref{eq:Harris_L2b-error} and the previously
  derived bounds on the inner products appearing there, we easily
  conclude.
\end{proof}

\section{Proof of Theorem~\ref{thm:diameter}}
\label{sec:proof_of_diameter}

In this section we prove Theorem~\ref{thm:diameter} which describes
the tail behaviour of the diameter of the critical cluster. We
also show several estimates that will later be used in the proof of
Theorem~\ref{thm:conv-to-exp}.

Let $\mathbb F$ be the set of all non-increasing functions
$f: \mathbb R\to [0,1)$. For every $f \in \mathbb F$ and $n\in \mathbb N$,
we introduce
\begin{equation}
  \label{eq:def-u_n^f}
  u_n^f(x)\defeq \begin{cases}
    E_x\big[1_{\set{N_n^+ \neq \emptyset}}
      \big(1-\prod_{u \in N_n^+}f(\varphi_u)\big)\big],
    \quad & \text{ for $x\ge h^*$}, \\
    0,   & \text{ for $x < h^*$}.
  \end{cases}
\end{equation}
Note that $u_n^f(x) \in [0,1]$ and by \eqref{eq:stoch_dom} it is
increasing in $x$. Taking $f\equiv 0$,
\begin{equation}
  \label{eq:uzeroP}
  u_n^0(x) = P_x[N_n^+ \neq \emptyset] ,
\end{equation}
which explains the relevance of this definition for the proof of
Theorem~\ref{thm:diameter}.

We will use the following inequality, which often allows us to consider
the special case $f\equiv 0$ only: For every $f\in \mathbb F$
\begin{equation}
  \label{eq:u_n^f-bound}
  u_n^0(x) (1-f(h^*)) \le u_n^f(x) \le u_n^0(x).
\end{equation}
Indeed, the second inequality follows directly from the definition of
$u_n^f$, since $f\ge 0$. To see the first one, note that since $f$ is
non-increasing, $\prod_{v \in N_n^+} f(\varphi_v) \le f(h^*)$ when
$N_n^+ \neq \emptyset$, and thus
\(
  E_x[1_{\set{N_n^+ \neq \emptyset}}(1-\prod_{v \in N_n^+}f(\varphi_v))]
  \ge E_x[1_{\set{N_n^+ \neq \emptyset}}(1-f(h^*))]
  = (1-f(h^*))u_n^0(x)
\).

Similarly to \eqref{eq:f-decomp} we decompose $u_n^f$ as
\begin{equation}
  \label{eq:anbn}
  u_n^f = \inp{\chi}{u_n^f}\chi + \beta[u_n^f]
\end{equation}
and define $a_n^f \defeq \inp{\chi}{u_n^f}$ and
$b_n^f \defeq \norm{\beta[u_n^f]}$, so that
\begin{equation}
  \norm{u_n^f}^2 = (a_n^f)^2 + (b_n^f)^2.
\end{equation}

Due to \eqref{eq:uzeroP}, in order to show Theorem~\ref{thm:diameter}, we
need precise asymptotic estimates on $a_n^f$ and $b_n^f$. These will be
proved step by step in the following several lemmas. The theorem is then
shown at the end of the section.

By Proposition~\ref{pro:cl23},
$P_x[\abs{\mathcal{C}_o \cap \mathbb{T}^+} = \infty] = 0$. Therefore,
$P_x[N_n^+ \neq \emptyset] \to 0$ as $n \to \infty$. As a consequence,
using also \eqref{eq:u_n^f-bound}, for every $f\in \mathbb F$,
\begin{equation}
  \label{eq:u_n^f(x)-limit}
  u_n^f(x) \to 0 \text{ as $n \to \infty$, pointwise and thus in
    $L^2(\nu )$}.
\end{equation}
As a consequence, for every $f\in \mathbb F$,
\begin{equation}
  \lim_{n\to\infty} a_n^f = \lim_{n\to\infty} b_n^f = 0.
\end{equation}

The following lemma provides a basic recursive relation for $u_n^f$,
based on the branching process representation. This relation will be
important to obtain a more precise description of the asymptotic
behaviour of $a_n^f$ and $b_n^f$ as $n \to \infty$.

\begin{lemma}
  \label{lem:u_n-recursion}
  For every non-increasing $f:\mathbb R\to [0,1)$ and $n\in \mathbb N$,
  \begin{equation}
    \label{eq:u_n-recursion-p1}
    u_n^f(x) = 1 - \Big(1-\frac{1}{d} L[u_{n-1}^f](x)\Big)^d
    \quad \text{for $x \in \R$}.
  \end{equation}
  As a consequence,
  \begin{equation}
    \label{eq:u_n^0-rec}
    u_n^f(x) = L[u_{n-1}^f](x) - g(L[u_{n-1}^f](x)),
  \end{equation}
  with $g(x) = 1-x-(1-d^{-1}x)^d$. This function satisfies, for
  $c_1,c_2\in (0,\infty)$,
  \begin{equation}
    \label{eq:u_n-recursion-p2}
    c_1 x^2 \le g(x) \le c_2 x^2
    \quad \text{for all $x \in [0,1]$.}
  \end{equation}
\end{lemma}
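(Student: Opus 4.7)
The plan is to derive \eqref{eq:u_n-recursion-p1} by conditioning on the field at the $d$ direct descendants of $o$ in $\T^+$ and exploiting the self-similarity of the branching-process representation~\eqref{eq:BP-representation}; \eqref{eq:u_n^0-rec} and~\eqref{eq:u_n-recursion-p2} then follow from elementary algebra and a Taylor expansion. I do not foresee any serious obstacle here — this is a standard one-step branching decomposition — with the only mildly subtle point being to check that~\eqref{eq:u_n-recursion-p1} extends automatically to the regime $x < h^*$.

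First I would rewrite, for every $x \in \R$,
\begin{equation}
u_n^f(x) = 1 - E_x\Big[\prod_{u\in N_n^+} f(\varphi_u)\Big],
\end{equation}
with the convention that an empty product equals $1$; this absorbs the indicator in~\eqref{eq:def-u_n^f}, since the event $\{N_n^+ = \emptyset\}$ contributes a factor $1$ that cancels with the leading constant. Let $v_1,\dots,v_d$ denote the children of $o$ in $\T^+$. Under $P_x$, the values $\varphi_{v_i}$ are i.i.d.\ with density $\rho_Y(\cdot - x/d)$, and the subtrees rooted at the $v_i$ (together with their fields) are conditionally independent given these values. Since $N_n^+$ decomposes as the disjoint union of $\set{u \in N_n^+ : v_i \preceq u}$ — each of which, conditionally on $\varphi_{v_i}$, is distributed as $N_{n-1}^+$ under $P_{\varphi_{v_i}}$ by self-similarity — this factorises as
\begin{equation}
E_x\Big[\prod_{u\in N_n^+} f(\varphi_u)\Big]
= \prod_{i=1}^d E_x\bigl[1 - u_{n-1}^f(\varphi_{v_i})\bigr]
= \Big(E_Y\bigl[1-u_{n-1}^f(x/d + Y)\bigr]\Big)^d.
\end{equation}
Because $u_{n-1}^f$ vanishes on $(-\infty, h^*)$, comparison with~\eqref{eq:L_h} identifies the inner expectation with $1 - d^{-1}L[u_{n-1}^f](x)$ whenever $x \ge h^*$, and~\eqref{eq:u_n-recursion-p1} follows by taking the $d$-th power. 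For $x < h^*$ both $u_n^f(x)$ and $L[u_{n-1}^f](x)$ vanish by definition, so the identity extends trivially to all of $\R$.

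The rewriting~\eqref{eq:u_n^0-rec} is then obtained from~\eqref{eq:u_n-recursion-p1} by substituting $y = L[u_{n-1}^f](x)$ and invoking the defining relation between $1 - (1-y/d)^d$ and $g(y)$ — a purely algebraic step. Finally, for the bound~\eqref{eq:u_n-recursion-p2}, direct differentiation gives $g(0) = g'(0) = 0$, while $g''(y) = -\tfrac{d-1}{d}(1-y/d)^{d-2}$ has constant sign on $[0,1]$; Taylor's theorem with integral remainder then delivers matching quadratic upper and lower bounds of order $y^2$ on this compact interval, giving~\eqref{eq:u_n-recursion-p2}.
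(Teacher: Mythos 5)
Your proof of \eqref{eq:u_n-recursion-p1} is, in substance, the paper's own one-step branching decomposition: the paper sets up the general $n_0$-step identity \eqref{eq:prod_n-f-recrusion} and then specialises to $n_0=1$, whereas you condition directly on the children, but the content is the same, and you correctly note that for $x<h^*$ both sides vanish by definition. The passage to \eqref{eq:u_n^0-rec} is likewise identical.

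For \eqref{eq:u_n-recursion-p2} the paper simply cites \cite[Lemma 5.3]{CerLoc23}, so your direct Taylor argument is a genuine addition, but as written it does not give what you claim, and you should have noticed. You correctly differentiate the displayed $g(x)=1-x-(1-d^{-1}x)^d$ to obtain $g''(y)=-\tfrac{d-1}{d}(1-y/d)^{d-2}<0$; together with $g(0)=g'(0)=0$, concavity forces $g\le 0$ on $(0,1]$, which is incompatible with the positive lower bound $g(x)\ge c_1x^2$ you are trying to establish. The resolution is that the printed formula for $g$ carries a sign error: \eqref{eq:u_n-recursion-p1} and \eqref{eq:u_n^0-rec} together force $y-g(y)=1-(1-y/d)^d$, i.e.\ $g(y)=y-1+(1-d^{-1}y)^d$, the negative of what is displayed. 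With this corrected $g$ one has $g''(y)=+\tfrac{d-1}{d}(1-y/d)^{d-2}$, which for $d\ge 2$ is bounded between two positive constants on $[0,1]$, and your Taylor-with-integral-remainder argument then does deliver the claimed two-sided bound. Before expanding a function in a Taylor series you should sanity-check it against the identity that determines it, and against the sign of the estimate you are aiming for; here that cross-check would have flagged the discrepancy immediately.
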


\begin{proof}
  For $x < h^*$, both sides of \eqref{eq:u_n-recursion-p1} are
  trivially zero by definition of $u_n^f$ and $L$. For $x \ge h^*$, by
  the branching process construction of $\varphi$, recalling
  $\mathcal F_n$ from \eqref{eq:filtrations}, for $n_0 < n$,
  \begin{equation}
    \begin{split}
      \label{eq:prod_n-f-recrusion}
      E_x\Big[\prod_{v \in N_n^+} f(\varphi_v) \Big| \F_{n_0}\Big]
      &= \prod_{v \in N^+_{n_0}}
      E_{\varphi_v}\Big[\prod_{w\in N^+_{n-n_0}}f(\varphi_w)\Big] \\
      &= \prod_{v \in N^+_{n_0}}
      \bigg(1-E_{\varphi_v}\Big[1-\prod_{w\in N^+_{n-n_0}}
          f(\varphi_w)\Big]\bigg) \\
      &= \prod_{v \in N^+_{n_0}}\big(1- u_{n-n_0}^f(\varphi_v)\big).
    \end{split}
  \end{equation}
  Therefore, setting $n_0 = 1$,
  \begin{equation}
    \label{eq:u_n-recursion_1}
    u_n^f(x)
    = E_x\Big[1 - E_x\Big[\prod_{v \in N^+_n} f(\varphi_v)
        \Big| \F_{1}\Big]\Big]
    = 1 - E_x\Big[\prod_{v \in N^+_{1}}
      \big(1- u_{n-1}^f(\varphi_v)\big)\Big].
  \end{equation}
  Using the conditional independence of the $(\varphi_v:v\in S_1^+)$
  given $\varphi_o = x$, and the fact that $u_n^f(x) = 0$ for $x < h^*$,
  \begin{equation}
    \label{eq:u_n-recursion_2}
    E_x\Big[\prod_{v \in N^+_1}\big(1- u_{n-1}^f(\varphi_v)\big)\Big]
    = E_x\Big[\prod_{v\in S^+_1}\big(1- u_{n-1}^f(\varphi_{v})\big)\Big]
    = \prod_{v\in S_1^+} E_x[1- u_{n-1}^f(\varphi_{v})].
  \end{equation}
  Using \eqref{eq:L_h-BP1},
  $E_x[u_{n-1}^f(\varphi_{v})] = d^{-1}L[u_{n-1}^f](x)$ for every
  $v\in S_1^+$. Together with \eqref{eq:u_n-recursion_1} and
  \eqref{eq:u_n-recursion_2}, this proves \eqref{eq:u_n-recursion-p1} and
  \eqref{eq:u_n^0-rec}. Inequality \eqref{eq:u_n-recursion-p2} is proved
  in \cite[Lemma 5.3]{CerLoc23}.
\end{proof}

The following lemma provides a rough lower bound for $u_n^0$ and $a_n^0$.

\begin{lemma}
  \label{lem:u_n^0-lower-bound}
  There exists a constant $c>0$ such that for all $n \ge 1$
  \begin{equation}
    \norm{u_n^0} \ge a_n^0 \ge c n^{-1}.
  \end{equation}
\end{lemma}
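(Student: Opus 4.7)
The plan is to prove the lower bound $a_n^0 \ge c/n$ via a second-moment (Paley--Zygmund) argument applied to the nonnegative random variable
\begin{equation*}
  M_n \defeq \sum_{w \in N_n^+}\chi(\varphi_w).
\end{equation*}
By \eqref{eq:chi-is-eigenfunc} at $h = h^*$ (where $\lambda_{h^*} = 1$) we have $E_x[M_n] = \chi(x)$, and since $\chi$ is strictly positive on $[h^*, \infty)$ while every $\varphi_w$ with $w \in N_n^+$ lies there, the event $\set{M_n > 0}$ coincides with $\set{N_n^+ \neq \emptyset}$. Cauchy--Schwarz therefore yields
\begin{equation*}
  \chi(x)^2 \;=\; E_x\bigl[M_n\,1_{\set{N_n^+ \neq \emptyset}}\bigr]^2 \;\le\; E_x[M_n^2]\,P_x[N_n^+ \neq \emptyset],
\end{equation*}
so the task reduces to an upper bound on $E_x[M_n^2]$ of order $\chi(x)\,n$.

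To produce such a bound, I would expand $E_x[M_n^2] = E_x[\sum_{v,w \in N_n^+}\chi(\varphi_v)\chi(\varphi_w)]$ and invoke Proposition~\ref{pro:Harris_L2}(b) with $f = g = \chi$ (using $\inp{\chi}{\chi} = 1$) for the whole double sum, together with Proposition~\ref{pro:Harris_L2}(a) applied with $f = \chi^2$ for the diagonal contribution subtracted off there. The pointwise error estimates in that proposition then give
\begin{equation*}
  E_x[M_n^2] \;\le\; \frac{d-1}{d}\inp{\chi^2}{\chi}\chi(x)\,n \;+\; \inp{\chi^2}{\chi}\chi(x) \;+\; \gamma^{n-1}\norm{\chi^2}\,q(x) \;+\; \tilde q(x).
\end{equation*}
The main technical obstacle is that $q$ and $\tilde q$ are only finite rather than globally bounded, so this right-hand side is not uniformly of order $\chi(x)\,n$ on $[h^*, \infty)$. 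I would sidestep this by localising in $x$: fix any $R > h^*$; on the compact window $[h^*, R]$ the functions $\chi$, $q$, and $\tilde q$ are bounded and $\chi \ge \chi(h^*) > 0$, so there exist $C_R < \infty$ and $n_0 = n_0(R)$ with $E_x[M_n^2] \le C_R\,\chi(x)\,n$ for all $x \in [h^*, R]$ and $n \ge n_0$. Combining with the Cauchy--Schwarz inequality above yields the pointwise lower bound $u_n^0(x) = P_x[N_n^+ \neq \emptyset] \ge \chi(x)/(C_R\,n)$ on $[h^*, R]$ for $n \ge n_0$.

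Integrating against $\chi\,\D\nu$ restricted to $[h^*, R]$ then gives
\begin{equation*}
  a_n^0 = \inp{\chi}{u_n^0} \;\ge\; \frac{1}{C_R\,n}\int_{h^*}^{R}\chi(x)^2\,\nu(\D x) \;\ge\; \frac{c'}{n} \qquad\text{for all } n \ge n_0,
\end{equation*}
with $c' > 0$ because $\chi$ is strictly positive on $[h^*, \infty)$. For the finitely many remaining indices $n < n_0$, strict positivity of $P_x[N_n^+ \neq \emptyset]$ on $\set{x \ge h^*}$ implies $a_n^0 > 0$, so $\min_{1 \le n < n_0} n\,a_n^0 > 0$, and adjusting the constant yields $a_n^0 \ge c/n$ for every $n \ge 1$. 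The remaining inequality $\norm{u_n^0} \ge a_n^0$ is immediate from Cauchy--Schwarz in $L^2(\nu)$ together with $\norm{\chi} = 1$.
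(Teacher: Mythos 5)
Your proof is correct, and it takes a genuinely different route from the paper's. The paper establishes the pointwise lower bound $u_n^0(x) \ge g(x)/n$ by invoking the volume-tail estimate $P_x[\abs{\mathcal{C}_o \cap \T^+} > m] \ge c_1 m^{-1/2}$ from Proposition~\ref{pro:cl23} (a result carried over from \cite{CerLoc23}), splitting the event $\set{\abs{\mathcal{C}_o \cap \T^+} > m}$ on whether $N_n^+$ is empty, and using a Markov inequality together with the first-moment bound $E_x[\abs{N_k^+}] \le c_2(x)$ to control the case where the cluster is large but confined to the first $n$ generations; optimizing $m \asymp n^2$ then gives the $n^{-1}$ rate. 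You instead apply a Paley--Zygmund (second-moment) argument to $M_n = \sum_{w \in N_n^+}\chi(\varphi_w)$: its mean is exactly $\chi(x)$ by the eigenfunction identity, and its second moment is $O(\chi(x)\,n)$ by Proposition~\ref{pro:Harris_L2}, so Cauchy--Schwarz yields $P_x[N_n^+ \neq \emptyset] \gtrsim \chi(x)/n$. Your localization to a compact window $[h^*, R]$ correctly handles the fact that the pointwise error functions $q$ and $\tilde q$ grow super-polynomially (indeed like $e^{cx^2}$, so they are not dominated by $\chi$), and this suffices since only the integrated quantity $a_n^0 = \inp{\chi}{u_n^0}$ is needed. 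Your route is more self-contained---it avoids reliance on the precise volume-tail asymptotics from the earlier paper and only needs the moment formulas already developed in Proposition~\ref{pro:Harris_L2}---and it is also the more classical branching-process technique, whereas the paper's argument is tighter in relying on available ingredients. Both produce the same conclusion and are essentially equal in difficulty.
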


\begin{proof}
  The statement will follow if we show
  \begin{equation}
    \label{eq:u_n^0-lowbound}
    0 \le g(x) n^{-1} \le u_n^0(x) \qquad \text{for all $n\ge 1$}
  \end{equation}
  for some non-trivial function $g:\mathbb R\to[0, 1]$. To show
  \eqref{eq:u_n^0-lowbound} we use the estimate on the volume of
  $\mathcal C_o$ from Proposition~\ref{pro:cl23}. By this proposition and
  \eqref{eq:stoch_dom} there is a positive constant $c_1$ such that, for
  all $m \ge 1$ and $x\ge h^*$,
  \begin{equation}
    P_x[\abs{\mathcal{C}_o \cap \mathbb{T}^+} > m] \ge c_1 m^{-1/2}.
  \end{equation}
  Since $|\mathcal{C}_o \cap \mathbb{T}^+| = \sum_{k\geq 0} \abs{N_k^+}$,
  this implies for all $m,n\ge 1$
	\begin{align}
    c_1 m^{-1/2}
    &\le P_x\Big[\sum_{k \geq 0} \abs{N_k^+} > m, N_n^+ = \emptyset\Big]
    + P_x\Big[\sum_{k \geq 0} \abs{N_k^+} > m, N_n^+ \neq \emptyset\Big] \\
    &\le P_x\Big[\sum_{k = 0}^{n-1} \abs{N_k^+} > m\Big]
    + P_x[N_n^+ \neq \emptyset].
  \end{align}
  By the Markov inequality,
  \(
    P_x[\sum_{k = 0}^{n-1} \abs{N_k^+} > m]
    \le \frac{1}{m} \sum_{k=0}^{n-1} E_x[\abs{N_k^+}]
  \),
  where, by Proposition~\ref{pro:Harris_L2}, $E_x[\abs{N_k^+}] \le c_2(x)$
  for some $x$-dependent constant $c_2(x)>0$. Thus,
  \begin{equation}
    c_1m^{-1/2}\le c_2(x) \frac{n}{m} + P_x[N_n^+ \neq \emptyset].
  \end{equation}
  Recalling \eqref{eq:uzeroP} and choosing
  $m = \lfloor \delta^2 n^2 \rfloor$ with $\delta = 2 c_1 c_2(x)^{-1}$,
  it follows that
  \begin{equation}
    u_n^0(x) \ge \frac{c_1^2}{2 c_2(x)} n^{-1} \quad \text{for all
      $n \ge 1$},
  \end{equation}
  showing \eqref{eq:u_n^0-lowbound} and thus the lemma.
\end{proof}

The next lemma gives upper bounds on $a_n^f$ and $b_n^f$.

\begin{lemma}
  \label{lem:u_n^0-upper-bound}
  There is $c<\infty$ such that for every $f\in \mathbb F$
  and $n \ge 1$
  \begin{equation}
    \label{eq:a_n,b_n-upper-toshow}
    a_n^f \le c n^{-1}
    \qquad \text{and} \qquad
    b_n^f \le c n^{-2}.
  \end{equation}
\end{lemma}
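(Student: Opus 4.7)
The plan is to obtain recursive estimates for both coordinates of $u_n^f$ from \eqref{eq:u_n^0-rec}, and then convert each into a polynomial decay bound. I first derive the recursions: decomposing $u_{n-1}^f = a_{n-1}^f \chi + \beta[u_{n-1}^f]$ and using $L\chi=\chi$ and the self-adjointness of $L$, taking the inner product of \eqref{eq:u_n^0-rec} with $\chi$ gives
\[
a_n^f = a_{n-1}^f - \inp{\chi}{g(L[u_{n-1}^f])},
\]
while projecting onto $\chi^\perp$ and using that $L$ preserves $\chi^\perp$ with operator norm $\gamma<1$ there yields
\[
b_n^f \le \gamma\, b_{n-1}^f + \norm{g(L[u_{n-1}^f])}.
\]

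For the bound on $a_n^f$, the crucial step is a quadratic lower bound $\inp{\chi}{g(L[u_{n-1}^f])} \ge c_0(a_{n-1}^f)^2$ valid for large $n$. I plan to obtain it by Jensen's inequality: $g$ is convex on $[0,d]$ (which contains the pointwise range of $L[u_{n-1}^f]$ since $u_{n-1}^f\in[0,1]$), and applying Jensen to the probability measure $\kappa^{-1}\chi\,\D\nu$ with $\kappa=\inp{\chi}{1}\in(0,1]$ gives
\[
\inp{\chi}{g(L[u_{n-1}^f])} \ge \kappa\, g\bigl(\kappa^{-1}\inp{\chi}{L[u_{n-1}^f]}\bigr) = \kappa\, g(\kappa^{-1}a_{n-1}^f),
\]
where the identity $\inp{\chi}{L[u_{n-1}^f]} = \inp{L\chi}{u_{n-1}^f} = a_{n-1}^f$ is used. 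Since $a_{n-1}^f\to 0$ by \eqref{eq:u_n^f(x)-limit}, eventually $\kappa^{-1}a_{n-1}^f \le 1$ and \eqref{eq:u_n-recursion-p2} yields $\kappa g(\kappa^{-1}a_{n-1}^f)\ge c_0(a_{n-1}^f)^2$. Substituting into the recursion for $a_n^f$ gives $a_n^f \le a_{n-1}^f - c_0(a_{n-1}^f)^2$, and a standard ODE-type comparison (setting $\alpha_n = 1/a_n^f$ and observing that $\alpha_n \ge \alpha_{n-1}+c_0/2$ eventually) produces $a_n^f \le C/n$; for small $n$ the bound is immediate from $a_n^f \le 1$.

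For the bound on $b_n^f$, convexity and boundedness of $g$ on $[0,d]$ yield $g(y)\le C y^2$ there, so
\[
\norm{g(L[u_{n-1}^f])}\le C\norm{L[u_{n-1}^f]}_{L^4(\nu)}^2 \le C \norm{u_{n-1}^f}^2
\]
by Proposition~\ref{prop:hypercontractivity}(a) with $p=2, q=4\le d^2+1$. Combined with the recursion this gives $b_n^f \le \gamma b_{n-1}^f + C((a_{n-1}^f)^2 + (b_{n-1}^f)^2)$. Since $b_{n-1}^f\to 0$, the $(b_{n-1}^f)^2$ term can be absorbed into $\gamma b_{n-1}^f$ for large $n$ by replacing $\gamma$ with some $\gamma'\in(\gamma,1)$; plugging in $a_{n-1}^f\le C/n$ from the previous step yields $b_n^f \le \gamma' b_{n-1}^f + C/n^2$, which a standard geometric-sum estimate (splitting the resulting series at $n/2$) converts into $b_n^f\le C/n^2$.

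The main obstacle is the quadratic lower bound on the drift. A naive estimate via the pointwise bound $g(y)\ge c_1 y^2$ fails because this only holds for $y\le 1$, whereas $L[u_{n-1}^f]$ can take values up to $d$; controlling the bad set $\{L[u_{n-1}^f]>1\}$ sharply enough to discard its contribution to $a_{n-1}^f$ looks delicate. Jensen's inequality sidesteps this difficulty by exploiting the global convexity of $g$ on the whole interval $[0,d]$, producing a clean lower bound that is uniform in $f\in\mathbb F$.
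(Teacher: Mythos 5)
Your proof is correct and, for the $a_n^f$ bound, takes a genuinely different route from the paper. The paper projects the recursion \eqref{eq:u_n^0-rec} onto $\spn\{\chi\}$, applies the pointwise lower bound $g(y)\ge c_1 y^2$ directly at $y = L[u_{n-1}^f]$, then uses $\chi \ge \chi(h^*) > 0$ on $[h^*,\infty)$ to pass from $\inp{\chi}{L[u_{n-1}^f]^2}$ to $\norm{L[u_{n-1}^f]}^2$, and finally $\norm{L[u_{n-1}^f]}^2 \ge \inp{\chi}{L[u_{n-1}^f]}^2 = (a_{n-1}^f)^2$ since $\norm\chi = 1$. Your version substitutes Jensen's inequality with respect to the probability measure $\inp{1}{\chi}^{-1}\chi\,\nu$ and the convex $g$, reducing the quadratic drift to the one-dimensional quantity $\kappa\, g(\kappa^{-1}a_{n-1}^f)$. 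That is a clean alternative; note in fact that the restriction ``eventually'' is unnecessary, since $a_{n-1}^f = \inp{\chi}{u_{n-1}^f}\le\inp{\chi}{1}=\kappa$ holds for every $n$ (as $u_{n-1}^f\le 1$), so $\kappa^{-1}a_{n-1}^f\le 1$ always and the drift recursion is valid from $n=1$ onward.

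One small corrective remark: the obstacle you describe for the direct approach --- that $g(y)\ge c_1 y^2$ is stated only for $y\in[0,1]$ while $L[u_{n-1}^f]$ ranges up to $d$ --- is not a genuine obstruction. Since $g$ is convex, nonnegative, with $g(0)=g'(0)=0$ and $g>0$ on $(0,d]$, the ratio $g(y)/y^2$ is continuous and positive on $(0,d]$ and tends to $g''(0)/2>0$ at the origin, so it is bounded away from zero on all of $(0,d]$; the paper's pointwise estimate therefore works with a possibly smaller constant, no control of the ``bad set'' required. So your Jensen detour is an elegant alternative rather than a necessary fix.

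Your $b_n^f$ argument is essentially the paper's: both project onto $\chi^\perp$ to get the contraction factor $\gamma$, bound $\norm{g(L[u_{n-1}^f])}$ by $C\norm{u_{n-1}^f}^2$ via the quadratic upper bound on $g$ and Proposition~\ref{prop:hypercontractivity}, and iterate the resulting linear recursion. The only organizational difference is that the paper first treats $f=0$ (absorbing $(b_n^0)^2$ into $\gamma b_n^0$) and then inserts $\norm{u_n^f}\le\norm{u_n^0}\le cn^{-1}$ for general $f$, whereas you absorb $(b_{n-1}^f)^2$ uniformly in $f$ directly; this works since $b_n^f\le\norm{u_n^0}\to 0$ uniformly. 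Both routes are sound.
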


\begin{proof}
  To prove the first statement in \eqref{eq:a_n,b_n-upper-toshow} we
  recall the recursive relation \eqref{eq:u_n^0-rec} and project it on
  $\spn\set{\chi}$. Using
  $\inp{\chi}{L[u_n^f]} = \inp{\chi}{u_n^f} = a_n^f$, and the lower bound
  on $g$ from~\eqref{eq:u_n-recursion-p2}, this yields
  \begin{equation}
    \label{eq:u_n^0-ineq1}
    a_{n+1}^f = \inp{\chi}{u_{n+1}^f}
    = \inp{\chi}{L[u_n^f]} - \inp{\chi}{g(L[u^f_n])}
    \le a_n^f - c_1\inp{\chi}{L[u^f_n]^2}.
  \end{equation}
  By Proposition~\ref{prop:L_h-chi-connection} and
  \eqref{eq:chi_non-dec}, $\chi(x) \ge c > 0$ for all $x\in [h^*, \infty)$.
  Hence,
  \(
    \inp{\chi}{L[u^f_n]^2} \ge c \inp{1}{L[u^f_n]^2}
    = c \norm{L[u^f_n]}^2 \ge c (a_n^f)^2
  \).
  Applied to \eqref{eq:u_n^0-ineq1}, this gives
  \begin{equation}
    a_{n+1}^f \le a_n^f - c (a_n^f)^2.
  \end{equation}
  This implies that $a_n^f$ is decreasing in $n$ and, after rearranging, also
  \begin{equation}
    c \le \frac{a_n^f - a_{n+1}^f}{(a_n^f)^2}
    \le \frac{a_n^f - a_{n+1}^f}{a_{n+1}^f a_n^f}
    = \frac{1}{a_{n+1}^f} - \frac{1}{a_n^f}.
  \end{equation}
  Summing this over $n$ running from $0$ to $n-1$ yields
  \begin{equation}
    c n
    \le \sum_{k=0}^{n-1} \Big(\frac{1}{a_{k+1}^f} - \frac{1}{a_{k}^f}\Big)
    = \frac{1}{a_n^f} - \frac{1}{a_0^f}
    \le \frac{1}{a_n^f},
  \end{equation}
  proving the first part of \eqref{eq:a_n,b_n-upper-toshow}.

  To prove the second part we project \eqref{eq:u_n^0-rec} onto the
  orthogonal complement of $\spn{\set{\chi}}$ and take norms. With the
  triangle inequality, this gives
  \begin{equation}
    \label{eq:b_n^0-ineq1}
    b_{n+1}^f \le \norm{\beta[L[u_n^f]]} + \norm{\beta[g(L[u_n^f])]}.
  \end{equation}
  By \eqref{eq:norm-beta},
  \(
    \norm{\beta[L[u_n^f]]} = \norm{L[\beta[u_n^f]]}
    \le \gamma \norm{\beta[u_n^f]} = \gamma b_n^f
  \).
  By the upper bound of $g$ from \eqref{eq:u_n-recursion-p2} and
  Proposition~\ref{prop:hypercontractivity}, since $\beta$ is a
  projection,
  \(
    \norm{\beta[g(L[u_n^f])]} \le \norm{g(L[u_n^f])}
    \le c_2 \norm{L[u_n^f]^2} \le c \norm{u_n^f}^2
  \).
  Applied to \eqref{eq:b_n^0-ineq1}, this gives
  \begin{equation}
    \label{eq:b_n-diffeq0}
    b_{n+1}^f \le \gamma b_n^f + c \norm{u_n^f}^2
    = \gamma b_n^f + c (a_n^f)^2 + c(b_n^f)^2.
  \end{equation}

  Taking now $f = 0$, since $b_n^0\to 0$ as $n \to \infty$, there is
  $\gamma' \in (\gamma ,1)$ and $n' <\infty$ such that
  $\gamma b_n^0 + c(b_n^0)^2 \le \gamma' b_n^0$ for all $n\ge n'$.
  Together with the first part of \eqref{eq:a_n,b_n-upper-toshow}, this
  implies
  \begin{equation}
    \label{eq:b_n-diffeq}
    b_{n+1}^0 \le \gamma' b_n^0 + c n^{-2} \quad \text{for $n \ge n'$}.
  \end{equation}
  Applying this recursively yields
  \begin{equation}
    b_{n+1}^0
    \le \gamma' \big(\gamma' b_{n-1}^0 + c (n-1)^{-2}\big) + c n^{-2}
    \le \dots \le (\gamma')^{n+1-n'} b_{n'}^0 + c
    \sum_{k=0}^{n-n'}(\gamma')^k (n-k)^{-2}.
  \end{equation}
  For $n\ge 2n'$, by splitting the sum at $k=n/2$, using also that
  $b_n^0 \le 1$ for all $n$, this can be bounded by
  $c (\gamma ')^{n/2} + c (n/2)^{-2} \le c n^{-2}$, proving the second
  half of \eqref{eq:a_n,b_n-upper-toshow} for $f = 0$.

  For a general $f\in \mathbb F$, by \eqref{eq:u_n^f-bound} and the
  already proven statements,
  $\norm{u_n^f}^2 \le \norm{u_n^0}^2 = (a_n^0)^2 + (b_n^0)^2 \le c n^{-2}$.
  Inserting this into \eqref{eq:b_n-diffeq0} gives
  \begin{equation}
    b_{n+1}^f \le \gamma b_n^f + c n^{-2},
  \end{equation}
  which looks like \eqref{eq:b_n-diffeq}. The second part of
  \eqref{eq:a_n,b_n-upper-toshow} for general $f\in \mathbb F$ then
  follows by the same arguments as for $f=0$.
\end{proof}

When $f\in \mathbb F$ is fixed, Lemmas~\ref{lem:u_n^0-lower-bound} and
\ref{lem:u_n^0-upper-bound} show that
$\lim_{n\to\infty}u_n^f/\norm{u_n^f} = \chi$ in $L^2(\nu )$ and
$c< n \norm{u_n^f} < c'$. In the special case $f=0$, this is almost
sufficient to prove Theorem~\ref{thm:diameter}, we only need to improve
the estimate on $\norm{u_n^f}$. In contrast, in the proof of
Theorem~\ref{thm:conv-to-exp} we will need to consider functions $f$
varying with $n$. There our estimates are not sufficient, since the lower
bound in Lemma~\ref{lem:u_n^0-lower-bound} holds only for $f=0$ and
cannot be true uniformly over $f\in \mathbb F$. We now provide tools
allowing us to deal with this case as well.

As it turns out, it is enough to prove the uniformity over a certain
family of non-increasing functions. Specifically, let
\begin{equation}
  \hat{\mathbb{F}} = \set{f_\lambda: \lambda \in [0, \infty)} \subset
  \mathbb F,
\end{equation}
where
\begin{equation}
  \label{eq:f_k-def}
  f_0 \equiv 0
  \quad\text{and}\quad
  f_\lambda(x) := \exp\Big( - \frac{\chi(x)}{\lambda} \Big)
  \quad\text{for $\lambda >0$.}
\end{equation}
To simplify the notation, we define (recall \eqref{eq:def-u_n^f},
  \eqref{eq:anbn})
\begin{equation}
  u_n^\lambda = u_n^{f_\lambda},
  \qquad a_n^\lambda = a_n^{f_\lambda} = \inp \chi {u_n^\lambda},
  \qquad b_n^\lambda = b_n^{f_\lambda} = \norm{\beta [u_n^\lambda]}.
\end{equation}

The first preliminary step in proving the uniformity over
$\hat{\mathbb F}$ is the following lemma. In the special case
$\lambda = 0$, this already follows from
Lemmas~\ref{lem:u_n^0-lower-bound} and~\ref{lem:u_n^0-upper-bound}.

\begin{lemma}
  \label{lem:b_n^k/a_n^k-boundedness}
  There is a constant $c < \infty$ so that
  \begin{equation}
    \label{eq:b_n^k/a_n^k-boundedness}
    b_n^\lambda \le c a_n^\lambda
    \quad \text{and} \quad
    \norm{u_n^\lambda} \le c{a_n^\lambda}
    \quad
    \text{for all $n\in \mathbb N_0$ and $\lambda \ge 0$}.
  \end{equation}
\end{lemma}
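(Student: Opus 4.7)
The plan is to set up a Lyapunov-type invariance argument for the pair $(a_n^\lambda, b_n^\lambda)$ under the recursion induced by Lemma~\ref{lem:u_n-recursion}, and match it with a uniform-in-$\lambda$ base-case bound. I first observe that the two inequalities in \eqref{eq:b_n^k/a_n^k-boundedness} are equivalent via $\norm{u_n^\lambda}^2 = (a_n^\lambda)^2 + (b_n^\lambda)^2$, so only $b_n^\lambda \le c a_n^\lambda$ needs to be proved. Projecting the recursion $u_{n+1}^\lambda = L[u_n^\lambda] - g(L[u_n^\lambda])$ onto $\spn\set{\chi}$ and its orthogonal complement, and arguing exactly as in the proof of Lemma~\ref{lem:u_n^0-upper-bound} (using $g(x)\le c_2 x^2$, the pointwise bound $\chi\ge \chi(h^*)>0$ on $[h^*,\infty)$, Proposition~\ref{prop:hypercontractivity}(b), and $\norm{L}_{L^2\to L^2} = 1$), I obtain
\begin{equation*}
  a_{n+1}^\lambda \ge a_n^\lambda - C_1 \big((a_n^\lambda)^2 + (b_n^\lambda)^2\big),
  \qquad
  b_{n+1}^\lambda \le \gamma b_n^\lambda + C_2 \big((a_n^\lambda)^2 + (b_n^\lambda)^2\big).
\end{equation*}
I also note that $a_n^\lambda$ is non-increasing in $n$, since $\inp{\chi}{g(L[u_n^\lambda])}\ge 0$.

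Substituting $b_n^\lambda \le C a_n^\lambda$ into these inequalities, an elementary calculation shows that for any $C>0$ there exists $\eta(C)>0$ (with $\eta(C)\asymp (1-\gamma)/C^2$ as $C\to\infty$) such that whenever $a_n^\lambda\le \eta(C)$ and $b_n^\lambda\le C a_n^\lambda$, also $b_{n+1}^\lambda\le C a_{n+1}^\lambda$. Since Lemma~\ref{lem:u_n^0-upper-bound} gives $a_n^\lambda\le a_n^0\le c/n$ for all $\lambda\ge 0$, the condition $a_n^\lambda\le \eta(C)$ holds automatically for all $n\ge n_*(C)\defeq \lceil c/\eta(C)\rceil$. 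The remaining task is to verify the ratio bound $b_n^\lambda\le Ca_n^\lambda$ for the finitely many $n\le n_*(C)$ uniformly in $\lambda\ge 0$, with $C$ chosen large enough to match this base-case bound.

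For this uniform base-case bound I exploit continuity of $c_n^\lambda\defeq b_n^\lambda/a_n^\lambda$ in $\lambda\in[0,\infty)$ for each fixed $n$, combined with boundary analysis at the two endpoints. As $\lambda\to 0$, dominated convergence gives $u_n^\lambda\to u_n^0$ in $L^2(\nu)$, and since $u_n^0(x)>0$ and $\chi(x)>0$ on $[h^*,\infty)$ one has $a_n^0>0$, so $c_n^\lambda\to c_n^0<\infty$. As $\lambda\to\infty$, the Taylor expansion $1-e^{-\chi(x)/\lambda}=\chi(x)/\lambda+O(\chi(x)^2/\lambda^2)$ gives $u_0^\lambda=\chi/\lambda+O(1/\lambda^2)$ in $L^2(\nu)$, where $\chi^k\in L^2(\nu)$ for every $k$ follows from $L[\chi]=\chi$ and iterated application of Proposition~\ref{prop:hypercontractivity}(a). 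Iterating the recursion preserves this leading-order alignment with $\chi$ (the nonlinear correction $g(L[u_n^\lambda])$ is $O(1/\lambda^2)$), yielding $a_n^\lambda=1/\lambda+O(1/\lambda^2)$ and $b_n^\lambda=O(1/\lambda^2)$, hence $c_n^\lambda=O(1/\lambda)\to 0$. By continuity, $\sup_{\lambda\ge 0}c_n^\lambda<\infty$ for each fixed $n$, and taking the maximum over the finitely many $n\le n_*(C)$ yields the required uniform constant; a finite iteration, enlarging $C$ (and hence $n_*(C)$) if necessary, closes the mild circular dependence between $C$ and $n_*(C)$.

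The main technical obstacle is that the lower bound $a_n^0\ge c/n$ from Lemma~\ref{lem:u_n^0-lower-bound} is specific to $\lambda=0$ and cannot be extended uniformly, since $a_n^\lambda\to 0$ as $\lambda\to\infty$. A naive comparison $u_n^\lambda\le u_n^0$ therefore cannot yield the ratio bound, and the delicate large-$\lambda$ Taylor analysis outlined above is essential precisely to handle this regime, where both $a_n^\lambda$ and $b_n^\lambda$ vanish but $a_n^\lambda$ dominates $b_n^\lambda$ by one order in $1/\lambda$.
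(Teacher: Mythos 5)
Your overall strategy — a Lyapunov-type forward-invariance argument for the ratio $c_n^\lambda := b_n^\lambda/a_n^\lambda$, plus a base-case analysis at small $n$ — is genuinely different from the paper's proof, and the invariance inequality you derive (if $a_n^\lambda \le \eta(C)$ and $c_n^\lambda\le C$ then $c_{n+1}^\lambda\le C$, with $\eta(C)\asymp(1-\gamma)/C^2$) is correct. The paper instead proceeds by two explicit bounds: iterating \eqref{eq:b_n-diffeq0} together with $\norm{u_l^\lambda}\le \lambda^{-1}$ to get $b_n^\lambda\le \gamma^n\lambda^{-1}+c\lambda^{-2}$, combining this with $b_n^\lambda\le cn^{-2}$ from Lemma~\ref{lem:u_n^0-upper-bound}, and — crucially — obtaining a \emph{lower} bound $a_n^\lambda\ge (1-f_\lambda(h^*))a_n^0\ge c\lambda^{-1}n^{-1}$ from the first half of \eqref{eq:u_n^f-bound}; dividing and taking the minimum then gives $c_n^\lambda\le c\gamma^n n+c$ uniformly.

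There is a genuine gap in your argument, in the step where you claim ``a finite iteration, enlarging $C$ (and hence $n_*(C)$) if necessary, closes the mild circular dependence between $C$ and $n_*(C)$.'' The forward-invariance step requires $\sup_\lambda c_{n_*(C)}^\lambda\le C$ with $n_*(C)\asymp c/\eta(C)\asymp C^2$, so the fixed-point problem you must solve is to find $C$ with $\sup_\lambda c_{m}^\lambda\le C$ at $m\asymp C^2$, i.e.~you implicitly need $\sup_\lambda c_m^\lambda=O(\sqrt m)$. Your base-case analysis (continuity in $\lambda$ plus the endpoint limits $\lambda\to 0$ and $\lambda\to\infty$) only yields $\sup_\lambda c_m^\lambda<\infty$ for each fixed $m$, with no control on the growth in $m$. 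Tracing constants in your Taylor expansion, the remainder in $u_n^\lambda=\chi/\lambda+r_n^\lambda$ satisfies $\norm{r_n^\lambda}\le c(1+n)\lambda^{-2}$, so the expansion gives $c_n^\lambda\lesssim 1$ only for $\lambda\gtrsim n$; in the complementary range $\lambda\lesssim n$ you have no uniform lower bound on $a_n^\lambda$, and this is exactly where the circularity bites. You explicitly dismiss the ``naive comparison $u_n^\lambda\le u_n^0$,'' but the paper uses the \emph{opposite} comparison from \eqref{eq:u_n^f-bound}, namely $u_n^\lambda\ge (1-f_\lambda(h^*))u_n^0$, which gives precisely the missing lower bound $a_n^\lambda\ge c\lambda^{-1}n^{-1}$ and hence $c_n^\lambda\le c\lambda/n\lesssim 1$ for $\lambda\lesssim n$. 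With that ingredient your two regimes $\lambda\gtrsim n$ and $\lambda\lesssim n$ would close the argument — but as written the base case is not established uniformly in $n$, so the proposal does not prove the lemma.
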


\begin{proof}
  As the case $\lambda = 0$ already follows from
  Lemmas~\ref{lem:u_n^0-lower-bound} and~\ref{lem:u_n^0-upper-bound}, it
  is enough to show \eqref{eq:b_n^k/a_n^k-boundedness} with
  $\lambda \ge 0$ replaced by $\lambda >0$.

  By \eqref{eq:b_n-diffeq0} from the proof of
  Lemma~\ref{lem:u_n^0-upper-bound},
  \begin{equation}
    b_{n+1}^\lambda \le \gamma b_{n}^\lambda + c \norm{u_n^\lambda}^2,
  \end{equation}
  with $c$ that is uniform over $f\in \mathbb F$.
  An iterative application of this inequality yields
  \begin{equation}
    \label{eq:b_n^k-iteration}
    b_{n+1}^\lambda \le \gamma^n b_0^\lambda
    + c \sum_{l = 0}^n \gamma^{n-l} \norm{u_l^\lambda}^2.
  \end{equation}

  To continue, we argue that
  \begin{equation}
    \label{eq:u_l^k-bound}
    \norm{u_l^\lambda} \le \lambda^{-1}
    \quad \text{for every $l \in \mathbb{N}_0$, $\lambda > 0$}.
  \end{equation}
  Indeed, by Lemma~\ref{lem:u_n-recursion},
  $u_{n+1}^\lambda(x) \le L[u_n^\lambda](x)$. Applying this recursively,
  taking norms, and using that the operator norm of $L$ is one, we obtain
  \begin{equation}
    \label{eq:u_l^k-bound-s1}
    \norm{u_l^\lambda} \le \norm{L^l[u_0^\lambda]} \le \norm{u_0^\lambda}.
  \end{equation}
  Moreover, since by definition
  $0\le u_0^\lambda = 1 - \exp(- \lambda^{-1} \chi) \le \lambda^{-1} \chi$,
  we know that
  $\norm{u_0^\lambda} \le \lambda^{-1} \norm{\chi} = \lambda^{-1}$.
  Together with \eqref{eq:u_l^k-bound-s1}, this shows
  \eqref{eq:u_l^k-bound}.

  Since also $b_0^\lambda \le \norm{u_0^\lambda} \le \lambda^{-1}$ and
  $\gamma <1$, \eqref{eq:b_n^k-iteration} and \eqref{eq:u_l^k-bound}
  imply
  \begin{equation}
    b_{n+1}^\lambda \le \gamma^n \lambda^{-1} + c \lambda^{-2}.
  \end{equation}
  On the other hand, by Lemma~\ref{lem:u_n^0-upper-bound},
  $b_n^\lambda \le c n^{-2}$, and thus
  \begin{equation}
    b_n^\lambda \le c \min\Big( \gamma ^n \lambda^{-1}
      +  \lambda^{-2},  \frac{1}{n^2}\Big).
  \end{equation}
  Since
  $1-f_\lambda(h^*) \ge c \lambda^{-1}$, \eqref{eq:u_n^f-bound} implies
  that
  \begin{equation}
    a_n^\lambda = \inp{u_n^\lambda}{\chi} \ge
    (1-f_\lambda (h^*))\inp{u_n^0}{\chi } \ge c \lambda^{-1}
    a_n^0 \ge c \lambda^{-1}n^{-1},
  \end{equation}
  where the last inequality follows from
  Lemma~\ref{lem:u_n^0-lower-bound}. Therefore,
  \begin{equation}
    \frac{b_n^\lambda}{a_n^\lambda}
    \le \frac{ c \min\big( \gamma ^n \lambda^{-1} + \lambda^{-2},
        n^{-2}\big)}{\lambda^{-1} n^{-1}}
    \le c \min\Big(c \gamma^n n +  \frac{n}{\lambda},
      \frac{\lambda}{n}\Big) \le c\gamma^n n + c,
  \end{equation}
  which is bounded from above by some constant $c$ independent of
  $\lambda$ or $n$. This proves the first statement. The second follows
  from the first one and
  $\norm{u_n^\lambda}^2 = (a_n^\lambda)^2 + (b_n^\lambda)^2$.
\end{proof}

We now improve the result of Lemma~\ref{lem:b_n^k/a_n^k-boundedness} and
show that $b_n^{\lambda} = o(a_n^{\lambda})$, uniformly in $\lambda$.

\begin{proposition}
  \label{prop:u_n^fk-L2-asymtote}
  Uniformly over $\lambda \ge 0$,
  \begin{equation}
    \label{eq:u_n^k_toshow}
    \lim_{n\to\infty}\frac{u_n^\lambda}{\inp{\chi}{u_n^\lambda}} = \chi
    \quad \text{in $L^2(\nu)$.}
  \end{equation}
\end{proposition}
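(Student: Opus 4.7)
The statement $u_n^\lambda/\inp{\chi}{u_n^\lambda} \to \chi$ in $L^2(\nu)$ is, via the orthogonal decomposition \eqref{eq:anbn}, equivalent to $b_n^\lambda/a_n^\lambda \to 0$. Hence my plan is to derive a recursive inequality for the ratio $r_n^\lambda \defeq b_n^\lambda/a_n^\lambda$ that forces it to decay to zero with constants independent of $\lambda$.

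I would start from the recursion \eqref{eq:u_n^0-rec}, $u_{n+1}^\lambda = L[u_n^\lambda] - g(L[u_n^\lambda])$, and split it by projecting onto $\spn\set{\chi}$ and its orthogonal complement. Because $L[\chi] = \chi$ and $L$ is self-adjoint, $L$ preserves this decomposition, and one obtains
\begin{equation}
a_{n+1}^\lambda = a_n^\lambda - \inp{\chi}{g(L[u_n^\lambda])}, \qquad \beta[u_{n+1}^\lambda] = L[\beta[u_n^\lambda]] - \beta[g(L[u_n^\lambda])].
\end{equation}
The upper bound on $g$ in \eqref{eq:u_n-recursion-p2} together with Proposition~\ref{prop:hypercontractivity}(b) controls both nonlinear terms by $c\norm{u_n^\lambda}^2$, and \eqref{eq:norm-beta} applied to $L[\beta[u_n^\lambda]]$ gives the spectral gap factor. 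Using that $\norm{u_n^\lambda} \le c a_n^\lambda$ uniformly in $\lambda$ by Lemma~\ref{lem:b_n^k/a_n^k-boundedness}, this yields
\begin{equation}
a_{n+1}^\lambda \ge a_n^\lambda - c (a_n^\lambda)^2, \qquad b_{n+1}^\lambda \le \gamma b_n^\lambda + c (a_n^\lambda)^2.
\end{equation}

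Next I would invoke Lemma~\ref{lem:u_n^0-upper-bound} which provides the uniform-in-$\lambda$ smallness $a_n^\lambda \le \norm{u_n^\lambda} \le c/n$. Consequently, for all $n$ larger than some $n_0$ independent of $\lambda$, we have $c a_n^\lambda \le 1/2$, so $a_{n+1}^\lambda \ge a_n^\lambda(1-ca_n^\lambda)>0$ and $(1-ca_n^\lambda)^{-1} \le 1 + c'/n$. Dividing the two recursions and using that $r_n^\lambda$ is uniformly bounded (again by Lemma~\ref{lem:b_n^k/a_n^k-boundedness}) gives
\begin{equation}
r_{n+1}^\lambda \le (1 + c'/n)(\gamma r_n^\lambda + c a_n^\lambda) \le \gamma' r_n^\lambda + c''/n
\end{equation}
for some $\gamma' \in (\gamma, 1)$ and $c''$ independent of $\lambda$, provided $n$ is large enough (uniformly in $\lambda$).

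Finally, iterating this geometric-plus-drift recursion produces
\begin{equation}
r_n^\lambda \le (\gamma')^{n-n_0} r_{n_0}^\lambda + c''\sum_{k=n_0}^{n-1}(\gamma')^{n-1-k}/k,
\end{equation}
and splitting the sum at $k=n/2$ shows both summands vanish as $n\to\infty$ uniformly in $\lambda \ge 0$. The main obstacle is conceptual rather than technical: the initial data $u_0^\lambda = 1 - e^{-\chi/\lambda}$ depends strongly on $\lambda$, and $a_n^\lambda$ admits no uniform lower bound, so one cannot hope to prove uniformity through a crude comparison with the case $\lambda = 0$; the argument succeeds only because every constant involved — the spectral gap $\gamma$, the hypercontractivity constant, and the constants in Lemmas~\ref{lem:b_n^k/a_n^k-boundedness} and~\ref{lem:u_n^0-upper-bound} — is independent of $\lambda$, so working with the self-normalising quantity $r_n^\lambda$ is exactly what sidesteps the $\lambda$-dependence of the initial scales.
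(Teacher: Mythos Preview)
Your argument is correct and takes a genuinely different route from the paper's. The paper proves the proposition via a two--scale argument: it first shows that $u_n^\lambda/\inp{\chi}{u_{n-n_0}^\lambda}\to\chi$ for a slowly growing $n_0=n_0(n)$ (e.g.\ $n_0=\lfloor\log n\rfloor$) by expanding the product recursion \eqref{eq:prod_n-f-recrusion} over $n_0$ generations, controlling the linear part by Proposition~\ref{pro:Harris_L2}(a) and the quadratic remainder by the Bonferroni inequality \eqref{eq:aiinequality-L2} together with Proposition~\ref{pro:Harris_L2}(b); it then shows $a_{n-n_0}^\lambda/a_n^\lambda\to1$ as a consequence of the first step. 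Your approach bypasses this entirely: you work with the one--step recursion of Lemma~\ref{lem:u_n-recursion}, extract the contraction $b_{n+1}^\lambda\le\gamma b_n^\lambda+c(a_n^\lambda)^2$ and the stability $a_{n+1}^\lambda\ge a_n^\lambda(1-ca_n^\lambda)$, and iterate the resulting inequality for the self--normalised ratio $r_n^\lambda=b_n^\lambda/a_n^\lambda$. This is more elementary---it needs neither the moment asymptotics of Proposition~\ref{pro:Harris_L2} nor any inclusion--exclusion, only the ingredients already assembled in the proof of Lemma~\ref{lem:u_n^0-upper-bound} plus the uniform bound of Lemma~\ref{lem:b_n^k/a_n^k-boundedness}. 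The paper's route, on the other hand, makes the branching structure more visible and yields the intermediate quantitative estimate \eqref{eq:u_n-n0^f-ineq-L2}, which it reuses (with $n_0=1$) in the proof of Lemma~\ref{lem:u_n^k-L2-asymptote}.
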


\begin{proof}
  The proof consists of two steps. First we show that, for a suitable
  choice of $n_0(n)$, which slowly diverges with $n$,
  \begin{equation}
    \label{eq:firststep-L2}
    \frac{u_n^\lambda}{\inp{\chi}{u_{n-n_0(n)}^\lambda}} \to \chi \quad
    \text{in $L^2(\nu )$ as $n \to \infty$, uniformly in $\lambda \ge 0$}.
  \end{equation}
  In a second step, we then show that
  $\inp{\chi}{u_{n-n_0(n)}^\lambda}/\inp{\chi}{u_n^\lambda} \to 1$ as
  $n \to \infty$ uniformly in $\lambda \ge 0$, implying the claim of the
  proposition.

  By \eqref{eq:prod_n-f-recrusion} from the proof of
  Lemma~\ref{lem:u_n-recursion},
  \begin{equation}
    \begin{split}
      \label{eq:u_n^f-L2}
      &u_n^\lambda(x)
      = E_x\bigg[E_x\Big[1 - \prod_{v \in N^+_n} f_\lambda (\varphi_v)
          \Big| \F_{n_0}\Big]\bigg]
      = E_x\Big[1 - \prod_{v \in N^+_{n_0}}
        \big(1- u_{n-n_0}^\lambda(\varphi_v)\big)\Big] \\
      &= E_x\Big[\sum_{v\in N^+_{n_0}} u_{n-n_0}^\lambda(\varphi_v)\Big]
      - E_x\Big[\prod_{v \in N^+_{n_0}}
        \big(1- u_{n-n_0}^\lambda(\varphi_v)\big)
        - 1 + \sum_{v\in N^+_{n_0}} u_{n-n_0}^\lambda(\varphi_v) \Big].
    \end{split}
  \end{equation}
  By Proposition~\ref{pro:Harris_L2}(a), the first expectation on the
  right-hand side satisfies
  \begin{equation}
    \label{eq:u_n^f_1-L2}
    E_x\Big[\sum_{u \in N^+_{n_0}} u_{n-n_0}^\lambda(\varphi_u)\Big]
    = \chi(x) \inp{\chi}{u_{n-n_0}^\lambda}
    +\varepsilon_{n_0}^{\lambda,n-n_0}(x)
  \end{equation}
  with
  \(
    \norm{\varepsilon_{n_0}^{\lambda,n-n_0}} \leq
    \gamma^{n_0} \norm{u_{n-n_0}^\lambda}
  \).

  To estimate the second expectation we need the following inequality:
  For any finite index set $I$ and $a_i \in [0,1]$, $i \in I$,
  \begin{equation}
    \label{eq:aiinequality-L2}
    0 \leq \prod_{i\in I}(1-a_i) - 1
    + \sum_{i \in I}a_i \leq \frac{1}{2} \sum_{i\neq j\in I} a_i a_j.
  \end{equation}
  This inequality follows easily from Bonferroni inequalities for
  independent events $A_i$ with $P(A_i) = a_i$. Indeed,
  \begin{equation}
    1-\prod_{i\in I} (1-a_i) = P(\cup_{i\in I} A_i)
    \le \sum_{i\in I}P(A_i) = \sum_{i\in I}a_i
  \end{equation}
  implies the lower bound in \eqref{eq:aiinequality-L2}, and
  \begin{equation}
    1-\prod_{i\in I} (1-a_i) = P(\cup_{i\in I} A_i)
    \ge \sum_{i\in I}P(A_i) - \frac12\sum_{i\neq j\in I}P(A_i\cap A_j)
    = \sum_{i\in I}a_i - \frac{1}{2} \sum_{i \neq j\in I} a_i a_j
  \end{equation}
  implies the upper bound.
  Inequality \eqref{eq:aiinequality-L2} and
  Proposition~\ref{pro:Harris_L2}(b) imply
  \begin{equation}
    \begin{split}
      \label{eq:u_n^f_2-L2}
      0 &\leq E_x\Big[\prod_{u \in N^+_{n_0}}
        \big(1-u_{n-n_0}^\lambda(\varphi_u)\big) - 1
        + \sum_{u \in N^+_{n_0}} u_{n-n_0}^\lambda(\varphi_u) \Big]
      \\ & \leq c \chi(x)\inp{\chi}{u_{n-n_0}^\lambda}^2
      \inp{\chi}{\chi^2} n_0
      + \bar{\varepsilon}_{n_0}^{\lambda, n-n_0}(x),
    \end{split}
  \end{equation}
  with
  \(
    \norm{\bar{\varepsilon}_{n_0}^{\lambda, n-n_0}}
    \le C\norm{u_{n-n_0}^\lambda}^2
  \).

  Dividing \eqref{eq:u_n^f-L2} by $\inp{\chi}{u_{n-n_0}^\lambda}$ and
  combining it with \eqref{eq:u_n^f_1-L2} and \eqref{eq:u_n^f_2-L2}, we
  obtain
  \begin{equation}
    \begin{split}
      \label{eq:u_n-n0^f-ineq-L2}
      \norm[\bigg]{
        &\frac{u_{n}^\lambda}{\inp{\chi}{u_{n-n_0}^\lambda}} - \chi}
      \\&\leq
      \frac{\norm{\varepsilon_{n_0}^{\lambda,n-n_0}}}
      {\inp{\chi}{u_{n-n_0}^\lambda}}
      + \frac{c \norm{\chi\inp{\chi}{u_{n-n_0}^\lambda}^2
          \inp{\chi}{\chi^2} n_0}}
      {\inp{\chi}{u_{n-n_0}^\lambda}}
      + \frac{\norm{\bar{\varepsilon}_{n_0}^{\lambda, n-n_0}(x)}}
      {\inp{\chi}{u_{n-n_0}^\lambda}}
      \\&\leq
      \gamma^{n_0} \frac{\norm{u_{n-n_0}^\lambda}}
      {\inp{\chi}{u_{n-n_0}^\lambda}}
      +  c\inp{\chi}{\chi^2} \inp{\chi}{u_{n-n_0}^\lambda} n_0
      + c\frac{\norm{u_{n-n_0}^\lambda}}
      {\inp{\chi}{u_{n-n_0}^\lambda}}\norm{u_{n-n_0}^\lambda}
      \\&\leq
      c \gamma^{n_0}  + \frac{c \inp{\chi}{\chi^2}n_0}{n-n_0}
      + \frac{c}{n-n_0},
    \end{split}
  \end{equation}
  where in the last inequality we used that
  $\norm{u_{n}^\lambda}/\inp{\chi}{u_n^\lambda}$ is uniformly bounded by
  Lemma~\ref{lem:b_n^k/a_n^k-boundedness}, and both
  $\inp{\chi}{u_n^\lambda}$ and $\norm{u_n^\lambda}$ are uniformly
  bounded above by $c n^{-1}$ by Lemma~\ref{lem:u_n^0-upper-bound}. This
  establishes \eqref{eq:firststep-L2} with, for example,
  $n_0(n) = \floor{\log(n)}$.

  Next we show that \eqref{eq:u_n-n0^f-ineq-L2} also implies
  \begin{equation}
    \label{eq:u_n^f_lefttoshow-L2}
    \abs[\bigg]{
      \frac{\inp{\chi}{u_{n}^\lambda}}{\inp{\chi}{u_{n-n_0(n)}^\lambda}} - 1
    } \to 0
    \quad \text{as $n\to \infty$, uniformly in $\lambda \ge 0$}.
  \end{equation}
  To see this, we use that $1 = \inp{\chi}{\chi}$ and
  $\abs{\inp{\chi}{f}} \le \norm{f}$ to get
  \begin{equation}
    \begin{split}
      \abs[\bigg]{
        \frac{\inp{\chi}{u_{n}^\lambda}}{\inp{\chi}{u_{n-n_0(n)}^\lambda}}
        - 1
      }
      &= \abs[\bigg]{
        \frac{\inp{\chi}{u_{n}^\lambda}} {\inp{\chi}{u_{n-n_0(n)}^\lambda}}
        - \inp{\chi}{\chi}}
      = \abs[\bigg]{
        \inp[\bigg]{ \chi}{\frac{u_{n}^\lambda}
          {\inp{\chi}{u_{n-n_0(n)}^\lambda}} - \chi}  }
      \\ &\leq \norm[\bigg]{
        \frac{u_n^\lambda}{\inp{\chi}{u_{n-n_0(n)}^\lambda}} -\chi},
    \end{split}
  \end{equation}
  which by \eqref{eq:u_n-n0^f-ineq-L2} converges to zero uniformly in
  $\lambda$. This shows \eqref{eq:u_n^f_lefttoshow-L2}, which together
  with \eqref{eq:firststep-L2} finishes the proof.
\end{proof}

Using the hypercontractivity of $L$ from
Proposition~\ref{prop:hypercontractivity}, it is easy to slightly improve
Proposition~\ref{prop:u_n^fk-L2-asymtote}. This will be useful
to ensure that certain products still lie in~$L^2(\nu)$.

\begin{lemma}
  \label{lem:u_n^k-L2-asymptote}
  Uniformly over $\lambda \ge 0$,
  \begin{equation}
    \label{eq:rmk-u_n^k-Lp}
    \lim_{n\to\infty}\frac{u_n^\lambda}{\inp{\chi}{u_n^\lambda}} = \chi
    \quad \text{in $L^{5/2}(\nu)$ and pointwise over $[h^*,\infty)$}.
  \end{equation}
\end{lemma}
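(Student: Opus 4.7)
The plan is to combine the uniform $L^2(\nu)$-convergence from Proposition~\ref{prop:u_n^fk-L2-asymtote} with the hypercontractivity of $L$ (Proposition~\ref{prop:hypercontractivity}(a)) to upgrade to $L^{5/2}(\nu)$, and with Lemma~\ref{lem:L-pointwise} to obtain pointwise convergence. Iterating the recursion \eqref{eq:u_n^0-rec} $n_0$ times yields
\begin{equation}
  u_n^\lambda = L^{n_0}[u_{n-n_0}^\lambda] - R_n^\lambda,
  \qquad R_n^\lambda \defeq \sum_{j=0}^{n_0-1}
  L^j\bigl[g\bigl(L[u_{n-j-1}^\lambda]\bigr)\bigr],
\end{equation}
and since $\chi = L^{n_0}[\chi]$, this rewrites (writing $v_n^\lambda \defeq u_n^\lambda/a_n^\lambda$) as
\begin{equation}
  v_n^\lambda - \chi
  = \frac{a_{n-n_0}^\lambda}{a_n^\lambda}\,
    L^{n_0}\bigl[v_{n-n_0}^\lambda - \chi\bigr]
  + \Bigl(\frac{a_{n-n_0}^\lambda}{a_n^\lambda} - 1\Bigr)\chi
  - \frac{R_n^\lambda}{a_n^\lambda}.
\end{equation}

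I would choose $n_0 = n_0(n) = \floor{\log n}$. Then Proposition~\ref{prop:u_n^fk-L2-asymtote} and the estimate \eqref{eq:u_n^f_lefttoshow-L2} (derived in its proof) give $\norm{v_{n-n_0}^\lambda - \chi} \to 0$ and $a_{n-n_0}^\lambda/a_n^\lambda \to 1$, both uniformly in $\lambda \ge 0$. For the first right-hand-side term, Proposition~\ref{prop:hypercontractivity}(a) with $p=2$ and $q=5/2$ (which is admissible since $d\ge 2$ forces $d^2+1 \ge 5$) and the fact that $L$ has operator norm $1$ on $L^2(\nu)$ gives $\norm{L^{n_0}[v_{n-n_0}^\lambda - \chi]}_{L^{5/2}(\nu)} \le d\norm{v_{n-n_0}^\lambda - \chi}_{L^2(\nu)} \to 0$ uniformly. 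The second term vanishes uniformly in any norm.

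The main piece of bookkeeping is the $R_n^\lambda$ term. Using $0 \le g(x) \le c x^2$ on $[0,1]$ and hypercontractivity from $L^2(\nu)$ to $L^4(\nu)$ or $L^5(\nu)$ (both admissible for $d\ge 2$), together with the operator norm bound $\norm{L}_{L^2\to L^2} = 1$, one finds for each $0 \le j \le n_0 - 1$
\begin{equation}
  \bigl\|L^j\bigl[g(L[u_{n-j-1}^\lambda])\bigr]\bigr\|_{L^{5/2}(\nu)}
  \le c\,\bigl\|L[u_{n-j-1}^\lambda]\bigr\|_{L^5(\nu)\vee L^4(\nu)}^2
  \le c\,\norm{u_{n-j-1}^\lambda}^2 \le c\,(a_{n-j-1}^\lambda)^2,
\end{equation}
where the last inequality uses Lemma~\ref{lem:b_n^k/a_n^k-boundedness}. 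Since $k\mapsto a_k^\lambda$ is non-increasing (as $g \ge 0$ in \eqref{eq:u_n^0-rec}), summing yields $\norm{R_n^\lambda}_{L^{5/2}(\nu)} \le c n_0 (a_{n-n_0}^\lambda)^2$, and therefore
\begin{equation}
  \frac{\norm{R_n^\lambda}_{L^{5/2}(\nu)}}{a_n^\lambda}
  \le c n_0 \frac{(a_{n-n_0}^\lambda)^2}{a_n^\lambda}
  \le c \log(n)\,\frac{a_{n-n_0}^\lambda}{a_n^\lambda}\, a_{n-n_0}^\lambda
  \xrightarrow[n\to\infty]{} 0
\end{equation}
uniformly in $\lambda$, since $a_{n-n_0}^\lambda/a_n^\lambda \to 1$ uniformly and $a_{n-n_0}^\lambda \le c/(n-n_0)$ by Lemma~\ref{lem:u_n^0-upper-bound}. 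This establishes the $L^{5/2}$-convergence.

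For the pointwise convergence on $[h^*,\infty)$, I would rerun the same argument with Lemma~\ref{lem:L-pointwise} in place of hypercontractivity. For each fixed $x \ge h^*$,
$\abs{L^{n_0}[v_{n-n_0}^\lambda - \chi](x)} \le \norm{v_{n-n_0}^\lambda - \chi} q(x) \to 0$ uniformly, while the corresponding pointwise bound $\abs{R_n^\lambda(x)}/a_n^\lambda \le c n_0 a_n^\lambda(q(x) + q(x)^2) \to 0$ uniformly in $\lambda$ follows by the same reasoning, applying Lemma~\ref{lem:L-pointwise} once (for $j \ge 1$) or directly using $(L[u_{n-1}^\lambda](x))^2 \le \norm{u_{n-1}^\lambda}^2 q(x)^2$ (for $j = 0$). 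The only non-mechanical point is the uniformity in $\lambda$, and this is entirely shouldered by the inputs from Proposition~\ref{prop:u_n^fk-L2-asymtote} and Lemma~\ref{lem:b_n^k/a_n^k-boundedness}, so no new obstacle arises beyond careful bookkeeping of constants.
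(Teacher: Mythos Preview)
Your proof is correct and follows essentially the same approach as the paper: both arguments feed the recursion \eqref{eq:u_n^0-rec} into the hypercontractivity estimate (Proposition~\ref{prop:hypercontractivity}(a)) to upgrade the uniform $L^2(\nu)$-convergence of Proposition~\ref{prop:u_n^fk-L2-asymtote} to $L^{5/2}(\nu)$, and into Lemma~\ref{lem:L-pointwise} for the pointwise statement. The only difference is that you iterate the recursion $n_0 = \floor{\log n}$ times, whereas the paper uses a single step ($n_0 = 1$), writing simply $u_n^\lambda = L[u_{n-1}^\lambda] - g(L[u_{n-1}^\lambda])$ and bounding $\norm{L[u_{n-1}^\lambda]^l}_{L^{5/2}(\nu)} \le c\norm{u_{n-1}^\lambda}_{L^2(\nu)}^2$ via $\norm{L[\cdot]}_{L^5(\nu)} \le d\norm{\cdot}_{L^2(\nu)}$; this avoids the sum defining $R_n^\lambda$ and the monotonicity argument for $a_k^\lambda$. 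Your extra iterations are harmless but unnecessary --- one application of $L$ already suffices to pass from $L^2$ to $L^{5/2}$, and the remainder from a single step is already $O((a_{n-1}^\lambda)^2/a_n^\lambda) \to 0$ uniformly.
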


\begin{proof}
  We will first show \eqref{eq:rmk-u_n^k-Lp}. We set $p = 5/2$. By
  Lemma~\ref{lem:u_n-recursion},
  $u_n^\lambda = L[u_{n-1}^\lambda] + \sum_{l=2}^d c_l L[u_{n-1}^\lambda]^l$.
  Therefore, also using that $L[\chi] = \chi$,
  \begin{equation}
    \label{eq:rmk-L^p-conv-eq1}
    \norm[\Big]{
      \frac{u_n^\lambda}{\inp{\chi}{u_n^\lambda}} - \chi}_{L^{p}(\nu)}
    \le \norm[\bigg]{
      L \Big[\frac{u_{n-1}^\lambda}{\inp{\chi}{u_n^\lambda}}
        - \chi\Big]}_{L^{p}(\nu)}
    + \frac{c}{\inp{\chi}{u_n^\lambda}}
    \sum_{l=2}^{d}\norm{L[u_{n-1}^\lambda]^l}_{L^p(\nu)}.
  \end{equation}
  Since $0\le u_n^\lambda \le 1$, it holds that $0 \le L[u_n^\lambda] \le d$.
  Therefore,
  \begin{equation}
    \norm{L[u_{n-1}^\lambda]^l}_{L^p(\nu)}
    \le d^{l-2}\norm{L[u_{n-1}^\lambda]^2}_{L^p(\nu)}
    = d^{l-2}\norm{L[u_{n-1}^\lambda]}_{L^{2p}(\nu)}^{2}
    \le c \norm{u_{n-1}^\lambda}_{L^2(\nu)}^2,
  \end{equation}
  where in the last inequality we used
  Proposition~\ref{prop:hypercontractivity}(a) with $q = 2p = 5$ and
  $p = 2$. Therefore, using also \eqref{eq:u_n^f_lefttoshow-L2} (with
    $n_0 = 1$), Lemma~\ref{lem:b_n^k/a_n^k-boundedness}, and the fact
  that $\norm{u_n^\lambda} \le \norm{u_n^0} \to 0$ as $n\to \infty$, we get
  \begin{equation}
    \label{eq:rmk-L^p-conv-eq2}
    \frac{\norm{L[u_{n-1}^\lambda]^l}_{L^p(\nu)}}{\inp{\chi}{u_n^\lambda}}
    \le
    \frac{\norm{u_{n-1}^\lambda}^2_{L^2(\nu)}}{c\inp{\chi}{u_{n-1}^\lambda}}
    \to 0
    \quad \text{as $n \to \infty$ uniformly in $\lambda \ge 0$}.
  \end{equation}
  Using Proposition~\ref{prop:hypercontractivity}(a) again for the
  remaining term on the right-hand side of \eqref{eq:rmk-L^p-conv-eq1},
  \begin{equation}
    \label{eq:rmk-L^p-conv-eq3}
    \norm[\bigg]{
      L \Big[\frac{u_{n-1}^\lambda}{\inp{\chi}{u_n^\lambda}} - \chi\Big]
    }_{L^{p}(\nu)}
    \le d \norm[\bigg]{
      \frac{u_{n-1}^\lambda}{\inp{\chi}{u_n^\lambda}} - \chi
    }_{L^{2}(\nu)}
    \to 0
    \quad \text{as $n \to \infty$ uniformly in $\lambda \ge 0$}.
  \end{equation}
  Here, the convergence follows from
  Proposition~\ref{prop:u_n^fk-L2-asymtote}. Combining
  \eqref{eq:rmk-L^p-conv-eq1} with \eqref{eq:rmk-L^p-conv-eq2} and
  \eqref{eq:rmk-L^p-conv-eq3} shows \eqref{eq:rmk-u_n^k-Lp}.

  We now show the pointwise convergence. By similar steps as in
  \eqref{eq:rmk-L^p-conv-eq1}, using Lemma~\ref{lem:L-pointwise}, for
  $x \ge h^*$,
  \begin{equation}
    \begin{split}
      \label{eq:rmk-ptw-conv-eq1}
      \abs[\Big]{
        \frac{u_n^\lambda(x)}{\inp{\chi}{u_n^\lambda}} - \chi(x)}
      &\le \abs[\bigg]{
        L \Big[\frac{u_{n-1}^\lambda}{\inp{\chi}{u_n^\lambda}}
          - \chi\Big](x)}
      + \frac{1}{\inp{\chi}{u_n^\lambda}}\sum_{l=2}^{d}
      \abs{L[u_{n-1}^\lambda](x)}^l
      \\&\le \norm[\bigg]{
        \frac{u_{n-1}^\lambda}{\inp{\chi}{u_n^\lambda}} - \chi} q(x)
      + \frac{1}{\inp{\chi}{u_n^\lambda}}
      \sum_{l=2}^{d}\norm{u_{n-1}^\lambda}^l q(x)^l.
    \end{split}
  \end{equation}
  Using \eqref{eq:rmk-L^p-conv-eq2} and \eqref{eq:rmk-L^p-conv-eq3}, it
  is immediate that the right-hand side of \eqref{eq:rmk-ptw-conv-eq1}
  converges to $0$ as $n \to \infty$, uniformly in $\lambda \ge 0$. This
  shows the pointwise convergence and finishes the proof.
\end{proof}

The following lemma provides the final ingredient for the proof of
Theorem~\ref{thm:diameter}. Its result allows us to determine the exact
asymptotic behaviour of $a_n^{\lambda}$ as $n \to \infty$.

\begin{lemma}
  \label{lem:a^f-asymtote}
  Let $C_1$ be as in \eqref{eq:C_1-def}. Then,
  \begin{equation}
    \label{eq:a^f-asymptote-toprove}
    \lim_{n\to\infty}\frac{1}{n}
    \Big(\frac{1}{a_n^\lambda} - \frac{1}{a_0^\lambda}\Big)
    = C_1^{-1} \quad\text{uniformly in $\lambda \ge 0$}.
  \end{equation}
\end{lemma}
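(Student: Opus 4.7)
The plan is to turn the recursion of Lemma~\ref{lem:u_n-recursion} into a scalar recursion for $a_n^\lambda$, identify its leading-order form, and then telescope.

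Applying $\inp{\chi}{\cdot}$ to the identity $u_{n+1}^\lambda = L[u_n^\lambda] - g(L[u_n^\lambda])$ and using self-adjointness of $L$ together with $L[\chi]=\chi$, I get
\[
a_n^\lambda - a_{n+1}^\lambda = \inp{\chi}{g(L[u_n^\lambda])}.
\]
A direct Taylor expansion of $g$ at $0$ yields $g(y) = \frac{d-1}{2d}y^2 + R(y)$ with $|R(y)|\le c y^3$ for $y\in[0,1]$, and hence
\[
a_n^\lambda - a_{n+1}^\lambda
= \frac{d-1}{2d}\inp{\chi}{L[u_n^\lambda]^2}
+ \inp{\chi}{R(L[u_n^\lambda])}.
\]

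The next step is to show that, uniformly in $\lambda\ge 0$,
\[
\frac{\inp{\chi}{L[u_n^\lambda]^2}}{(a_n^\lambda)^2}\longrightarrow\inp{\chi^2}{\chi}
\quad\text{and}\quad
\frac{|\inp{\chi}{R(L[u_n^\lambda])}|}{(a_n^\lambda)^2}\longrightarrow 0.
\]
Both rest on Lemma~\ref{lem:u_n^k-L2-asymptote}, which gives the uniform convergence $u_n^\lambda/a_n^\lambda\to\chi$ in $L^{5/2}(\nu)$; hypercontractivity (Proposition~\ref{prop:hypercontractivity}(a) with $p=5/2$ and $q$ chosen sufficiently large) promotes this to convergence $L[u_n^\lambda]/a_n^\lambda\to\chi$ in $L^6(\nu)$. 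For the quadratic term I write $(L[u_n^\lambda]/a_n^\lambda)^2-\chi^2$ as a product of the difference and the sum and apply Cauchy--Schwarz with $\chi\in L^2(\nu)$. For the cubic error,
\[
|\inp{\chi}{L[u_n^\lambda]^3}|
\le \norm{\chi}\,\norm{L[u_n^\lambda]^3}_{L^2(\nu)}
= O((a_n^\lambda)^3)
\]
by the same $L^6$-boundedness, and this is $o((a_n^\lambda)^2)$ since $a_n^\lambda\to 0$ uniformly in $\lambda$ by Lemma~\ref{lem:u_n^0-upper-bound}.

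Putting everything together,
\[
a_n^\lambda - a_{n+1}^\lambda = C_1^{-1}(a_n^\lambda)^2(1+\eta_n^\lambda),
\]
with $\eta_n^\lambda\to 0$ as $n\to\infty$ uniformly in $\lambda\ge 0$. In particular $a_{n+1}^\lambda/a_n^\lambda \to 1$ uniformly, and dividing the recursion by $a_n^\lambda a_{n+1}^\lambda$ yields
\[
\frac{1}{a_{n+1}^\lambda}-\frac{1}{a_n^\lambda} = C_1^{-1} + \tilde\eta_n^\lambda,
\qquad \tilde\eta_n^\lambda\to 0 \text{ uniformly in }\lambda\ge 0.
\]
Summing from $k=0$ to $n-1$, dividing by $n$, and applying a standard Cesàro argument (whose error is uniform in $\lambda$ because the $\tilde\eta_k^\lambda$ are) then gives \eqref{eq:a^f-asymptote-toprove}.

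The main obstacle will be the uniform convergence of the quadratic ratio $\inp{\chi}{L[u_n^\lambda]^2}/(a_n^\lambda)^2\to\inp{\chi^2}{\chi}$. The delicate point is upgrading the $L^{5/2}$-convergence supplied by Lemma~\ref{lem:u_n^k-L2-asymptote} to $L^p$-convergence with $p$ high enough to control the triple product $\chi\cdot L[u_n^\lambda]/a_n^\lambda\cdot L[u_n^\lambda]/a_n^\lambda$ by Hölder; without the uniform bound $\norm{u_n^\lambda}/a_n^\lambda\le c$ from Lemma~\ref{lem:b_n^k/a_n^k-boundedness}, the regime $\lambda\to\infty$ (where $a_n^\lambda$ can be arbitrarily small) would be out of reach.
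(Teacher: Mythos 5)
Your proof is correct and follows essentially the same route as the paper: project the recursion of Lemma~\ref{lem:u_n-recursion} onto $\chi$, identify the leading quadratic term $\frac{d-1}{2d}\inp{\chi}{L[u_n^\lambda]^2}$ in $a_n^\lambda - a_{n+1}^\lambda$ (the paper expands the polynomial $g$ exactly rather than via Taylor remainder, but this is the same thing), show the rescaled quadratic term tends to $\inp{\chi^2}{\chi}$ and the higher-order terms are $o((a_n^\lambda)^2)$ uniformly in $\lambda$ using Lemma~\ref{lem:u_n^k-L2-asymptote} together with hypercontractivity and Lemma~\ref{lem:b_n^k/a_n^k-boundedness}, deduce $a_{n+1}^\lambda/a_n^\lambda\to 1$, and telescope. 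The only cosmetic difference is that you pass through an $L^6$ bound on $L[u_n^\lambda]/a_n^\lambda$ while the paper works with the $L^2$ convergence of $L[u_n^\lambda/a_n^\lambda]^2$ directly; both routes rest on the same uniform estimates.
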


\begin{proof}
  We show that
  \begin{equation}
    \label{eq:a_f-difference-eq}
    \lim_{n\to\infty}\Big(\frac{1}{a_{n}^\lambda}
      -\frac1{a_{n-1}^\lambda}\Big)
    =\lim_{n\to\infty}\frac{a_{n-1}^\lambda
      - a_n^\lambda}{a_{n-1}^\lambda a_{n}^\lambda}
    = C_1^{-1}
  \end{equation}
  uniformly in $\lambda \ge 0$. The claim of the lemma then follows from
  \eqref{eq:a_f-difference-eq} by telescoping the difference therein.

  By Lemma~\ref{lem:u_n-recursion} again, this time writing the prefactor
  of the quadratic term explicitly,
  \begin{equation}
    \label{eq:a^f-1}
    \begin{split}
      a_n^\lambda = \inp{\chi} {u_n^\lambda}
      &= \inp{\chi}{L[u_{n-1}^\lambda]}
      - \binom{d}{2}\frac{1}{d^2}\inp{\chi}{L[u_{n-1}^\lambda]^2}
      - \sum_{l=3}^d c_l \inp{\chi}{L[u_{n-1}^\lambda]^l}.
    \end{split}
  \end{equation}
  Using that $L$ is self-adjoint and $L[\chi]=\chi$, we get
  \(
    \inp{\chi}{L[u_{n-1}^\lambda]}
    = \inp{\chi}{u_{n-1}^\lambda} = a_{n-1}^\lambda
  \).
  Hence, after dividing by $(a_{n-1}^\lambda)^2$, \eqref{eq:a^f-1} implies
  \begin{equation}
    \begin{split}
      \label{eq:a^f-frac}
      \frac{a_{n-1}^\lambda - a_{n}^\lambda}{(a_{n-1}^\lambda)^2}
      &= \binom{d}{2} d^{-2} \inp[\Big]{\chi}
      {\frac{L[u_{n-1}^\lambda]^2}{(a_{n-1}^\lambda)^2}}
      + \sum_{l=3}^d c_l \inp[\Big]{\chi}
      {\frac{L[u_{n-1}^\lambda]^l}{(a_{n-1}^\lambda)^2}}
      \\&= \binom{d}{2} d^{-2}
      \inp[\Big]{\chi}{L\Big[\frac{u_{n-1}^\lambda}{a_{n-1}^\lambda}\Big]^2}
      + \sum_{l=3}^d c_l
      \inp[\Big]{\chi}
      {L\Big[\frac{u_{n-1}^\lambda}{a_{n-1}^\lambda}\Big]^2
        L[u_{n-1}^\lambda]^{l-2}}.
    \end{split}
  \end{equation}
  By Propositions~\ref{prop:u_n^fk-L2-asymtote}
  and~\ref{prop:hypercontractivity},
  $L[u_{n-1}^\lambda/a_{n-1}^\lambda]^2 \to \chi^2$ in $L^2(\nu )$,
  uniformly in $\lambda$.
  By \eqref{eq:u_n^f(x)-limit} and \eqref{eq:u_n^f-bound},
  $0 \leq u_{n}^\lambda \leq u_n^0 \to 0$ in $L^2(\nu)$ as $n \to \infty$.
  Therefore, using that $L[u_{n-1}^\lambda] \le d$ and
  the boundedness of $L$,
  $L[u_{n-1}^\lambda]^{l-2} \le d^{l-3}L[u_{n-1}^\lambda] \to 0$ in
  $L^2(\nu )$ uniformly in $\lambda$. Using this in \eqref{eq:a^f-frac}
  then yields
  \begin{equation}
    \label{eq:a_n^f-diffeq2}
    \lim_{n\to\infty}
    \frac{a_{n-1}^\lambda - a_{n}^\lambda}{(a_{n-1}^\lambda)^2}
    = \binom{d}{2}d^{-2}\inp{\chi}{\chi^2}
    = C_1^{-1}
    \quad \text{uniformly in $\lambda \ge 0$}.
  \end{equation}
  From this \eqref{eq:a_f-difference-eq} follows, if we show
  \begin{equation}
    \label{eq:a^f-lefttoproof}
    \lim_{n\to\infty}\frac{a_n^\lambda}{a_{n-1}^\lambda} = 1
    \quad \text{uniformly in $\lambda \ge 0$.}
  \end{equation}
  To see this, note that by multiplying \eqref{eq:a_n^f-diffeq2} by
  $a_{n-1}^\lambda$,
  \begin{equation}
    1 - a_n^\lambda/a_{n-1}^\lambda
    = a_{n-1}^\lambda C_1^{-1} + o(a_{n-1}^\lambda)
    = o(1),
  \end{equation}
  where the $o(1)$ term is independent of $\lambda$ since
  $0\le a_n^\lambda \leq a_n^0 \to 0$ for $n \to \infty$. This shows
  \eqref{eq:a^f-lefttoproof}, and together with \eqref{eq:a_n^f-diffeq2}
  completes the proof of \eqref{eq:a_f-difference-eq}.
\end{proof}

We are now ready to prove Theorem~\ref{thm:diameter}.

\begin{proof}[Proof of Theorem~\ref{thm:diameter}]
  Recall from \eqref{eq:uzeroP} that
  $u_n^0(x) = P_x[N^+_n \neq \emptyset ]$. Therefore, by
  Proposition~\ref{prop:u_n^fk-L2-asymtote} and
  Lemma~\ref{lem:u_n^k-L2-asymptote},
  \begin{equation}
    \label{eq:thm-diameter-step1}
    \lim_{n\to\infty}
    \frac{P_{\boldsymbol{\cdot}}[N^+_n \neq \emptyset]}{\inp{\chi}{u_n^0}}
    = \chi
    \quad\text{in $L^2(\nu)$ and pointwise}.
  \end{equation}
  Moreover, by Lemma~\ref{lem:a^f-asymtote},
  \begin{equation}
    \label{eq:an0}
    {\inp{\chi}{u_n^0}} = a_n^0 = C_1 n^{-1} (1+o(1)).
  \end{equation}
  Combining these two facts one obtains \eqref{eq:diameter-1}, and by
  integrating over $\varphi (o)$ which is $\nu$-distributed also
  \eqref{eq:diameter-3}.

  To show the last statement of the theorem, let
  $x_1, \dots, x_{d+1} = \bar o$ be the neighbours of $o$ in $\mathbb T$,
  and let $A_{n,i}$ be the event ``the subtree of $x_i$ intersects $N_n$''.
  By the branching process construction, for every $n\ge 1$, the events
  $A_{n,1}, \dots, A_{n,d+1}$ are independent and have the same
  probability $p=p(n,x)$. Further,
  $P_x[N_n\neq \emptyset] = P_x[\cup_{i=1}^{d+1} A_{n,i}] = 1-(1-p)^{d+1}$
  and similarly $P_x[N^+_n\neq \emptyset] = 1-(1-p)^{d}$ and thus
  \begin{equation}
    P_x[N_n\neq \emptyset] = 1-(1-P_x[N_n^+\neq
        \emptyset])^{\frac{d+1}{d}} = \frac{d+1}{d} P_x[N_n^+ \neq
      \emptyset] + O( P_x[N_n^+ \neq \emptyset]^2).
  \end{equation}
  This directly implies the statement for $P_x[N_n\neq \emptyset]$. The
  statement for $P[N_n \neq \emptyset]$ is again obtained by integration
  over $\varphi_{o}$.
\end{proof}

\section{Proof of Theorem~\ref{thm:conv-to-exp}}
\label{sec:proof-of-thm2}
The goal of this section is twofold: We show
Theorem~\ref{thm:conv-to-exp}, and then, in
Section~\ref{subsec:further-cond-res}, provide further results
concerning the behaviour of $\mathcal C_o$ conditioned on
$\set{N_n^+ \neq \emptyset}$. These results are, more or less, direct
consequences of Theorems~\ref{thm:diameter} and~\ref{thm:conv-to-exp},
and will later be used to show Theorem~\ref{thm:inv-principle}.

We start with a preparatory lemma which is a special case of
Theorem~\ref{thm:conv-to-exp} for functions $f$ orthogonal to $\chi$.

\begin{lemma}
  \label{lem:conv-to-exp_prep}
  For every $f \in L^2(\nu)$ such that $\inp{\chi}{f} = 0$ there
  is a sequence $\varepsilon_n^f \in L^2(\nu)$ converging to zero in
  $L^2(\nu)$ and pointwise,
  such that for
  every $\delta > 0$, $x \ge h^*$, and $n\ge 1$,
  \begin{equation}
    \label{eq:conv-to-exp_prep}
    P_x\Big[\abs[\Big]{\frac{1}{n}\sum_{v \in N^+_n} f(\varphi_v)}
      > \delta\, \Big|\, N^+_n\neq\emptyset\Big]
    \le c \delta ^{-2} \varepsilon_n^f(x).
  \end{equation}
\end{lemma}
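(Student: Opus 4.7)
The plan is to combine a Chebyshev-type inequality with the second-moment formula from Proposition~\ref{pro:Harris_L2}(b), exploiting the crucial cancellation that occurs because the $O(n)$ leading term of that formula carries a factor $\inp{\chi}{f}^2$, which vanishes under our hypothesis $\inp{\chi}{f}=0$. This is what turns a sum that would typically be of order $n$ into one of order $\sqrt{n}$, so that after dividing by $n$ we obtain concentration at $0$.

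First I would observe that since $\sum_{v\in N_n^+}f(\varphi_v)=0$ on $\{N_n^+=\emptyset\}$, the event in question is contained in $\{N_n^+\neq\emptyset\}$, so
\[
  P_x\Big[\abs[\Big]{n^{-1}\sum_{v\in N_n^+}f(\varphi_v)}>\delta\,\Big|\,N_n^+\neq\emptyset\Big]
  \le \frac{1}{\delta^2 n^2 P_x[N_n^+\neq\emptyset]}\,
  E_x\Big[\Big(\sum_{v\in N_n^+}f(\varphi_v)\Big)^2\Big]
\]
by Markov. Next, expanding the square and applying Proposition~\ref{pro:Harris_L2}(b) with $g=f$, the term $\chi(x)\frac{d-1}{d}\inp{\chi}{f}^2\inp{\chi^2}{\chi}\,n$ disappears, leaving
\[
  E_x\Big[\Big(\sum_{v\in N_n^+}f(\varphi_v)\Big)^2\Big]
  = E_x\Big[\sum_{v\in N_n^+}f(\varphi_v)^2\Big] + \varepsilon_n^{f,f}(x),
\]
where $\|\varepsilon_n^{f,f}\|\le C\|f\|^2$ and $|\varepsilon_n^{f,f}(x)|\le \|f\|^2 \tilde q(x)$ by the same proposition. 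The diagonal term $E_x[\sum f^2]$ is then treated by Proposition~\ref{pro:Harris_L2}(a) applied to $f^2$, yielding $\chi(x)\inp{\chi}{f^2}+\varepsilon_n^{f^2}(x)$; altogether
\[
  E_x\Big[\Big(\sum_{v\in N_n^+}f(\varphi_v)\Big)^2\Big]
  \le c\bigl(\chi(x)\inp{\chi}{f^2} + R(x)\bigr),
\]
where $R(x) := |\varepsilon_n^{f^2}(x)| + \|f\|^2\tilde q(x)$ is uniformly $L^2(\nu)$-bounded in $n$.

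For the denominator, Theorem~\ref{thm:diameter} gives $nP_x[N_n^+\neq\emptyset]\to C_1\chi(x)$ both pointwise and in $L^2(\nu)$; combined with \eqref{eq:crit-chi_bounds}, which yields $\chi\ge c>0$ on $[h^*,\infty)$, we obtain $P_x[N_n^+\neq\emptyset]\ge c\chi(x)/n$ for all sufficiently large $n$ uniformly in $x\ge h^*$. Plugging this into the Chebyshev bound above yields
\[
  P_x\Big[\abs[\Big]{n^{-1}\sum_{v\in N_n^+}f(\varphi_v)}>\delta\,\Big|\,N_n^+\neq\emptyset\Big]
  \le \frac{c}{\delta^2}\underbrace{\Big(\frac{\inp{\chi}{f^2}}{n} + \frac{R(x)}{n\chi(x)}\Big)}_{=:\,\varepsilon_n^f(x)}.
\]
Finally, I would verify $\varepsilon_n^f\to 0$ pointwise and in $L^2(\nu)$: pointwise decay is immediate from the $1/n$ prefactors, using that $R(x)/\chi(x)$ is finite on $[h^*,\infty)$ since $\chi$ is bounded below there; and $\|\varepsilon_n^f\|_{L^2(\nu)}\to 0$ follows from the $L^2(\nu)$-boundedness of $R$ combined with $1/\chi\le c$ on $[h^*,\infty)$, so that $R/\chi \in L^2(\nu)$ and the $1/n$ prefactor gives decay.

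The main technical subtlety is the control of the diagonal term $E_x[\sum f^2]$: Proposition~\ref{pro:Harris_L2}(a) nominally needs $f^2\in L^2(\nu)$, i.e.\ $f\in L^4(\nu)$, whereas the statement only assumes $f\in L^2(\nu)$. This can be handled either by standing the extra integrability (which holds automatically in all subsequent applications of this lemma) or, if needed, by a truncation argument combined with Lemma~\ref{lem:L-pointwise} to bound $L^n[f^2]$ pointwise without appealing to an $L^2$-error estimate. Either way, the derivation above produces the desired envelope $\varepsilon_n^f$ with both convergence properties.
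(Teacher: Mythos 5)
Your proof takes essentially the same route as the paper: conditional Markov inequality, second-moment formula from Proposition~\ref{pro:Harris_L2}(b) exploiting the vanishing of the $\inp{\chi}{f}^2 n$ term, and a lower bound on $P_x[N_n^+\neq\emptyset]$. The one step I would flag is the denominator bound. You assert that Theorem~\ref{thm:diameter} together with \eqref{eq:crit-chi_bounds} gives $P_x[N_n^+\neq\emptyset]\ge c\,\chi(x)/n$ for all $n$ large \emph{uniformly in} $x\ge h^*$. Theorem~\ref{thm:diameter} only gives the $(1+o(1))$ asymptotics \emph{pointwise} in $x$ (and \eqref{eq:thm-diameter-step1} gives $L^2$-convergence), neither of which yields uniformity in $x$; for a fixed $n$ and $x$ very large the error could a priori be large. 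The paper sidesteps this cleanly via stochastic domination \eqref{eq:stoch_dom}: $P_x[N_n^+\neq\emptyset]\ge P_{h^*}[N_n^+\neq\emptyset]$ for all $x\ge h^*$, and Theorem~\ref{thm:diameter} applied at the single point $x=h^*$ shows $nP_{h^*}[N_n^+\neq\emptyset]$ is bounded below by a positive constant uniformly in $n$. Since you only need $(n^2 P_x[N_n^+\neq\emptyset])^{-1}\le c/n$ — the $\chi(x)$ factor in your lower bound is not actually used, as $\chi(x)\inp{\chi}{f^2}/n$ and $R(x)/n$ already go to zero in $L^2(\nu)$ and pointwise — your argument goes through once you replace that step with the stochastic domination reduction. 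Your remark about the $f\in L^4(\nu)$ requirement for the diagonal term is a real subtlety shared by the paper's own terse formulation, and your suggested workarounds (restricting to $L^4$ inputs or truncation) are both reasonable.
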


\begin{proof}
  By the conditional Markov inequality
  \begin{equation}
    P_x\Big[\abs[\Big]{\frac 1n \sum_{v \in N^+_n} f(\varphi_v)}
      >\delta \Big|
      N^+_n\neq\emptyset\Big]
    \leq \delta^{-2}
    E_x\Big[\Big(\frac{1}{n} \sum_{v \in N^+_n} f(\varphi_v)\Big)^2
      \Big| N^+_n\neq\emptyset\Big].
  \end{equation}
  The conditional expectation on the right-hand side satisfies
  \begin{equation}
    E_x\Big[\Big(\frac{1}{n} \sum_{v \in N^+_n} f(\varphi_v)\Big)^2
      \Big| N^+_n\neq\emptyset\Big]
    = \Big(\frac{1}{n P_x[N^+_n\neq\emptyset]}\Big)
    \Big(\frac{1}{n}E_x\Big[\Big(\sum_{v \in N^+_n}
          f(\varphi_v)\Big)^2\Big]\Big).
  \end{equation}
  By Proposition~\ref{pro:Harris_L2}(a,b), since $\inp{\chi}{f} = 0$,
  \begin{equation}
    \frac 1n E_x\Big[\Big(\sum_{v \in N_n^+} f(\varphi_v)\Big)^2\Big]
    = \varepsilon_n^f(x),
  \end{equation}
  with $\varepsilon_n^f \to 0$ in $L^2$ and pointwise. Further, by the
  stochastic domination \eqref{eq:stoch_dom}, using
  Theorem~\ref{thm:diameter},
  \(
    (n P_x[N^+_n\neq\emptyset])^{-1}
    \le (n P_{h^*}[N^+_n \neq \emptyset])^{-1} \le c
  \)
  and the statement of the lemma follows.
\end{proof}

\begin{proof}[Proof of Theorem~\ref{thm:conv-to-exp}]
  We first consider the special case $f = \chi$; the general case will
  then easily follow using Lemma~\ref{lem:conv-to-exp_prep}.

  We start by showing that for every $\alpha > 0$,
  \begin{equation}
    \label{eq:conv-to-exp_p1}
    E\big[ e^{-\alpha Z_n^{\chi ,x}}\big]
    = \frac{C_1}{C_1+\alpha} + \varepsilon_n^\alpha(x),
  \end{equation}
  with $\varepsilon_n^\alpha \to 0$ pointwise and in $L^2(\nu)$. Using
  the definitions \eqref{eq:def-u_n^f}, \eqref{eq:f_k-def} of $u_n^f$ and
  $f_\lambda$, setting as before $u_n^\lambda = u_n^{f_\lambda}$, and
  recalling \eqref{eq:uzeroP}, we obtain
  \begin{equation}
    \begin{split}
      \label{eq:Px[exp(...)]-start}
      E\big[ e^{-\alpha Z_n^{\chi ,x}}\big]
      = E_x\Big[\exp\Big(-\frac{\alpha}{n}&\sum_{v \in
            N_n^+}\chi(\varphi_v)\Big) \Big| N^+_n\neq\emptyset\Big]
      = 1 - \frac{u_n^{n/\alpha}(x)}{u_n^0(x)}.
    \end{split}
  \end{equation}
  By Lemma~\ref{lem:u_n^k-L2-asymptote} and \eqref{eq:an0},
  \begin{equation}
    u_n^0(x) = C n^{-1} \chi (x) (1 + \varepsilon (x)),
  \end{equation}
  where $\varepsilon \to 0$ pointwise. Since
  Lemma~\ref{lem:u_n^k-L2-asymptote} holds uniformly in $\lambda$, we can
  apply it with $\lambda=n/\alpha$ to obtain
  \begin{equation}
    \label{eq:unn}
    u_n^{n/\alpha}(x) = a_n^{n/\alpha} \chi (x) (1 + \tilde\varepsilon (x)),
  \end{equation}
  again with $\tilde \varepsilon \to 0$ pointwise. Moreover, by
  Lemma~\ref{lem:a^f-asymtote}, again using the uniformity in $\lambda$,
  \begin{equation}
    \label{eq:na_n}
    (n a_n^{n/\alpha})^{-1} = (n a_0^{n/\alpha})^{-1} + C_1^{-1} + o(1)
    \quad \text{as $n \to \infty$}.
  \end{equation}
  By the definitions of $a_n^\lambda$, $u_n^\lambda$ and $f_\lambda$, and
  by the monotone convergence theorem,
  \begin{equation}
    \begin{split}
      \label{eq:na_n^n-asym}
      n a_0^{n/\alpha} &= n \inp{\chi}{u_0^{n/\alpha}}
      = n \inp[\big]{\chi}
      { E_{\boldsymbol{\cdot}}\big[1-f_{n/\alpha}(\varphi_o) \big]}
      \\&= n \inp\chi {1-f_{n/\alpha}}
      = \inp[\big] \chi
      {n\big(1- e^{-\alpha \chi(\boldsymbol{\cdot})/n}\big)}
      \xrightarrow{n\to\infty}
      \alpha \inp \chi \chi   = \alpha.
    \end{split}
  \end{equation}
  Inserting this into~\eqref{eq:na_n} yields
  \begin{equation}
    \label{eq:na_n2}
    a_n^{n/\alpha} =  \frac{\alpha C_1(1+o(1))}{n (\alpha +  C_1)}
    \quad \text{as $n \to \infty$}.
  \end{equation}
  Combining \eqref{eq:Px[exp(...)]-start}--\eqref{eq:unn},
  \eqref{eq:na_n2} gives
  \begin{equation}
    1- E\big[e^{-\alpha Z_n^{\chi ,x}}\big]
    = \frac {\alpha }{\alpha +C_1} (1+o(1)),
  \end{equation}
  which shows the pointwise convergence in \eqref{eq:conv-to-exp_p1}. The
  $L^2(\nu)$-convergence follows by the dominated convergence theorem,
  since the left-hand side of \eqref{eq:conv-to-exp_p1} is bounded by $1$.

  By Lévy's continuity theorem for the Laplace transform,
  \eqref{eq:conv-to-exp_p1} implies the statement \eqref{eq:conv-to-exp}
  of the theorem in the case $Z^{f,x}_n$ with $f = \chi$. For general
  $f \in L^2(\nu)$ we write $f = \inp{\chi}{f}\chi + \beta[f]$ as usual.
  Then $Z_n^{f,x} = \inp \chi f Z_n^{\chi,x} + Z_n^{\beta [f],x}$. The
  statement for $Z_n^{f,x}$ then directly follows, since the first
  summand converges to an exponential random variable with mean
  $C_1^{-1}\inp \chi f$, by the first step of the proof, and the second
  summand converges to $0$ in probability, by
  Lemma~\ref{lem:conv-to-exp_prep}, since $\inp{\chi}{\beta[f]} = 0$.

  We now show the statement of the theorem for $Z_n^{f}$. Let $\nu_n$ be the
  law of $\varphi_o$ under $P[\,\cdot\, | N_n^+ \neq \emptyset]$. By
  integrating \eqref{eq:conv-to-exp_p1} over $\nu_n (\D x)$
  \begin{equation}
    \label{eq:e^-Z_n^chi-1}
    E\big[e^{-Z_n^\chi }\big]
    = \frac{C_1}{C_1+\alpha} + \int \varepsilon_n^\alpha(x) \nu_n(\D x).
  \end{equation}
  We need to show that the last integral is $o(1)$. Observe that
  \(
    \nu_n(\D x)
    = P[\varphi_o \in \D x | N_n^+ \ne \emptyset]
    = P[\varphi_o \in \D x, N_n^+ \ne \emptyset]P[N_n^+ \neq \emptyset]^{-1}
    = P_x[N_n^+ \neq \emptyset]P[N_n^+ \neq \emptyset]^{-1} \nu(\D x)
  \).
  Therefore, using the Cauchy--Schwarz inequality and that
  $\varepsilon_n^\alpha \to 0$ in $L^2(\nu)$, the integral in
  \eqref{eq:e^-Z_n^chi-1} is bounded above by
  \begin{equation}
    \label{eq:e^-Z_n^chi-2}
    o(1) \Big(
      \int \frac{P_x[N_n^+ \neq \emptyset]^2}{P[N_n^+ \neq \emptyset]^2}
      \nu(\D x) \Big)^{1/2}.
  \end{equation}
  By \eqref{eq:thm-diameter-step1}, $P_x[N_n^+ \neq \emptyset]P[N_n^+ \neq
  \emptyset]^{-1} = \chi(x)\inp{1}{\chi}^{-1} + \varepsilon_n(x)$, with
  $\varepsilon_n(x) \to 0$ in $L^2(\nu)$ as $n \to \infty$. This implies that
  the integral in \eqref{eq:e^-Z_n^chi-2} is $O(1)$ and together with
  \eqref{eq:e^-Z_n^chi-1} and again Lévy's continuity theorem shows statement
  \eqref{eq:conv-to-exp} for $Z_n^f$ with $f=\chi$. For $Z_n^f$ and general
  $f \in L^2(\nu)$ the statement then follows by integrating
  \eqref{eq:conv-to-exp_prep} over $\nu_n(\D x)$ and applying similar
  arguments as above.
\end{proof}

\subsection{Further results for the conditioned model}
\label{subsec:further-cond-res}

The following few results which all concern the model conditioned to
$\set{N_n^+ \neq \emptyset}$ will later be used in the proof of
Theorem~\ref{thm:inv-principle}. The first one explains the role of the
measure $Q_x$ introduced using the spine construction in
Section~\ref{ss:spined-bp}. Similar results are well established in
branching processes literature,  see for instance
\cite[Theorem~5]{HarHar07} or \cite[Theorem 4]{ChaRou90}. Proposition~1.5
in \cite{Pow19} can be seen as the analogous result for branching
diffusion on bounded domains.

\begin{proposition}
  \label{prop:P-conditioned-is-Q}
  For all $K \in \mathbb{N}$, $x \ge h^*$ and $B \in \F_{K}$ (see
    \eqref{eq:filtrations})
  \begin{equation}
    \label{eq:P-conditioned-is-Q_toshow}
    \lim_{n\to\infty} P_x[B | N^+_n \neq \emptyset] = \Q_x[B].
  \end{equation}
\end{proposition}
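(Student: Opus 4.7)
The plan is to condition on $\F_K$ using the branching structure, extract the pointwise limit from the sharp one-arm asymptotic of Theorem~\ref{thm:diameter}, exchange limit and expectation by dominated convergence, and finally recognise the resulting $\F_K$-density as the Radon--Nikodym derivative of $\Q_x$ with respect to $P_x$.

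The branching construction (as already exploited in the derivation of~\eqref{eq:prod_n-f-recrusion}) gives
\begin{equation}
\label{eq:plan-markov}
P_x[B \cap \set{N_n^+ \neq \emptyset}] = E_x\Big[1_B\Big(1 - \prod_{v \in N_K^+}(1-u_{n-K}^0(\varphi_v))\Big)\Big],
\end{equation}
where $u_m^0(y) = P_y[N_m^+ \neq \emptyset]$. From Lemma~\ref{lem:u_n^k-L2-asymptote} and Lemma~\ref{lem:a^f-asymtote} one has the pointwise asymptotic $u_m^0(y) \sim C_1\chi(y)/m$ on $[h^*,\infty)$, which also governs the denominator $P_x[N_n^+\neq\emptyset] \sim C_1\chi(x)/n$. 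Since $\abs{N_K^+} \le d^K$ is finite, combining this with the Bonferroni-type bound~\eqref{eq:aiinequality-L2} yields the pointwise convergence
\begin{equation}
\label{eq:plan-pw}
\frac{1 - \prod_{v \in N_K^+}(1-u_{n-K}^0(\varphi_v))}{P_x[N_n^+\neq\emptyset]} \xrightarrow{n\to\infty} \frac{\sum_{v \in N_K^+}\chi(\varphi_v)}{\chi(x)},
\end{equation}
the quadratic correction being $O(n^{-2})$ against a leading $\Theta(n^{-1})$ term.

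To pass to the limit under $E_x[1_B \,\cdot\,]$ I would invoke dominated convergence. The elementary bound $1-\prod_i(1-a_i) \le \sum_i a_i$, together with the pointwise estimate $u_m^0(y) \le L[u_{m-1}^0](y) \le \norm{u_{m-1}^0}q(y) \le c q(y)/m$ coming from Lemma~\ref{lem:u_n-recursion}, Lemma~\ref{lem:L-pointwise} and Lemma~\ref{lem:u_n^0-upper-bound}, shows that the ratio in~\eqref{eq:plan-pw} is bounded for all large $n$ by $C(x)\sum_{v \in N_K^+}q(\varphi_v)$. Iterating Lemma~\ref{lem:L-pointwise} gives $E_x[\sum_{v \in N_K^+} q(\varphi_v)] = L^K[q](x) < \infty$, so the dominating random variable is integrable and
\begin{equation}
\label{eq:plan-limit}
\lim_{n\to\infty} P_x[B \mid N_n^+ \neq \emptyset] = \frac{1}{\chi(x)}E_x\Big[1_B \sum_{v \in N_K^+}\chi(\varphi_v)\Big].
\end{equation}

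It then remains to recognise the right-hand side of~\eqref{eq:plan-limit} as $\Q_x[B]$. Along a fixed spine trajectory $(o=v_0,\dots,v_K)$ the kernel $\mathcal K$ of~\eqref{eq:K-def} has density $d\chi(y)/\chi(x)$ with respect to the underlying GFF transition $\rho_Y(y-x/d)\D y$, so telescoping yields the Radon--Nikodym derivative $d^K\chi(\varphi_{\sigma_K})/\chi(x)$ of $\Q_x$ with respect to $P_x^1$ on $\F_K^1$ (the convention $\chi(\dagger)=0$ absorbing the restriction that the spine stays in $\mathcal{C}_o$). Since under $P_x^1$ the mark $\sigma_K$ is uniform on $S_K^+$ and independent of the field, averaging out the spine produces the $\F_K$-density $\chi(x)^{-1}\sum_{v\in N_K^+}\chi(\varphi_v)$, matching~\eqref{eq:plan-limit}. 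The principal obstacle is the dominated convergence step: the crude bound $u_m^0 \le 1$ is useless after division by $P_x[N_n^+\neq\emptyset] = \Theta(n^{-1})$, so one really needs the refined pointwise control $u_m^0(y) = O(q(y)/m)$ with $q \in L^2(\nu)$ from Lemma~\ref{lem:L-pointwise} and the remark following it.
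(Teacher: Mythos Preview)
Your proof is correct and follows essentially the same strategy as the paper's: condition on $\F_K$ via the branching structure, extract the pointwise limit from Theorem~\ref{thm:diameter}, pass to the limit by dominated convergence, and identify the resulting density with $\Q_x$. The only notable technical variation is in the domination step: the paper bounds the ratio by $g_n = c'\sum_{v\in N_K^+}(\chi(\varphi_v)+\bar\varepsilon_{n-K}(\varphi_v))$ and invokes the \emph{generalized} dominated convergence theorem (showing $g_n\to g$ in $L^1(P_x)$ via Proposition~\ref{pro:Harris_L2}), whereas you produce a single $n$-independent dominating function $C(x)\sum_{v\in N_K^+} q(\varphi_v)$ by combining $u_m^0 \le L[u_{m-1}^0]$ with the pointwise bound of Lemma~\ref{lem:L-pointwise} and the fact $q\in L^2(\nu)$ from the remark following it. Your route is arguably cleaner here since it avoids the extra $L^1$-convergence verification, at the cost of invoking the $L^2(\nu)$-integrability of $q$ which the paper explicitly declares it will not use.
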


\begin{proof}
  We follow similar steps as in the proof of \cite[Proposition~1.5]{Pow19}.
  For every $n\ge K$,
  \begin{equation}
    \label{eq:PBtoq} P_x[B | N^+_n \neq \emptyset]
    = E_x\Big[ \frac{1_B P_x[N^+_n\neq\emptyset | \F_K]}
      {P_x[N^+_n\neq\emptyset]}\Big ].
  \end{equation}
  Writing $N^+_K = \set{v_1, \dots, v_{\abs{N^+_K}}}$ and defining the
  events $A_i = \{v_i$ is root of a subtree in $\mathcal C_o$ with height
    of at least $n-K\}$,
  \begin{equation}
    \label{eq:aidec}
    \set{N^+_n\neq\emptyset} = \bigcup_{k=1}^{\abs{N_K^+}}A_k
    = \bigcup_{k=1}^{\abs{N_K^+}}\Big(A_k  \cap \bigcap_{j<k}A_j^c\Big),
  \end{equation}
  where the last union is disjoint. Since the $A_i$ are independent
  conditionally on $\mathcal F_K$,
  \begin{equation}
    P_x[N^+_n\neq\emptyset | \F_K]
    = \sum_{i=1}^{\abs{N^+_K}} P_{\varphi_{v_i}}[N_{n-K}^+\neq\emptyset]
    \prod_{j<i} P_{\varphi_{v_j}}[N^+_{n-K}=\emptyset].
  \end{equation}
  By Theorem~\ref{thm:diameter},
  \(
    P_{\varphi_{v_i}}[N^+_{n-K}\neq \emptyset]
    \sim C_1(n-K)^{-1}\chi(\varphi_{v_i})\to 0
  \)
  and $P_x[N^+_n\neq\emptyset] \sim C_1n^{-1}\chi(x)$ as $n\to\infty$.
  Therefore,
  \begin{equation}
    \label{eq:Px[Nn>0|FK]}
    \lim_{n\to\infty}
    \frac{1_B P_x[N^+_n\neq\emptyset| \F_K]}{P_x[N^+_n\neq\emptyset]}
    = 1_B \frac{\sum_{v \in N_K}\chi(\varphi_v)}{\chi(x)},
    \qquad P_x\text{-a.s.}
  \end{equation}
  In order to insert this into \eqref{eq:PBtoq}, we use the generalised
  dominated convergence theorem. To this end we bound the fraction on the
  left-hand side of \eqref{eq:Px[Nn>0|FK]} by a function $g_n$ satisfying
  $g_n \to g$ in $L^1(P_x)$ and $P_x$-a.s.: Using \eqref{eq:aidec} and
  Theorem~\ref{thm:diameter} (or \eqref{eq:thm-diameter-step1},
    \eqref{eq:an0} from its proof),
  \begin{equation}
    P_x[N^+_n\neq\emptyset |\F_K]
    \leq \sum_{v \in N^+_K}P_{\varphi_v}[N^+_{n-K}\neq\emptyset]
    = \sum_{v \in N^+_K} C_1(n-K)^{-1}
    (\chi(\varphi_v)+\bar{\varepsilon}_{n-K}(\varphi_v))
  \end{equation}
  with $\bar{\varepsilon}_n \to 0$ pointwise and in $L^2(\nu)$ as
  $n \to \infty$. For the denominator, again by
  Theorem~\ref{thm:diameter},
  $P_x[N^+_n\neq\emptyset] \geq P_{h^*}[N^+_n\neq\emptyset] \geq cn^{-1}$
  for some constant $c$. Thus, for $n>K$,
  \begin{equation}
    \begin{split}
      \frac{1_B P_x[N^+_n\neq\emptyset | \F_K]}
      {P_x[N^+_n\neq\emptyset]}
      &\leq \frac{cn}{n-K}\sum_{v \in N_K^+}
      (\chi(\varphi_v) + \bar{\varepsilon}_{n-K}(\varphi_v))
      \\&\le c' \sum_{v \in N_K^+} (\chi(\varphi_v)
        + \bar{\varepsilon}_{n-K}(\varphi_v)) =: g_n,
    \end{split}
  \end{equation}
  which converges a.s.~to $g := c' \sum_{v \in N_K^+} \chi(\varphi_v)$.
  Moreover, by Proposition~\ref{pro:Harris_L2},
  \begin{equation}
    \label{eq:l1dom}
    E_x[\abs{g_n-g}]
    \le E_x\big[
      c' \sum_{v\in N_K^+} \abs{ \bar{\varepsilon}_{n-K}(\varphi_v)}
      \big]
    =\inp{\chi}{ \abs{\bar{\varepsilon}_{n-K}}}\chi(x)
    + \varepsilon_K^{\abs{\bar{\varepsilon}_{n-K}}}(x).
  \end{equation}
  Since $\norm{\bar \varepsilon_n}\to 0$, the bounds established in
  \eqref{eq:errorsa} imply that the right-hand side of \eqref{eq:l1dom}
  converges to $0$, that is $g_n \to g$ in $L^1(P^x)$. Therefore,
  by the generalised dominated convergence theorem,
  using \eqref{eq:PBtoq}, \eqref{eq:Px[Nn>0|FK]},
  \begin{equation}
    \lim_{n\to\infty }P_x[B | N^+_n\neq\emptyset]
    = E_x\Big[1_B \frac{\sum_{v\in N_K}\chi(\varphi_v)}{\chi(x)}\Big]
    = Q_x[B],
  \end{equation}
  where the last equality follows from Lemma~\ref{lem:many-to-few_general}
  with $k=1$ and $Y(v_1)=1_B \chi(\varphi_{v_1})$.
\end{proof}

The following lemma and its corollary follow almost directly from the
results established in the proof of Theorem~\ref{thm:conv-to-exp}.

\begin{lemma}
  \label{lem:sum_f_average}
  For every $f \in L^2(\nu)$,  $x \ge h^*$ and $\delta >0$,
  \begin{equation}
    \label{eq:sum_f_average-toshow}
    \lim_{n\to\infty}P_x\bigg[\abs[\bigg]{
        \frac{\sum_{v\in N_n^+} f(\varphi_v)}
        {\sum_{v \in N_n^+}\chi(\varphi_v)}
        - \inp{\chi}{f}}
      > \delta  \,\bigg|\,
      N_n^+\neq \emptyset \bigg] =0.
  \end{equation}
\end{lemma}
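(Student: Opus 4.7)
The plan is to reduce to the mean-zero case via the orthogonal decomposition $f = \inp{\chi}{f}\chi + g$ with $g \defeq f - \inp{\chi}{f}\chi$, so that $\inp{\chi}{g} = 0$ (recall that $\norm{\chi}_{L^2(\nu )} = 1$ by Proposition~\ref{prop:L_h-chi-connection}). Then
\begin{equation*}
  \frac{\sum_{v\in N_n^+} f(\varphi_v)}{\sum_{v\in N_n^+}\chi(\varphi_v)} - \inp{\chi}{f}
  = \frac{n^{-1}\sum_{v\in N_n^+} g(\varphi_v)}{n^{-1}\sum_{v\in N_n^+}\chi(\varphi_v)},
\end{equation*}
so the claim reduces to showing that this ratio converges to $0$ in conditional probability.

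For the numerator, Lemma~\ref{lem:conv-to-exp_prep} applied to $g$ directly yields $n^{-1}\sum_{v\in N_n^+} g(\varphi_v) \to 0$ in probability under $P_x[\,\cdot\,|N_n^+\neq\emptyset]$. For the denominator, Theorem~\ref{thm:conv-to-exp} applied with $f = \chi$ gives, in view of $\inp{\chi}{\chi} = 1$, the convergence in distribution
\begin{equation*}
  \frac{1}{n}\sum_{v\in N_n^+}\chi(\varphi_v) \;\xrightarrow{d}\; C_1^{-1} Z,
\end{equation*}
where $Z$ is exponential of mean one. To combine these, I would use a standard Slutsky-type bound: writing $A_n$ and $B_n$ for the numerator and denominator above, for any $\epsilon>0$ and $\delta>0$,
\begin{equation*}
  P_x\Big[\Big|\tfrac{A_n}{B_n}\Big| > \delta \,\Big|\, N_n^+\neq\emptyset\Big]
  \le P_x\big[|A_n| > \delta\epsilon \,\big|\, N_n^+\neq\emptyset\big]
  + P_x\big[B_n < \epsilon \,\big|\, N_n^+\neq\emptyset\big].
\end{equation*}
The first term tends to $0$ as $n\to\infty$ for each fixed $\epsilon>0$, while by the Portmanteau theorem,
\begin{equation*}
  \limsup_{n\to\infty} P_x\big[B_n < \epsilon \,\big|\, N_n^+\neq\emptyset\big]
  \le P[C_1^{-1} Z \le \epsilon] = 1 - e^{-C_1\epsilon}.
\end{equation*}
Letting first $n\to\infty$ and then $\epsilon \to 0$ concludes the argument.

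I do not expect any serious obstacle. The essential point making the Slutsky-type passage work is that the limiting law $C_1^{-1}Z$ has no atom at $0$, and that $B_n$ is strictly positive on $\set{N_n^+\neq\emptyset}$ since $\chi > 0$ on $[h^*,\infty)$ by Proposition~\ref{prop:L_h-chi-connection}, so the ratio is well-defined under the conditional measure.
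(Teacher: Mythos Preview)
Your proposal is correct and follows essentially the same route as the paper: the same orthogonal decomposition $f=\inp{\chi}{f}\chi+\beta[f]$, the same reduction to a ratio $A_n/B_n$, Lemma~\ref{lem:conv-to-exp_prep} for the numerator, Theorem~\ref{thm:conv-to-exp} for the denominator, and the same inclusion $\{|A_n/B_n|>\delta\}\subset\{|A_n|>\delta\epsilon\}\cup\{B_n<\epsilon\}$. The only cosmetic difference is that the paper packages the two-parameter limit into a single diagonal sequence $\bar\delta_n\to 0$ chosen slowly enough, whereas you take the cleaner ``first $n\to\infty$, then $\epsilon\to 0$'' route; both are valid.
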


\begin{proof}
  Observe that
  \begin{equation}
    \begin{split}
      \label{eq:sum_f_avg-simplify}
      \frac{\sum_{v\in N^+_n} f(\varphi_v)}
      {\sum_{v \in N^+_n}\chi(\varphi_v)}
      - \inp{\chi}{f}
      = \frac{\sum_{v\in N^+_n}(f(\varphi_v)
          -\inp{\chi}{f}\chi(\varphi_v))}
      {\sum_{v \in N^+_n}\chi(\varphi_v)}
      = \frac{n^{-1}\sum_{v\in N^+_n} \beta[f](\varphi_v)}
      {n^{-1}\sum_{v \in N^+_n}\chi(\varphi_v)}.
    \end{split}
  \end{equation}
  For any $A,B\in \mathbb R$ and $\varepsilon ,\delta >0$,
  \(
    \set{\abs{A/B} > \varepsilon} \subset \set{\abs A>\delta}
    \cup \set{\abs B <\delta/\varepsilon}
  \),
  which together with \eqref{eq:sum_f_avg-simplify} allows us to bound the
  left-hand side of \eqref{eq:sum_f_average-toshow} by
  \begin{equation}
    \label{eq:sum_bound}
    P_x\Big[\abs[\Big]{n^{-1}\sum_{v \in N^+_n}\beta[f](\varphi_v)} >
      \bar{\delta}_n
      \,\Big|\, N^+_n\neq\emptyset\Big]
    + P_x\Big[n^{-1} \sum_{v \in N^+_n}\chi(\varphi_v) <
      \bar{\delta}_n/\delta
      \,\Big|\, N^+_n\neq\emptyset\Big].
  \end{equation}
  We now show that there is a sequence $\bar{\delta}_n \to 0$ such that
  both summands converge to zero. By Lemma~\ref{lem:conv-to-exp_prep},
  the first summand is bounded by
  $\bar{\delta}_n^{-2}\bar{\varepsilon}_n^f(x)$, where the
  $\bar{\varepsilon}_n^f$-term is independent of $\bar\delta_n$ and
  converges to zero. Therefore, if $\bar{\delta}_n \to 0$ sufficiently
  slowly, then also $\bar{\delta}_n^{-2}\bar{\varepsilon}_n^f(x) \to 0$.
  For the
  second summand, fix $\varepsilon >0$. Then for $n$ large enough so that
  $\bar \delta_n/\delta \le \varepsilon$,
  \begin{equation}
    P_x\Big[n^{-1} \sum_{v \in N^+_n}\chi(\varphi_v) <
      \bar{\delta}_n/\delta
      \,\Big|\, N^+_n\neq\emptyset\Big]
    \le
    P_x\Big[n^{-1} \sum_{v \in N^+_n}\chi(\varphi_v) <
      \varepsilon
      \,\Big|\, N^+_n\neq\emptyset\Big].
  \end{equation}
  By Theorem~\ref{thm:conv-to-exp}, the right-hand side converges to
  $P(Z < C_1 \varepsilon)$ which can be made arbitrarily small by
  choosing $\varepsilon$ small. This implies that the second summand in
  \eqref{eq:sum_bound} converges to zero and completes the proof.
\end{proof}

\begin{corollary}
  \label{prop:f-average}
  For every $x\ge h^*$, $\delta >0$ and $f,g\in L^2(\nu)$ with $g>0$,
  \begin{equation}
    \label{eq:foverg}
    \lim_{n\to\infty}P_x\bigg[\abs[\bigg]{
        \frac{\sum_{v \in N^+_n}f(\varphi_v)}
        {\sum_{v \in N^+_n} g(\varphi_v)}
        - \frac{\inp{\chi}{f}}{\inp{\chi}{g}}
    } > \delta \,\bigg|\, N^+_n\neq\emptyset \bigg] = 0.
  \end{equation}
  In particular, setting $g \equiv 1$,
  \begin{equation}
    \label{eq:f-average_implication}
    \lim_{n\to\infty}P_x\bigg[\abs[\bigg]{
        \frac{1}{\abs{N_n^+}}\sum_{v \in N_n^+}f(\varphi_v)
        - \frac{\inp{\chi}{f}}{\inp{\chi}{1}}
    } > \delta \,\bigg|\, N^+_n\neq\emptyset \bigg] = 0.
  \end{equation}
\end{corollary}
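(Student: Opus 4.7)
The plan is to reduce the statement to Lemma~\ref{lem:sum_f_average} by a continuous mapping argument applied simultaneously to $f$ and to $g$. Specifically, I would write
\begin{equation}
  \frac{\sum_{v \in N_n^+} f(\varphi_v)}{\sum_{v \in N_n^+} g(\varphi_v)}
  = \frac{A_n}{B_n},
  \qquad
  A_n \defeq \frac{\sum_{v \in N_n^+} f(\varphi_v)}{\sum_{v \in N_n^+}\chi(\varphi_v)},
  \quad
  B_n \defeq \frac{\sum_{v \in N_n^+} g(\varphi_v)}{\sum_{v \in N_n^+}\chi(\varphi_v)},
\end{equation}
and apply Lemma~\ref{lem:sum_f_average} twice (once with $f$, once with $g$) to obtain that, under $P_x[\,\cdot\,\mid N_n^+ \neq \emptyset]$, $A_n \to \inp{\chi}{f}$ and $B_n \to \inp{\chi}{g}$ in probability as $n\to\infty$.

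Next, I would verify that $\inp{\chi}{g} > 0$: by assumption $g > 0$, and by Proposition~\ref{prop:L_h-chi-connection} $\chi$ is strictly positive on $[h^*,\infty)$, which has positive $\nu$-measure, so $\inp{\chi}{g} = \int g\chi \,\D\nu > 0$. Since convergence in probability of two sequences to deterministic limits implies joint convergence in probability of the pair, and the map $(a,b)\mapsto a/b$ is continuous at any $(a,b)$ with $b\neq 0$, the continuous mapping theorem yields convergence of $A_n/B_n$ in conditional probability to $\inp{\chi}{f}/\inp{\chi}{g}$, which is exactly \eqref{eq:foverg}. The special case \eqref{eq:f-average_implication} with $g \equiv 1$ then follows at once, since $\sum_{v\in N_n^+}1 = \abs{N_n^+}$ and $\inp{\chi}{1}\in(0,\infty)$.

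I do not anticipate any substantive obstacle: the argument is essentially a routine continuous-mapping reduction to the already-established Lemma~\ref{lem:sum_f_average}. The only point requiring a small check is the strict positivity of the limiting denominator $\inp{\chi}{g}$, which is immediate from the hypothesis $g > 0$ together with the positivity of $\chi$ on $[h^*,\infty)$ recalled in Proposition~\ref{prop:L_h-chi-connection}.
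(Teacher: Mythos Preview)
Your proposal is correct and matches the paper's approach essentially exactly: the paper also writes the ratio as $A_n/B_n$ with $A_n = \sum f(\varphi_v)/\sum \chi(\varphi_v)$ and $B_n = \sum g(\varphi_v)/\sum \chi(\varphi_v)$, then observes that the bad event is contained in $\{|A_n - \inp{\chi}{f}| \ge \delta'\} \cup \{|B_n - \inp{\chi}{g}| \ge \delta'\}$ for a suitable $\delta'>0$, and applies Lemma~\ref{lem:sum_f_average} to each. Your continuous-mapping phrasing is just a slightly more abstract packaging of the same inclusion argument, and your explicit check that $\inp{\chi}{g}>0$ is the point that makes the choice of $\delta'$ possible.
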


\begin{proof}
  It is easy to see that there is $\delta' = \delta'(\delta ,f,g)>0$ such
  that the event on the left-hand side of \eqref{eq:foverg} is contained
  in the union of
  $\{\abs{\sum f(\varphi_v)/\sum \chi(\varphi_v)-\inp{\chi}{f}}\ge \delta'\}$
  and
  $\{\abs{\sum g(\varphi_v)/\sum \chi(\varphi_v)-\inp{\chi}{g}}\ge \delta'\}$.
  The statement then follows by Lemma~\ref{lem:sum_f_average}.
\end{proof}

The final result of this section is the following technical lemma which
can be seen as a somewhat stronger version of
\eqref{eq:f-average_implication} in Corollary~\ref{prop:f-average}. It is
tailored to be used in the proof of Proposition~\ref{prop:Q-limit}.

\begin{lemma}
  \label{lem:f-average-tech}
  Write $N^+_n = \set{v_1, v_2, \dots, v_{\abs{N_n^+}}}$ and set
  $N^+_{n,M} = \set{v_1, v_2, \dots, v_M}$. For $\delta, \rho >0$ and
  $f:\mathbb R\to\mathbb R$ bounded, define the event
  \begin{equation}
    B_n(f,\delta,\rho) \defeq
    \set{\rho n \le \abs{N^+_n}}
    \cap \bigg(\bigcup_{\rho n \le M \le \abs{N^+_n}}
      \bigg\{\abs[\bigg]{
        \frac{\sum_{v \in N^+_{n,M}}f(\varphi_v)}{M}
        - \frac{\inp{\chi}{f}}{\inp{\chi}{1}}}
      >\delta\bigg\}\bigg).
  \end{equation}
  Then for every $x \ge h^*$,
  \begin{equation}
    \label{eq:B_n-toshow}
    \lim_{n\to\infty}P_{x}\big[B_n(f,\delta,\rho)
      \big| N_n^+ \neq\emptyset \big] =0.
  \end{equation}
\end{lemma}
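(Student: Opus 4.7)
The proof rests on two ingredients: monotonicity of the partial sums (after reduction to $f\ge 0$), which enables a geometric-grid interpolation, and a sub-tree decomposition that reduces the estimate at deterministic cut-points to instances of Corollary~\ref{prop:f-average}.

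\emph{Reduction and tightness.} Writing $f=f^+ - f^-$ with bounded nonnegative parts, it suffices to treat the case $f\ge 0$. Set $c\defeq \inp{\chi}{f}/\inp{\chi}{1}$ and $S_M\defeq\sum_{v\in N_{n,M}^+}f(\varphi_v)$, which is now non-decreasing in $M$. By Theorem~\ref{thm:conv-to-exp} applied with $f\equiv 1$, the ratio $\abs{N_n^+}/n$ converges in distribution (under $P_x[\,\cdot\,|N_n^+\neq\emptyset]$) to a positive multiple of an exponential variable, and in particular is tight: for any $\eta>0$ there exists $C<\infty$ with $P_x[\abs{N_n^+}>Cn\mid N_n^+\neq\emptyset]\le \eta$ for all large $n$. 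On the good event $\{\abs{N_n^+}\le Cn\}$, the index $M$ in $B_n(f,\delta,\rho)$ ranges over the bounded interval $[\rho n, Cn]$.

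\emph{Geometric-grid interpolation.} Fix $\epsilon=\epsilon(\delta,c)>0$ small and set $M_j\defeq\lceil\rho n(1+\epsilon)^j\rceil$ for $j=0,\dots,K$ with $K=\lceil\log(C/\rho)/\log(1+\epsilon)\rceil=O(1)$. By monotonicity of $M\mapsto S_M$, for every $M\in[M_j,M_{j+1}]$,
\begin{equation}
\frac{1}{1+\epsilon}\cdot\frac{S_{M_j}}{M_j}\ \le\ \frac{S_M}{M}\ \le\ (1+\epsilon)\cdot\frac{S_{M_{j+1}}}{M_{j+1}}.
\end{equation}
So if $\abs{S_{M_j}/M_j-c}\le\delta/3$ for every $j$, a direct computation shows $\abs{S_M/M-c}\le\delta$ uniformly, once $\epsilon$ is small enough. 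By a union bound over the $O(1)$-sized grid, it suffices to prove, for each fixed $j$, that $P_x[\abs{S_{M_j}/M_j-c}>\delta/3\mid N_n^+\neq\emptyset]\to 0$.

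\emph{Control at deterministic grid points (main obstacle).} This is the step not covered by Corollary~\ref{prop:f-average}, which gives control only at the random endpoint $M=\abs{N_n^+}$. The plan is to exploit the sub-tree structure of $\mathcal C_o\cap\mathbb T^+$ under depth-first traversal: if $T_1,\dots,T_d$ are the sub-trees of $\mathbb T^+$ rooted at the children of $o$, their contributions $N_n^+\cap T_i$ are explored consecutively, so
\begin{equation}
S_{M_j}\ =\ \sum_{i=1}^{J}\sum_{v\in N_n^+\cap T_i}f(\varphi_v)\ +\ S^{\mathrm{rem}}_{M_j},
\end{equation}
where $J\le d$ counts the completed sub-trees at step $M_j$ and $S^{\mathrm{rem}}_{M_j}$ is the partial sum inside the $(J{+}1)$-th sub-tree. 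By the branching-process representation \eqref{eq:BP-representation}, conditionally on $\varphi_{v^{(i)}}$, the block $N_n^+\cap T_i$ is distributed as $N_{n-1}^+$ under $P_{\varphi_{v^{(i)}}}$; Corollary~\ref{prop:f-average}, together with dominated convergence in the random starting value $\varphi_{v^{(i)}}$ and $J=O(1)$, then gives that each completed block is within $o(1)$ of $c\cdot\abs{N_n^+\cap T_i}$. The remainder $S^{\mathrm{rem}}_{M_j}$ is itself a partial-sum instance of the same lemma at depth $n-1$; iterating the decomposition along the depth-first hierarchy closes the estimate once the current sub-tree contributes $O(\epsilon M_j)$ vertices.

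The hard part is precisely this last step: the recursion must be made quantitative in the (possibly problematic) regime where a single sub-tree carries most of the mass of $N_n^+$, forcing the recursion to descend far. Propagating the estimate through the hierarchy requires a stopping-time argument that halts once the residual sub-tree becomes negligible relative to the current $M_j$, together with a uniform-in-$x$ version of Corollary~\ref{prop:f-average} obtained by integrating its pointwise statement against the empirical distribution of $\varphi_{v^{(i)}}$.
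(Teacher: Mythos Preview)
The reduction to $f\ge 0$, the tightness argument for $\abs{N_n^+}/n$, and the geometric-grid interpolation are all fine and reduce the problem to controlling $S_{M_j}/M_j$ at finitely many deterministic cut-points $M_j\asymp n$. The gap is in your ``main obstacle'' step, and you essentially acknowledge it yourself: the recursive sub-tree decomposition starting from the children of $o$ does not close. If a single sub-tree carries almost all of $N_n^+$ (which is typical, not exceptional, since $\abs{N_n^+}\asymp n$ and this must come from a single surviving lineage), your recursion descends one generation at a time and may need $\Theta(n)$ steps before the residual becomes negligible. At each step you invoke Corollary~\ref{prop:f-average} with a random initial value $\varphi_{v^{(i)}}$, and the errors are only controlled pointwise in $x$, not uniformly; you call for ``a uniform-in-$x$ version of Corollary~\ref{prop:f-average}'', but no such statement is available in the paper and establishing it would require substantially more work (the error terms in Proposition~\ref{pro:Harris_L2} involve the unbounded function $q(x)$). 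Moreover, the conditioning on $\{N_n^+\neq\emptyset\}$ does not factor cleanly through the recursion.

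The paper sidesteps all of this by a different, non-recursive decomposition. Instead of splitting at generation $1$ and iterating, it fixes a \emph{single} intermediate generation $n_0=n-n^{1/4}$ and splits $N_n^+$ into blocks $N_n^i$ according to their ancestors $w_i\in N_{n_0}^+$. Each block then has depth only $n-n_0=n^{1/4}$, so by a second-moment bound (Proposition~\ref{pro:Harris_L2}(b)) all blocks satisfy $\abs{N_n^i}\le q:=n^{3/4}$ with high conditional probability. Corollary~\ref{prop:f-average}, applied once to each block (with a union bound over $\abs{N_{n_0}^+}$ blocks, controlled via the first-moment estimate), shows that every block average is within $\delta/2$ of the target. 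Since any partial sum $S_M$ with $M\ge\rho n$ differs from a sum of complete blocks by at most one incomplete block of size $\le q$, the discrepancy is $O(q/M)=O(n^{-1/4})\to 0$. This gives the maximal inequality directly, without any grid, monotonicity, or recursion, and without needing uniformity in $x$.
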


\begin{proof}
  We start by outlining the strategy of the proof. We will define events
  $A_n = A_n(n_0)$ (see \eqref{eq:A_n-def}) and $A'_n = A'_n(n_0, q)$
  (see \eqref{eq:A'_n-def}), so that when $n_0 = n_0(n)$ and $q=q(n)$ are
  chosen correctly,
  \begin{equation}
    \begin{gathered}
      \label{eq:B_n-conditions}
      \lim_{n\to\infty}
      P_x\big[A_n(n_0(n)) \big| N_n^+ \neq\emptyset\big] = 0,
      \\
      \lim_{n\to\infty}
      P_x\big[A'_n(n_0(n), q(n)) \big| N_n^+ \neq\emptyset\big] =0,
      \\
      B_n(f,\delta, \rho) \subseteq A_n(n_0(n)) \cup A'_n(n_0(n), q(n))
      \quad \text{for $n$ large enough}.
    \end{gathered}
  \end{equation}
  The statement of the lemma follows directly from these claims by a
  union bound.

  To define $A_n = A_n(n_0)$, we fix $0 \leq n_0 \leq n$, write
  $N_{n_0}^+ = \set{w_1, \dots, w_{\abs{N_{n_0}^+}}}$, and set
  $N_n^i = \set{v \in N_n^+ : \text{$w_i$ is ancestor of $v$}}$. For
  $1\le i\le \abs{N_{n_0}^+}$, we set
  \begin{equation}
    m_i = \begin{cases}
      \abs{N_n^i}^{-1} \sum_{v \in N_n^i}f(\varphi_v),\qquad
      &\text{if }\abs{N_n^i}>0,\\
      \inp{\chi}{f}/\inp{\chi}{1}, &\text{otherwise},
    \end{cases}
  \end{equation}
  and define the events
  \begin{equation}
    \label{eq:A_n-def}
    A_n^i = \Big\{
      \abs[\Big]{m_i - \frac{\inp{\chi}{f}}{\inp{\chi}{1}}} >
    \frac{\delta}{2} \Big\}
    \quad\text{and}\quad
    A_n = A_n(n_0) = \bigcup_{i=1}^{\abs{N_{n_0}^+}} A_n^i.
  \end{equation}

  We now prove an upper bound on $P[A_n | N_n^+ \neq\emptyset]$. By a
  union bound, using that
  $A_{n}^i \subset\{N_n^i \neq \emptyset\}\subset\{N_n^+ \neq\emptyset\}$,
  \begin{equation}
    \label{eq:techaa}
    \begin{split}
      P_x[A_n | N_n^+ \neq\emptyset ]
      &= P_x[ N_n^+ \neq\emptyset]^{-1}
      P_x[A_n \cap \{N_n^+ \neq\emptyset\}]
      \\&\le P_x[ N_n^+ \neq\emptyset]^{-1}
      E_x\bigg[\sum_{i=1}^{\abs{N_{n_0}^+}}
        P_x[A_n^i \cap \{N_n^i \neq \emptyset\} |
          \mathcal F_{n_0}]\bigg].
    \end{split}
  \end{equation}
  Using the branching process properties of the GFF,
  \begin{equation}
    \label{eq:techbb}
    \begin{split}
      P_x[&A_n^i \cap \set{N_n^i \neq\emptyset} | \F_{n_0}]\\
      &= P_{\varphi_{w_i}}\bigg[\abs[\bigg]{
          \frac 1{\abs{N_{n-n_0}^+}} \sum_{v \in N_{n-n_0}^+} f(\varphi_v)
          - \frac{\inp{\chi}{f}}{\inp{\chi}{1}}}
        > \frac{\delta}{2}\, \bigg|\, N_{n-n_0}^+ \neq\emptyset\bigg]
      P_{\varphi_{w_i}}[N_{n-n_0}^+ \neq\emptyset].
    \end{split}
  \end{equation}
  By Corollary~\ref{prop:f-average}(b), the first probability on the
  right-hand side is bounded by some  $\varepsilon_{n-n_0}(\varphi_{w_i})$
  satisfying $1\ge \varepsilon_n \to 0$ pointwise, and thus in any
  $L^q(\nu)$ by the bounded convergence theorem. By
  Proposition~\ref{prop:u_n^fk-L2-asymtote} and
  Lemmas~\ref{lem:u_n^k-L2-asymptote},~\ref{lem:a^f-asymtote}, the second
  probability equals
  \begin{equation}
    P_{\varphi_{w_i}}[N_{n-n_0}^+ \neq\emptyset]
    = C_1 (n-n_0)^{-1}\big(\chi(\varphi_{w_i})
      + \bar \varepsilon_{n-n_0}(\varphi_{w_i})\big),
  \end{equation}
  where $\bar \varepsilon \to 0$ pointwise and in $L^{5/2}(\nu)$.
  Combining these statements with \eqref{eq:techaa} and
  \eqref{eq:techbb}, and using then Proposition~\ref{pro:Harris_L2} in
  the numerator, we obtain
  \begin{equation}
    \label{eq:f-average-tech-step1}
    \begin{split}
      P_x[A_n | \set{N_n^+ \neq\emptyset}]
      &\le \frac
      {E_x \Big[\sum_{w \in N_{n_0}^+} \varepsilon_{n-n_0}(\varphi_{w})C_1
          (n-n_0)^{-1}(\chi(\varphi_{w})
            + \bar\varepsilon_{n-n_0}(\varphi_{w}))
          \Big]}
      {C_1 n^{-1} (\chi (x) + \bar \varepsilon_n(x))}
      \\ &=
      \frac{c(x) n}{n-n_0}
      \inp{\chi}{\varepsilon_{n-n_0}(\chi + \bar \varepsilon_{n-n_0})}\chi(x)
      + \tilde \varepsilon_{n_0}(x),
    \end{split}
  \end{equation}
  where $\tilde \varepsilon_n\to 0$ pointwise. Finally, using Hölder's
  inequality on the inner product, since $\chi, \chi^2 \in L^2(\nu)$ (by
    \eqref{eq:crit-chi_bounds}), $\varepsilon_n \to 0$ in $L^5(\nu)$ and
  $\bar \varepsilon \to 0$ in $L^{5/2}(\nu)$, it follows that for every
  $x\ge h^*$,
  \begin{equation}
    \label{eq:An}
    \text{if $n_0\to\infty$ and $n-n_0\to\infty$, then }
    P_x[A_n(n_0) | \set{N_n^+ \neq\emptyset}] \to 0.
  \end{equation}

  We now turn to the events $A'_n$. For given $n_0 \le n$ and
  $q$, let
  \begin{equation}
    \label{eq:A'_n-def}
    A'_n = A'_n(n_0, q) = \bigcup_{i=1}^{\abs{N_{n_0}^+}}
    \set{\abs{N_n^i} > q}.
  \end{equation}
  To bound the probability $P[A'_n | N_n^+\neq\emptyset]$, we write
  \begin{equation}
    \label{eq:A'_n-1}
    P_x[A'_n \cap \set{N_n^+\neq\emptyset}]
    \le P_x[A'_n]
    \le E_x\Big[\sum_{i=1}^{\abs{N_{n_0}^+}}
      P_{\varphi_{w_i}}[\abs{N_{n-n_0}^+} > q] \Big].
  \end{equation}
  By the Markov inequality and Proposition~\ref{pro:Harris_L2}(b) (with
    $f = g = 1$),
  \begin{equation}
    P_{\varphi_{w_i}}[\abs{N_{n-n_0}^+} > q]
    \le q^{-2}E_{\varphi_{w_i}}[\abs{N_{n-n_0}^+}^2]
    = Cq^{-2} (\chi(\varphi_{w_i})(n-n_0)
      + \varepsilon_{n-n_0}(\varphi_{w_i})),
  \end{equation}
  where $\norm{\varepsilon_{n-n_0}} \le c$. Thus, by
  Proposition~\ref{pro:Harris_L2}(a),
  \begin{equation}
    \begin{split}
      \label{eq:Aprimen}
      P_x[A'_n \cap \set{N_n^+\neq\emptyset}]
      &\leq q^{-2}E_x\Big[\sum_{v \in N_{n_0}^+}
        (c \chi(\varphi_{v})(n-n_0)
          + \varepsilon_{n-n_0}(\varphi_v))\Big]
      \\&\le cq^{-2}\big((n-n_0+1)\chi(x) +
        \varepsilon'_{n_0}(x)\big).
    \end{split}
  \end{equation}
  with $\varepsilon'_{n}\to 0$ pointwise. Together with
  $P_x[N_n^+\neq \emptyset ] \ge c n^{-1}$, by
  Theorem~\ref{thm:diameter}, this implies
  \begin{equation}
    \label{eq:A'_n-final}
    P_x[A'_n | N_n^+\neq\emptyset]
    \le \frac{n}{q^2}\Big((n-n_0+C)\chi(x) + \varepsilon '_{n_0}(x)\Big).
  \end{equation}

  We will now give sufficient conditions on the functions $n_0(n)$ and
  $q(n)$ so that for large enough $n$,
  \begin{equation}
    \label{eq:B_n-inclusion}
    B_n(f,\delta,\rho) \subset A_n \cup A'_n, \text{ or equivalently }
    B_n(f,\delta,\rho)^c \supset (A_n)^c \cap (A'_n)^c.
  \end{equation}
  By definition,
  $\set{\rho n \le \abs{N_n^+}}^c \subseteq B_n(f,\delta,\rho)^c$.
  Therefore, \eqref{eq:B_n-inclusion} is implied by
  \begin{equation}
    \label{eq:B_n^c-inclusion}
    (A_n)^c \cap (A'_n)^c \cap \set{\rho n \le \abs{N_n^+}}
    \subseteq B_n(f,\delta,\rho)^c.
  \end{equation}
  To prove this, we assume that
  $(A_n)^c \cap (A'_n)^c \cap \set{\rho n \le \abs{N_n^+}}$ holds. For
  $M \le \abs{N_n^+}$, let
  \begin{equation}
    k(M) \defeq \inf\Big\{\sum_{i=1}^\ell  \abs{N_n^i} :
      \ell\in \{0,\dots, \abs{N_{n_0}^+}\}
      \text{ such that }\sum_{i=1}^\ell  \abs{N_n^i} \ge M\Big\}
    \ge M.
  \end{equation}
  On $(A_n)^c$,
  \(
    \abs[\big]{\abs{N_n^i}^{-1} \sum_{v \in N_n^i} f(\varphi_v)
      -\inp{\chi}{f}/\inp{\chi}{1}} \leq \delta/2
  \)
  for all $i = 1, \dots, \abs{N_{n_0}^+}$. Therefore,
  for all $M$ with $\rho n \le M \le \abs{N_n^+}$
  \begin{equation}
    \label{eq:A_n-bound}
    \abs[\bigg]{\frac{\sum_{v \in N_{n,k(M)}^+} f(\varphi_v)}{k(M)}
      -\frac{\inp{\chi}{f}}{\inp{\chi}{1}}}
    \leq \frac{\delta}{2}.
  \end{equation}
  Writing
  \(
    \sum_{v \in N_{n,M}^+}f(\varphi_v)
    = \sum_{v \in N_{n,k(M)}^+} f(\varphi_v)
    - \sum_{v \in N_{n,k(M)}^+\setminus N_{n,M}^+} f(\varphi_v)
  \),
  and using that the absolute value of the second sum is bounded by
  $(k(M)-M)\sup\abs{f}$, we obtain that for every $M\ge \rho n$
  \begin{equation}
    \begin{split}
      \label{eq:pre-B}
      \abs[\bigg]{&\frac{\sum_{v \in N_{n,M}^+} f(\varphi_v)}{M}
        - \frac{\sum_{v \in N_{n,k(M)}^+}f(\varphi_v)}{k(M)}
      }
      \\&= \abs[\bigg]{
        \frac{\sum_{v \in N_{n,k(M)}^+} f(\varphi_v)}{M}
        - \frac{\sum_{v \in N_{n,k(M)}^+\setminus N_{n,M}^+} f(\varphi_v)}{M}
        - \frac{\sum_{v \in N_{n,k(M)}^+}f(\varphi_v)}{k(M)}
      }
      \\&\le \abs[\bigg]{
        \sum_{v \in N_{n,k(M)}^+} f(\varphi_v)\Big(\frac{1}{M}
          - \frac{1}{k(M)}\Big)}
      + \frac{k(M)-M}{M} \sup\abs{f}\\
      &= \frac{k(M) - M}{M}
      \bigg(\abs[\bigg]{
          \frac{\sum_{v \in N_{n,k(M)}^+} f(\varphi_v)}{k(M)}}
        + \sup\abs{f}\bigg)
      \le c(f,\delta)  \frac q{\rho n},
    \end{split}
  \end{equation}
  where in the last inequality we used \eqref{eq:A_n-bound} and the fact
  that $k(M)-M \leq q$ on $(A'_n)^c$. If the right-hand side of
  \eqref{eq:pre-B} is smaller than $\delta /2$, then together with
  \eqref{eq:A_n-bound}, this implies that $B_n(f,\delta,\rho)^c$
  holds, implying \eqref{eq:B_n^c-inclusion} and thus
  \eqref{eq:B_n-inclusion}.

  To finish the proof of \eqref{eq:B_n-conditions}, we must choose
  $n_0=n_0(n)$ and $q =q(n)$ so that \eqref{eq:An} applies and the
  left-hand sides of \eqref{eq:Aprimen}, \eqref{eq:pre-B} tend to zero.
  This is easily done by setting, e.g., $q (n) = n^{3/4}$ and
  $n_0 = n - n^{1/4}$.
\end{proof}

\section{\texorpdfstring{The $S_n$ martingale}{The Sn martingale}}
\label{sec:S_n-process}

The remaining three sections of this paper are dedicated to proving
Theorem~\ref{thm:inv-principle}. In this section, we will introduce a
martingale based on the depth-first traversal of a sequence of copies of
$\mathcal C_o \cap \mathbb T^+$, and prove that it satisfies an
invariance principle, see Proposition~\ref{prop:S_n-scaling-limit}. This
martingale can be seen as an analogue to the Lukasiewicz path used to
study critical Galton-Watson trees. The second part of the section then
demonstrates another two scaling limit results,
Proposition~\ref{prop:joint-scaling-limit} and
Proposition~\ref{prop:conditioned-scaling-limit} which are both
consequences of Proposition~\ref{prop:S_n-scaling-limit}.

To define the martingale, we consider an i.i.d.~sequence
$\mathbf{T} = ((T^1, \varphi^1), (T^2, \varphi^2), \dots)$ where every
$(T^i, \varphi^i)$ is distributed as
$(\mathcal C_o \cap \mathbb T^+, \varphi |_{\mathcal C_o \cap \mathbb T^+})$
under $P_x$. To keep the notation simple, we keep using $P_x$ for the
probability measure associated with the whole sequence $\mathbf{T}$, and
for $v \in T^i$ we write $\varphi_v$ instead of $\varphi^i_v$. We further
use $o^i$ to denote the root of $T^i$, and set
$N_n^{+,i} = \{v\in T^i: d(o^i,v) = n\}$. Note that $\abs{N_n^{+,i}}$ has
the same distribution as $\abs{N_n^+}$ which was studied in detail in the
previous sections. In particular, we know that all $T^i$ are a.s.~finite.

Throughout this section, we use the notation for the parent $p(v)$,
direct descendants $\desc(v)$, and siblings $\sib(v)$ of $v\in T^i$
relative to the rooted tree $T^i$ (not $\mathbb T$),  $w \preceq v$
means that $w$ is an ancestor of $v$, cf.~below~\eqref{eq:spheres}.

We now describe the depth-first traversal $\mathbf{v} = (v_1, v_2, \dots)$
of $\mathbf{T}$. It starts at the root of $T^1$, that is $v_1 = o^1$, and
then explores the tree $T^1$ in a depth-first manner. After
visiting all vertices of $T^1$, it proceeds to $o^2$, explores $T^2$ in a
depth-first manner, and so forth. $v_i$ denotes the $i$-th vertex visited
during this traversal. The notation $v <_{\mathbf{v}} w$ indicates that
$v$ precedes $w$ in $\mathbf{v}$ (with $\le_{\mathbf{v}}$ representing
  the reflexive version). For any $v \in \cup_i T^i$ we write $\Lambda(v)$
to denote the index of the tree which $v$ belongs to (that is
  $v \in T^{\Lambda(v)}$). We define
\begin{equation}
  \label{eq:def-Lambda,H}
  \Lambda_n = \Lambda(v_n)
  \quad \text{and} \quad
  H_n = \abs{v_n} =  d(v_n, o^{\Lambda_n}),
\end{equation}
that is $\Lambda_n$ is the index of the tree which is explored at step
$n$, and $H_n$ is the ``height'' of the $n$-th explored vertex. For every
$v \in \cup_i T^i$, we define the set
$Y(v)$ as
the union of all siblings of some ancestor of $v$ that appear later in
$\mathbf{v}$ than this ancestor itself, that is,
\begin{equation}
  \label{eq:Y-def}
  Y(v) = \bigcup_{w \preceq v} \set{u \in \sib(w) : w <_{\mathbf{v}} u}.
\end{equation}

With this notation we can introduce the key object of this section, the
process
\begin{equation}
  \label{eq:S_n-def}
  S_n = \chi(\varphi_{v_n}) - \sum_{i \le \Lambda_n} \chi(\varphi_{o^i})
  + \sum_{w \in Y(v_n)} \chi(\varphi_w),
  \qquad n\in \mathbb N,
\end{equation}
which is adapted to the filtration
\begin{equation}
  \mathcal{H}_n = \sigma\Big((v, \varphi_v): v\in \bigcup_{i=1}^n \sib
      (v_i)\Big), \qquad n\in \mathbb N.
\end{equation}

The next proposition shows that $S_n$ converges to a Brownian motion with
variance
\begin{equation}
    \label{eq:sigma-def}
  \sigma^2 = \inp{\chi}{\mathcal{V}}/\inp{\chi}{1},
\end{equation}
where
\begin{equation}
  \label{eq:V_chi-def}
  \mathcal{V}(x)
  := P_{x}\Big[\Big(\sum_{w \in N_1^+} \chi(\varphi_w)\Big)^2\Big]
  - P_{x}\Big[\sum_{w \in N_1^+} \chi(\varphi_w)\Big]^2
  = \Var_{x}\Big(\sum_{w \in N_1^+} \chi(\varphi_w)\Big).
\end{equation}
Here and below $(B_t)_{t\ge 0}$ denotes the standard Brownian motion.

\begin{proposition}
  \label{prop:S_n-scaling-limit}
  For every $x\ge h^*$, in $P_x$-distribution w.r.t.~the Skorokhod topology,
  \begin{equation}
    \lim_{n\to\infty}\Big(\frac{1}{\sqrt{n}}S_{\floor{nt}}\Big)_{t \ge 0}
    = (\sigma B_t)_{t \ge 0}.
  \end{equation}
\end{proposition}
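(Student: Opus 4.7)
The plan is to reduce the proposition to a functional martingale central limit theorem. The first step is to verify that $(S_n)_{n\ge 1}$ is an $(\mathcal H_n)$-martingale. A case analysis on the transition from $v_n$ to $v_{n+1}$ in the depth-first traversal (whether $v_{n+1}$ is a descendant of $v_n$, the earliest later sibling of some ancestor of $v_n$, or the root $o^{\Lambda_n+1}$ of the next tree in case $v_n$ finishes $T^{\Lambda_n}$) shows that, in all three cases, the increment admits the single uniform formula
\begin{equation*}
  S_{n+1}-S_n \;=\; \sum_{w\in\desc(v_n)}\chi(\varphi_w)\;-\;\chi(\varphi_{v_n}),
\end{equation*}
where $\desc(v_n)$ denotes the set of direct descendants of $v_n$ in the rooted tree $T^{\Lambda_n}$. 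Since, conditionally on $\varphi_{v_n}$, this descendant set is distributed as $N_1^+$ under $P_{\varphi_{v_n}}$, \eqref{eq:L_h-BP1} and the eigenfunction identity $L[\chi]=\chi$ (which uses $\lambda_{h^*}=1$) give $E_x[S_{n+1}-S_n\mid\mathcal H_n]=0$, and the analogous computation applied to $\chi^2$ identifies the conditional variance
\begin{equation*}
  E_x\bigl[(S_{n+1}-S_n)^2 \bigm| \mathcal H_n\bigr]\;=\;\mathcal V(\varphi_{v_n}).
\end{equation*}

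With the martingale structure in hand, the proposition follows from the functional martingale CLT once one verifies (i) the convergence in $P_x$-probability $n^{-1}\sum_{k<\lfloor nt\rfloor}\mathcal V(\varphi_{v_k})\to\sigma^2 t$ for every $t>0$, and (ii) a conditional Lindeberg condition on the jumps. For (ii) I would use \eqref{eq:crit-chi_bounds} together with the Gaussian tails of the increments in the branching-process representation to obtain a uniform bound on $E_x[|S_{k+1}-S_k|^p]$ for some $p>2$ (Proposition~\ref{prop:hypercontractivity} and the many-to-few formulas in Proposition~\ref{pro:Harris_L2} being the main tools); condition (ii) then follows from Markov's inequality in the standard way.

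The real work is in (i). Exploiting that the DFS regenerates each time a tree is completed, I set $B_i=|T^i|$ and $U_i=\sum_{v\in T^i}\mathcal V(\varphi_v)$ (so that $(B_i,U_i)$ are i.i.d.\ under $P_x$), and $K_n=\max\{k:B_1+\dots+B_k\le n\}$, giving $\sum_{k\le n}\mathcal V(\varphi_{v_k})=\sum_{i\le K_n}U_i+R_n$ with $0\le R_n\le U_{K_n+1}$ and $n=\sum_{i\le K_n}B_i+r_n$ with $r_n\le B_{K_n+1}$. By Proposition~\ref{pro:cl23}, $P_x[B_i>m]\sim C\chi(x)m^{-1/2}$, hence $B_i$ lies in the $\tfrac12$-stable domain of attraction and in particular $E_x[B_i]=\infty$. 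The classical SLLN is therefore unavailable and the target is instead the ratio statement $\sum_{i\le K_n}U_i\big/\sum_{i\le K_n}B_i\to\sigma^2$ in probability. The key ingredient is a \emph{whole-tree} LLN $U_i/B_i\to\sigma^2$ in probability on $\{B_i\text{ large}\}$, which I would obtain by decomposing $T^i$ by generation, applying Corollary~\ref{prop:f-average}(b) and Lemma~\ref{lem:f-average-tech} (with $f=\mathcal V$) to the generations carrying a positive fraction of the vertices, and controlling the remaining generations via Proposition~\ref{pro:Harris_L2}. A standard stable-regime renewal argument (the sum $\sum_{i\le k}B_i$ is carried, to leading order, by its few largest summands) then propagates this per-tree LLN to the ratio of sums, while the boundary terms $R_n$ and $r_n$ are tight of the size of a single $B_{K_n+1}$ and hence negligible.

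The main obstacle is precisely this combination of a whole-cluster averaging statement with heavy-tailed renewal asymptotics: because the inter-regeneration times have infinite mean one cannot simply appeal to the strong law, and the identification of the limiting variance $\sigma^2=\inp{\chi}{\mathcal V}/\inp{\chi}{1}$ requires patching per-generation averages (which the tools of Section~\ref{sec:proof-of-thm2} provide) into a single average over the entire large cluster. Once this ergodic LLN is in place, the remaining pieces (the martingale check, the quadratic variation formula, and the Lindeberg verification) are essentially routine.
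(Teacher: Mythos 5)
The martingale identification, the conditional quadratic variation formula $E_x[(S_{k+1}-S_k)^2\mid\mathcal H_k]=\mathcal V(\varphi_{v_k})$, and the reduction to a functional martingale CLT all match the paper. The Lindeberg bound via a fourth-moment estimate from the Lipschitz continuity of $\chi$ is also essentially what the paper does. The genuine gap is in part (i), the ergodic law of large numbers for $n^{-1}\sum_{k<n}\mathcal V(\varphi_{v_k})$, and specifically in your renewal decomposition.

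You write $\sum_{k\le n}\mathcal V(\varphi_{v_k})=\sum_{i\le K_n}U_i+R_n$ and $n=\sum_{i\le K_n}B_i+r_n$ with $r_n\le B_{K_n+1}$, and then assert that $R_n$ and $r_n$ are negligible because they are bounded by a single block. This is false in the $\tfrac12$-stable regime, and the failure is exactly caused by the heavy tail you correctly identified. Since $P_x[B_i>m]\asymp m^{-1/2}$, the sum $\sum_{i\le K_n}B_i$ is dominated by its largest summands, and the classical inspection paradox in infinite-mean renewal theory is extreme: the "age" $r_n$ is with probability bounded away from $0$ a positive fraction of $n$ (indeed $r_n/n$ converges to a nondegenerate arcsine-type law). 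So the DFS at time $n$ is typically in the middle of one huge tree, and the incomplete-tree contributions $R_n$, $r_n$ are of order $n$ rather than lower order. Your reduction to the ratio $\sum_{i\le K_n}U_i\big/\sum_{i\le K_n}B_i$ therefore loses the dominant part of both sums, and the per-tree LLN $U_i/B_i\to\sigma^2$ on $\{B_i\text{ large}\}$, even if proved, would not by itself close the argument: you would additionally need a law of large numbers along the \emph{partial DFS prefix} of the giant unfinished tree.

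This missing piece is precisely what the paper's Proposition~\ref{prop:Q-limit} supplies by a different route: sample a uniformly random index $U_n\in\{1,\dots,n\}$, work directly with the generation $N^*_n$ containing $v_{U_n}$ (inside whichever tree, complete or not), and show that the per-generation empirical average of $f$ over the DFS-visited part of that generation converges uniformly, using Lemma~\ref{lem:f-average-tech} together with height concentration (Lemmas~\ref{lem:H_n>Csqrt(n)-tozero} and~\ref{lem:P[H*_t<R(x,n)]>delta/3}) and a conditional-expectation/$L^2$-contraction trick. That construction handles the partial-tree contribution by design, because it never separates complete from incomplete trees. If you want to keep the renewal flavor, you would at minimum need a uniform-in-prefix version of your whole-tree LLN, i.e.\ a statement of the shape of Lemma~\ref{lem:f-average-tech} but over DFS prefixes of the tree rather than over prefixes of a single generation; and you would need to phrase your whole-tree LLN under the conditioning on height $\{N^+_n\neq\emptyset\}$ rather than on size, since Corollary~\ref{prop:f-average} and Lemma~\ref{lem:f-average-tech} are stated under the former. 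Neither of these translations is automatic.
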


We need a few preparatory steps to show this proposition.
We start by proving that $S$ is a martingale.

\begin{lemma}
  \label{lem:S_n-is-mtg}
  The process $S$ is a $\mathcal{H}$-martingale under $P_x$.
\end{lemma}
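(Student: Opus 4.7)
The plan is to verify the three defining properties of a martingale. Adaptedness of $(S_n)_n$ to $(\mathcal H_n)_n$ is immediate, since by step $n$ the vertices $v_1, \dots, v_n$ and all their siblings in $T$ have been revealed, so $v_n$, $\Lambda_n$, $Y(v_n)$, and every field value entering $S_n$ are $\mathcal H_n$-measurable. Integrability follows from a crude bound: $S_n$ is a signed sum of at most $O(n)$ terms of the form $\chi(\varphi_w)$; each of the $\Lambda_n \le n$ root contributions equals $\chi(x)$ because each tree satisfies $\varphi_{o^i}=x$, and the remaining $O(n)$ terms have finite $P_x$-expectation by the many-to-one formula~\eqref{eq:many-to-few_model1} together with $\chi(x) \le c_2 x$ on $[h^*, \infty)$ from~\eqref{eq:crit-chi_bounds}. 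The non-trivial content is the identity $E_x[S_{n+1}-S_n \mid \mathcal H_n] = 0$.

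To analyse the increment, I would distinguish three cases for the transition $v_n \to v_{n+1}$: (A) descent, where $v_n$ has at least one child in $T^{\Lambda_n}$ and $v_{n+1}$ is the first such child; (B1) lateral backtrack, where $v_n$ is a leaf of $T^{\Lambda_n}$ but $Y(v_n)\neq\emptyset$ and $v_{n+1}$ is the next pending sibling; and (B2) tree transition, where $v_n$ is a leaf, $Y(v_n)=\emptyset$, and $v_{n+1}=o^{\Lambda_n+1}$. Using the convention $\chi \equiv 0$ on $(-\infty, h^*)$, one can sum over all direct descendants of $v_n$ in $\mathbb T^+$ and only those with field $\ge h^*$ contribute. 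In Case A, $\Lambda_{n+1}=\Lambda_n$ and $Y(v_{n+1}) = Y(v_n) \cup (\desc(v_n) \setminus \{v_{n+1}\})$, giving
\begin{equation}
  \label{eq:plan-unified}
  S_{n+1}-S_n = \sum_{w \in \desc(v_n)} \chi(\varphi_w) - \chi(\varphi_{v_n}).
\end{equation}
In Case B1, a short check of the definition of $Y$ shows $Y(v_{n+1}) = Y(v_n)\setminus\{v_{n+1}\}$, so the $\chi(\varphi_{v_{n+1}})$ contributions cancel and the increment reduces to $-\chi(\varphi_{v_n})$. In Case B2 the new contribution $\chi(\varphi_{o^{\Lambda_n+1}})=\chi(\varphi_{v_{n+1}})$ to the leading term of $S_{n+1}$ is exactly cancelled by the matching new summand appearing in $\sum_{i\le\Lambda_{n+1}}\chi(\varphi_{o^i})$, while $Y(v_{n+1})=\emptyset$; this again produces $-\chi(\varphi_{v_n})$. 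Since being a leaf of $T^{\Lambda_n}$ forces $\varphi_w<h^*$ for every $w\in\desc(v_n)$, both Cases B fit into the uniform formula~\eqref{eq:plan-unified}.

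With~\eqref{eq:plan-unified} in hand, the martingale identity follows from $L[\chi]=\chi$. The key observation is that the children of $v_n$ in $\mathbb T^+$ are descendants rather than siblings of any previously visited vertex, so by the branching-process construction~\eqref{eq:BP-representation} their field values are independent of $\mathcal H_n$ conditionally on $\varphi_{v_n}$, with the law of the first generation under $P_{\varphi_{v_n}}$. Therefore, by~\eqref{eq:L_h-BP1} and Proposition~\ref{prop:L_h-chi-connection} (using $\lambda_{h^*}=1$),
\begin{equation}
  E_x\Big[\sum_{w \in \desc(v_n)} \chi(\varphi_w) \,\Big|\, \mathcal H_n\Big]
  = L[\chi](\varphi_{v_n}) = \chi(\varphi_{v_n}),
\end{equation}
and the martingale identity follows. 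The main subtlety is the verification of the unified formula~\eqref{eq:plan-unified}; in particular, the inclusion of the root sum $-\sum_{i\le \Lambda_n}\chi(\varphi_{o^i})$ in the definition of $S_n$ is exactly what makes the Case B2 transition telescope correctly, and it is precisely the non-measurability of the children of $v_n$ with respect to $\mathcal H_n$ that allows the averaging in Cases B to combine with Case A into a single vanishing conditional expectation.
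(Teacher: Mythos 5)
Your proof is correct and takes essentially the same route as the paper: you derive the unified increment formula $S_{n+1}-S_n = \sum_{w\in\desc(v_n)}\chi(\varphi_w)-\chi(\varphi_{v_n})$ by a case analysis on the DFS transition and then conclude via $L[\chi]=\chi$ and the branching Markov property. The only difference is that you also explicitly verify adaptedness and integrability, which the paper leaves implicit.
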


\begin{proof}
  We first show that
  \begin{equation}
    \label{eq:S-diff}
    S_{k+1} - S_k
    = -\chi(\varphi_{v_k}) + \sum_{w \in \desc(v_k)} \chi(\varphi_w).
  \end{equation}
  To this end we need to distinguish between several possible scenarios:

  (1) If $v_k$ is not a leaf, that is $\desc(v_k) \neq \emptyset $, then
  $v_{k+1}$ is the first child of $v_k$, and thus
  $Y(v_{k+1}) = (\desc(v_k)\setminus \set{v_{k+1}}) \cup Y(v_k)$ where
  the union is disjoint. From this \eqref{eq:S-diff} follows.

  (2) If $v_k$ is a leaf, that is $\desc(v_k) = \emptyset$, then there
  are three possible cases for $v_{k+1}$: Either it is the next sibling
  (with respect to $<_{\mathbf{v}}$) of $v_k$, or the next sibling of
  some ancestor of $v_k$, or it is the root of the next tree in the tree
  sequence. In the first two cases,
  $Y(v_k) = Y(v_{k+1}) \cup \set{v_{k+1}}$ where the union is disjoint,
  and thus $S_{k+1} - S_k = -\chi(\varphi_{v_k})$ and thus
  \eqref{eq:S-diff} holds. In the last case, when $v_{k+1}$ is the root
  of the next tree, both sets $Y(v_k)$ and $Y(v_{k+1})$ must be empty and
  $\Lambda_{k+1} = \Lambda_k +1$, which implies \eqref{eq:S-diff} also in
  this case.

  From \eqref{eq:S-diff} and the branching process properties of
  $\varphi $ it follows that
  \begin{equation}
    \begin{split}
      E_x&[S_{k+1}-S_k | \mathcal{H}_k]
      = E_x\Big[-\chi(\varphi_{v_k})
        + \sum_{w \in \desc(v_k)} \chi(\varphi_w)
        \Big| \mathcal{H}_k\Big] \\
      &= -\chi(\varphi_{v_k}) + E_{\varphi_{v_k}}
      \Big[\sum_{w \in N_1^+} \chi(\varphi_w)\Big]
      = -\chi(\varphi_{v_k}) + L\chi (\varphi_{v_k}) = 0,
    \end{split}
  \end{equation}
  since $\chi $ is the eigenfunction of $L$ (see
    \eqref{eq:chi-is-eigenfunc}). This finishes the proof.
\end{proof}

The next four simple lemmas will be used to control the quadratic
variation of $S$.

\begin{lemma}
  \label{lem:S_k-square-diff}
  For every $k\ge 1$, with~$\mathcal V$ as in \eqref{eq:V_chi-def},
  \begin{equation}
    E_x[(S_{k+1}- S_k)^2 | \mathcal{H}_k] = \mathcal{V}(\varphi_{v_k}).
  \end{equation}
\end{lemma}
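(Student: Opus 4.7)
The plan is to leverage the increment formula $S_{k+1} - S_k = -\chi(\varphi_{v_k}) + \sum_{w \in \desc(v_k)} \chi(\varphi_w)$ established in the proof of Lemma~\ref{lem:S_n-is-mtg}. Since $\chi(\varphi_{v_k})$ is $\mathcal{H}_k$-measurable, squaring and taking the conditional expectation reduces the problem to computing $E_x[(\sum_{w \in \desc(v_k)} \chi(\varphi_w) - \chi(\varphi_{v_k}))^2 \mid \mathcal{H}_k]$, which is precisely the conditional variance of $\sum_{w \in \desc(v_k)} \chi(\varphi_w)$ given $\mathcal{H}_k$ once we check the mean.

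The key step will be a Markov property argument. I would carefully inspect the definition of $\mathcal{H}_k = \sigma((v,\varphi_v) : v \in \bigcup_{i=1}^k \sib(v_i))$ to confirm that it reveals only the values on vertices in the sibling-sets of $v_1,\dots,v_k$, and in particular does \emph{not} reveal the children of $v_k$ themselves (indeed, when $v_k$ is not a leaf, those children first enter the filtration at step $k+1$, since $\sib(v_{k+1}) = \desc(v_k)$; when $v_k$ is a leaf there is nothing to reveal). Combined with the branching-process construction~\eqref{eq:BP-representation} of $\varphi$ on $\mathbb T^+$, which makes the subtree rooted at $v_k$ conditionally independent of $\mathcal{H}_k$ given $\varphi_{v_k}$, this yields that the conditional law of $(\desc(v_k),(\varphi_w)_{w \in \desc(v_k)})$ under $P_x[\,\cdot\,\mid \mathcal{H}_k]$ coincides with the law of $(N_1^+,(\varphi_w)_{w \in N_1^+})$ under $P_{\varphi_{v_k}}$.

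From here the proof finishes in one line. Using \eqref{eq:L_h-BP1} and the eigenfunction relation $L\chi = \chi$, the conditional mean of $\sum_{w \in \desc(v_k)} \chi(\varphi_w)$ is $L\chi(\varphi_{v_k}) = \chi(\varphi_{v_k})$, so the centred increment $S_{k+1}-S_k$ has conditional second moment equal to
\[
\Var_{\varphi_{v_k}}\Big(\sum_{w \in N_1^+}\chi(\varphi_w)\Big) = \mathcal{V}(\varphi_{v_k}),
\]
by the definition \eqref{eq:V_chi-def} of $\mathcal V$.

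I do not expect a serious obstacle: the only genuine subtlety is the Markov-property step, which is entirely structural and follows from a direct inspection of the filtration together with the independence of subtrees inherent in the multi-type branching-process representation of the GFF. No new estimates or spectral input beyond $L\chi = \chi$ are needed.
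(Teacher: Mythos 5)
Your proposal is correct and follows essentially the same route as the paper: it starts from the increment formula \eqref{eq:S-diff}, uses the branching-property/Markov step to replace conditioning on $\mathcal{H}_k$ by expectation under $P_{\varphi_{v_k}}$ over $N_1^+$, invokes $L\chi=\chi$, and matches the result with the definition \eqref{eq:V_chi-def} of $\mathcal V$. The only cosmetic difference is that you observe the increment is conditionally centred and identify the second moment as a variance directly, whereas the paper expands the square term by term; these are the same computation.
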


\begin{proof}
  By \eqref{eq:S-diff}
  the conditional expectation $E_x[(S_{k+1}-S_k)^2 | \mathcal{H}_k]$
  equals
  \begin{equation}
    \begin{split}
      &\chi(\varphi_{v_k})^2
      - 2\chi(\varphi_{v_k})
      E_x\Big[
        \sum_{w \in \desc(v_k)}\chi(\varphi_w) \Big| \mathcal{H}_k \Big]
      + E_x\Big[\Big(\sum_{w \in \desc(v_k)} \chi(\varphi_w)\Big)^2
        \Big| \mathcal{H}_k\Big] \\
      &= \chi(\varphi_{v_k})^2
      - 2\chi(\varphi_{v_k})E_{\varphi_{v_k}}
      \Big[\sum_{w \in N_1^+}\chi(\varphi_w)\Big]
      + E_{\varphi_{v_k}}\Big[
        \Big(\sum_{w \in N_1^+} \chi(\varphi_w)\Big)^2\Big].
    \end{split}
  \end{equation}
  From this the statement follows using \eqref{eq:chi-is-eigenfunc} again.
\end{proof}

\begin{lemma}
  \label{lem:var-bound}
  There is $c<\infty$ such that $\mathcal V(x)<c$ for all $x\ge h^*$.
\end{lemma}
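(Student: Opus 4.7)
The plan is to express $\mathcal V(x)$ explicitly via the branching-process representation of $\varphi$ and then bound the resulting variance uniformly using the Lipschitz property of $\chi$ from~\eqref{eq:chi-Lipschitz}. Under $P_x$, the $d$ field values $(\varphi_w)_{w\in S_1^+}$ are i.i.d.\ with common law of $x/d + Y$, where $Y\sim\N(0,\sigma_Y^2)$, and since $\chi$ vanishes on $(-\infty,h^*)$ the sum over $N_1^+$ coincides with the sum over $S_1^+$. Independence then gives
\begin{equation}
\mathcal V(x) = d\,\Var_Y\bigl(\chi(x/d+Y)\bigr).
\end{equation}
Using the general bound $\Var(Z)\le E[(Z-c)^2]$ with the centring $c = \chi(x/d)$ (recall $\chi(z)=0$ for $z<h^*$), it suffices to establish the pointwise estimate
\begin{equation}
\abs{\chi(x/d+y) - \chi(x/d)} \le \chi(h^*) + L_0\,\abs{y}, \qquad x\ge h^*,\ y\in\R,
\end{equation}
where $L_0$ denotes the Lipschitz constant of $\chi$ on $[h^*,\infty)$. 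Squaring, applying $(a+b)^2\le 2a^2+2b^2$, and integrating in $Y$ then immediately yields the uniform bound $\mathcal V(x) \le 2d\bigl(\chi(h^*)^2 + L_0^2\sigma_Y^2\bigr)$.

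The pointwise estimate above is proved by a short case analysis according to whether $x/d$ and $x/d+y$ each lie above or below $h^*$. When both lie above $h^*$, the bound is immediate from Lipschitzness. In the two mixed cases, one of $\chi(x/d)$ and $\chi(x/d+y)$ equals zero while the other is controlled via $\chi(z)\le \chi(h^*) + L_0(z-h^*)$ for $z\ge h^*$; the key observation is that crossing the threshold $h^*$ forces $\abs{y}\ge \abs{x/d - h^*}$, so the distance from the relevant argument to $h^*$ can be reabsorbed into $L_0\abs{y}$. The trivial case where both arguments are below $h^*$ gives a zero difference.

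The only subtle point is this threshold effect at $h^*$: a naive estimate on $E_Y[\chi(x/d+Y)^2]$ alone grows like $x^2$ (since $\chi$ itself grows linearly by~\eqref{eq:crit-chi_bounds}), and only by centring the variance around the pointwise value $\chi(x/d)$ does one extract the leading-order cancellation that keeps $\mathcal V$ bounded. Once the right centring is chosen, the Lipschitz property reduces everything to a uniform Gaussian moment estimate.
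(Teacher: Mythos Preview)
Your proof is correct and follows essentially the same route as the paper: both reduce $\mathcal V(x)$ to $d\,\Var_Y(\chi(x/d+Y))$ via independence and then invoke the Lipschitz property~\eqref{eq:chi-Lipschitz}. The paper's proof leaves the final step implicit (``the statement follows''), whereas you spell out the centring at $\chi(x/d)$ and the threshold case analysis at $h^*$, which is exactly the content hidden behind that phrase.
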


\begin{proof}
  Let $\set{w_1, \dots, w_d}$ be the children of the root in
  $\mathbb T^+$. Since $(\varphi_{w_i})_{i=1}^d$ are independent
  under $P_x$ and $\chi(x) = 0$ for $x < h^*$,
  \(
    \Var_x(\sum_{v \in N_1^+} \chi(\varphi_v))
    = \Var_x(\sum_{i=1}^d \chi(\varphi_{w_i}))
    = \sum_{i=1}^d \Var_x(\chi(\varphi_{w_i}))
    = d \Var_Y(\chi(Y + x/d))
  \),
  where $Y \sim \mathcal N(0, \sigma_Y^2)$, see
  \eqref{eq:BP-representation}. Since $\chi$ is Lipschitz
  on $[h^*, \infty)$ by~\eqref{eq:chi-Lipschitz}, the statement follows.
\end{proof}

To state the next result, let $U_n$ be a random variable distributed
uniformly on $\{1,\dots,n\}$, defined on the same probability space as the
sequence $\mathbf T$, independent of $\mathbf T$. We write
\begin{equation}
  \label{eq:def_Lambda,H}
  H_n^* = H_{U_n}
  \quad \text{and} \quad \Lambda_n^* = \Lambda_{U_n},
\end{equation}
for the height and the tree index of a vertex
chosen uniformly amongst the first $n$ explored vertices.

\begin{lemma}
  \label{lem:H_n>Csqrt(n)-tozero}
  For all $x \ge h^*$,
  \begin{equation}
    \lim_{u\to\infty}\sup_{n\ge 1} P_x [H_n^* \ge u \sqrt{n}] = 0.
  \end{equation}
\end{lemma}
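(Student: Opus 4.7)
Since $U_n \le n$ almost surely, one has $H_{U_n} \le \max_{k \le n} H_k$, which reduces the claim to the uniform tail bound $\sup_n P_x[\max_{k \le n} H_k \ge u\sqrt n] \to 0$ as $u\to\infty$. Any vertex $v_k$ with $k \le n$ lies in one of the trees $T^1, \dots, T^{\Lambda_n}$, so writing $M_i \defeq \max_{v \in T^i}\abs v$ for the maximum depth of $T^i$ we have $\max_{k \le n} H_k \le \max_{i \le \Lambda_n} M_i$. The $M_i$ are i.i.d.\ with $P_x[M_1 \ge m] = P_x[N_m^+ \ne \emptyset] \le C(x)/m$ by Theorem~\ref{thm:diameter}. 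Using a cut-off $J = A\sqrt n$ on the number of trees and a union bound,
\[
P_x\bigl[\max_{k \le n} H_k \ge u\sqrt n\bigr]
\le P_x[\Lambda_n > A\sqrt n] + A\sqrt n \cdot \frac{C(x)}{u\sqrt n}
= P_x[\Lambda_n > A\sqrt n] + \frac{C(x) A}{u}.
\]

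To bound the first term, note that $\{\Lambda_n > J\} = \{X_1 + \dots + X_J \le n-1\}$ where $X_i \defeq \abs{T^i}$ are i.i.d.\ with $P_x[X_1 > k] \le C(x) k^{-1/2}$ by Proposition~\ref{pro:cl23}. Since $E_x[X_1] = \infty$, direct mean-based arguments are unavailable, so I truncate instead. With $\tilde X_i \defeq X_i \wedge K$, elementary computations yield $E_x[\tilde X_1] \asymp \sqrt K$ and $E_x[\tilde X_1^2] \le c(x) K^{3/2}$. Choosing $K$ of order $n/A^2$ so that $J E_x[\tilde X_1] \ge 2n$, Chebyshev's inequality applied to the sum $\sum_{i=1}^J \tilde X_i$ gives
\[
P_x[\Lambda_n > A\sqrt n]
 \le P_x\Big[\sum_{i=1}^J \tilde X_i \le n\Big]
 \le \frac{J\, \Var_x(\tilde X_1)}{n^2}
 \le \frac{C(x)}{A^2}.
\]

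Combining the two estimates with, for example, $A = u^{1/3}$ yields $P_x[H_n^* \ge u\sqrt n] \le C(x)\, u^{-2/3}$ uniformly in $n$, which proves the lemma. The principal difficulty is the infinite mean of the tree sizes $X_i$; the truncation level $K \asymp n/A^2$ is chosen precisely so that $JE_x[\tilde X_1]$ stays above $n$ (making Chebyshev meaningful) while the resulting bound $J\Var_x(\tilde X_1)/n^2$ decays like $A^{-2}$.
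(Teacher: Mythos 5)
Your proof is correct, but it takes a different path to control the number of trees $\Lambda_n$ than the paper does. The paper shows a geometric tail bound $P_x[\Lambda_n \ge k] \le P_x[|T^1| < n]^{k-1} \le (1-c(x)n^{-1/2})^{k-1}$ (using the lower bound on $P_x[|T^1|\ge n]$ from Proposition~\ref{pro:cl23}), which immediately yields $E_x[\Lambda_n] \le c(x)\sqrt n$. It then avoids any truncation or cutoff: it bounds
$P_x[H_n^*\ge u\sqrt n]$ by $\sum_{i\ge 1}P_x[i\le \Lambda_n,\ N^{+,i}_{\lceil u\sqrt n\rceil}\neq\emptyset]$, notes the two events are independent, and collapses the sum to $c(x)u^{-1}n^{-1/2}E_x[\Lambda_n]\le c(x)u^{-1}$. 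Your version replaces the expectation step by a hard cutoff $J = A\sqrt n$ and a tail bound $P_x[\Lambda_n > A\sqrt n]\le C(x)A^{-2}$, obtained by identifying $\{\Lambda_n > J\}$ with $\{\sum_{i\le J}|T^i| \le n-1\}$ and then applying truncation (to handle the infinite mean of $|T^i|$) and Chebyshev. Both arguments use the two-sided tail estimate $P_x[|T^1|>m]\asymp m^{-1/2}$. The paper's route is shorter and yields the sharper rate $O(u^{-1})$, whereas the truncation/Chebyshev route costs an extra optimisation and gives $O(u^{-2/3})$ -- still more than enough for the qualitative statement. You could also note that the paper's geometric bound on $P_x[\Lambda_n\ge k]$ would give you an exponentially small bound in $A$, bypassing the truncation entirely if you wanted to keep your cutoff structure.
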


\begin{proof}
  By Theorem~\ref{thm:diameter} and Proposition~\ref{pro:cl23},
  there is $c(x)<\infty$ such that for all $n\ge 1$,
  \begin{equation}
      \label{eq:K(x)-cond}
      P_x[N_n^+ \neq \emptyset ] \le \frac{c(x)}{n}
      \qquad\text{and}\qquad
      E_x[\Lambda_n] \le c(x) \sqrt{n}.
  \end{equation}
  To see the second inequality in \eqref{eq:K(x)-cond}, we note that by
  the independence of the trees $T^i$ and Proposition~\ref{pro:cl23},
  \(
    P_x[\Lambda_n \ge k]
    \le P_x[|T^i| < n, i=1,\dots, k-1]
    = P_x[{\abs{T^1} < n}]^{k-1}
    \le (1-c(x) n^{-1/2})^{k-1}
  \)
  and thus
  \(
    E_x[\Lambda_n] = \sum_{k=1}^\infty P_x[\Lambda_n \ge k]
    \le c(x)^{-1}n^{1/2}
  \)
  as claimed.

  Since
  \(\set{H_n^* \ge u \sqrt{n}}
    \subseteq \set{\exists i \le \Lambda_n : N_{\ceil{u\sqrt{n}}}^{+,i}
      \neq \emptyset  }
  \),
  by a union bound,
  \begin{equation}
    P_x [H_n^* \ge u \sqrt{n}]
    \le \sum_{i\ge 1}
    P_x\big[i\le \Lambda_n, N_{\ceil{u \sqrt{n}}}^{+,i}\neq \emptyset \big].
  \end{equation}
  The events $\set{i \leq \Lambda_n}$ and
  $\set{\abs{N_{\ceil{u\sqrt{n}}}^{+,i}}>0}$ are independent. Hence, using
  the first half of \eqref{eq:K(x)-cond}, this is bounded by
  $c(x) u^{-1} n^{-1/2}\sum_{i \ge 1} P_x[i \le \Lambda_n]$. Since
  $\sum_{i \ge 1} P_x[{i \le \Lambda_n }] =   E_x[\Lambda_n] \le
  c(x)\sqrt{n}$, by the second half of \eqref{eq:K(x)-cond}, the claim
  follows.
\end{proof}

\begin{lemma}
  \label{lem:P[H*_t<R(x,n)]>delta/3}
  For every $\delta>0$ and $x \ge h^*$ there exists a sequence
  $R(n)$ such that $\lim_{n\to\infty}R(n)=\infty $  and
  \begin{equation}
    \sup_{n\ge 1} P_x[H^*_n \le R(n)] \le \delta.
  \end{equation}
\end{lemma}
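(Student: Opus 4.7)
The plan is to establish the uniform bound $P_x[H_n^* \le R] \le c(x)(R+1)/\sqrt{n}$ for some constant $c(x) < \infty$, after which the desired $R(n)$ can be read off. I would begin with the deterministic inequality
\[
  \sum_{k=1}^n 1_{\{H_k \le R\}} \le \sum_{i=1}^{\Lambda_n} M_i,
  \qquad M_i \defeq \sum_{j=0}^R |N_j^{+,i}|,
\]
justified by noting that the depth-first traversal has fully explored trees $T^1, \ldots, T^{\Lambda_n - 1}$ by step $n$ while only a prefix of $T^{\Lambda_n}$ has been visited; every vertex of height $\le R$ counted on the left therefore contributes to some $M_i$ on the right. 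Averaging and taking expectations gives
\[
  P_x[H_n^* \le R] = \frac{1}{n} E_x\Big[\sum_{k=1}^n 1_{\{H_k \le R\}}\Big] \le \frac{1}{n} E_x\Big[\sum_{i=1}^{\Lambda_n} M_i\Big].
\]

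The next step is a Wald-type identity. The key observation is that $M_i$ is a function of $T^i$ alone, while $\{\Lambda_n \ge i\} = \{\sum_{j=1}^{i-1}|T^j| < n\}$ is a function of $T^1, \ldots, T^{i-1}$ only; since the $T^i$ are i.i.d., this independence yields
\[
  E_x\Big[\sum_{i=1}^{\Lambda_n} M_i\Big]
  = \sum_{i \ge 1} E_x[M_i] \, P_x[\Lambda_n \ge i]
  = E_x[M_1] \cdot E_x[\Lambda_n].
\]
For $E_x[M_1]$, Proposition~\ref{pro:Harris_L2}(a) with $f\equiv 1$ gives $E_x[|N_k^+|] = \inp{\chi}{1}\chi(x) + \varepsilon_k^1(x)$ with $|\varepsilon_k^1(x)| \le \gamma^{k-1}q(x)$, and summing over $0 \le k \le R$ yields $E_x[M_1] \le c_1(x)(R+1)$. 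For $E_x[\Lambda_n]$, the geometric-series argument from the proof of Lemma~\ref{lem:H_n>Csqrt(n)-tozero}, based on Proposition~\ref{pro:cl23}, applies verbatim to give $E_x[\Lambda_n] \le c_2(x)\sqrt{n}$.

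Combining these gives $P_x[H_n^* \le R] \le c_1(x)c_2(x)(R+1)/\sqrt{n}$, and one then sets, for instance,
\[
  R(n) \defeq \Big\lfloor \delta \sqrt{n}\,/\,(c_1(x) c_2(x))\Big\rfloor - 1,
\]
which tends to infinity while guaranteeing $P_x[H_n^* \le R(n)] \le \delta$ for every $n \ge 1$ (vacuously so for the finitely many $n$ with $R(n) < 0$, since heights are non-negative). I do not anticipate a genuine obstacle here; the only point requiring a little care is the independence structure underlying Wald's identity, which however is immediate once one notices that $\Lambda_n$ depends on the $T^i$ only through their sizes.
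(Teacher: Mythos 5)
Your proof is correct and follows essentially the same route as the paper's: the same deterministic domination of $\sum_{k\le n}1_{\{H_k\le R\}}$ by $\sum_{i\le\Lambda_n}M_i$, the same Wald-type factorisation $E_x[\sum_{i\le\Lambda_n}M_i]=E_x[M_1]E_x[\Lambda_n]$ (the paper invokes Wald's equation via the stopping-time structure of $\Lambda_n$, while you derive it directly from the observation that $M_i$ is $\sigma(T^i)$-measurable and $\{\Lambda_n\ge i\}$ is $\sigma(T^1,\dots,T^{i-1})$-measurable, which is the same mechanism), and the same inputs $E_x[M_1]\le c(x)(R+1)$ and $E_x[\Lambda_n]\le c(x)\sqrt n$. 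The only cosmetic difference is that you keep the bound quantitative in $R$ and read off $R(n)$ explicitly, whereas the paper proves the soft statement $\lim_n P_x[H_n^*\le K]=0$ for each fixed $K$ and then leaves the (routine) extraction of a diverging $R(n)$ implicit.
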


\begin{proof}
  We will show that for every $K<\infty$
  \begin{equation}
    \lim_{n\to\infty }P_x [H_n^* \le K] = 0,
  \end{equation}
  which implies the statement of the lemma.

  For fixed $K>0$, by definition of  $H^*_n$,
  \begin{equation}
    P_x[H_n^* \le K] = n^{-1}\sum_{l = 1}^n P_x [H_l \le K]
    = n^{-1} E_x\Big[\sum_{h=0}^K \sum_{l=1}^n 1_{H_l = h}\Big].
  \end{equation}
  Since
  $\sum_{l=1}^n 1_{H_l = h} \le \sum_{i=1}^{\Lambda_n} \abs{N_{h}^{+,i}}$
  and thus
  \(
    \sum_{h=0}^K \sum_{l=1}^n 1_{H_l = h}
    \le \sum_{i=1}^{\Lambda_n} \sum_{h = 0}^K \abs{N_{h}^{+,i}}
  \).
  Moreover, $(\sum_{h=0}^{K} \abs{N_{h}^{+,i}})_{i=1}^{\Lambda_n}$ are i.i.d.
  and with finite mean. Noting that $\Lambda_n$ is a stopping time with
  respect to $\mathcal{G}_i = \sigma(T^j: j\le i)$ and using Wald's
  equation (see e.g. \cite[Theorem 2.6.2]{Durrett19}),
  \begin{equation}
    \label{eq:H_n^*<K-bound}
    P_x[H_n^* \le K]
    \le \frac{1}{n} E_x\Big[\sum_{i=1}^{\Lambda_n}
      \sum_{h=0}^K |N_{h}^{+,i}|\Big]
    = \frac{1}{n} E_x[\Lambda_n] E_x\Big[\sum_{h=0}^K |N_{h}^{+,1}|\Big],
  \end{equation}
  which by \eqref{eq:K(x)-cond} converges to $0$ as $n \to \infty$.
\end{proof}

The following law of large numbers will be important for the proof of
Proposition~\ref{prop:S_n-scaling-limit}.

\begin{proposition}
  \label{prop:Q-limit}
  Let $f$ be a bounded function and
  $m^f_n \defeq n^{-1}\sum_{i=1}^n f(\varphi_{v_i})$. Then
  \begin{equation}
    \lim_{n\to\infty }m^f_n
    = m_\infty^f := \frac{\inp{\chi}{f}}{\inp{\chi}{1}},
    \quad \text{in $P_x$-probability.}
  \end{equation}
\end{proposition}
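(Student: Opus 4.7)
The plan is to center $f$ and decompose the DFS sum along tree boundaries. Writing $g := f - m^f_\infty$ (bounded with $\inp{\chi}{g} = 0$), it suffices to prove $n^{-1}\sum_{i=1}^n g(\varphi_{v_i}) \to 0$ in $P_x$-probability. The natural split according to completed trees is
\begin{equation}
  \sum_{i=1}^n g(\varphi_{v_i}) = \sum_{j=1}^{\Lambda_n - 1} T_j^g + R_n^g,
  \qquad T_j^g := \sum_{v \in T^j} g(\varphi_v),
\end{equation}
where $R_n^g$ is the DFS-partial sum over the vertices of the currently-explored tree $T^{\Lambda_n}$ visited by step $n$.

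For the first piece, the assumption $\inp{\chi}{g}=0$ is crucial: Proposition~\ref{pro:Harris_L2}(a) gives $|E_x[T_j^g]| = |\sum_{k\ge 0} L^k[g](x)| \le \sum_{k\ge 0}\gamma^{k-1}\|g\|q(x) < \infty$, so although $E_x|T^j| = \infty$, the i.i.d.\ variables $T_j^g$ have finite mean. Kolmogorov's SLLN then yields $k^{-1}\sum_{j\le k} T_j^g \to E_x[T_1^g]$ almost surely, and combining with the bound $E_x[\Lambda_n] \le c\sqrt n$ obtained inside the proof of Lemma~\ref{lem:H_n>Csqrt(n)-tozero} shows $\sum_{j < \Lambda_n} T_j^g = O_p(\sqrt n) = o_p(n)$.

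The main obstacle will be showing $R_n^g = o_p(n)$. The subtlety is that $|T^{\Lambda_n}|$ is typically of order $n$: by the $1/2$-stable tail of $|T^j|$ from Proposition~\ref{pro:cl23} and the usual renewal-overshoot behaviour, the size of the tree ``containing'' the $n$-th visited vertex remains of order $n$ with positive probability, so the trivial bound $|R_n^g|\le|T^{\Lambda_n}|\|g\|_\infty$ is useless. My plan is to decompose $R_n^g$ by generation within $T^{\Lambda_n}$: writing $V_{m_n}$ for the first $m_n := n - \tau_{\Lambda_n - 1}$ DFS-visited vertices of $T^{\Lambda_n}$,
\begin{equation}
  R_n^g = \sum_{h \ge 0} \sum_{v \in V_{m_n}\cap N_h^{+,\Lambda_n}} g(\varphi_v).
\end{equation}
The crucial structural observation is that $V_{m_n}\cap N_h^{+,\Lambda_n}$ is exactly a prefix of $N_h^{+,\Lambda_n}$ in the DFS-induced ordering of height-$h$ vertices, so Lemma~\ref{lem:f-average-tech} applies directly to each inner sum. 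For ``bulk'' generations $h$ (with $|N_h^{+,\Lambda_n}|$ of typical order and $h$ in the range $[R(n), C\sqrt n]$ forced by Lemmas~\ref{lem:P[H*_t<R(x,n)]>delta/3} and~\ref{lem:H_n>Csqrt(n)-tozero}), Lemma~\ref{lem:f-average-tech} produces a $\delta$-bound on each partial average, summing to at most $\delta\, m_n \le \delta n$; the remaining generations (heights outside the typical range, or with $|N_h^{+,\Lambda_n}|$ atypically small) are controlled by the cruder bound $\|g\|_\infty\cdot|V_{m_n}\cap N_h^{+,\Lambda_n}|$ and contribute $O(\rho n)$. Letting $\delta,\rho \downarrow 0$ after $n\to\infty$ then closes the argument. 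The main technical work is (i) transferring Lemma~\ref{lem:f-average-tech}, which is stated under the conditional law $P_x[\cdot\,|\,N_h^+\neq\emptyset]$, to the size-biased current tree $T^{\Lambda_n}$, and (ii) combining the per-height failure probabilities uniformly over the relevant height range without losing a logarithmic factor.
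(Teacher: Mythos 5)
Your tree-boundary decomposition is a genuinely different route: the paper instead rewrites $m_n^f$ as the conditional expectation $E_x[m_n^*\mid\mathcal F_{\mathbf T}]$ of a random horizontal-slice average $m_n^*$, proves $m_n^*\to m_\infty^f$ directly, and reduces to i.i.d.\ trees via the stopping times $\tau_i$, thereby absorbing both the boundary term and the size-biasing into the definition of $m_n^*$. Your approach must instead split off and separately control the partially-explored tree $T^{\Lambda_n}$.

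There is a genuine gap in the first piece. You claim the i.i.d.\ $T_j^g=\sum_{v\in T^j}g(\varphi_v)$ have finite mean and invoke Kolmogorov's SLLN. But the absolute convergence of $\sum_{k\ge 0}L^k[g](x)$ controls only the signed series of generation-wise expectations; it does not give $E_x\abs{T_1^g}<\infty$, and in fact $E_x\abs{T_1^g}$ is infinite. The diagonal contribution to the second moment, $\sum_k L^k[g^2](x)$, already diverges because $\inp{\chi}{g^2}>0$ even though $\inp{\chi}{g}=0$; and since $\abs{T_1^g}$ scales like $\sqrt{\abs{T^1}}$ while $P_x[\abs{T^1}>m]\sim Cm^{-1/2}$ by Proposition~\ref{pro:cl23}, one gets $P_x[\abs{T_1^g}>u]\sim cu^{-1}$, a Cauchy-type tail with infinite mean and both tails heavy, so even $E_x[T_1^g]$ is not well-defined. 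The SLLN step therefore fails. One can very likely still obtain $\sum_{j<\Lambda_n}T_j^g=o_p(n)$ via a Marcinkiewicz--Zygmund law at some exponent $p\in(1/2,1)$ together with $\Lambda_n=O_p(\sqrt n)$, but that requires first establishing the tail estimate $P_x[\abs{T_1^g}>u]\le Cu^{-1}$, which you have not provided and which is a nontrivial piece of work on its own.

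The second piece is only sketched; the two technical obstacles you name — transferring Lemma~\ref{lem:f-average-tech} from the conditional law $P_x[\,\cdot\mid N_h^+\neq\emptyset]$ to the size-biased current tree $T^{\Lambda_n}$, and summing per-height failure probabilities uniformly over heights without a logarithmic loss — carry most of the difficulty in this proof, and the paper's conditional-expectation identity together with a Cauchy--Schwarz decoupling is precisely what makes them tractable.
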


\begin{proof}
  Let
  \begin{equation}
    N_{k,n}^{+,i} = N_k^{+,i} \cap \{v_1,\dots, v_n\}
  \end{equation}
  be the part of $N_k^{+,i}$ traversed in the first $n$ steps,
  and let $N^*_{n} = N_{H_n^*,n}^{+,\Lambda_n^*}$. We set
  \begin{equation}
    m^*_n = \abs{N_{n}^*}^{-1} \sum_{v \in N_{n}^*} f(\varphi_v).
  \end{equation}
  Denoting by $\mathcal F_{\mathbf T}$ the $\sigma $-algebra generated by
  the sequence $\mathbf T = (T_i,\varphi^i)_{i\ge 1}$, it holds that
  \begin{equation}
    \label{eq:Q_n-observation}
    m_n^f = E_x[m_n^*| \mathcal F_{\mathbf T}]
  \end{equation}
  (effectively, the expectation here is over $U_n$ only). To show the
  proposition it will thus be sufficient to show that
  \begin{equation}
    \label{eq:Q_n-toshow}
    \lim_{n\to\infty } m^*_n = m_\infty^f
    \quad \text{in $P_x$-probability. }
  \end{equation}
  To see that this is indeed enough, note that since $f$ is bounded
  $m^*_n$ is dominated by a constant. Hence, by the dominated
  convergence theorem, $m_n^*\to m_\infty^f$ also in $L^2(P_x)$. By
  \eqref{eq:Q_n-observation}, since the conditional expectation is a
  contraction on $L^2(P_x)$,  $m_n^f \to m_\infty^f$ in $L^2(P_x)$,
  and thus in $P_x$-probability as claimed.

  To prove \eqref{eq:Q_n-toshow}, we fix $\varepsilon, \delta  >0$ and set
  \begin{equation}
    A_n = \{\abs{m_n^* - m_\infty^f} > \varepsilon \}.
  \end{equation}
  We then fix $C<\infty$ such that $P_x[H_n^* \ge C n^{1/2}] \le \delta/3$
  for all $n\ge 1$, which is possible by
  Lemma~\ref{lem:H_n>Csqrt(n)-tozero}, and take $R(n)$ as in
  Lemma~\ref{lem:P[H*_t<R(x,n)]>delta/3}, so that
  $P_x[H_n^* \le R(n)] \le \delta/3$. Setting
  $B_n = \{R(n) \le H_n^* \le C n^{1/2}\}$, it follows that for all
  $n\ge 1$,
  \begin{equation}
    \label{eq:pxan}
    P_x[A_n]\le 2\delta /3 + P_x[A_n  \cap B_n ]
  \end{equation}
  We further set
  $m_{h,n}^i = |N_{h,n}^{+,i}|^{-1} \sum_{v \in N_{h,n}^{+,i}} f(\varphi_v)$
  and $A_{h, n}^i = \set{\abs{m_{h, n}^i - m_\infty^f}> \varepsilon}$.
  Using the definitions of $U_n$ and $m^*_n$, and then
  decomposing by possible values of $H_k$, $\Lambda_k$, we obtain
  \begin{equation}
    \label{eq:P_x-times-mu_n[A_n]-calc}
    \begin{split}
      P_x [A_n \cap B_n]
      &
      = \sum_{k = 1}^{n} P_x[A_n \cap B_n | U_n = k]P_x[U_n = k]
      \\&= \sum_{k = 1}^{n} \frac 1n P_x[A_{H_k, n}^{\Lambda_k}, R(n)\le H_k
        \le C n^{1/2}]
      \\&= \frac 1n
      \sum_{l=R(n)}^{Cn^{1/2}} \sum_{i=1}^n
      E_x\Big[ 1_{A_{l, n}^{i}} \sum_{k = 1}^{n}
        1_{\set{H_k = l, \Lambda_k = i}} \Big].
      \\& =
      \frac 1n \sum_{l=R(n)}^{Cn^{1/2}} \sum_{i=1}^n
      E_x\big[1_{A_{l, n}^{i}}
        \abs[\big]{N_{l,n}^{+,i}}\big],
    \end{split}
  \end{equation}
  where in the last step we used
  $\sum_{k=1}^{n}  1_{\set{H_k = l, \Lambda_k = i}}  = \abs{N_{l,n}^{+,i}}$.
  To analyse the expectation on the right-hand side,  we define
  $\sigma $-algebras $\mathcal{G}_{i} = \sigma(T^1, T^2, \dots, T^i)$ and set
  $\tau_i = \abs{T^1}+ \dots + \abs{T^{i-1}}$ (which is
    $\mathcal{G}_{i-1}$-measurable). Then, using that
  $(T^i,\varphi^i)_{i\ge 1}$ is an i.i.d.~sequence,
  \begin{equation}
    \begin{split}
      \label{eq:Q_n-calc1}
      E_x\big[1_{A_{l, n}^{i}}\abs{N_{l,n}^{+,i}}\big| \mathcal{G}_{i-1}\big]
      &= 1_{\set{\tau_i \le n}} E_x\big[1_{A^1_{l, n - \tau_i}}
        \abs[\big]{N_{l,n-\tau_i}^{+,1}}\big] \\
      &= 1_{\set{\tau_i\le n}}
      E_x\big[1_{A_{l, n - \tau_i}}
        |N_{l,n-\tau_i}^{+}| \big| N_l^{+} \neq\emptyset\big]
      P_x[N_l^{+} \neq\emptyset],
    \end{split}
  \end{equation}
  where on the last line we omitted the superscript `1' since
  $N^{+,1}_{l,k}$ has the same distribution as $N^+_{l,k}$.
  By Theorem~\ref{thm:diameter}, Proposition~\ref{pro:Harris_L2} and
  \eqref{eq:K(x)-cond} there is a $c = c(x)$ such that
  \begin{equation}
    \label{eq:K-properties}
    P_x\big[N_l^{+} \neq\emptyset\big] \le cl^{-1},
    \qquad E_x\big[\abs[\big]{N_l^{+}}^2 \,\big|\, N_l^{+}
      \neq\emptyset\big]^{1/2} \le c l,
    \qquad E_x\big[\Lambda_n\big] \le cn^{1/2}.
  \end{equation}
  Further, for $B_n(f,\varepsilon, \rho)$ as in
  Lemma~\ref{lem:f-average-tech}, it holds that
  \(
    A_{l,k} \cap \set{|N^+_{l, k}| \ge l\rho}
    \subseteq B_l(f,\varepsilon, \rho)
  \).
  Therefore, choosing $\rho = \delta/(6Cc^2)$ and decomposing on whether
  $|N_{l,n-\tau_i}^{+}|$ is larger than $l \delta/(6Cc^2)$, we obtain
  \begin{equation}
    \begin{split}
      \label{eq:Q_n-calc2}
      E_x\big[&1_{A^1_{l, n - \tau_i}} \abs{N_{l,n-\tau_i}^{+,1}} \big|
        N_l^{+,1} \neq\emptyset\big] \\
      &\le \frac{l\delta}{6Cc^2}
      + E_x\big[1_{B_n(f, \varepsilon, \rho )}
        \abs{N_{l,n-\tau_i}^+} \big| N_l^+ \neq\emptyset\big] \\
      &\le \frac{l\delta}{6Cc^2}
      + E_x\big[\abs{N_l^+}^2 \big| N_l^+ \neq\emptyset\big]^{1/2}
      P_x\big[B_l(f, \varepsilon, \rho )\big| N_l^+ \neq\emptyset\big]^{1/2},
    \end{split}
  \end{equation}
  where in the last step we used the Cauchy--Schwarz inequality and
  $|N_{l,n-\tau_i}^+| \le |N_l^+|$.
  Combining \eqref{eq:Q_n-calc1} and
  \eqref{eq:Q_n-calc2} with the first two claims in
  \eqref{eq:K-properties} we obtain
  \begin{equation}
    E_x\big[1_{A_{l,n}^i}\abs{N_{l,n}^{+,i}}\big] \le
    \Big(\frac{\delta}{6Cc}
      + c^2 P_x\big[B_l(f, \varepsilon, \rho )
        \,\big|\,N_l^+ \neq\emptyset\big]^{1/2}\Big)P_x[\tau_i \le n].
  \end{equation}
  Inserting this into \eqref{eq:pxan},
  \eqref{eq:P_x-times-mu_n[A_n]-calc}, we obtain that $P_x[A_n]$ is
  bounded by
  \begin{equation}
    \frac{2\delta }{3}
    +\frac{1}{n} \Big(\frac{\delta}{6Cc}
      + c^2 \sup_{l \ge R(n)}P_x\big[B_l(f, \varepsilon, \rho )
        \,\big|\,N_l^+ \neq\emptyset\big]^{1/2}\Big)
    \sum_{l=R(n)}^{\floor{C\sqrt{n}}} \sum_{i=1}^n P_x[\Lambda_n \ge i],
  \end{equation}
  Since $R(n)$ diverges, Lemma~\ref{lem:f-average-tech} implies that
  the supremum in this formula tends to zero as
  $n \to \infty$. By the last property in
  \eqref{eq:K-properties},
  $\sum_{l=R(n)}^{\floor{C\sqrt{n}}} \sum_{i=1}^n P_x[\Lambda_n\ge i]
  \le C \sqrt{n}E_x[\Lambda_n] \le Ccn$.
  Therefore
  \begin{equation}
    P_x [A_n] \le \frac{2\delta}{3}
    + \Big(\frac{\delta}{6} + Cc^3 o(1)\Big) \le \delta
  \end{equation}
  for $n$ large enough. Since $\varepsilon$ and $\delta $ are arbitrary,
  this shows \eqref{eq:Q_n-toshow} and with the initial comment completes
  the proof.
\end{proof}

We are now ready to prove Proposition~\ref{prop:S_n-scaling-limit}.

\begin{proof}[Proof of Proposition~\ref{prop:S_n-scaling-limit}]
  We apply a martingale functional central limit theorem, see
  e.g.~\cite[Theorem~8.2.8]{Durrett19}. To check its assumptions we need
  to show that
  \begin{enumerate}
    \item
    \(
      \lim_{n \to \infty}n^{-1}
      \sum_{k=1}^{n} E_x[(S_k - S_{k-1})^2 | \mathcal{H}_{k-1}]
      = \frac{\inp{\chi}{\mathcal{V}}}{\inp{\chi}{1}}
    \)
    in $P_x$-probability,
    \item
    \(
      \lim_{n \to \infty}n^{-1} \sum_{k=1}^{n}
      E_x[(S_k - S_{k-1})^2 1_{\set{|S_k - S_{k-1}|>\varepsilon\sqrt{n}}}]
      = 0.
    \)
  \end{enumerate}

  Condition (a) follows from Lemma~\ref{lem:S_k-square-diff} and
  Proposition~\ref{prop:Q-limit}, using that $\mathcal{V}$ is bounded by
  Lemma~\ref{lem:var-bound}.

  We now show (b). By the Cauchy--Schwarz inequality,
  \(
    E_x[(S_k - S_{k-1})^2
      1_{\set{|S_k - S_{k-1}|>\varepsilon\sqrt{n}}}]
    \le E_x[(S_k - S_{k-1})^4]^{1/2}
    P_x[(S_k - S_{k-1})^2 > \varepsilon^2n]^{1/2}
  \).
  By Lemmas~\ref{lem:S_k-square-diff}, \ref{lem:var-bound},
  $E_x[(S_k - S_{k-1})^2] \le c$ uniformly in $k$ and thus
  $P_x[(S_k - S_{k-1})^2 > \varepsilon^2n] \le c \varepsilon^{-2} n^{-1}$.
  Thus, to prove (b) it is enough to show
  \begin{equation}
    \label{eq:S_n-scaling-toshow-b}
    E_x[(S_k - S_{k-1})^4] \le c
  \end{equation}
  uniformly in $k$. By \eqref{eq:S-diff},
  \(
    S_k - S_{k-1}
    = \sum_{v \in \desc(v_{k-1})} \chi(\varphi_v) - \chi(\varphi_{v_{k-1}})
  \),
  and thus
  \begin{equation}
    \begin{split}
      \label{eq:S-diff^4}
      E_x[(S_k - S_{k-1})^4 | \mathcal{H}_{k-1}]
      &= E_{\varphi_{v_{k-1}}}
      \Big[\Big(\sum_{v\in N_1^+} \chi(\varphi_v)
        - \chi(\varphi_{v_{k-1}})\Big)^4\Big] \\
    &= E_{\varphi_{v_{k-1}}}
    \Big[\Big(\sum_{v\in N_1^+} \chi(\varphi_v)
        - E_{\varphi_{v_{k-1}}}\big[\sum_{v\in N_1^+}
          \chi(\varphi_v)\big]\Big)^4\Big],
    \end{split}
  \end{equation}
  where we used that
  \(
    \chi(\varphi_{v_{k-1}})
    = E_{\varphi_{v_{k-1}}}[\sum_{v \in N_1^+} \chi(\varphi_{v})]
  \)
  in the last line. We write $N_1^+ = \set{w_1, \dots, w_d}$ and follow
  similar arguments as in the proof of Lemma~\ref{lem:var-bound}, using
  that $\chi(x) = 0$ for $x < h^*$ and exploiting the independence of the
  $\varphi_{w_i}$, to obtain that \eqref{eq:S-diff^4} is bounded above by
  \(
    c_d E_{\varphi_{v_{k-1}}}[(\chi(\varphi_{w_1})
        - E_{\varphi_{v_{k-1}}}[\chi(\varphi_{w_1})])^4]
  \).
  Since $\chi$ is Lipschitz on $[h^*, \infty)$,
  $\chi(\varphi_{w_1})-E_x[\chi(\varphi_{w_1})]$ is sub-Gaussian, and thus
  $E_x[(\chi(\varphi_{w_1})-E_x[\chi(\varphi_{w_1})])^4] \le c$, where
  the constant $c$ only depends on $d$ and the Lipschitz constant of
  $\chi$ but not on $x$. Statement \eqref{eq:S_n-scaling-toshow-b} then
  follows by taking the expectation of \eqref{eq:S-diff^4}.
\end{proof}

\subsection{Further invariance principles}

We now prove several further convergence results that are a consequence
of Proposition~\ref{prop:S_n-scaling-limit} and which will be useful
later.

\begin{proposition}
  \label{prop:joint-scaling-limit}
  Let $B_t$ be a standard Brownian motion, let $L_t^0$ be its local time
  at $0$. Then, as $n \to \infty$, in  $P_x$-distribution with respect to
  the Skorokhod topology,
  \begin{equation}
    \bigg(\frac{\sum_{w \in Y(v_{\floor{nt}})} \chi(\varphi_w)}{\sqrt{n}},
      \frac{\Lambda(v_{\floor{nt}})}{\sqrt{n}}\bigg)_{t \ge 0}
    \to \Big(\sigma |B_t|, \frac{\sigma}{\chi(x)}L_t^0\Big)_{t \ge 0}.
  \end{equation}
\end{proposition}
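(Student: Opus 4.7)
The plan is to identify $(C_n, \chi(x) \Lambda_n)$, where $C_n \defeq \sum_{w \in Y(v_n)} \chi(\varphi_w)$, as the Skorokhod reflection at zero of a process with Brownian scaling limit, and then apply Lévy's theorem. The starting observation is that, because $\varphi_{o^i} = x$ for every $i$ under the joint law of $\mathbf T$, the definition \eqref{eq:S_n-def} of $S_n$ simplifies to
\begin{equation*}
S_n = \chi(\varphi_{v_n}) - \chi(x)\Lambda_n + C_n.
\end{equation*}
Setting $\tilde S_n \defeq S_n - \chi(\varphi_{v_n}) = C_n - \chi(x) \Lambda_n$, the pair $(C_n, \chi(x) \Lambda_n)$ is the discrete Skorokhod reflection of $\tilde S_n$ at $0$: indeed, $C_n \ge 0$, the process $\chi(x) \Lambda_n$ is non-decreasing, and $\Lambda_n$ can only jump at those $n$ at which $v_n = o^{\Lambda_n}$, but then $Y(v_n) = \emptyset$ and so $C_n = 0$. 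Consequently, $\min_{m \le n} \tilde S_m$ is attained at the most recent new-tree time and I obtain the explicit formulas
\begin{equation*}
\chi(x)\Lambda_n = -\min_{m \le n}\tilde S_m
\qquad \text{and} \qquad
C_n = \tilde S_n - \min_{m \le n}\tilde S_m.
\end{equation*}

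The next step is to show that $\tilde S_{\floor{n\cdot}}/\sqrt n$ inherits from Proposition~\ref{prop:S_n-scaling-limit} the same scaling limit $\sigma B$ as $S_{\floor{n\cdot}}/\sqrt n$, which amounts to proving
\begin{equation*}
\max_{k \le nT} \frac{\chi(\varphi_{v_k})}{\sqrt n} \to 0 \qquad \text{in $P_x$-probability}
\end{equation*}
for every $T > 0$. This is the main technical step: since $\chi(\varphi_{v_k})$ is not a jump of the martingale $S$, its control is not given for free by the FCLT proof. I would fix $C$ large enough that Lemma~\ref{lem:H_n>Csqrt(n)-tozero} makes $\set{H_k > C\sqrt n \text{ for some } k \le nT}$ negligible, and on the complementary event use Wald's identity on the stopping time $\Lambda_{\floor{nT}}$ (adapted to the filtration $\sigma(T^1,\dots,T^i)$) together with Proposition~\ref{pro:Harris_L2}(a) to get
\begin{equation*}
E_x\Big[\sum_{k=1}^{\floor{nT}}\chi(\varphi_{v_k})^4 1_{H_k \le C\sqrt n}\Big]
\le E_x[\Lambda_{\floor{nT}}] \sum_{l=0}^{\floor{C\sqrt n}} L^l[\chi^4](x)
\le c(x) n,
\end{equation*}
where I used that $\chi^4 \in L^2(\nu)$ by \eqref{eq:crit-chi_bounds}, that $L^l[\chi^4](x)$ is bounded uniformly in $l$, and that $E_x[\Lambda_n] = O(\sqrt n)$ (see \eqref{eq:K(x)-cond}). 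A fourth-moment Markov bound then yields the required negligibility.

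Combining these two steps gives $\tilde S_{\floor{n\cdot}}/\sqrt n \to \sigma B$ in $P_x$-distribution with respect to the Skorokhod $J_1$ topology. The Skorokhod reflection map $f \mapsto \bigl(f - \inf_{s \le \cdot}(f(s)\wedge 0),\, -\inf_{s \le \cdot}(f(s) \wedge 0)\bigr)$ is continuous at continuous functions, so by the continuous mapping theorem
\begin{equation*}
\Big(\frac{C_{\floor{nt}}}{\sqrt n},\, \frac{\chi(x)\Lambda_{\floor{nt}}}{\sqrt n}\Big)_{t \ge 0}
\to \sigma\Big(B_t - \inf_{s \le t}B_s,\; -\inf_{s \le t}B_s\Big)_{t \ge 0}.
\end{equation*}
Finally, Lévy's theorem asserts that this limit has the same law as $\sigma(|W_t|, L^0_t)_{t \ge 0}$ for some standard Brownian motion $W$ with local time $L^0$ at zero; dividing the second coordinate by $\chi(x)$ yields the proposition.
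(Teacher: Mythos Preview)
Your proof is correct and follows essentially the same route as the paper: identify $\tilde S_n = S_n - \chi(\varphi_{v_n})$, observe that $(C_n,\chi(x)\Lambda_n)$ is its Skorokhod reflection at zero, transfer the FCLT from $S$ to $\tilde S$ by showing the correction $\chi(\varphi_{v_{\floor{n\cdot}}})/\sqrt n$ is negligible, and finish with the continuous mapping theorem and L\'evy's identity. The only substantive difference is in how you control $\max_{k\le nT}\chi(\varphi_{v_k})/\sqrt n$: the paper (Lemma~\ref{lem:chi/sqrt(n)-conv}) decomposes on $\{\Lambda_{\floor{nT}}\le j\}$ and tree heights $\le k$, then uses Gaussian comparison on the resulting forest, whereas you use a fourth-moment Markov bound via Wald's identity, which is arguably more elementary. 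One small imprecision: Lemma~\ref{lem:H_n>Csqrt(n)-tozero} as stated is about $H_n^*=H_{U_n}$, not $\max_{k\le n}H_k$; what you actually need is the bound on $P_x[\exists i\le\Lambda_n:\,N^{+,i}_{\lceil C\sqrt n\rceil}\neq\emptyset]$ established in its \emph{proof}, so the citation should point there.
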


To show Proposition~\ref{prop:joint-scaling-limit} from
Proposition~\ref{prop:S_n-scaling-limit}, we need the following
lemma. It states that (under scaling) the first summand in the
definition \eqref{eq:S_n-def} of $S_n$ is negligible.

\begin{lemma}
  \label{lem:chi/sqrt(n)-conv}
  In $P_x$-distribution with respect to the Skorokhod topology,
  \begin{equation}
    \lim_{n\to\infty }
    \Big(\frac{\chi(\varphi_{v_{\floor{nt}}})}{\sqrt{n}}\Big)_{t \ge 0}
    = 0.
  \end{equation}
\end{lemma}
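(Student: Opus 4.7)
The plan is to show the stronger property of local uniform convergence in probability, which is equivalent to Skorokhod convergence here because the limit $0$ is continuous. Thus it suffices to prove that for every $T > 0$ and $\varepsilon > 0$,
\begin{equation*}
  P_x\Big[\max_{1 \le k \le \lfloor nT \rfloor} \chi(\varphi_{v_k}) > \varepsilon \sqrt{n}\Big] \xrightarrow{n \to \infty} 0.
\end{equation*}
Since every visited vertex satisfies $\varphi_{v_k} \ge h^*$, \eqref{eq:crit-chi_bounds} gives $\chi(\varphi_{v_k}) \le c_2 \varphi_{v_k}$, so I may replace $\chi(\varphi_{v_k})$ by $\varphi_{v_k}$ at the price of absorbing $c_2$ into $\varepsilon$.

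Next I split the maximum according to the tree containing $v_k$. Setting $M^i := \max_{v \in T^i} \varphi_v$ and $M_\varphi := \max_{v \in \mathcal{C}_o \cap \T^+}\varphi_v$, the $M^i$ are i.i.d.~copies of $M_\varphi$ under $P_x$, and Markov's inequality $P_x[\Lambda_{\lfloor nT \rfloor} > C\sqrt{n}] \le E_x[\Lambda_{\lfloor nT \rfloor}]/(C\sqrt{n}) \le c/C$ (from \eqref{eq:K(x)-cond}) together with a union bound yields
\begin{equation*}
  P_x\Big[\max_{k \le \lfloor nT \rfloor} \varphi_{v_k} > M\Big]
  \le \frac{c}{C} + C\sqrt{n}\,P_x[M_\varphi > M].
\end{equation*}
Choosing $C$ large reduces the task to showing $\sqrt{n}\,P_x[M_\varphi > \varepsilon \sqrt{n}] \to 0$.

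The key step is a Gaussian-type tail bound on $M_\varphi$. For any $N \in \mathbb{N}$, a union bound and Markov's inequality give
\begin{equation*}
  P_x[M_\varphi > M] \le P_x[N_N^+ \neq \emptyset] + \sum_{k=0}^{N-1} L^k[1_{[M,\infty)}](x).
\end{equation*}
Theorem~\ref{thm:diameter} bounds the first term by $C\chi(x)/N$. Using the bound $\chi(y) \le c_2 y$ from \eqref{eq:crit-chi_bounds} together with the Gaussian tails of $\nu$, one obtains $\inp{\chi}{1_{[M,\infty)}} \le C e^{-cM^2}$ and $\norm{1_{[M,\infty)}} \le C e^{-cM^2}$, hence by Proposition~\ref{pro:Harris_L2}(a),
\begin{equation*}
  L^k[1_{[M,\infty)}](x) \le C\chi(x) e^{-cM^2} + C q(x) \gamma^{k-1} e^{-cM^2}.
\end{equation*}
Summing and optimising the splitting depth $N \asymp e^{cM^2/2}$ produces $P_x[M_\varphi > M] \le C(x) e^{-c' M^2}$. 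Substituting $M = \varepsilon \sqrt{n}$ yields $\sqrt{n}\,P_x[M_\varphi > \varepsilon \sqrt{n}] \to 0$ at exponential speed.

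The main obstacle is the last tail estimate: a naive first-moment argument summing $L^k[1_{[M,\infty)}](x)$ over all $k \ge 0$ fails because the critical cluster has infinite expected size. The resolution is to truncate at depth $N$, use Theorem~\ref{thm:diameter} to control the tail beyond $N$, and optimise $N$ against the super-exponential decay coming from Proposition~\ref{pro:Harris_L2} and the Gaussian tails of $\nu$.
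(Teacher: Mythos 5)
Your proof is correct and takes a genuinely different route from the paper's. Both arguments start identically: reduce Skorokhod convergence to $P_x[\max_{k\le\floor{nt}}\varphi_{v_k} > \varepsilon\sqrt{n}]\to 0$ using \eqref{eq:crit-chi_bounds}, then use $E_x[\Lambda_n]\le c(x)\sqrt{n}$ from \eqref{eq:K(x)-cond} to restrict attention to roughly $\sqrt{n}$ trees. The divergence is in how the maximum of the field over one cluster is controlled. The paper truncates additionally at a tree height $k$, bounds the resulting maximum from above by the maximum over $jd^k$ non-negatively correlated Gaussians in $j$ copies of the \emph{full} forward tree (discarding the cluster structure), invokes a Gaussian comparison bound of order $\sqrt{k\log j}$, applies Markov, and finally makes the polynomial choices $j = n^{2/3}$, $k = n^{5/6}$. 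You instead derive a direct Gaussian-type tail bound $P_x[M_\varphi > M] \le C(x)e^{-c'M^2}$ for the cluster maximum, via a generation-by-generation union bound, the first-moment identity $P_x[\exists v\in N_k^+:\varphi_v\ge M]\le L^k[1_{[M,\infty)}](x)$, the spectral decomposition of Proposition~\ref{pro:Harris_L2}(a) together with the Gaussian tails of $\nu$, and the one-arm estimate of Theorem~\ref{thm:diameter} for depths beyond $N$, with the optimising truncation $N\asymp e^{cM^2/2}$. Your route avoids Gaussian comparison inequalities altogether, yields a sharper (exponential rather than polynomial) tail for the single-cluster maximum, and makes the reason for the cut-off explicit — the naive sum over all $k\ge 0$ fails because the expected cluster size is infinite. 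The paper's approach is somewhat more elementary in its tools (only Markov, the one-arm probability, and a standard Gaussian max bound) but less quantitative. One small point worth noting explicitly if you write this up: for $k=0$ the bound from Proposition~\ref{pro:Harris_L2}(a) with $\gamma^{k-1}$ does not apply, but $L^0[1_{[M,\infty)}](x)=1_{[M,\infty)}(x)=0$ once $M>x$, so that term simply drops.
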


\begin{proof}
  As $\chi$ grows at most linearly (see  \eqref{eq:crit-chi_bounds}), it
  is enough to show that for any fixed $t > 0$ and $\varepsilon > 0$,
  \begin{equation}
    \label{eqn:hhh}
    \lim_{n\to\infty}
    P_x\Big[\max_{i \le nt}\varphi_{v_i}/\sqrt{n} > \varepsilon\Big]
    = 0.
  \end{equation}
  Let $H(T^i) = \max_{v \in T^i} H(v)$ denote the height of tree $T^i$.
  For every $j,k\in \mathbb N$, the probability in \eqref{eqn:hhh} is
  bounded from above by
  \begin{equation}
    \begin{split}
      \label{eq:jjj}
    P_x\Big[&\max_{i \le nt}\varphi_{v_i} > \varepsilon \sqrt{n},\
      \Lambda_{\floor{nt}} \le j,\ H(T_i) \le k \text{ for }i=1,\dots, j\Big]
    \\ &+
    P_x[\exists i \in \set{1, \dots, j} :
      H(T^i)>k,\ \Lambda_{\floor{nt}}\leq j]
    + P_x[\Lambda_{\floor{nt}} > j].
    \end{split}
  \end{equation}
  To show \eqref{eqn:hhh}, we thus need to choose
  $j_n \to \infty$ and $k_n \to \infty$ so that all three summands
  converge to zero.

  We start with the first summand. Restricted to
  $\set{\Lambda_{\floor{nt}} \le j}$ and
  $\set{H(T^i) \le k \text{ for } i=1,\dots, j}$,
  $\max_{k \le nt} \varphi_{v_k}$ is dominated by the maximum of all
  $\varphi_v$ with $v$ such that $\Lambda(v) \le j$ and $H(v) \le k$.
  Considering not only the maximum over the connected components of the
  level set, but over the first $k$ generations in $j$ copies of the
  whole forward tree $\mathbb{T}^+$, this is dominated by the maximum of
  $j d^k$ non-negatively correlated Gaussian random variables with mean
  at most $x$ and bounded variance. By Gaussian comparison techniques,
  the mean of this maximum is of order
  $x + c \sqrt {\log j d^k} \le c \sqrt {k \log j}$ for $j,k$ large
  enough. Thus, by the Markov inequality,
  \begin{equation}
    P_x\Big[\max_{i \le nt}\varphi_{v_i} > \varepsilon \sqrt{n},\
      \Lambda_{\floor{nt}} \le j,\ H(T_i) \le k
      \text{ for } i=1,\dots, j\Big] \\
    \le \frac{C}{\varepsilon n^{1/2}} \sqrt{k\log(j)}.
  \end{equation}
  By a union bound, using Theorem~\ref{thm:diameter}, the second
  summand in \eqref{eq:jjj} satisfies
  \begin{equation}
    P_x[\exists i \in \set{1, \dots, j}
      : H(T^i) > k,\ \Lambda_{\floor{nt}} \le j]
    \le
    c(x)j/k.
  \end{equation}
  Finally, by the Markov inequality and \eqref{eq:K(x)-cond}, the third
  summand can be bounded by
  \begin{equation}
     P_x[\Lambda_{\floor{nt}} > j] \le c(x)\sqrt{tn}/j.
  \end{equation}
  Setting now, e.g., $j_n = n^{2/3}$ and $k_n = n^{5/6}$, all three
  summands in \eqref{eq:jjj} converge to $0$ as required.
\end{proof}

\begin{proof}[Proof of Proposition~\ref{prop:joint-scaling-limit}]
  We use the continuous mapping theorem together with the already
  established convergence statements. By
  Proposition~\ref{prop:S_n-scaling-limit},
  $n^{-1/2}S_{\floor{n\cdot}} \to \sigma B_\cdot$, and by
  Lemma~\ref{lem:chi/sqrt(n)-conv},
  $n^{-1/2}\chi(\varphi_{v_{\floor{n\cdot}}})\to 0$ as $n \to \infty$, in
  $P_x$-distribution, in the Skorokhod topology. Therefore
  \begin{equation}
    n^{-1/2}(S_{\floor{n\cdot}} - \chi(\varphi_{v_{\floor{n\cdot}}}))
    = n^{-1/2}\Big(\sum_{w\in Y(v_{\floor{n\cdot}})} \chi(\varphi_w)
      - \sum_{i \le \Lambda_{\floor{n\cdot}}} \chi(\varphi_{o^i})\Big)
  \end{equation}
  also converges to $\sigma B_\cdot$ as $n \to \infty$. Next, note that
  \begin{equation}
    \inf_{k \le n}\set{S_{k} - \chi(\varphi_{v_k})}
    = \inf_{k \le n}\set[\Big]{ \sum_{w \in Y(v_k)} \chi(\varphi_w)
      - \sum_{i \le \Lambda_k} \chi(\varphi_{o^i})}
    = -\sum_{i \le \Lambda_n} \chi(\varphi_{o^i}).
  \end{equation}
  Therefore, setting $\underline{B}_t = \inf_{s \le t}B_s$ and using the
  continuous mapping theorem with the map
  $g(X_t) = (X_t - \inf_{s \le t} X_s, -\inf_{s \le t} X_s)$,
  \begin{equation}
    \label{eq:joint-convergence1-step}
    n^{-1/2}\Big(\sum_{w \in Y(v_{\floor{n\cdot}})} \chi(\varphi_w),
      \sum_{i \le \Lambda_{\floor{n\cdot}}} \chi(\varphi_{o^i})\Big)
    \to \big(\sigma (B_\cdot - \underline{B}_\cdot), - \sigma
      \underline{B}_\cdot\big)
  \end{equation}
  in distribution as $n \to \infty$. By Lévy's theorem (see, e.g.,
    Theorem~VI.2.3 in \cite{RevYor99}), the right-hand side of
  \eqref{eq:joint-convergence1-step} has the same distribution as
  $(\sigma |B_\cdot|, \sigma L_\cdot^0(B))$. Together with the fact that
  under $P_x$,
  \(
    \sum_{i \le \Lambda_{\floor{nt}}} \chi(\varphi_{o^i})
    = \chi(x) \Lambda_{\floor{nt}}
  \),
  this implies the proposition.
\end{proof}

Another consequence of
Proposition~\ref{prop:S_n-scaling-limit} is the following
scaling limit result for $S_n$
conditioned to reach a certain height on the first tree $T_1$. To
this end we define
\begin{equation}
  \label{eq:S-bar-def}
  \bar{S}_n =
  \begin{cases}
    \sum_{w \in Y(v_n)} \chi(\varphi_w)\qquad
     &\text{if $n \le |T^1|$}, \\
    0 &\text{if $n > |T^1|$}.
  \end{cases}
\end{equation}
Note that, up to an additive correction $\chi (\varphi_{v_n}) - \chi (x)$,
$\bar S_\cdot$ is equal to $S_\cdot$ restricted to $T_1$.

\begin{proposition}
  \label{prop:conditioned-scaling-limit}
  For $y > 0$, let $(\mathbf{e}^{\ge y/\sigma})_{t\ge 0}$ be a Brownian
  excursion conditioned to reach at least height $y/\sigma$. Then, in
  distribution under $P_x[\, \cdot \, | \sup_k \bar{S}_k \ge \sqrt{n}y]$,
  with respect to the Skorokhod topology,
  \begin{equation}
    \lim_{n\to\infty}
    \big(n^{-1/2}\bar{S}_{\floor{nt}}\big)_{t \ge 0}
    = (\sigma \mathbf{e}^{\ge y/\sigma})_{t\ge 0}.
  \end{equation}
\end{proposition}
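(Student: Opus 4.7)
The plan is to reduce the conditioning $P_x[\cdot\mid A_n]$, with $A_n := \{\sup_k\bar S_k\ge \sqrt n y\}$, to an unconditional distributional statement on the full process $Y$, exploiting the i.i.d.\ structure of the tree sequence $(T^i,\varphi^i)_{i\ge 1}$, and then to apply the unconditional invariance principle of Proposition~\ref{prop:S_n-scaling-limit} via a continuous mapping argument. Recall from the proof of Proposition~\ref{prop:joint-scaling-limit} that $Y_n := \sum_{w\in Y(v_n)}\chi(\varphi_w) = X_n - \min_{k\le n}X_k$, where $X_n := S_n - \chi(\varphi_{v_n})$, and that $\bar S_k = Y_k$ for $k\le |T^1|$ while $\bar S_k = 0$ otherwise; thus $\bar S$ is precisely the first excursion of $Y$ away from zero, extended by zeros afterwards. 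Proposition~\ref{prop:S_n-scaling-limit}, Lemma~\ref{lem:chi/sqrt(n)-conv} and L\'evy's theorem together give
\[
\bigl(Y_{\lfloor nt\rfloor}/\sqrt n\bigr)_{t\ge 0}
\to (\sigma|B_t|)_{t\ge 0}
\quad \text{in $P_x$-distribution (Skorokhod topology).}
\]

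Define the ``first excursion reaching level $y$'' functional $\Phi_y$ on the set $\{w\in D([0,\infty),\R_{\ge 0}) : \sup w > y\}$ by $\Phi_y(w)(t) := w(G_w + t)\cdot\mathbf 1_{\{t\le D_w - G_w\}}$, where $T_w := \inf\{s : w(s) \ge y\}$, $G_w := \sup\{s < T_w : w(s) = 0\}$ and $D_w := \inf\{s > T_w : w(s) = 0\}$, so that $\Phi_y(w)$ is the first excursion of $w$ reaching $y$, shifted to start at $0$ and extended by zeros. By the i.i.d.\ structure of the tree sequence, the law of $\bigl(\bar S_{\lfloor nt\rfloor}/\sqrt n\bigr)_{t\ge 0}$ under $P_x[\cdot\mid A_n]$ coincides with the law of $\Phi_y\bigl(Y_{\lfloor n\cdot\rfloor}/\sqrt n\bigr)$ under $P_x$: both represent a randomly chosen excursion of $Y$ conditioned on reaching $\sqrt n y$, the former by conditioning the first excursion, the latter by selecting the first index $i$ for which the $i$-th excursion of $Y$ reaches $\sqrt n y$.

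Since $\Phi_y$ is continuous at $\sigma|B|$ almost surely with respect to the Skorokhod topology---which follows from the facts that the Brownian path a.s.\ crosses level $y/\sigma$ transversally and that $T^{y/\sigma}$, $G^{y/\sigma}$, $D^{y/\sigma}$ are a.s.\ continuity points of $|B|$---the continuous mapping theorem yields $\Phi_y\bigl(Y_{\lfloor n\cdot\rfloor}/\sqrt n\bigr) \to \Phi_y(\sigma|B|)$ in $P_x$-distribution. Finally, by It\^o's excursion theory, the first excursion of $\sigma|B|$ reaching $y$, shifted to start at $0$, has the law of $\sigma\mathbf e^{\ge y/\sigma}$ extended by zeros; combining these conclusions finishes the proof. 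The main technical obstacle is the verification of the a.s.\ continuity of $\Phi_y$ at $\sigma|B|$: while the strong Markov property of $|B|$ at $T^{y/\sigma}$ handles $D^{y/\sigma}$ without difficulty, establishing continuity at $G^{y/\sigma}$ requires controlling the behaviour of the Brownian zero set just before $T^{y/\sigma}$, which is where the argument demands the most care.
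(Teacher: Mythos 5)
Your overall strategy---rewrite the conditional law using the i.i.d.\ tree structure as the law of the first excursion of $Y_\cdot := \sum_{w\in Y(v_\cdot)}\chi(\varphi_w)$ to reach $\sqrt n y$, reduce to the unconditional invariance principle $n^{-1/2}Y_{\lfloor n\cdot\rfloor}\to\sigma|B|$, and identify the limit by excursion theory---is the right one and is the route taken in the references (Powell's Proposition~6.13, Duquesne--Le Gall's Proposition~2.5.2) to which the paper delegates this proof. The exchange-of-conditioning via the i.i.d.\ structure and the identification of $\Phi_y(\sigma|B|)$ are both correct.

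There is, however, a genuine gap in the continuous mapping step, and it is not merely a detail to be ``verified with care'' as you suggest. The excursion-extraction functional $\Phi_y$ is \emph{not} continuous at $\sigma|B|$ in the Skorokhod topology, so the continuous mapping theorem does not apply. The endpoints $G_w$, $D_w$ are defined through the zero set of $w$, and the zero set is not a continuous function of the path: the paths $w_n = \sigma|B| + n^{-1}$ converge to $\sigma|B|$ uniformly on compacts (hence in Skorokhod topology), yet are strictly positive, so $G_{w_n}$ and $D_{w_n}$ degenerate to $0$ and $+\infty$ under the usual conventions while $0<G_{\sigma|B|}<D_{\sigma|B|}<\infty$ a.s.\ This example satisfies all of your stated sufficient conditions (transversal crossing at $T$; $T,G,D$ continuity points of $|B|$), showing that those conditions do not imply continuity of $\Phi_y$. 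The obstruction is not in the behaviour of the Brownian zero set, but in that of the \emph{approximating} paths' zero sets, which plain Skorokhod convergence does not control.

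To close the gap one must exploit the extra structure of the specific prelimit sequence, and the paper has already set this up: Proposition~\ref{prop:joint-scaling-limit} gives the \emph{joint} convergence
\begin{equation*}
  \Big(n^{-1/2} Y_{\lfloor nt\rfloor},\ n^{-1/2}\Lambda_{\lfloor nt\rfloor}\Big)_{t\ge 0}
  \ \longrightarrow\ \Big(\sigma|B_t|,\ \tfrac{\sigma}{\chi(x)}L_t^0\Big)_{t\ge 0}.
\end{equation*}
Since $\Lambda$ increments precisely when $Y$ returns to $0$, and since $L^0$ strictly increases over every neighbourhood of every zero of $|B|$, this joint convergence forces the zero set of $n^{-1/2}Y_{\lfloor n\cdot\rfloor}$ to converge locally in Hausdorff distance to the zero set of $\sigma|B|$; combined with the transversality argument for $T$, one then obtains $(G_n/n, D_n/n)\to(G,D)$ and the convergence of the extracted excursion. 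Equivalently, one can work with the Skorokhod decomposition $Y_n = X_n - \min_{k\le n}X_k$ together with the convergence $n^{-1/2}\min_{k\le n\cdot}X_k \to \sigma\underline B_\cdot$, using that $\sigma\underline B$ is continuous and strictly decreasing. Either way, this is a bespoke argument replacing continuous mapping, and it is precisely what Duquesne--Le Gall and Powell supply; your proposal as written stops short of it.
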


We omit the proof of this proposition as it would be identical to the
proof of Proposition~6.13 in \cite{Pow19}, which itself follows
\cite[Proposition 2.5.2]{DLG02}.

\section{\texorpdfstring{Relation of $S_n$ and $H_n$}{Relation of Sn and Hn}}
\label{sec:H_n-S_n-connection}

The aim of this section will be to establish a connection between the
martingale $S_n$ and the height process $H_n$ (see \eqref{eq:S_n-def}
  and \eqref{eq:def-Lambda,H} for definitions). This will be useful in
the proof of Theorem~\ref{thm:inv-principle} in
Section~\ref{sec:inv-principle}. Throughout the whole section, we only
consider the field on the first tree $(T, \varphi) =(T^1, \varphi^1)$ in
the infinite tree sequence
$\mathbf{T} = ((T^1, \varphi^1), (T^2, \varphi^2), \dots)$ introduced in
Section~\ref{sec:S_n-process}. As a consequence, we only consider $\bar{S}$
introduced in \eqref{eq:S-bar-def} instead of $S$. We will see that,
approximately, $\bar{S}(v) \approx H(v)/ C_1$ with
$C_1$ as in \eqref{eq:C_1-def}.

Motivated by this, for $\eta > 0$ we say that $v \in T$ is \emph{$\eta$-bad}
if
\begin{equation}
  \abs[\Big]{\frac{\bar{S}(v)}{H(v)} - C_1^{-1} } > \eta.
\end{equation}
Fixing in addition $R > 0$, we say that $v \in T$ is \emph{$(\eta, R)$-bad}
if there exists a $w \prec v$ such that $H(w) \ge R$ and $w$ is $\eta$-bad.
This means that $v$ is $(\eta, R)$-good (i.e.~not $(\eta ,R)$-bad) if all
its ancestors in generations at least $R$ are $\eta$-good. We set
\begin{equation}
  \label{eq:Nbad}
  N_n^{(\eta,R)} = \set{v \in N_n^+ : \text{ $v$ is $(\eta, R)$-bad}}.
\end{equation}

The first main result of this section shows that this set is relatively
small.

\begin{proposition}
  \label{prop:|N_n^bad|/|N_n|-fraction}
  For every $\varepsilon > 0$ and $x \ge h^*$,
  \begin{equation}
    \label{eq:psevenone}
    \lim_{R\to\infty}
    \sup_{n \ge R}
    P_x\Big[\frac{\abs{N_n^{(\eta,R)}}}{\abs{N_n^+}} > \varepsilon
      \,\Big|\, N_n^+ \neq\emptyset \Big] = 0.
  \end{equation}
\end{proposition}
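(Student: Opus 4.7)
My plan is a first-moment computation via the spine many-to-one formula, combined with Theorem~\ref{thm:conv-to-exp} to control the ratio $\abs{N_n^{(\eta,R)}}/\abs{N_n^+}$ that appears in~\eqref{eq:psevenone}. First, I would extend Proposition~\ref{pro:many-to-few_model}(a) to test functions depending on the full ancestry of $v \in N_n^+$ and on the siblings of that ancestry (cf.~\cite[Lemma~8]{HarRob17}), obtaining
\begin{equation}
  E_x\big[\abs{N_n^{(\eta,R)}}\big]
  = Q_x\Big[\frac{\chi(x)}{\chi(\xi_n)}\,
    1_{\{\sigma_n\text{ is }(\eta,R)\text{-bad}\}}\Big].
\end{equation}
Since under $Q_x$ the spine takes values in $[h^*,\infty)$, where $\chi \ge \chi(h^*) > 0$ by Proposition~\ref{prop:L_h-chi-connection}, this is at most $c(x)\,Q_x[\sigma_n\text{ is }(\eta,R)\text{-bad}]$.

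Next, I would show that $\bar S(\sigma_k)/k \to C_1^{-1}$ almost surely under $Q_x$ by a Markov-chain law of large numbers along the spine. Writing $\bar S(\sigma_k) = \sum_{j=1}^{k} X_j$, where $X_j$ collects the $\chi$-mass of the right-siblings of $\sigma_j$, the construction of $Q_x$ together with the eigenrelation $L[\chi]=\chi$ gives
\begin{equation}
  E_{Q_x}\big[X_j \,\big|\, \mathcal F_{j-1}^1\big]
  = \frac{d-1}{2d}\,\chi(\xi_{j-1}),
\end{equation}
the factor $(d-1)/2$ being the expected number of children of $\sigma_{j-1}$ coming after $\sigma_j$ in a uniformly random DFS ordering, while each non-spine child contributes $\chi(\xi_{j-1})/d$ in expectation. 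By Lemma~\ref{lem:xi_inv-measure}, $(\xi_j)$ is ergodic with invariant measure $\pi(\D x) = \chi(x)^2\nu(\D x)$, so the ergodic theorem yields $k^{-1}\sum_{j=1}^{k} E_{Q_x}[X_j\mid \mathcal F_{j-1}^1] \to \tfrac{d-1}{2d}\inp{\chi^2}{\chi} = C_1^{-1}$, while the centered martingale remainder vanishes by the $L^2$-martingale SLLN (conditional variances of $X_j$ being dominated by a $\pi$-integrable function of $\xi_{j-1}$ thanks to the Lipschitz property~\eqref{eq:chi-Lipschitz}). Almost-sure convergence then gives $\delta(R) \defeq Q_x[\exists k \ge R : \sigma_k\text{ is }\eta\text{-bad}] \to 0$ as $R\to\infty$, dominating $Q_x[\sigma_n\text{ is }(\eta,R)\text{-bad}]$ uniformly in $n \ge R$.

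Combining this with the lower bound $P_x[N_n^+\neq\emptyset]\ge c(x)/n$ from Theorem~\ref{thm:diameter}, we get $E_x[\abs{N_n^{(\eta,R)}}\mid N_n^+ \neq \emptyset] \le c'(x)\,n\,\delta(R)$. To turn this first-moment bound into the desired estimate on the ratio, for any $\delta > 0$ I would split
\begin{equation}
  P_x\Big[\frac{\abs{N_n^{(\eta,R)}}}{\abs{N_n^+}} > \varepsilon
    \,\Big|\, N_n^+ \neq \emptyset\Big]
  \le P_x\big[\abs{N_n^+} < \delta n \,\big|\, N_n^+ \neq \emptyset\big]
  + \frac{c'(x)\,\delta(R)}{\varepsilon\,\delta},
\end{equation}
where Markov's inequality handles the second event. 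By Theorem~\ref{thm:conv-to-exp} with $f\equiv 1$, $n^{-1}\abs{N_n^+}$ converges in conditional distribution to $C_1^{-1}\inp{\chi}{1}\, Z$ with $Z\sim \mathrm{Exp}(1)$, whose law has no atom at $0$; hence the first term can be made arbitrarily small, uniformly in large $n$, by choosing $\delta$ small. Sending $R\to\infty$ first and then $\delta\to 0$ closes the estimate.

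The principal obstacle is Step 2: matching the ergodic LLN limit with the specific constant $C_1^{-1}$ used in the definition of ``bad'' requires both the exact calculation $\tfrac{d-1}{2d}\inp{\chi^2}{\chi} = C_1^{-1}$ and a history-dependent many-to-one identity that tracks off-spine siblings through the change of measure, rather than only the spine value as in Proposition~\ref{pro:many-to-few_model}.
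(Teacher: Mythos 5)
Your proof is correct and takes a genuinely different intermediate route from the paper's. The paper first replaces the raw ratio $\abs{N_n^{(\eta,R)}}/\abs{N_n^+}$ by the $\chi$-weighted ratio $\sum_{v\in N_n^+}\chi(\varphi_v)1_{\{v\text{ is }(\eta,R)\text{-bad}\}}/\sum_{v\in N_n^+}\chi(\varphi_v)$ (using $\chi\ge\chi(h^*)>0$ together with Lemma~\ref{lem:sum_f_average}), applies Markov's inequality to this ratio as a whole, and after the many-to-one change of measure is left with a factor $Z_n = \chi(x)P_x[N_n^+\neq\emptyset]^{-1}/\sum_{v\in N_n^+}\chi(\varphi_v)$ whose uniform integrability must then be established separately (its claim~(ii), proved via Theorems~\ref{thm:diameter} and~\ref{thm:conv-to-exp}). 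You instead apply Markov to the numerator alone: Lemma~\ref{lem:many-to-few_general} with $k=1$ and $Y(v_1)=1_{\{v_1\text{ is }(\eta,R)\text{-bad}\}}$ gives $E_x[\abs{N_n^{(\eta,R)}}]=\Q_x\big[\chi(x)\chi(\xi_n)^{-1}1_{\{\sigma_n\text{ is }(\eta,R)\text{-bad}\}}\big]$, the factor $\chi(x)/\chi(\xi_n)$ is crudely bounded by $\chi(x)/\chi(h^*)$, and the denominator $\abs{N_n^+}$ is controlled by a $\delta$-split plus the Yaglom limit (Theorem~\ref{thm:conv-to-exp} with $f\equiv1$) rather than by Lemma~\ref{lem:sum_f_average}. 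Both approaches hinge on the same spine ergodic law of large numbers, with the identical compensator computation $\frac{d-1}{2d}\inp{\chi^2}{\chi}=C_1^{-1}$ (and both leave the rigorous ergodicity argument somewhat terse — your sketch via martingale SLLN is a reasonable way to flesh out what the paper calls ``an ergodicity argument''). What you gain is that the conditioning on $\set{N_n^+\neq\emptyset}$ is handled transparently outside the change of measure, sidestepping the $Z_n$ uniform-integrability step; the price is the explicit limit-order bookkeeping (first $R\to\infty$, then $\delta\to 0$), which you handle correctly. Finally, the ``principal obstacle'' you flag — a many-to-one identity tracking the spine's ancestry and off-spine siblings — is already exactly Lemma~\ref{lem:many-to-few_general} (Lemma~8 of~\cite{HarRob17}), which accepts any $\mathcal F_n$-measurable test function $Y(v_1)$, so it is not an actual obstruction.
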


\begin{proof}
  The proof follows a strategy similar to the proof of Proposition~6.17
  in \cite{Pow19}, with adaptations that are necessary in order to handle
  the unbounded domain in our setting. For $\varepsilon > 0$, $R > 0$ and
  $n \ge 0$ we define the event
  \begin{equation}
    E_{R, n}^\varepsilon
    = \Big\{ \frac{\sum_{v \in N_n^+} \chi(\varphi_v)
        1_{\set{\text{$v$ is $(\eta, R)$-bad}}}}
      {\sum_{v \in N_n^+} \chi(\varphi_v)} > \varepsilon \Big\}.
  \end{equation}
  We first claim that it suffices to show that
  \begin{equation}
    \label{eq:(eta,R)-bad-fraction_toshow}
    \lim_{R\to\infty}
    \sup_{n \ge R} P_x[E_{R,n}^\varepsilon | N_n^+ \neq\emptyset] = 0.
  \end{equation}
  To see that this indeed implies the lemma, we write
  \begin{equation}
    \frac{\abs{N_n^{(\eta, R)}}}{\abs{N_n^+}} =
    \frac{\abs{N_n^{(\eta,R)}}}{\sum_{v \in N_n^+} \chi(\varphi_v)}
    \cdot
    \frac{\sum_{v \in N_n^+} \chi(\varphi_v)} {\abs{ N_n^+}}.
  \end{equation}
  Therefore, for every $\delta > 0$,
  the probability in \eqref{eq:psevenone} is
  bounded from above by
  \begin{equation}
    \label{eq:|N_n^(eta,R)|/|N_n|-bound}
    P_x\Big[\frac{|N_n^{(\eta,R)}|}{\sum_{v \in N_n^+}\chi(\varphi_v)}
      > \delta \,\Big|\, N_n^+ \neq\emptyset \Big]
    +P_x\Big[\frac{|N_n^+|}{\sum_{v \in N_n^+}\chi(\varphi_v)}
      < \delta/\varepsilon \,\Big|\, N_n^+ \neq\emptyset \Big].
  \end{equation}
  By Lemma~\ref{lem:sum_f_average} (with $f=1$), the second probability
  tends to $0$ as $n \to \infty$ if $\delta = \delta(\varepsilon)$ is
  small enough. Since $\chi (x) \ge \chi (h^*) > 0$ for every $x\ge h^*$,
  it holds that
  \(
    \abs{N_n^{(\eta,R)}}
    \le c \sum_{v \in N_n^+}\chi(\varphi_v)
    1_{\set{\text{$v$ is $(\eta,R)$-bad}}}
  \),
  and thus
  \begin{equation}
    P_x\Big[\frac{|N_n^{+,(\eta,R)}|}{\sum_{v \in N_n^+}\chi(\varphi_v)}
      > \delta \Big| N_n^+ \neq\emptyset \Big]
      \le P_x\big[E^{\delta/c}_{R,n} \big| N_n^+ \neq\emptyset \big],
  \end{equation}
  which together with \eqref{eq:(eta,R)-bad-fraction_toshow} implies the
  claim of the lemma.

  We now prove \eqref{eq:(eta,R)-bad-fraction_toshow}. By first using the
  Markov inequality, and then Lemma~\ref{lem:many-to-few_general} with
  $k = 1$ and
  \(
    Y(v_1) = \chi(\varphi_{v_1})1_{\set{\text{$v_1$ is
          $(\eta,R)$-bad}}}{1_{\set{N_n^+ \neq\emptyset}}}/({\sum_{w \in
          N_n^+}\chi(\varphi_w)})
  \),
  it holds
  \begin{equation}
    \begin{split}
      \label{eq:E_R,n^e-bound}
      P_x[E_{R,n}^\varepsilon| N_n^+ \neq\emptyset]
      &\le \varepsilon^{-1}
      E_x\Big[\frac{\sum_{v \in N_n^+}
          \chi(\varphi_v)1_{\set{\text{$v$ is $(\eta, R)$-bad}}}}
        {\sum_{v \in N_n^+} \chi(\varphi_v)}
        \Big| N_n^+ \neq\emptyset\Big] \\
      &= \varepsilon^{-1} E_x\Big[
        \frac{1_{\set{N_n^+ \neq\emptyset}}\sum_{v \in N_n^+} \chi(\varphi_v)
          1_{\set{\text{$v$ is $(\eta, R)$-bad}}}
          }
          {\sum_{v \in N_n^+} \chi(\varphi_v)}\Big]
        P_x[N_n^+ \neq\emptyset]^{-1} \\
        &= \varepsilon^{-1} \Q_x\Big[
          \frac{\chi(x) P_x[N_n^+ \neq\emptyset]^{-1}}
          {\sum_{v \in N_n^+} \chi(\varphi_v)}
        1_{\set{\text{$\sigma_n$ is $(\eta,R)$-bad}}}\Big],
    \end{split}
  \end{equation}
  where, as defined in Section~\ref{sec:notation-and-results}, $\sigma_n$
  is the vertex on the spine in the $n$-th generation. To continue, we first
  show the following two claims:
  \begin{itemize}
    \item[(i)]
    $\sup_{n \ge R} \Q_x[\text{$\sigma_n$ is $(\eta, R)$-bad}] \to 0$ as
    $R \to \infty$.
    \item[(ii)]
    Set
    \(
      Z_n =\frac{\chi(x) P_x[N_n^+ \neq\emptyset]^{-1}}
      {\sum_{v \in N_n^+} \chi(\varphi_v)}
    \).
    Then for all $\delta > 0$, there exist $R', K>0$ such that
    $\Q_x[Z_n 1_{\set{Z_n>K}}] \le \delta$ for all $n \ge R'$.
  \end{itemize}

  Claim (i) will follow from the ergodic behaviour of the field along the
  spine $(\sigma_n)_{n\ge 0}$ under $\Q_x$. Let
  $\sib^{<}(v) = \{w\in \sib(v): v<_{\mathbf{v}} w\}$. Then
  $\bar{S}(\sigma_n)=\sum_{k\le n}\sum_{v\in \sib^<(\sigma_k)}\chi(\varphi_v)$.
  Recall that $\xi_k := \varphi (\sigma_k)$. By conditioning on
  $\xi_{k-1}$, due to the fact that under $Q_x$ the spine mark at
  generation $k$ is uniformly distributed on descendants of its position
  at generation $k-1$ and that non-spine vertices behave as under $P_x$,
  \begin{equation}
    \begin{split}
      \Q_x&\Big[\sum_{v \in \sib^{<}(\sigma_k)} \chi(\varphi_v)\Big]
      = \Q_x\Big[\Q_{\xi_{k-1}}\Big[
            \sum_{v \in \sib^{<}(\sigma_1)} \chi(\varphi_v)\Big]\Big]
      \\&=
      Q_x\Big[\frac{1}{2} \Q_{\xi_{k-1}}
        \Big[\sum_{v \in N_1^+ \setminus \set{\sigma_1}}
          \chi(\varphi_v)\Big]\Big]
      \\&= Q_x\Big[\frac{d-1}{2d} P_{\xi_{k-1}}\Big[\sum_{v \in N_1^+}
          \chi(\varphi_v)\Big]\Big]
      = \frac{d-1}{2d} Q_x[ \chi(\xi_{k-1})].
    \end{split}
  \end{equation}
  By Lemma~\ref{lem:xi_inv-measure} the invariant measure of the Markov
  chain $(\xi_k)_{k\ge 0}$ under $\Q_x$ is given by $\chi(y)^2\nu(\D y)$,
  so the last expression converges to $C_1^{-1}$ as $k\to \infty$.
  Claim (i) then follows from an ergodicity argument.

  To see (ii), note that again by Lemma~\ref{lem:many-to-few_general},
  \begin{equation}
    \begin{split}
      \Q_x[Z_n 1_{\set{Z_n > K}}]
      &= \frac{P_x[1_{\set{Z_n>K}}1_{\set{N_n^+ \neq\emptyset}}]}
      {P[N_n^+ \neq\emptyset]}
      = P_x[Z_n>K| N_n^+ \neq\emptyset].
     \\& \le
     P_x\Big[\frac{\sum_{v \in N_n^+}\chi(\varphi_v)}{n}
       < \frac{1}{Kc} \Big| N_n^+ \neq\emptyset\Big],
    \end{split}
  \end{equation}
  where in the last inequality we used the fact that
  $\chi(x)P_x[N_n^+ \neq\emptyset]^{-1} \ge c$,
  by Theorem~\ref{thm:diameter}.
  By Theorem~\ref{thm:conv-to-exp}, this converges to the probability that an
  exponentially distributed random variable is smaller than $(K c_x)^{-1}$.
  It is thus straightforward to find $K$ and $R'$ such that the last
  probability is smaller than $\delta$ for all $n \ge R'$, proving (ii).

  Returning back to
  \eqref{eq:E_R,n^e-bound}, its right-hand side is bounded by
  \begin{equation}
    \begin{split}
      \Q_x&[Z_n 1_{\set{\text{$\sigma_n$ is $(\eta,R)$-bad}}}] \\
      &= \Q_x[Z_n 1_{\set{Z_n > K}}
        1_{\set{\text{$\sigma_n$ is $(\eta,R)$-bad}}}] +
      \Q[Z_n 1_{\set{Z_n \le K}}
        1_{\set{\text{$\sigma_n$ is $(\eta,R)$-bad}}}] \\
      &\quad\le \Q_x[Z_n 1_{\set{Z_n > K}}]
      + K \Q[\text{$\sigma_n$ is $(\eta,R)$-bad}],
    \end{split}
  \end{equation}
  which together with (i) and (ii) implies
  \eqref{eq:(eta,R)-bad-fraction_toshow} and concludes the proof.
\end{proof}

To state the second main result of this section, we introduce (enlarging
  the probability space if necessary) random variables
$\tau^1, \dots, \tau^k$ which under $P_x$, conditionally on $T$, are
independent and uniformly distributed on $\set{1, 2, \dots, |T|}$. Using
these random variables, we define two $k \times k$ random matrices
\begin{equation}
  \begin{split}
    \label{eq:D-mat-def}
    (D_n^{\bar{S}})_{i,j}
    &= n^{-1}\big(\bar{S}(v_{\tau^i}) + \bar{S}(v_{\tau^j})
      - 2 \bar{S}(v_{\tau^i} \wedge v_{\tau^j})\big) \\
    (D_n^H)_{i,j}
    &= n^{-1}\big(H(v_{\tau^i}) + H(v_{\tau^j})
      - 2 H(v_{\tau^i} \wedge v_{\tau^j})\big),
  \end{split}
\end{equation}
where, as usual, $(v_1, v_2, \dots)$ denotes the depth-first traversal of
$T$, and $v \wedge w$ denotes the most recent common ancestor of $v$ and
$w$ in $T$.

\begin{proposition}
  \label{prop:S-H-mat}
  For every $k \ge 1$ and $\varepsilon > 0$,
  \begin{equation}
    \lim_{n\to\infty}
    P_x\big[\norm{C_1^{-1} D_n^H - D_n^{\bar{S}}} > \varepsilon
      \big| N_n^+ \neq\emptyset\big] = 0,
  \end{equation}
  where $\norm{\cdot}$ denotes the Frobenius norm of
  $k\times k$ matrices.
\end{proposition}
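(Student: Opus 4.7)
I would introduce the residual $f(v) \defeq C_1^{-1} H(v) - \bar S(v)$, so that each entry can be written as
\begin{equation*}
(C_1^{-1} D_n^H - D_n^{\bar S})_{ij}
= n^{-1}\big(f(v_{\tau^i}) + f(v_{\tau^j}) - 2 f(v_{\tau^i} \wedge v_{\tau^j})\big).
\end{equation*}
By the triangle inequality and a union bound over the finitely many vertices appearing in the $k\times k$ matrix, the proposition reduces to showing that, for every $\varepsilon > 0$ and every $w$ of the form $v_{\tau^l}$ or $v_{\tau^l}\wedge v_{\tau^{l'}}$ with $l,l' \le k$,
\begin{equation*}
\lim_{n\to\infty} P_x\big[\abs{f(w)}/n > \varepsilon \,\big|\, N_n^+ \neq \emptyset\big] = 0.
\end{equation*}

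Using Theorem~\ref{thm:diameter}, I would next restrict to the event $\set{\max_v H(v) \le Cn}$, which under the conditioning $\set{N_n^+ \neq \emptyset}$ holds with probability at least $1 - c/C$ for any fixed $C$, since $P_x[\max_v H(v) > Cn \mid N_n^+ \neq \emptyset] \le P_x[N_{Cn}^+ \neq \emptyset]/P_x[N_n^+ \neq \emptyset]$. On this event, whenever $w$ is $(\eta, R)$-good with $H(w) \ge R$, the defining inequality of $(\eta, R)$-goodness yields $\abs{f(w)} \le \eta H(w) \le \eta C n$, so choosing $\eta < \varepsilon/C$ produces $\abs{f(w)}/n < \varepsilon$. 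The case $H(w) < R$ is negligible because $H(w)$ and $\bar S(w)$ depend only on the field restricted to the first $R$ generations of $T$, hence are $O_P(1)$ and vanish after division by $n$. Crucially, for the common ancestor $w = v_{\tau^i}\wedge v_{\tau^j}$, the $(\eta, R)$-goodness of $v_{\tau^i}$ automatically implies $\abs{\bar S(w)/H(w) - C_1^{-1}} \le \eta$ as soon as $H(w) \ge R$, since $w$ is itself one of the ancestors of $v_{\tau^i}$ appearing in the definition. The entire argument therefore reduces to showing that $v_{\tau^i}$ is $(\eta, R)$-good with high conditional probability.

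Since $v_{\tau^i}$ is uniformly distributed on $T$ conditionally on $T$,
\begin{equation*}
P_x\big[v_{\tau^i}\text{ is }(\eta,R)\text{-bad} \,\big|\, N_n^+ \neq \emptyset\big]
= E_x\Big[\sum_{m \ge R}\frac{\abs{N_m^{(\eta,R)}}}{\abs{N_m^+}}\cdot\frac{\abs{N_m^+}}{\abs{T}}\,\Big|\, N_n^+ \neq \emptyset\Big].
\end{equation*}
The weights $\abs{N_m^+}/\abs{T}$ sum to at most $1$, while Proposition~\ref{prop:|N_n^bad|/|N_n|-fraction} supplies the bound $\abs{N_m^{(\eta,R)}}/\abs{N_m^+} = o_P(1)$ uniformly in $m \ge R$. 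The main obstacle is to transfer this uniform estimate from the single-generation conditioning $\set{N_m^+\neq\emptyset}$ used in Proposition~\ref{prop:|N_n^bad|/|N_n|-fraction} to the stronger conditioning $\set{N_n^+\neq\emptyset}$. I expect this can be handled by applying the branching Markov property at generation $m$ together with the size-biasing estimate $P_x[N_n^+\neq\emptyset \mid \mathcal F_m] \lesssim (n-m)^{-1}\sum_{v\in N_m^+}\chi(\varphi_v)$ coming from Theorem~\ref{thm:diameter}, and controlling the resulting expectations via the moment bounds of Proposition~\ref{pro:Harris_L2}.
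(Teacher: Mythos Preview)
Your outline matches the paper's proof closely: the paper also reduces (via the $k=2$ case) to showing that a uniformly chosen vertex $V=v_{\tau^1}$ is $(\eta,R)$-good with high conditional probability (isolated there as Lemma~\ref{lem:P[v_tau-is-bad]}), handles the common ancestor via the ancestor clause in the definition of $(\eta,R)$-goodness, restricts to bounded height $\{N_{Mn}^+=\emptyset\}$, and treats the case $H(w)<R$ by bounding the field on the first $R$ generations (your $O_P(1)$ claim, made precise via a constant $K=K(\delta)$ chosen using Proposition~\ref{prop:P-conditioned-is-Q}).

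The one genuine gap is in your last paragraph. Your displayed identity is correct, but the proposed mechanism --- ``apply the branching Markov property at generation $m$ together with the size-biasing estimate $P_x[N_n^+\neq\emptyset\mid\mathcal F_m]\lesssim(n-m)^{-1}\sum_{v\in N_m^+}\chi(\varphi_v)$'' --- does not directly work, because the factor $1/|T|$ is a \emph{global} quantity and is not $\mathcal F_m$-measurable. The paper circumvents this by first proving an auxiliary lower bound $P_x[\,|T|\le qn^2\mid N_n^+\neq\emptyset\,]\to 0$ as $q\to 0$ (Lemma~\ref{lem:P[|T|<qn^2]-to-zero}, itself relying on Theorem~\ref{thm:conv-to-exp} and the stable-law tail of $|T|$), so that $1/|T|$ may be replaced by $1/(qn^2)$. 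One must also restrict the summation to a window $h_1 n\le m\le h_2 n$: the contribution from $m<h_1 n$ is handled by the first-moment bound on $\sum_{k<h_1 n}|N_k^+|$, and that from $m>h_2 n$ by Theorem~\ref{thm:diameter}. Only after these two reductions does a Cauchy--Schwarz step, together with the monotone change of conditioning $P_x[\,\cdot\cap\{N_m^+\neq\emptyset\}\mid N_n^+\neq\emptyset]\le P_x[\,\cdot\mid N_m^+\neq\emptyset]\,P_x[N_m^+\neq\emptyset]/P_x[N_n^+\neq\emptyset]$, allow Proposition~\ref{prop:|N_n^bad|/|N_n|-fraction} to be invoked termwise. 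Your sketch has the right target but underestimates this step.
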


To prove this proposition, we need two lemmas. The first one estimates
from below the size of $T$ conditionally on $\set{N_n^+ \neq \emptyset}$.

\begin{lemma}
  \label{lem:P[|T|<qn^2]-to-zero}
  For every $x \ge h^*$,
  \begin{equation}
    \lim_{q\to 0}\sup_{n\ge 0}
    P_x\big[\abs{T} \le q n^2 | N_n^+ \neq\emptyset\big] = 0.
  \end{equation}
\end{lemma}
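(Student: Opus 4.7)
My approach uses a spine change-of-measure together with the stable-$1/2$ scaling limit for sums of critical cluster sizes. Writing $1_{\set{N_n^+\neq\emptyset}}=\sum_{v\in N_n^+}|N_n^+|^{-1}$ and applying an extension of Proposition~\ref{pro:many-to-few_model} to tree-valued functionals (Lemma~\ref{lem:many-to-few_general} from Appendix~\ref{sec:B} with $k=1$), set up with the natural truncation $Q_x^{(n)}$ of $Q_x$ in which the spine has length exactly $n$ (past which an ordinary critical cluster under $P_{\xi_n}$ is rooted at $\sigma_n$), one obtains
\begin{equation*}
  P_x[|T|\le qn^2,\, N_n^+\neq\emptyset]
  = \chi(x)\, Q_x^{(n)}\Big[
    \frac{1_{\set{|T|\le qn^2}}}{|N_n^+|\,\chi(\xi_n)}\Big].
\end{equation*}
Under $Q_x^{(n)}$ the tree $T$ is almost surely finite: it consists of the spine $\sigma_0,\dots,\sigma_n$ together with $(d-1)$ independent critical offspring trees attached at each $\sigma_k$, $k<n$, and an additional offspring tree rooted at $\sigma_n$.

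Under $Q_x^{(n)}$, $|T|$ is dominated by a sum of $\Theta(n)$ i.i.d.\ cluster sizes, each with the heavy tail $P_y[|T|>m]\sim C\chi(y)/\sqrt m$ given by Proposition~\ref{pro:cl23}. The generalised central limit theorem for sums in the domain of attraction of a positive $1/2$-stable law then yields $|T|/n^2\to\mathcal A$ in distribution under $Q_x^{(n)}$, with $\mathcal A>0$ a.s., and in particular $Q_x^{(n)}[|T|\le qn^2]\to F(q)$ for a continuous $F$ with $F(0)=0$. A parallel argument using Theorem~\ref{thm:diameter} (each non-spine subtree at generation $k$ reaches generation $n$ with probability $\asymp 1/(n-k)$, producing typically $\asymp n$ survivors) shows that $|N_n^+|$ is of order $n$ with high probability under $Q_x^{(n)}$, and hence $Q_x^{(n)}[|N_n^+|^{-2}]=O(n^{-2})$.

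Combining these two estimates via Cauchy--Schwarz and using $\chi(\xi_n)\ge\chi(h^*)>0$,
\begin{equation*}
  Q_x^{(n)}\Big[\frac{1_{\set{|T|\le qn^2}}}{|N_n^+|\,\chi(\xi_n)}\Big]
  \le \frac{1}{\chi(h^*)}
  \bigl(Q_x^{(n)}[|N_n^+|^{-2}]\bigr)^{1/2}
  \bigl(Q_x^{(n)}[|T|\le qn^2]\bigr)^{1/2}
  = O(n^{-1})\,F(q)^{1/2}.
\end{equation*}
Multiplying by $\chi(x)$ and dividing by $P_x[N_n^+\neq\emptyset]\sim C_1\chi(x)/n$ (Theorem~\ref{thm:diameter}) then yields $P_x[|T|\le qn^2\mid N_n^+\neq\emptyset]\lesssim F(q)^{1/2}\to 0$ as $q\to 0$, uniformly in (large) $n$; the remaining range of $n$ is handled directly using the non-asymptotic tail estimate of Proposition~\ref{pro:cl23}. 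The main obstacle is the rigorous, uniform-in-$n$ justification of the two scaling limits in the preceding paragraph; both rely on careful variance and tail estimates for non-spine subtree sizes under the unbounded type space, controlled via the hypercontractive bounds of Proposition~\ref{prop:hypercontractivity} and the integrability of $\chi$ under the spine's invariant measure $\pi=\chi^2\nu$ (Lemma~\ref{lem:xi_inv-measure}).
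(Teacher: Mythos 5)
Your core intuition is the same as the paper's: $|T|$ should be of order $n^2$ given survival to generation $n$ because it dominates a sum of $\Theta(n)$ critical-cluster sizes in the domain of attraction of the positive $1/2$-stable law (Proposition~\ref{pro:cl23}). But the route you take via the spine change of measure has two genuine gaps.

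First, the identity
\begin{equation*}
  P_x[|T|\le qn^2,\, N_n^+\neq\emptyset]
  = \chi(x)\, Q_x^{(n)}\Big[
    \frac{1_{\set{|T|\le qn^2}}}{|N_n^+|\,\chi(\xi_n)}\Big]
\end{equation*}
cannot be read off from Lemma~\ref{lem:many-to-few_general}: that lemma applies only to $\mathcal{F}_n$-measurable functionals $Y(v_1,\dots,v_k)$, whereas $1_{\set{|T|\le qn^2}}$ and $|N_n^+|^{-1}1_{\set{N_n^+\neq\emptyset}}$ both depend on the tree beyond generation $n$. You acknowledge the need for a ``natural truncation'' $Q_x^{(n)}$ and an extension of the formula, but this new spine measure and the identity for it are never constructed or verified; as written, the step is an assertion, not a proof.

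Second, and more seriously, the Cauchy--Schwarz step fails. You deduce $Q_x^{(n)}[|N_n^+|^{-2}]=O(n^{-2})$ from ``$|N_n^+|$ is of order $n$ with high probability.'' That inference is invalid: since $|N_n^+|\ge 1$ $Q_x^{(n)}$-a.s., the event $\{|N_n^+|\le \varepsilon n\}$ contributes mass to $|N_n^+|^{-2}$ that a high-probability statement does not control. In fact, the Yaglom picture suggests $|N_n^+|/n$ converges under the spine measure to a size-biased exponential (Gamma$(2)$), whose inverse-square moment diverges; accounting for the hard floor $|N_n^+|\ge 1$ one expects $Q_x^{(n)}[|N_n^+|^{-2}]\asymp n^{-2}\log n$ rather than $O(n^{-2})$, which destroys the uniformity in $n$ your final display requires. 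To repair this one would need a small-ball estimate $Q_x^{(n)}[|N_n^+|\le k]\lesssim (k/n)^{2}$ uniformly in small $k$, which is a considerably harder quantitative statement than the paper uses anywhere.

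The paper avoids both issues by working directly under $P_x[\,\cdot\,|N_n^+\neq\emptyset]$: Theorem~\ref{thm:conv-to-exp} gives $P_x[|N_n^+|<\eta n\mid N_n^+\neq\emptyset]\le\delta/2$ for small $\eta$ and large $n$, and then on $\{|N_n^+|\ge \eta n\}$ the subtrees of the first $\lfloor\eta n\rfloor$ vertices of $N_n^+$ are independent and, by the stochastic domination \eqref{eq:stoch_dom} (each $\varphi_w\ge h^*$), their sizes dominate i.i.d.\ copies of $|\mathcal{C}_o\cap\T^+|$ under $P_{h^*}$. The $1/2$-stable limit for $n^{-2}\sum_{i\le\lfloor\eta n\rfloor}T_i'$ then gives the result directly, with no spine construction, no extension of the many-to-few formula, and no second-moment bound on $|N_n^+|^{-1}$. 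Small $n$ are handled by simply decreasing $q$. I would suggest abandoning the Cauchy--Schwarz route and using this conditioning-plus-domination argument.
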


\begin{proof}
  It will be sufficient to show that for every $\delta > 0$ there exist
  $q$ and $n_0$ such that for every $n \ge n_0$,
  \begin{equation}
    \label{eq:|T|<Cn^2-toshow}
    P_x[|T| \le q n^2| N_n^+ \neq\emptyset] \le \delta.
  \end{equation}
  Indeed, since there are
  only finitely many $n < n_0$, by decreasing the value of $q$,
  the inequality \eqref{eq:|T|<Cn^2-toshow} can be made true for all
  $n\ge 0$. This then directly implies the claim of the lemma.

  To show \eqref{eq:|T|<Cn^2-toshow}, observe that for every $\eta >0$
  \begin{equation}
    \begin{split}
      \label{eqn:decaa}
      P_x&[|T| \le q n^2| N_n^+ \neq\emptyset]
      \\&\le
      P_x\big[|T| \le q n^2, \abs{N_n^+} \ge \eta n| N_n^+
        \neq\emptyset\big]
      +P_x\big[\abs{N_n^+} < \eta n | N_n^+ \neq\emptyset\big].
    \end{split}
  \end{equation}
  By Theorem~\ref{thm:conv-to-exp}, there is $\eta >0$ small such
  that the second probability on the right-hand side of
  \eqref{eqn:decaa} is bounded by $\delta /2$ for all $n$ large enough. It
  is thus sufficient to show that for every $\delta >0$ and
  $\eta >0$ there is $n_0$ so that for all $n\ge n_0$
  \begin{equation}
    \label{eq:decbb}
    P_x\big[\abs T \le q n^2, \abs{N_n^+} \ge \eta n\big| N_n^+
      \neq\emptyset\big]
    \le \delta /2.
  \end{equation}
  Note first that
  \begin{equation}
    P_x\big[\abs T \le q n^2, \abs{N_n^+} \ge \eta n \big| N_n^+
      \neq\emptyset\big]
    \le
    P_x\big[\abs T \le q n^2 \big| \abs{N_n^+} \ge \eta n\big].
  \end{equation}
  Given $\{\abs{N_n^+} \ge \eta n\}$, let $w_1,\dots, w_{\floor{\eta n}}$ be
  the first $\floor{\eta n}$ vertices in $N_n^+$, and let $T_w$ be the
  subtree of $T$ rooted at $w$. Obviously,
  $\abs T \le \sum_{i=1}^{\floor {\eta n}} \abs {T_{w_i}}$.
  Under $P[\, \cdot\, | \abs{N_n^+} \ge \eta n]$, the random variables
  $\abs{T_{w_i}}$ are independent. Moreover, since $w \in N_n^+$ implies
  $\varphi_w \ge h^*$, by stochastic domination \eqref{eq:stoch_dom}, for
  any $u>0$,
  \begin{equation}
    P_x\big[ \abs{T_{w_i}} \ge u  \big| \abs{N_n^+} \ge \eta n\big]
    \ge P_{h^*}\big[\abs{\mathcal C_o \cap \mathbb T^+} \ge u\big].
  \end{equation}
  Denoting $T_i'$, $i\ge 1$, i.i.d.~random variables distributed as
  $\abs{\mathcal C_o \cap \mathbb T^+}$ under $P_{h^*}$, it follows that
  \begin{equation}
    \label{eq:stable}
    P_x\big[|T| \le q n^2 \big| \abs{N_n^+} \ge \eta n\big] \le
    P\Big[\sum_{i=1}^{\floor{\eta n}}T'_i \le q n^2\Big].
  \end{equation}
  By Theorem~\ref{pro:cl23}, $T'_i$ are in the domain of attraction of
  the $1/2$-stable random distribution. Therefore,
  $n^{-2}\sum_{i\le \floor{\eta n}} T'_i$ converges in distribution to a
  non-negative $1/2$-stable random variable (see e.g. \cite[Theorem
      XIII.6.2]{Fel71}). As a consequence, since the distribution of this
  random variable has no atom at $0$, for any $\delta, \eta >0$ there
  exists $q$ small so that for all $n$ large enough the right-hand side
  of \eqref{eq:stable} is bounded by $\delta/2$. This shows
  \eqref{eq:decbb} and completes the proof.
\end{proof}

The second lemma needed to show Proposition~\ref{prop:S-H-mat} studies
the probability that a randomly chosen vertex
$v_{\tau_1}$ is $(\eta, R)$-bad.

\begin{lemma}
  \label{lem:P[v_tau-is-bad]}
  Let $V := v_{\tau_1}$ be a uniformly chosen vertex of $T$. Then, for
  every $x\ge h^*$ and $\eta >0$,
  \begin{equation}
    \lim_{R \to \infty} \sup_{n \ge R}
    P_x\big[\text{$V$ is $(\eta, R)$-bad }
      \big| N_n^+ \neq\emptyset \big] = 0.
  \end{equation}
\end{lemma}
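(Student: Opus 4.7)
The plan is to decompose the event $\{V \text{ is }(\eta,R)\text{-bad}\}$ according to the height of $V$, reducing the problem to two spine-based estimates already established in the paper. Since $V$ is uniform on $T$, conditionally on $T$ one has $P_x[V \text{ is }(\eta,R)\text{-bad}\mid T] = |T|^{-1}\sum_{k\ge R}|N_k^{(\eta,R)}|$, so the task is to control the expectation of this quantity times $1_{\{N_n^+\neq\emptyset\}}$ and to divide by $P_x[N_n^+\neq\emptyset]$. The central difficulty is the random factor $|T|^{-1}$, which is handled by restricting to a good event on which $|T|$ is not too small and $\diam(T)$ is not too large.

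Concretely, let $E_{q,M}:=\{|T|\ge qn^2\}\cap\{\diam(T)\le Mn\}$. By Lemma~\ref{lem:P[|T|<qn^2]-to-zero} and Theorem~\ref{thm:diameter} (which gives $P_x[\diam(T)\ge Mn]/P_x[\diam(T)\ge n]\to 1/M$), we have $P_x[E_{q,M}^c\mid N_n^+\neq\emptyset]\to 0$ as $q\to 0$ and $M\to\infty$, uniformly in $n\ge R$. On $E_{q,M}$ one uses $|T|^{-1}\le (qn^2)^{-1}$ and only heights $k\in [R,Mn]$ contribute; combined with $P_x[N_n^+\neq\emptyset]\ge c/n$ from Theorem~\ref{thm:diameter}, it remains to show that
\begin{equation}
\frac{1}{qn}\sum_{k=R}^{Mn}E_x\big[|N_k^{(\eta,R)}|\,1_{\{N_n^+\neq\emptyset\}}\big]\longrightarrow 0 \quad \text{as } R\to\infty, \text{ uniformly in } n\ge R.
\end{equation}

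For $k>n$ the indicator $1_{\{N_n^+\neq\emptyset\}}$ is automatic whenever $|N_k^{(\eta,R)}|>0$, so the path-functional version of the many-to-one formula (the general form of Proposition~\ref{pro:many-to-few_model}(a), cf.~Appendix~\ref{sec:B}) gives
\begin{equation}
E_x[|N_k^{(\eta,R)}|] = \chi(x)\,\Q_x\!\left[\frac{1_{\{\sigma_k \text{ is }(\eta,R)\text{-bad}\}}}{\chi(\xi_k)}\right].
\end{equation}
Using $\chi(\xi_k)\ge\chi(h^*)>0$ and the uniform spine bound $\sup_{k\ge R}\Q_x[\sigma_k\text{ is }(\eta,R)\text{-bad}]\le\varepsilon(R)\to 0$ from claim~(i) in the proof of Proposition~\ref{prop:|N_n^bad|/|N_n|-fraction}, we obtain $E_x[|N_k^{(\eta,R)}|]\le C\varepsilon(R)\chi(x)$; summing over $n<k\le Mn$ and dividing by $qn$ yields a contribution of order $M\varepsilon(R)/q$, which vanishes as $R\to\infty$.

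For $k\le n$, conditioning on $\F_k$ and applying Theorem~\ref{thm:diameter} yields $P_x[N_n^+\neq\emptyset\mid\F_k]\le C(n-k)^{-1}\sum_{w\in N_k^+}\chi(\varphi_w)$, so the problem reduces to estimating $E_x[|N_k^{(\eta,R)}|\sum_{w\in N_k^+}\chi(\varphi_w)]$. The diagonal part $w\in N_k^{(\eta,R)}$ is handled by the same path-functional many-to-one argument, contributing $C\chi(x)\varepsilon(R)$. The off-diagonal part is controlled via the path-functional many-to-two formula (the general version of Proposition~\ref{pro:many-to-few_model}(b)): decomposing by the splitting generation $j\in\{0,\ldots,k-1\}$ of the two spines, using $\chi(\xi_{\cdot})\ge\chi(h^*)$, the uniform boundedness of $\Q_x[\chi(\xi_j)]$, and the fact that the bad-status of either spine at height~$k$ requires an $\eta$-bad vertex somewhere on a path of length at most $k$ (whose probability is at most $C\varepsilon(R)$ by spine ergodicity), one arrives at a bound of order $Ck\varepsilon(R)$. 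Combined, this gives $E_x[|N_k^{(\eta,R)}|1_{\{N_n^+\neq\emptyset\}}]\le Ck\varepsilon(R)/(n-k)$; summing over $R\le k\le (1-\delta')n$ for a fixed small $\delta'$ produces at most $Cn\varepsilon(R)$, and the short boundary range $(1-\delta')n<k\le n$ is absorbed by the crude uniform bound $E_x[|N_k^{(\eta,R)}|]\le C\varepsilon(R)$ from the $k>n$ case. Dividing by $qn$ and sending $R\to\infty$ (with $q,M,\delta'$ fixed) then completes the argument. The main technical obstacle is precisely the off-diagonal estimate, where the path-functional many-to-two decomposition must be combined with the spine ergodic bound on $(\eta,R)$-badness for \emph{both} spines simultaneously, taking care that the bad-status may already be inherited from the shared segment of the two spines up to the splitting generation.
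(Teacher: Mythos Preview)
Your approach differs from the paper's and has the right overall shape, but two steps in the $k\le n$ range are not justified as written.

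First, the bound $P_x[N_n^+\neq\emptyset\mid\F_k]\le C(n-k)^{-1}\sum_{w\in N_k^+}\chi(\varphi_w)$ does not follow from Theorem~\ref{thm:diameter}: that theorem gives only the pointwise asymptotic $P_y[N_m^+\neq\emptyset]\sim C_1\chi(y)m^{-1}$ for fixed $y$, not a bound uniform in the random, unbounded $\varphi_w$. (One can derive $u_m^0(y)\le Cm^{-1}\bigl(\chi(y)+\gamma^{m/2}q(y)\bigr)$ from Lemma~\ref{lem:u_n-recursion} and Proposition~\ref{pro:Harris_L2}(a), but this introduces a $q$-term that must then be carried through the many-to-two computation.) Second, the off-diagonal many-to-two step is more delicate than you indicate. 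Under $Q_x^2$ conditioned on the splitting time $s=j$, one of the siblings of $\sigma_{j+1}^1$ is $\sigma_{j+1}^2$, whose field value follows the spine kernel $\mathcal K$ rather than the $P$-kernel. Consequently $\bar S(\sigma_m^1)$ for $m>j$ contains the term $\chi(\xi_{j+1}^2)$, so the event $\{\sigma_k^1\text{ is }(\eta,R)\text{-bad}\}$ is \emph{not} conditionally independent of $\xi_k^2$ given $\xi_j$, and the single-spine ergodic bound (i) from the proof of Proposition~\ref{prop:|N_n^bad|/|N_n|-fraction} does not transfer to $Q_x^2$ without an additional decoupling argument (e.g.\ conditioning also on $\xi_{j+1}^2$, or Cauchy--Schwarz). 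Both gaps look fixable, but neither is automatic.

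The paper sidesteps both issues. It restricts $H(V)$ to a window $[h_1n,h_2n]$ (the ranges $H(V)<h_1n$ and $H(V)>h_2n$ being handled by simple first-moment bounds), then on this window writes $|N_k^{(\eta,R)}|=(|N_k^{(\eta,R)}|/|N_k^+|)\cdot|N_k^+|$ and applies Cauchy--Schwarz with $E_x[|N_k^+|^2\mid N_n^+\neq\emptyset]$ and $P_x[|N_k^{(\eta,R)}|/|N_k^+|>\varepsilon\mid N_k^+\neq\emptyset]$, the latter controlled directly by Proposition~\ref{prop:|N_n^bad|/|N_n|-fraction}. The key point is that $k\ge h_1n$ makes the change of conditioning from $\{N_n^+\neq\emptyset\}$ to $\{N_k^+\neq\emptyset\}$ cost only a bounded factor $P_x[N_k^+\neq\emptyset]/P_x[N_n^+\neq\emptyset]\le c/h_1$, so no $\F_k$-conditioning and no two-spine analysis are needed.
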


\begin{proof}
  For arbitrary positive constants $q,h_1<h_2$, it holds
  \begin{equation}
    \begin{split}
      \label{eq:oooo}
      P_x&\big[V\text{ is $(\eta, R)$-bad }
        \big| N_n^+ \neq\emptyset \big]
      \\&\le P_x\big[\abs T \le q n^2
        \big| N_n^+ \neq\emptyset \big]
      + P_x\big[\abs T > qn^2, H(V)< h_1 n
        \big| N_n^+ \neq\emptyset \big]
      \\&\quad+ P_x\big[\abs T > qn^2, H(V)> h_2 n
        \big| N_n^+ \neq\emptyset \big]
      \\&\quad+ P_x\big[\abs T > qn^2, H(V)\in [h_1n ,h_2n ],
        \text{$V$ is $(\eta, R)$-bad }
        \big| N_n^+ \neq\emptyset \big].
    \end{split}
  \end{equation}
  We will show that for every $\delta > 0$
  there are $q, h_1, h_2$ such that for every $n\ge 0$ the first three
  summands on the right-hand side are smaller than $\delta /3$, and that
  for every fixed $q, h_1, h_2$ the fourth one satisfies
  \begin{equation}
    \label{eq:v_tau-is-bad_case4_toshow}
    \lim_{R \to \infty} \sup_{n \ge R}
    P_x\big[\abs T > qn^2, H(V) \in [h_1n, h_2n],
      V\text{ is $(\eta,R)$-bad } \big| N_n^+ \neq\emptyset\big] = 0.
  \end{equation}
  This will imply the statement of the lemma.

  Concerning the first summand, the fact that it is possible to choose
  $q>0$ small so that
  $P_x[\abs T \le q n^2 \big| {N_n^+ \neq\emptyset} ] \le \delta /3$ for
  all $n\ge 0$ follows
  immediately from Lemma~\ref{lem:P[|T|<qn^2]-to-zero}. From now on we
  keep $q$ fixed in this way.

  For the second summand, it holds
  \begin{equation}
    \begin{split}
      \label{eq:mmm}
      P_x\big[\abs T > qn^2, H(V)< h_1 n \big| N_n^+ \neq\emptyset \big]
      & \le \frac{1}{P_x[N_n^+ \neq\emptyset] }
      P_x\big[\abs T > qn^2, H(V)< h_1 n \big]
      \\ & \le \frac{1}{P_x[N_n^+ \neq\emptyset] }
      \frac 1{q n^2} E_x\Big[\sum_{k=0}^{h_1 n} \abs{N_k^+}\Big],
    \end{split}
  \end{equation}
  where the last inequality follows from the fact that
  $V$ is a uniformly chosen vertex of $T$, and thus on $\{\abs T \ge qn^2\}$,
  \begin{equation}
    P_x[H_V\le h_1n | \sigma (T)]
    = \abs T^{-1} \sum_{k=0}^{h_1 n}\abs {N_k^+}
    \le \frac 1{qn^2}\sum_{k=0}^{h_1 n}\abs {N_k^+}.
  \end{equation}
  By Theorem~\ref{thm:diameter},
  $P_x[N_n^+ \neq\emptyset]^{-1} \le c(x) n$, and, by
  Proposition~\ref{pro:Harris_L2}, $E_x[|N_k^+|] \le c'(x)$ for all
  $k\ge 0$. Therefore, the right-hand side of \eqref{eq:mmm} can be made
  smaller than $\delta /3$ by choosing $h_1$ small.

  For the third summand in \eqref{eq:oooo}, note that
  $\{H(V) \ge h_2 n\} \subset \{N_{h_2 n}^+ \neq \emptyset\}$, and thus
  \begin{equation}
    P_x\big[\abs T > qn^2, H(V)> h_2 n \big| N_n^+ \neq\emptyset \big]
    \le
    \frac 1  {P_x[N_n^+ \neq\emptyset ]} P_x[N_{h_2 n}^+ \neq \emptyset],
  \end{equation}
  which can be made arbitrarily small by using Theorem~\ref{thm:diameter}
  and choosing $h_2$ large.

  Finally, we show \eqref{eq:v_tau-is-bad_case4_toshow}. Recall the
  notation from \eqref{eq:Nbad}. Since $V$ is a
  uniformly chosen vertex of $T$, for arbitrary $q, h_1, h_2, n, R >0$
  and $\varepsilon  \in (0,1)$, on the event $\{\abs T\ge q n^2\}$ it
  holds that
  \begin{equation}
    \begin{split}
      P_x\big[ & H(V) \in [h_1n, h_2n],
        V\text{ is $(\eta,R)$-bad } \big| \sigma(T,\varphi ) \big]
      = \frac 1{\abs T}
      \sum_{k=h_1 n}^{h_2 n} \abs{N_k^{+,(\eta ,R)}}
      \\& =
      \frac 1{\abs T}
      \sum_{k=h_1 n}^{h_2 n}
      \frac{\abs{N_k^{+,(\eta ,R)}}}{\abs{N_k^+}} \abs{N_k^+}
      \le
      \varepsilon  + \frac 1{qn^2}
      \sum_{k=h_1 n}^{h_2 n}
      1_{\set{{\abs{N_k^{+,(\eta ,R)}}}/{\abs{N_k^+}}\ge \varepsilon}}
      \abs{N_k^+}.
    \end{split}
  \end{equation}
  As a consequence, the probability in \eqref{eq:v_tau-is-bad_case4_toshow}
  satisfies
  \begin{equation}
    \label{eq:v_tau-is-bad_mid-bound}
    \begin{split}
      P_x\big[&\abs T > qn^2, H(V) \in [h_1n, h_2n],
        V\text{ is $(\eta,R)$-bad } \big| N_n^+ \neq\emptyset\big]
      \\&\le
      \varepsilon  + \frac{1}{qn^2}
      \sum_{k=h_1 n}^{h_2 n}
      E_x\big[ 1_{\set{{\abs{N_k^{+,(\eta ,R)}}}
            /{\abs{N_k^+}}\ge \varepsilon}}
        \abs{N_k^+} \big| N_n^+ \neq\emptyset\big].
    \end{split}
  \end{equation}
  By the Cauchy--Schwarz inequality, every summand in
  \eqref{eq:v_tau-is-bad_mid-bound} is bounded by
  \begin{equation}
    E_x\big[\abs{N_k^+}^2 \big| N_n^+ \neq\emptyset\big]^{1/2}
    P_x\big[\abs{N_k^{+,(\eta,R)}}/\abs{N_k^+}
      > \varepsilon, N_k^+ \neq\emptyset
      \big| N_n^+ \neq\emptyset\big]^{1/2}.
  \end{equation}
  By Theorem~\ref{thm:diameter} and Proposition~\ref{pro:Harris_L2},
  $E_x[|N_k^+|^2\,|\, N_n^+\neq\emptyset]^{1/2} \le c(x) n$. The second
  term satisfies,
  \begin{equation}
    P_x\Big[\frac{\abs{N_k^{+,(\eta,R)}}}{\abs{N_k^+}}
      > \varepsilon, N_k^+ \neq\emptyset \Big| N_n^+ \neq\emptyset\Big]
    \le P_x\Big[ \frac{\abs{N_k^{+,(\eta,R)}}}{\abs{N_k^+}}
      > \varepsilon\Big| N_k^+ \neq\emptyset\Big]
    \frac{P_x[N_k^+ \neq\emptyset]}{P_x[N_n^+ \neq\emptyset]}.
  \end{equation}
  By Theorem~\ref{thm:diameter}, for $k\ge h_1 n$, the fraction on the
  right-hand side can be bounded by a constant $c(x)$. Noting also that
  $\abs{N_k^{+,(\eta,R)}} = 0$ for $R > k$, we obtain that
  \eqref{eq:v_tau-is-bad_mid-bound} is bounded by
  \begin{equation}
    \varepsilon  + \frac 1{qn^2} c(x) n^2 (h_2 - h_1)
    \sup_{k\ge R}
    P_x\Big[ \frac{\abs{N_k^{+,(\eta,R)}}}{\abs{N_k^+}}
      > \varepsilon\Big| N_k^+ \neq\emptyset\Big]^{1/2}.
  \end{equation}
  By Proposition~\ref{prop:|N_n^bad|/|N_n|-fraction}, the supremum here
  converges to $0$ as $R \to \infty$. Since $\varepsilon >0$ is
  arbitrary, this shows \eqref{eq:v_tau-is-bad_case4_toshow} and
  completes the proof.
\end{proof}

We can now prove Proposition~\ref{prop:S-H-mat}. The proof
resembles the one of Proposition~6.21 in \cite{Pow19}, with modifications
required to account for the unbounded domain in our setting.

\begin{proof}[Proof of Proposition~\ref{prop:S-H-mat}]
  It is enough to show the statement for $k = 2$. The general case where
  $k > 2$ is then obtained by a union bound. In the case $k=2$, since the
  matrices are symmetric and the diagonal entries are zero, it is
  sufficient to control one off-diagonal entry. To this end, for
  $\varepsilon >0$ let $G_n$ be the event
  \begin{equation}
    G_n : = \set[\big]{|C_1^{-1} (D_n^H)_{1,2} - (D_n^{\bar{S}})_{1,2}| >
      \varepsilon}.
  \end{equation}
  In order to bound the probability of $G_n$, we will define events
  $A_n(\delta)$ (see \eqref{eq:P[mat-norm]-A_n(delta)-def}) satisfying
  $P[A_n(\delta)| N_n^+ \neq\emptyset] \le \delta$ and show that for $n$
  large enough and the right choice of $\delta_n \to 0$, $G_n$ is a
  subset of $A_n(\delta_n)$. This will imply the statement of the lemma.

  In order to define the events $A_n(\delta)$ for $\delta > 0$, we first fix
  $M = M(\delta ) \ge 1$ so that
  \begin{equation}
    \lim_{n \to \infty}
    P_x[N_{Mn}^+\neq \emptyset  |
      N_n^+ \neq\emptyset] \le \delta/3,
  \end{equation}
 which is possible by Theorem~\ref{thm:diameter}. We then set
 \begin{equation}
   \eta = \eta(\delta) := \varepsilon/(4M),
 \end{equation}
 and, using Lemma~\ref{lem:P[v_tau-is-bad]} and the
 independence of $\tau^1$ and $\tau^2$, we fix $R = R(\delta)$ so that
 \begin{equation}
   \lim_{n \to \infty}
   P_x\big[\text{$v_{\tau^1}$ or $v_{\tau^2}$ is $(\eta, R)$-bad}
     \big| N_n^+ \neq\emptyset\big] \le \delta/3.
 \end{equation}
 Next, we fix $K = K(\delta)$ such that
 \begin{equation}
	 \label{eq:S-H-mat_K-exists}
   \lim_{n \to \infty}
   P_x[\sup \set{\chi(\varphi_u) : u
	 \in \cup_{k=0}^R N_k^+} \ge K | N_n^+ \neq\emptyset] \le \delta/3.
 \end{equation}
 This is possible since the event in \eqref{eq:S-H-mat_K-exists} is
 $\mathcal{F}_R$-measurable, and thus, by
 Proposition~\ref{prop:P-conditioned-is-Q}, the left-hand side of
 \eqref{eq:S-H-mat_K-exists} equals
 $Q_x[\sup \set{\chi(\varphi_u) : u \in \cup_{k=0}^R N_k^+} \ge K]$
 (with $Q_x$ as in Section~\ref{ss:spined-bp}). It
 is then straightforward to choose $K$ large enough so that
 \eqref{eq:S-H-mat_K-exists} is satisfied.
 Finally, we define the events $A_n$ by
  \begin{equation}
    \begin{split}
      \label{eq:P[mat-norm]-A_n(delta)-def}
      A_n = A_n(\delta)
      ={} &\set{N_{Mn}^+ \neq \emptyset }
        \cup \set{\text{$v_{\tau^1}$ or $v_{\tau^2}$ is $(\eta,R)$-bad}}
        \\&  \cup \set{\sup \set{\chi(\varphi_u) : u \in \cup_{k=0}^R N_k^+}
          \ge K}.
    \end{split}
  \end{equation}
  By the choice of $M$, $\eta$, $R$, and $K$, it holds that
  \(
    \lim_{n \to \infty} P_x[A_n(\delta) | N_n^+ \neq\emptyset] \le \delta
  \).
  Therefore, for sequences $\delta_n$ converging to zero sufficiently
  slowly, it also holds that
  \begin{equation}
    \label{eq:P[mat-norm]-step}
    \lim_{n \to \infty} P_x[A_n(\delta_n)| N_n^+ \neq\emptyset] = 0.
  \end{equation}

  Next, we show that $G_n \subseteq A_n(\delta_n)$ for a well-chosen
  $\delta_n$ and large $n$. Let
  $B(\delta ) = \set{H(v_{\tau^1} \land v_{\tau^2}) > R}$.
  We will show that for large enough $n$ and a suitable choice of
  $\delta_n$,
  \begin{equation}
    \label{eq:P[mat-norm]-step2}
    G_n \cap A_n^c(\delta_n) \cap B(\delta_n) = \emptyset
    \quad \text{and} \quad
    G_n \cap A_n^c(\delta_n) \cap B^c(\delta_n) = \emptyset.
  \end{equation}
  From this, it easily follows that $G_n \cap A_n^c(\delta_n) = \emptyset$,
  and thus, $G_n \subseteq A_n(\delta_n)$. Then, if $\delta_n \to 0$
  sufficiently slowly, by \eqref{eq:P[mat-norm]-step},
  \begin{equation}
    \lim_{n \to \infty} P_x[G_n| N_n^+ \neq\emptyset]
    \le \lim_{n \to \infty} P_x[A_n(\delta_n) | N_n^+ \neq\emptyset] = 0,
  \end{equation}
  which is sufficient to prove the statement of the proposition.

  It remains to show the existence of $\delta_n$ such that
  \eqref{eq:P[mat-norm]-step} and \eqref{eq:P[mat-norm]-step2} hold. By
  definition, on $A_n^c$ it holds
  \begin{equation}
    \label{eq:P[mat-norm]-A^c-props}
    H(v_{\tau^i}) \le Mn \quad\text{and}\quad
    \Big|\frac{\bar{S}(v_{\tau^i})}{H(v_{\tau^i})} - C_1^{-1} \Big| \le \eta
    \qquad \text{ for $i =1,2$.}
  \end{equation}
  Thus, on $A^c_n \cap B$ it holds that
  $R < H(v_{\tau^1} \land v_{\tau^2}) \le Mn$. Since $v_{\tau^1}$ is not
  $(\eta,R)$-bad,
  \(
    \abs{\bar{S}(v_{\tau^1} \wedge v_{\tau^2})
      /H(v_{\tau^1} \wedge v_{\tau^2}) - C_1^{-1}} \le \eta
  \).
  Together with \eqref{eq:P[mat-norm]-A^c-props}, the definition of the
  matrices in \eqref{eq:D-mat-def}, and the definition of $\eta$, this
  implies
  \begin{equation}
    \abs{(D_n^{\bar{S}})_{1,2} - C_1^{-1} (D_n^H)_{1,2}}
    \le \frac{1}{n} 4Mn \eta \le \varepsilon,
  \end{equation}
  and thus $G_n \cap B \cap A_n^c = \emptyset$, proving the first half of
  \eqref{eq:P[mat-norm]-step2}. Next, on event $B^c \cap A_n^c$ it holds
  $H(v_{\tau^1} \land v_{\tau^2}) \le R$ and
  $\sup \set{\chi (\varphi_u) : u \in \cup_{k \le R} N^+_k } < K$. Therefore,
  $\bar{S}(v_{\tau^1} \land v_{\tau^2}) \le K d^R$, and as a consequence
  $|\bar{S}(v_{\tau^1} \land v_{\tau^2})
  - C_1^{-1} H(v_{\tau^1} \land v_{\tau^2})| \le K d^R + R C_1^{-1} $.
  We choose a sequence $\delta_n$ converging to $0$ sufficiently
  slowly so that \eqref{eq:P[mat-norm]-step} is satisfied and
  $K(\delta_n) d^{R(\delta_n) +1} + R(\delta_n) C_1^{-1}  \le \varepsilon n$,
  and thus
  \begin{equation}
    |(D_n^{\bar{S}})_{1,2} - C_1^{-1} (D_n^H)_{1,2}|
    \le \frac{\varepsilon}{2}
    + \frac{K(\delta_n)d^{R(\delta_n)}+R(\delta_n)C_1^{-1} }{n}
    \le \varepsilon
  \end{equation}
  for $n$ large. Hence,
  $G_n \cap B^c(\delta_n) \cap A_n^c(\delta_n) = \emptyset$ for such $n$,
  completing the proof of \eqref{eq:P[mat-norm]-step2}.
\end{proof}

\section{Proof of Theorem~\ref{thm:inv-principle}}
\label{sec:inv-principle}

In this section, we prove Theorem~\ref{thm:inv-principle}, relying on
Lemmas~\ref{lem:GHP-tightness} and~\ref{lem:GP-convergence}. The argument
follows the framework of \emph{random metric measure spaces}, with key
results originating from \cite{GrePfaWin09} and \cite{AbDeHo13}. The
proof closely mirrors that of Theorem~1.1 in \cite[Section~6.3.2]{Pow19},
which itself builds on the aforementioned results.

Recall from Section~\ref{sec:model} that $\mathbf{e}$ denotes a Brownian
excursion conditioned to reach height $1$. Also define $\hat{\mathbf{e}}$
to be $(\sigma C_1)$ times a Brownian excursion conditioned to reach
height $(C_1 \sigma)^{-1}$, where the constants $\sigma$ and $C_1$ are
given in \eqref{eq:sigma-def} and \eqref{eq:C_1-def}, and write
$(T_{\hat{\mathbf{e}}}, d_{\hat{\mathbf{e}}})$ for the real tree with
contour function $\hat{\mathbf{e}}$. By Brownian scaling,
$(T_{\hat{\mathbf{e}}}, d_{\hat{\mathbf{e}}})$ is isometrically
equivalent to $(T_{\mathbf{e}}, d_{\mathbf{e}})$. Therefore, to show
Theorem~\ref{thm:inv-principle}, it is enough to prove that if
$(T_{x,n}, d_{x,n})$ is a random metric space whose law coincides with
that of $(\mathcal{C}_o^{h^*} \cap \mathbb{T}^+, d/n)$ under
$P_x[ \cdot | N_n^+ \neq\emptyset]$ then
\begin{equation}
  \label{eq:inv-princ_toshow}
  (T_{x,n}, d_{x,n}) \to (T_{\hat{\mathbf{e}}}, d_{\hat{\mathbf{e}}})
  \quad \text{as $n \to \infty$}
\end{equation}
in distribution with respect to the Gromov-Hausdorff topology.

It will prove useful to additionally equip $(T_{x,n}, d_{x,n})$ and
$(T_{\hat{\mathbf{e}}}, d_{\hat{\mathbf{e}}})$ with measures and work in
the space of metric measure spaces which we will call $\mathbb{X}$. For
this purpose, we equip $(T_{x,n}, d_{x,n})$ with the measure $\mu_{x,n}$
which is the uniform measure among the vertices in
$\mathcal{C}_o^{h^*} \cap \mathbb{T}^+$. We equip
$(T_{\hat{\mathbf{e}}}, d_{\hat{\mathbf{e}}})$ with the measure
$\mu_{\hat{\mathbf{e}}}$ which is the uniform measure on $[0, \hat{\tau})$
and $\hat{\tau}$ is the length of $\hat{\mathbf{e}}$. We write $P_x^n$
for the law of $(T_{x,n}, d_{x,n}, \mu_{x,n})$ and $P_{\hat{\mathbf{e}}}$
for the law of
$(T_{\hat{\mathbf{e}}}, d_{\hat{\mathbf{e}}}, \mu_{\hat{\mathbf{e}}})$,
and use $E_x^n$ and $E_{\hat{\mathbf{e}}}$ for the corresponding
expectations.

\begin{lemma}
  \label{lem:GHP-tightness}
  For fixed $x \ge h^*$, the family $(T_{x,n}, d_{x,n}, \mu_{x,n})_{n \ge
  0}$ is tight with respect to the Gromov-Hausdorff-Prokhorov metric.
\end{lemma}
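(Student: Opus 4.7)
I invoke the standard characterisation of relative compactness in the Gromov--Hausdorff--Prokhorov topology (see e.g.~\cite{AbDeHo13}): since the measures $\mu_{x,n}$ are probability measures, the family $(T_{x,n}, d_{x,n}, \mu_{x,n})$ is GHP-tight iff (i) the diameters $\diam(T_{x,n}, d_{x,n})$ are tight and (ii) for every $\varepsilon > 0$, the $\varepsilon$-covering numbers $N(\varepsilon, T_{x,n}, d_{x,n})$ are tight.

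The key input for both (i) and (ii) is the tightness in Skorokhod space, with a continuous limit, of the rescaled height process $(n^{-1} H_{\floor{n^2 t}})_{t \ge 0}$ under $P_x[\,\cdot\,|\,N_n^+ \neq \emptyset]$. Indeed, the tree $T^1$ with its graph distance admits a natural $1$-Lipschitz surjective contour parametrisation, and on a contour-time interval of length $\delta$ the induced tree distance between the corresponding points is bounded by twice the oscillation of the contour function on that interval, which is in turn controlled by the oscillation of the height sequence $H$. Hence, if the rescaled height process has modulus of continuity at most $\rho$ on scale $\delta$, then $(T^1, d)$ is covered by $O(1/\delta)$ balls of $d$-radius $O(\rho n)$; equivalently $(T_{x,n}, d_{x,n})$ is covered by $O(1/\delta)$ balls of radius $O(\rho)$. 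Taking $\delta = 1$ controls the diameter, and general $\delta$ controls the covering number.

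To obtain this tightness of the rescaled height process, I combine Proposition~\ref{prop:conditioned-scaling-limit} (applied with time parameter $n^2$ and $y = 1/C_1$), which yields convergence, and hence tightness with continuous limit, of $(n^{-1} \bar{S}_{\floor{n^2 t}})_{t\ge 0}$ conditioned on $\{\sup_k \bar{S}_k \ge n/C_1\}$, with the approximation $H \approx C_1 \bar{S}$ coming from Proposition~\ref{prop:S-H-mat}. Lemma~\ref{lem:P[|T|<qn^2]-to-zero} comparing the random scale $|T^1|$ to the deterministic $n^2$ ensures that time-changes by $|T^1|$ versus $n^2$ are asymptotically negligible.

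The main technical obstacle is upgrading the finite-sample comparison of Proposition~\ref{prop:S-H-mat} to a uniform statement of the form $\sup_{k \le |T^1|} |H_k - C_1 \bar{S}_k|/n \to 0$ in $P_x[\,\cdot\,|\,N_n^+ \neq \emptyset]$-probability, and showing that the conditioning on $\{N_n^+ \neq \emptyset\}$ is asymptotically interchangeable with the conditioning on $\{\sup_k \bar{S}_k \ge n/C_1\}$. Both points are handled through a discrete-grid union bound in combination with the $(\eta,R)$-good/bad dichotomy of Section~\ref{sec:H_n-S_n-connection} and Proposition~\ref{prop:|N_n^bad|/|N_n|-fraction}, closely paralleling the strategy of~\cite[Section~6.3.2]{Pow19}, with additional care required to accommodate the unbounded type space of our branching process.
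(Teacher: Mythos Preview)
Your approach differs substantially from the paper's, and the step you yourself flag as ``the main technical obstacle'' is a genuine gap that the sketch does not close.

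The paper's proof of Lemma~\ref{lem:GHP-tightness} is elementary and does not touch the $\bar S/H$ comparison of Section~\ref{sec:H_n-S_n-connection} at all. It applies the compactness criterion of \cite{AbDeHo13} directly: diameter tightness comes from Theorem~\ref{thm:diameter} via $P_x[N_{Rn}^+\neq\emptyset\mid N_n^+\neq\emptyset]\le c/R$, and the $\delta$-covering number is bounded by a first-moment argument. For $n\delta\ge 1$ one sets $b=\floor{n\delta}$, observes that the vertices in $\bigcup_j\{v\in N^+_{jb}:\exists\,w\in N^+_{(j+1)b},\ v\prec w\}$ are centres of a $\delta$-cover, and bounds the expected size of each layer by $c(x)/b$ using Theorem~\ref{thm:diameter} and Proposition~\ref{pro:Harris_L2}(a); for $n\delta<1$ the covering number equals $|T|$, handled by the volume tail of Proposition~\ref{pro:cl23}. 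Only results from Sections~\ref{sec:proof_of_diameter}--\ref{sec:proof-of-thm2} are used.

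Your route, by contrast, needs the uniform statement $\sup_{k\le|T^1|}n^{-1}\abs{H_k-C_1\bar S_k}\to 0$ in probability under $P_x[\,\cdot\mid N_n^+\neq\emptyset]$, and this is \emph{not} delivered by Proposition~\ref{prop:|N_n^bad|/|N_n|-fraction} together with a grid union bound. That proposition controls only the \emph{fraction} of $(\eta,R)$-bad vertices in a given $N_n^+$, not their existence; a single bad vertex $v$ at height of order $n$ already produces $\abs{H(v)-C_1\bar S(v)}$ of order $n$, so a fraction-type estimate applied at $\Theta(n^2)$ grid points gives no control on the maximum. Proposition~\ref{prop:S-H-mat} is likewise about \emph{uniformly sampled} points only. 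Upgrading to a supremum would require a maximal inequality along every ancestry line, which the paper neither proves nor needs. (You also conflate the height process $H$ with the contour function; the latter is $1$-Lipschitz in unscaled time, but after the $n^{-1}$ vertical and $n^{-2}$ horizontal rescaling this gives modulus $O(n\delta)$, not $o(1)$, so Lipschitzness alone buys nothing.) In \cite{Pow19} as here, GHP-tightness is obtained by the direct covering argument, and the $\bar S/H$ comparison enters only in the Gromov--Prokhorov identification step.
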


\begin{proof}
  Fix $x \ge h^*$. Our aim is to show that for every $\varepsilon > 0$
  there exists a relatively compact subset $\mathbb{K} \subset \mathbb{X}$
  (with respect to the Gromov-Hausdorff-Prokhorov metric) such that
  \begin{equation}
    \label{eq:GHP-tightness}
    \inf_{n \ge 0} P_x^n[(T_{x,n}, d_{x,n}, \mu_{x,n}) \in \mathbb{K}]
    \ge 1-\varepsilon.
  \end{equation}
  To do this, we define the relatively compact set (where the relative
    compactness follows from Theorem~2.6 in \cite{AbDeHo13})
  \begin{equation}
    \mathbb{K}_{R, M}
    = \bigg\{ (X,r,\mu) \in \mathbb{X} \ \bigg\vert
      \begin{array}{l}
      \mu(X)=1,\ \diam(X) \le 2R,\ \text{for all $k\ge 1$, $X$ can be} \\
      \text{covered by fewer than $2^{4k}M$ balls of radius $2^{-k}$}
    \end{array}\bigg\}
  \end{equation}
  and show that for $\varepsilon > 0$ there are $R(\varepsilon)$ and
  $M(\varepsilon)$ such that for $\mathbb{K} = \mathbb{K}_{R, M}$,
  \eqref{eq:GHP-tightness} is satisfied.
  Define the events
  \begin{align}
    &A_{x,n}(R) = \set{\diam(T_{x,n}) \le 2R} \quad \text{and} \\
    &B_{x,n}^\delta(M) = \set{\text{$T_{x,n}$ can be covered with
        $<\delta^{-4} M$ balls of radius $\delta$}}.
  \end{align}
  Then, since $\mu_{x,n}(T_{x,n}) = 1$,
  \(
    \set{(T_{x,n}, d_{x,n}, \mu_{x,n}) \in \mathbb{K}_{R, M}}
    = A_{x,n}(R) \cap (\cap_{k \ge 1} B_{x,n}^{2^{-k}}(M))
  \),
  and thus
  \(
    P_x^n[(T_{x,n}, d_{x,n}, \mu_{x,n}) \not\in \mathbb{K}_{R, M}]
    \le P_x^n[A_{x,n}(R)^c]
    + \sum_{k \ge 1} P_x^n[B_{x,n}^{2^{-k}}(M)^c \cap A_{x,n}(R)]
  \).
  To see \eqref{eq:GHP-tightness}, it is therefore enough to show that
  for $M = M(\varepsilon)$ and $R = R(\varepsilon)$ large enough
  \begin{equation}
    \label{eq:GHP-tightness_toshow}
    P_x^n[A_{x,n}(R)^c] \le \varepsilon/2
    \quad \text{and} \quad
    P_x^n[B_{x,n}^\delta(M)^c \cap A_{x,n}(R)] \le \delta \varepsilon/2,
  \end{equation}
  for all $\delta > 0$ and $n \ge 0$.

  We will first find a $R(\varepsilon)$ such that the first equation of
  \eqref{eq:GHP-tightness_toshow} is satisfied. Recall that the law of
  $(T_{x,n}, d_{x,n})$ under $P_x^n$ is that of $(T, d/n)$ under
  $P_x[\,\cdot\, | N_n^+ \neq\emptyset]$. Thus,
  $P_x^n[\diam(T_{x,n}) \ge 2R] \le P_x[|N_{Rn}^+|>0 | N_n^+ \neq\emptyset]$,
  which by Theorem~\ref{thm:diameter}, is smaller than $\varepsilon/2$
  for $R(\varepsilon)$ large enough. For such $R(\varepsilon)$ the first
  part of \eqref{eq:GHP-tightness_toshow} is valid.

  Next, we will find a $M(\varepsilon)$ such that also the second
  equation of \eqref{eq:GHP-tightness_toshow} is met. We will distinguish
  thereby between the cases where $n \delta < 1$ and $n \delta \ge 1$.
  Assume first that $n \delta < 1$. Then, every $\delta$-ball around a
  vertex $v \in T_{x,n}$ only contains the vertex $v$ itself. This means
  that in this case $T_{x,n}$ can be covered with $<\delta^{-4} M$ balls
  of radius $\delta$ if and only if the cardinality of $T_{x,n}$ is
  $<\delta^{-4} M$. Therefore
  \begin{equation}
    P_x^n[B_{x,n}^\delta(M)^c \cap A_{x,n}(R)]
    \le P_x[|T| \ge \delta^{-4}M | N_n^+ \neq \emptyset]
    \le \frac{P_x[|T| \ge \delta^{-4}M]}{P_x[N_n^+ \neq \emptyset]}.
  \end{equation}
  Using Theorem~\ref{thm:diameter} and Proposition~\ref{pro:cl23} this can be
  further bounded from above by
  \begin{equation}
    C\frac{\delta^2 n}{\sqrt{M}} \le C\frac{\delta}{\sqrt{M}},
  \end{equation}
  for some constant $C$, where we also used that by assumption
  $\delta n < 1$. Choosing some
  $M = M(\varepsilon) \ge (2C/\varepsilon)^2$, the second equation of
  \eqref{eq:GHP-tightness_toshow} is true for all $n, \delta$ with
  $n \delta < 1$.

  Next we consider the case $n \delta \ge 1$. Set
  $b_{n, \delta} = \floor{n \delta}$ and define the sets of vertices
  \(
    V_j = \set{v \in N_{j b_{n,\delta}}^+ :
      \exists w \in N_{(j+1) b_{n, \delta}}^+ \text{ with } v \prec w}
  \)
  for $j \ge 0$. We will show that on
  \begin{equation}
    \label{eq:GHP-balls_condition}
    \set{\diam(T_{x,n}) \le 2R} \cap
    \big(\bigcap_{j=0}^{2Rn/b_{n,\delta}-1}
      \big\{|V_j| \le \frac{M b_{n,\delta}}{2Rn\delta^4}\big\}\big)
  \end{equation}
  $(T_{x,n}, d_{x,n})$ can be covered with fewer than $\delta^{-4}M$ balls
  of radius $\delta$. Specifically, $T_{x,n}$ is covered by
  $\set{B_\delta(v)}_{v \in V}$, where
  $V = \cup_{j = 0}^{2Rn/b_{n,\delta}-1} V_j$. To see this, note that
  since on $\set{\diam(T_{x,n}) \le 2R}$ the height of $T_{x,n}$ is
  trivially bounded by $2Rn$ and thus, for every $v \in T_{x,n}$ there is
  a $w \in V_j$ for some $0\le j \le 2Rn/b_{n,\delta}-1$ so that
  $d_{x,n}(v,w) \le \delta$. Further, it is clear that on
  \eqref{eq:GHP-balls_condition} the cardinality of $V$ is bounded by
  $(2Rn/b_{n,\delta})(M b_{n, \delta}/2Rn\delta^4) = \delta^{-4}M$. This
  shows the claimed covering property. To prove the second part of
  \eqref{eq:GHP-tightness_toshow}, it thus suffices to show that for
  $M = M(R, \varepsilon)$ large enough,
  \begin{equation}
    \label{eq:GHP-tightness_step}
    P_x\Big[\bigcup_{j = 0}^{2Rn/b_{n,\delta}-1} \big\{|V_j|
        > \frac{M b_{n, \delta}}{2Rn\delta^4}\big\}\Big| N_n^+
      \neq \emptyset \Big]
    \le \delta\varepsilon/2.
  \end{equation}
  By conditioning on $\mathcal{F}_{jb_{n,\delta}}$ (recalling the
    definition of $\mathcal{F}_n$ from \eqref{eq:filtrations}) and using
  Theorem~\ref{thm:diameter} and Proposition~\ref{pro:Harris_L2},
  \begin{align}
    E_x\big[|V_j|\big]
    &= E_x\big[E_x\big[|V_j| \big| \mathcal{F}_{jb_{n,\delta}}\big]\big]
    = E_x\Big[\sum_{v \in N_{jb_{n,\delta}}^+}
      P_{\varphi_v}[N_{b_{n, \delta}}^+ \neq \emptyset]\Big] \\
    &= E_x\Big[\sum_{v \in N_{jb_{n,\delta}}^+}
      C \chi(\varphi_v)b_{n, \delta}^{-1}
      (1 + \varepsilon_{b_{n, \delta}}(\varphi_v))\Big]
    \le q(x)b_{n, \delta}^{-1}
  \end{align}
  for some function $q(x)$ independent of $n$ and $\delta$. Therefore, by
  conditioning on $\set{N_n^+ \neq\emptyset}$ and using the Markov
  inequality and Theorem~\ref{thm:diameter},
  \begin{equation}
    P_x\Big[|V_j|
      > \frac{M b_{n, \delta}}{2Rn\delta^4} \Big| N_n^+ \neq \emptyset \Big]
    \le \frac{2Rn\delta^4}{M b_{n, \delta}}
    \frac{E_x[|V_j|]}{P_x[N_n^+ \neq \emptyset]}
    \le q'(x) \frac{2Rn^2\delta^4}{M b_{n, \delta}^2},
  \end{equation}
  where $q'$ is some other function independent of $n$ and $\delta$. This means
  that by a union bound, the left-hand side of \eqref{eq:GHP-tightness_step}
  is bounded from above by
  \begin{equation}
    \label{eq:GHP-tightness_step2}
    q'(x) \frac{2Rn^2\delta^4}{M b_{n,\delta}^2}(2Rn/b_{n,\delta})
    = q'(x)\frac{4R^2n^3\delta^4}{M b_{n, \delta}^3}
    = \frac{q'(x) 4R^2 \delta}{M}
    \Big(\frac{n\delta}{\floor{n \delta}}\Big)^3.
  \end{equation}
  Using that for $x \ge 1$, $1 \le x/\floor{x} \le 2$, we see that by
  choosing $M \ge 64R^2q'(x)/\varepsilon$, the right-hand side of
  \eqref{eq:GHP-tightness_step2} is bounded by $\delta \varepsilon/2$.
  For such choices of $M$, \eqref{eq:GHP-tightness_step} is satisfied.

  Finally, first choosing $R$ as described above and then $M$ as the
  maximum of the two $M$-values obtained for the cases $n\delta < 1$ and
  $n \delta \ge 1$, gives $(R,M)$ such that
  \eqref{eq:GHP-tightness_toshow} is satisfied. For such $(R, M)$,
  \eqref{eq:GHP-tightness} holds true for $\mathbb{K} = \mathbb{K}_{R,M}$
  and the proof is finished.
\end{proof}

\begin{lemma}
  \label{lem:GP-convergence}
  For fixed $x \ge h^*$,
  \begin{equation}
    \label{eq:GP-convergence}
    (T_{x,n}, d_{x,n}, \mu_{x,n})
    \to (T_{\hat{\mathbf{e}}}, d_{\hat{\mathbf{e}}}, \mu_{\hat{\mathbf{e}}})
    \quad \text{as $n \to \infty$}
  \end{equation}
  in distribution with respect to the Gromov-Prokhorov metric.
\end{lemma}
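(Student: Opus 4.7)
We will follow the framework of \cite[Section~6.3.2]{Pow19} and use the characterization of Gromov--Prokhorov convergence for random metric measure spaces via convergence of distance matrix distributions for i.i.d.~samples (see \cite{GrePfaWin09}). Specifically, it will suffice to show that for every fixed $k\ge 1$, the random $k\times k$ matrix $D_n^H$ defined in \eqref{eq:D-mat-def} converges in distribution under $P_x[\,\cdot\,|N_n^+\neq\emptyset]$ to the matrix $(d_{\hat{\mathbf{e}}}(U^i,U^j))_{i,j\le k}$, where $U^1,\dots,U^k$ are i.i.d.~samples from $\mu_{\hat{\mathbf{e}}}$; this is the relevant object because $D_n^H$ is by construction the rescaled distance matrix of $k$ i.i.d.~$\mu_{x,n}$-samples $v_{\tau^1},\dots,v_{\tau^k}$.

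By Proposition~\ref{prop:S-H-mat}, $\norm{C_1 D_n^{\bar S}-D_n^H}\to 0$ in probability under this conditioning, so it will be enough to identify the limit of $C_1 D_n^{\bar S}$. To do so, we will relate $\bar S(v_{\tau^i}\wedge v_{\tau^j})$, which appears in the definition of $D_n^{\bar S}$, to the infimum of $\bar S$ over the DFS interval $[\tau^i\wedge\tau^j,\tau^i\vee\tau^j]$. A direct inspection of \eqref{eq:S-bar-def} shows that $\bar S$ is bounded below by $\bar S(v_{\tau^i}\wedge v_{\tau^j})$ on this interval, and it achieves the value $\bar S(v_{\tau^i}\wedge v_{\tau^j})$ plus a sum of at most $d-1$ terms $\chi(\varphi_w)$ at the transitions between successive children of the common ancestor; this excess is $O(1)$ in probability and hence negligible after division by $n$. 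Consequently, $(D_n^{\bar S})_{i,j}$ will agree up to $o(1)$ with $n^{-1}(\bar S_{\tau^i}+\bar S_{\tau^j}-2\inf_{u\in[\tau^i\wedge\tau^j,\tau^i\vee\tau^j]}\bar S_u)$.

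We then plan to apply Proposition~\ref{prop:conditioned-scaling-limit} with $n$ replaced by $n^2$ and $y=1/C_1$: under $P_x[\,\cdot\,|\sup_k\bar S_k\ge n/C_1]$, $(n^{-1}\bar S_{\lfloor n^2 t\rfloor})_{t\ge 0}\to \sigma \mathbf{e}^{\ge 1/(\sigma C_1)}$ in distribution on Skorokhod space, which multiplied by $C_1$ gives convergence to $\hat{\mathbf{e}}$ by the very definition of $\hat{\mathbf{e}}$. Since $|T|$ is the first return of $\bar S$ to $0$, we will also have $|T|/n^2\to\hat\tau$ jointly, so the normalised traversal times $\tau^i/n^2$ are asymptotically i.i.d.~uniform on $[0,\hat\tau]$ conditionally on $\hat\tau$. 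Applying the continuous mapping theorem to the map $(f,s_1,\dots,s_k)\mapsto (f(s_i)+f(s_j)-2\inf_{[s_i\wedge s_j,s_i\vee s_j]} f)_{i,j}$, almost surely continuous at $(\hat{\mathbf{e}},U^1,\dots,U^k)$ by continuity of $\hat{\mathbf{e}}$, will then yield $C_1 D_n^{\bar S}\to (d_{\hat{\mathbf{e}}}(U^i,U^j))_{i,j}$ in distribution, which is exactly what is needed.

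The main obstacle will be that Proposition~\ref{prop:conditioned-scaling-limit} uses the conditioning $\{\sup_k\bar S_k\ge n/C_1\}$, whereas the statement requires conditioning on $\{N_n^+\neq\emptyset\}=\{\sup_v H(v)\ge n\}$. The near-identity $\bar S(v)\approx H(v)/C_1$ established through Proposition~\ref{prop:|N_n^bad|/|N_n|-fraction} should imply that for every $\varepsilon>0$ the containments $\{\sup \bar S\ge (1+\varepsilon)n/C_1\}\subseteq \{N_n^+\neq\emptyset\}\subseteq\{\sup \bar S\ge (1-\varepsilon)n/C_1\}$ hold up to events of lower order probability; together with Theorem~\ref{thm:diameter}, which guarantees that all three events have probability of the same order, a sandwich argument using the continuity in $y$ of the law of $\mathbf{e}^{\ge y}$ should transfer the invariance principle to the desired conditioning and complete the proof.
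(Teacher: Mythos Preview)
Your overall route is the paper's: reduce to distance-matrix convergence via \cite{GrePfaWin09}, swap $D_n^H$ for $C_1 D_n^{\bar S}$ using Proposition~\ref{prop:S-H-mat}, and then invoke Proposition~\ref{prop:conditioned-scaling-limit}. Two minor remarks first: you should also cite Lemma~\ref{lem:GHP-tightness} for the relative-compactness half of the \cite{GrePfaWin09} criterion (distance-matrix convergence alone does not suffice); and your claim that the excess $\inf_{[\tau^i\wedge\tau^j,\tau^i\vee\tau^j]}\bar S-\bar S(v_{\tau^i}\wedge v_{\tau^j})$ is ``$O(1)$ in probability'' is not quite right, since $\chi$ is unbounded (cf.~\eqref{eq:crit-chi_bounds}). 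The correct statement is that this excess is $o(n)$ under the conditioning, which is what you need and which follows from the tightness of the field values.

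The substantive gap is in your treatment of the conditioning change. Proposition~\ref{prop:|N_n^bad|/|N_n|-fraction} is stated \emph{conditionally on} $\{N_n^+\neq\emptyset\}$; it yields the inclusion $\{N_n^+\neq\emptyset\}\subseteq\{\sup\bar S\ge(1-\varepsilon)n/C_1\}$ up to $o(1/n)$, but it says nothing about $\{\sup\bar S\ge(1+\varepsilon)n/C_1\}\setminus\{N_n^+\neq\emptyset\}$, so the reverse containment in your sandwich is not established. Moreover, Theorem~\ref{thm:diameter} only gives the asymptotic of $P_x[N_n^+\neq\emptyset]$; it tells you nothing about $P_x[\sup\bar S\ge yn]$, so your assertion that ``all three events have probability of the same order'' is unsupported. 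Without control on $P_x[\sup\bar S\ge yn]$ your sandwich cannot close: knowing $E_x[\psi\mid B\cap A_{1-\varepsilon}]$ is close to $E_x[\psi\mid B]$ does not relate it to $E_x[\psi\mid A_{1-\varepsilon}]$ unless you also know $P_x[B\mid A_{1-\varepsilon}]\to 1$.

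The paper fills this gap differently and more directly: rather than a sandwich, it computes $P_x[\sup_m\bar S_m\ge n/C_1]$ via Proposition~\ref{prop:joint-scaling-limit} and Brownian excursion theory, obtaining $P_x[\sup_m\bar S_m\ge n/C_1]\sim \chi(x)C_1/n$, which matches $P_x[N_n^+\neq\emptyset]$ \emph{exactly} (not merely up to a constant). Combined with $P_x[\sup\bar S\ge n/C_1\mid N_n^+\neq\emptyset]\to 1$ (which does follow from Proposition~\ref{prop:|N_n^bad|/|N_n|-fraction}), Bayes' formula then gives $P_x[N_n^+\neq\emptyset\mid \sup\bar S\ge n/C_1]\to 1$ as well, so the two conditionings become interchangeable with no $\varepsilon$-sandwich or continuity-in-$y$ argument needed.
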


\begin{proof}
  By Corollary~3.1 in \cite{GrePfaWin09}, the convergence
  \eqref{eq:GP-convergence} holds true if and only if
  \begin{enumerate}
    \item The family $\set{P_x^n}_{n \ge 0}$ is relatively compact in the
    space of probability measures on $\mathbb{X}$ (with respect to the
      Gromov-weak topology).
    \item For every function $\Psi: \mathbb{X} \to \mathbb{R}$ of the form,
    \begin{equation}
      \label{eq:Psi-def}
      \Psi((X, r, \mu))
      = \int \psi((r(x_i,x_j))_{1\le i < j \le k})
      \mu^{\otimes k}(\D(x_1, \dots, x_k)),
    \end{equation}
    where $\psi: [0, \infty)^{\binom{k}{2}} \to \mathbb{R}$ is a
    continuous and bounded function,
    $E^n_x[\Psi] \to E_{\hat{\mathbf{e}}}[\Psi]$ as $n \to \infty$.
  \end{enumerate}
  Since convergence in the Gromov-Hausdorff-Prokhorov sense implies
  convergence in the Gromov-weak sense, part (a) is implied by
  Lemma~\ref{lem:GHP-tightness} and only part (b) is left to be shown.

  To see (b), fix a function $\Psi$ as described in \eqref{eq:Psi-def}
  and recall the definition of $D^H_n$ from \eqref{eq:D-mat-def}. Note
  that $d_{x,n}(v,w) = n^{-1} (H(v) + H(w) - 2H(v \wedge w))$, and thus
  \begin{equation}
    \label{eq:P_x^n[psi]-eqiv}
    E^n_x[\Psi] = E_x\big[\psi(D^H_n) \big| N_n^+ \neq\emptyset\big].
  \end{equation}
  By Proposition~\ref{prop:S-H-mat},
  \begin{equation}
    \label{eq:P_x^n[psi]-conv1}
    \lim_{n\to \infty}
    \big(E_x\big[\psi(D^H_n) \big| N_n^+ \neq\emptyset \big]
      - E_x\big[\psi(D^{\bar{S}}_n / C_1^{-1} )
        \big| N_n^+ \neq\emptyset \big]\big) = 0.
  \end{equation}
  Assume for now that also
  \begin{equation}
    \label{eq:P_x^n[psi]-conv2}
    \lim_{n\to\infty} \big(
      E_x\big[\psi(D^{\bar{S}}_n/C_1^{-1}) \big| N_n^+ \neq\emptyset \big]
      -E_x\big[\psi(D^{\bar{S}}_n/C_1^{-1} ) \big|
        \sup_m \bar{S}_m \ge C_1^{-1} n \big] \big) = 0.
  \end{equation}
  By Proposition~\ref{prop:conditioned-scaling-limit},
  \(
    \lim_{n\to\infty} E_x[\psi(D^{\bar{S}}_n / C_1^{-1} )
      | \sup_m \bar{S}_m \ge C_1^{-1} n]
    = E_{\hat{\mathbf{e}}}[\Psi]
  \),
  and thus in combination with \eqref{eq:P_x^n[psi]-eqiv},
  \eqref{eq:P_x^n[psi]-conv1} and \eqref{eq:P_x^n[psi]-conv2}, it follows
  $\lim_{n \to \infty} E^n_x[\Psi] = E_{\hat{\mathbf{e}}}[\Psi]$,
  concluding the proof.

  It remains to prove \eqref{eq:P_x^n[psi]-conv2}. Since $\psi$ is
  bounded, it is enough to show that
  \begin{equation}
    \lim_{n\to\infty}
    P_x \big[N_n^+ \neq\emptyset
      \big| \sup_m \bar{S}_m \ge C_1^{-1} n \big] = 1
    \quad \text{and} \quad
    \lim_{n\to\infty}
    P_x \big[\sup_m \bar{S}_m \ge C_1^{-1} n
      \big| N_n^+ \neq\emptyset \big] = 1.
  \end{equation}
  Using that $P[A|B]=P[B|A]P[A]/P[B]$, it is enough to show that
  \begin{equation}
    \label{eq:GP-conv_lefttoshow}
    \lim_{n \to \infty}
    P_x[\sup_m \bar{S}_m \ge C_1^{-1} n | N_n^+ \neq\emptyset] = 1
    \text{\quad and \quad}
    \lim_{n\to\infty}
    \frac{P_x[\sup_m \bar{S}_m \ge C_1^{-1} n]}{P_x[N_n^+ \neq\emptyset]}
    = 1.
  \end{equation}
  Observe that for every $\delta > 0$,
  \begin{equation}
    \begin{split}
      P_x&\big[\sup_m \bar{S}_m \ge C_1^{-1} n
        \big| N_n^+ \neq\emptyset\big]
      \\&\ge P_x\big[\sup_m \bar{S}_m \ge C_1^{-1} n
        \big| |N_{\floor{n(1+\delta)}}^+|>0\big]
      \frac{P_x[|N_{\floor{n(1+\delta)}}^+|>0]}{P_x[N_n^+ \neq\emptyset]}.
    \end{split}
  \end{equation}
  By Proposition~\ref{prop:|N_n^bad|/|N_n|-fraction}, the first term in
  the product on the right-hand side converges to $1$ as $n \to \infty$,
  and the second term converges to $(1+\delta)^{-1}$ by
  Theorem~\ref{thm:diameter}. This shows the first convergence of
  \eqref{eq:GP-conv_lefttoshow}. By Theorem~\ref{thm:diameter},
  $P_x[N_n^+ \neq\emptyset] \sim \chi(x)C_1 n^{-1}$ as $n \to \infty$.
  Therefore, we are left to prove that
  \begin{equation}
    \label{eq:GP-conv_lefttoshow_2}
    \lim_{n\to\infty}
    \frac{P_x[\sup_m \bar{S}_m \ge C_1^{-1} n]}{\chi(x) C_1 n^{-1}} = 1
  \end{equation}
  to conclude the second limit of \eqref{eq:GP-conv_lefttoshow}. We will
  do this in a similar way as, e.g., in \cite[Section 1.4, p.~263]{Gal05}.

  Consider the depth-first traversal $(v_1, v_2, \dots)$ of the sequence
  of trees $(T^1, T^2, \dots)$. We set
  $S'_n = \sum_{w \in Y(v_n)} \chi(\varphi_w)$ and
  $S^i = \sup_n S'_n 1_{v_n \in T^i}$ for $i\ge 1$ (so that
    $\sup_n \bar{S}_n = S^1$). Recall that by
  Proposition~\ref{prop:joint-scaling-limit},
  \begin{equation}
    \lim_{n \to \infty}
    \Big(\frac{S'_{\floor{nt}}}{\sqrt{n}},
      \frac{\Lambda_{\floor{nt}}}{\sqrt{n}}\Big)_{t \ge 0}
    = \Big(\sigma |B_t|, \frac{\sigma}{\chi(x)}L_t^0\Big)_{t \ge 0},
  \end{equation}
  where the limit is in $P_x$-distribution with respect to the Skorokhod
  topology. Defining $\gamma_r = \inf\set{t \ge 0: L_t^0 > r}$ and
  $\tau_n = \inf\set{t \ge 0: n^{-1}\Lambda_{\floor{n^2 t}} > 1}$ and
  using Brownian scaling, this gives the joint convergence
  \begin{equation}
    \lim_{n\to\infty}
    \Big(\Big(\frac{1}{n} S'_{\floor{n^2 t}}\Big)_{t \ge 0}, \tau_n \Big)
    = \Big(\big(\sigma |B_t|\big)_{t \ge 0}, \gamma_{\chi(x)/\sigma}\Big),
  \end{equation}
  and therefore also
  \begin{equation}
    \lim_{n\to\infty}
    \Big(\frac{1}{n} S'_{\floor{n^2(t \wedge \tau_n)}} \Big)_{t \ge 0}
    = \big(\sigma |B_{t \wedge \gamma_{\chi(x)/\sigma}}|\big)_{t \ge 0}.
  \end{equation}
  From this we deduce that for every $y > 0$, as $n \to \infty$,
  \begin{equation}
    \label{eq:GP-convergence_step1}
    \lim_{n\to\infty} P_x\Big[\sup_{1 \le i \le n} S^i > ny \Big]
    = P_x\Big[\sup_{t \le \gamma_{\chi(x)/\sigma}} \sigma |B_t| > y\Big]
    = 1 - \exp\Big(-\frac{\chi(x)}{y}\Big),
  \end{equation}
  where the last equality is a consequence of excursion theory for
  Brownian motion (see e.g. Chapter XII in \cite{RevYor99}). By the
  independence of the trees $T^i$ (and thus the variables $S^i$), we also
  have that
  \begin{equation}
    \label{eq:GP-convergence_step2}
    P_x\Big[\sup_{1 \le i \le n} S^i > ny \Big]
    = 1 - \big(1- P_x\big[S^1 > ny\big]\big)^n
    = 1 - \Big(1- P_x\Big[\sup_{m} \bar{S}_m > ny\Big]\Big)^n.
  \end{equation}
  Combining \eqref{eq:GP-convergence_step1} and
  \eqref{eq:GP-convergence_step2} then gives
  \begin{equation}
    P_x\Big[\sup_{m} \bar{S}_m > ny\Big] \sim \frac{\chi(x)}{y}n^{-1}
    \quad \text{as $n \to \infty$},
  \end{equation}
  which by setting $y = C_1^{-1}$ concludes
  \eqref{eq:GP-conv_lefttoshow_2} and finishes the proof.
\end{proof}

We now present the proof of Theorem~\ref{thm:inv-principle}, which is
identical to that of Theorem~1.1 in \cite{Pow19}.

\begin{proof}[Proof of Theorem~\ref{thm:inv-principle}]
  Since convergence in the Gromov-Hausdorff-Prokhorov metric implies
  convergence in the Gromov-Prokhorov metric,
  Lemma~\ref{lem:GP-convergence} characterizes subsequential limits with
  respect to the Gromov-Hausdorff-Prokhorov topology. Hence, we obtain
  the convergence in distribution
  \begin{equation}
    (T_{x,n}, d_{x,n}, \mu_{x,n})
    \to (T_{\hat{\mathbf{e}}}, d_{\hat{\mathbf{e}}}, \mu_{\hat{\mathbf{e}}})
    \quad \text{as $n \to \infty$}
  \end{equation}
  with respect to the Gromov-Hausdorff-Prokhorov metric. Now, since
  Gromov-Hausdorff-Prokhorov convergence further implies convergence in
  the Gromov-Hausdorff sense, claim \eqref{eq:inv-princ_toshow} follows,
  completing the proof.
\end{proof}

\appendix
\section{\texorpdfstring{Properties of $\chi $}{Properties of chi}}
\label{app:proof-chi-lipsch}

We prove here that $\chi$ is Lipschitz continuous on $[h^*, \infty)$, as stated in
\eqref{eq:chi-Lipschitz}. The proof uses similar ideas as the proof of
Proposition~3.1 of \cite{AbaCer19}.

\begin{proof}[Proof of \eqref{eq:chi-Lipschitz}]
  We will show that the derivative $\chi'$ is bounded on $[h^*, \infty)$.
  Recall that for $x\ge h^*$ it holds
  $\chi(x) = d\int_{[h^*, \infty)} \chi(z) \rho_Y(z - x/d)\D z$, where
  $\rho_Y$ is the density of the Gaussian random variable $Y$.
  Differentiating this expression, using an integration by parts and the
  fact that $\chi'(z)=0$ for $z< h^*$, we obtain that for $x\ge h^*$
  \begin{equation}
    \begin{split}
      \label{eq:chi'-1}
      \chi'(x)
      & = - \int_{h^*}^\infty \chi(z)\rho_Y'\Big(z - \frac{x}{d}\Big)\D z
      \\ & = \int_{[h^*, \infty)}\chi'(z)\rho_Y\Big(z - \frac xd\Big)\D z
      + \chi(h^*)\rho_Y\Big(h^* - \frac xd\Big)
      \\ & \eqdef E_Y\Big[\chi'\Big(Y + \frac xd\Big)\Big] + e(x).
    \end{split}
  \end{equation}
  Defining $e(x) = 0$ for $x< h^*$, using that $\chi$ is increasing and
  thus $\chi '\ge 0$, this equality can be extended to inequality
  \begin{equation}
    \label{eq:chi-Lipsch-simple}
    \chi'(x) \le E_Y\Big[\chi'\Big(Y + \frac xd\Big)\Big] + e(x)
    \qquad \text{for all } x\in \mathbb R.
  \end{equation}
  Similarly as in the proof of Proposition~3.1 in \cite{AbaCer19}, we
  obtain an upper bound on $\chi'$ by iterating
  \eqref{eq:chi-Lipsch-simple} an appropriate amount of times. For this,
  let $Y_1, \dots, Y_k$ be independent Gaussian random variables having
  the same distribution as $Y$, and define
  $Z_i = Y_1/d^{i-1} + Y_2/d^{i-2} + \dots + Y_i$. Note that $Z_i$ is a
  centred Gaussian random variable whose variance, denoted $\sigma^2_{i}$,
  is bounded uniformly in $i$. Then, by applying
  \eqref{eq:chi-Lipsch-simple} $k$-times,
  \begin{equation}
    \begin{split}
      \label{eq:chi'-bound}
      \chi'(x)
      &\le E_{Y_1}\Big[
        E_{Y_2}\Big[
          \chi'\Big(Y_2 + \frac{Y_1 + x/d}{d}\Big)
          + e(Y_1 + x/d) + e(x)\Big]\Big]
      \\ &\le \dots \le
      E\Big[\chi'\Big(\frac{x}{d^k} + \frac{Y_1}{d^{k-1}}
          + \frac{Y_2}{d^{k-2}} + \dots + Y_k\Big)\Big]
      + E\Big[\sum_{i=0}^{k-1} e\Big(\frac{x}{d^i} + Z_i\Big)\Big]
      \\ &\le E\Big[\chi'\Big(\frac{x}{d^k} + Z_k\Big)\Big]
      + \sum_{i=0}^{k-1} E\Big[e\Big(\frac{x}{d^i} + Z_i\Big)\Big].
    \end{split}
  \end{equation}
  We now choose $k=k(x) = \floor{\log_d(x)}$. Then the boundedness of the
  first summand can be proved exactly in the same way as the boundedness
  of $E[\chi(x/d^k + Z_k)]$ in the proof of Proposition~3.1 in
  \cite{AbaCer19} (note that $(d-1)$ in \cite{AbaCer19} corresponds to $d$
    in our setting), one only needs to verify that $\chi' \in L^2(\nu)$.
  This can be easily proved using \eqref{eq:chi'-1} and the facts
  $\rho_Y'(x) = -c x \rho_Y(x)$, $L[\chi] = \chi$ and
  \eqref{eq:crit-chi_bounds} which imply
  $|\chi'(x)| \le c L[\chi^2](x) + c x^2$ for $x \ge h^*$. From this
  $\chi'\in L^2(\nu)$ follows from
  Proposition~\ref{prop:hypercontractivity}(b).

  To bound the second summand on the right hand side of
  \eqref{eq:chi'-bound}, we first observe that by the choice of $k(x)$ it
  holds that $x_0 := x/d^{k(x)} \in [1,d]$. With this notation, by
  rearranging the sum,
  \begin{equation}
    \sum_{i=0}^{k(x)-1} E[e(x/d^i + Z_i)]
    = \sum_{i=0}^{k(x)-1} E[e(x_0 d^i + Z_{k(x)-i})].
  \end{equation}
  Moreover, it is easy to see that for some constants $c,c'$
  \begin{equation}
    0\le e(x) := 1_{x\ge h^*} \chi (h^*) \rho_Y\Big(h^* - \frac xd\Big)
    \le c' 1_{x\ge h^*} e^{-c x}.
  \end{equation}
  Therefore,
  \begin{equation}
    \sum_{i=0}^{k(x)-1} E[e(x_0 d^i + Z_{k(x)-i})]
    \le
    c' \sum_{i=0}^\infty E\big[e^{-c(x_0d^i + Z_{k(x)-i})}\big]
    \le
    c' \sum_{i=0}^\infty e^{-c x_0 d^i} e^{c^2 \sigma_{k(x)-i}^2/2},
  \end{equation}
  which, since $\sigma_i^2$ are bounded and $x_0 \in [1,d]$, is clearly
  bounded uniformly in $x\ge h^*$.
\end{proof}

\section{Proof of the many-to-few formulas}
\label{sec:B}

We give here a proof of Proposition~\ref{pro:many-to-few_model}.
Throughout the proof we use the notation from Section~\ref{ss:spined-bp}.
We start by introducing a useful standard martingale related to
$\varphi$ when viewed as branching process.

\begin{lemma}
  \label{lem:martingale_zeta}
  For $x\in \mathbb R$, $n\in \mathbb N$, let
  \begin{equation}
    \zeta(x, n) \defeq d^n\chi(x) \qquad \text{and} \qquad
    \zeta_n  \defeq \zeta(\xi_n^1, n).
  \end{equation}
  Then, for every $k\in \mathbb N$, $(\zeta_n)_{n \in \mathbb{N}}$ is a
  $P_x^k$-martingale with respect to $\mathcal F_n^k$.
\end{lemma}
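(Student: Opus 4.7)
The plan is to verify $E_x^k[\zeta_{n+1} \mid \mathcal{F}_n^k] = \zeta_n$ by reducing it to the eigenfunction identity $L[\chi] = \chi$ from Proposition~\ref{prop:L_h-chi-connection}. Since $d^{n+1}$ is deterministic, this is equivalent to the one-step identity
\begin{equation*}
  E_x^k\big[\chi(\xi_{n+1}^1) \,\big|\, \mathcal{F}_n^k\big]
  = d^{-1}\chi(\xi_n^1).
\end{equation*}

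To set this up, I would unpack the conditional law of $\xi_{n+1}^1$ given $\mathcal{F}_n^k$. By the construction of $P_x^k$ recalled in Section~\ref{ss:spined-bp}, both the spine position $\sigma_n^1$ and its type $\xi_n^1 = \varphi_{\sigma_n^1}$ are $\mathcal{F}_n^k$-measurable, and conditionally on $\mathcal{F}_n^k$ the mark moves to one of the $d$ children of $\sigma_n^1$ independently and uniformly at random, while each child $c$ has type $\varphi_c$ distributed as $d^{-1}\xi_n^1 + Y$ with $Y \sim \mathcal{N}(0, \sigma_Y^2)$ by \eqref{eq:BP-representation}. Crucially, the cemetery replacement in the definition of $P_x^k$ is invisible to $\chi$, because $\chi$ vanishes on $(-\infty, h^*)$ and at $\dagger$ by the standard extension, so one may as well work with the unmodified GFF values along the spine. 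Averaging first over the choice of child and then over the field yields
\begin{equation*}
  E_x^k\big[\chi(\xi_{n+1}^1) \,\big|\, \mathcal{F}_n^k\big]
  = E_Y\big[\chi\big(d^{-1}\xi_n^1 + Y\big)\big]
  = d^{-1}L[\chi](\xi_n^1)
  = d^{-1}\chi(\xi_n^1),
\end{equation*}
where the middle equality is the definition \eqref{eq:L_h} of $L$ applied at $\xi_n^1$ (the indicator $1_{[h^*,\infty)}$ inside $L$ costs nothing since $\chi$ already vanishes below $h^*$), and the last equality is $L[\chi] = \chi$. Multiplying by $d^{n+1}$ gives the martingale identity.

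Integrability of $\zeta_n$, which is needed for the conditional expectation to be well defined, follows from the linear growth bound $\chi(x) \leq c_2 x$ in \eqref{eq:crit-chi_bounds} together with the Gaussianity of $\xi_n^1$ under $P_x^k$, by a short induction using the one-step distributional description above. I do not anticipate any real obstacle: the content of the lemma is precisely the fact that $\chi$ is a right $1$-eigenfunction of $L$, reinterpreted as a martingale property of the spine process, and the factor $d^n$ in $\zeta_n$ is exactly what is needed so that the $d^{-1}$ produced by uniform averaging over the $d$ children cancels against the growth of $d^n$.
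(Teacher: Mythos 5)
Your proof is correct and takes essentially the same route as the paper: both reduce the martingale identity to the eigenfunction relation $L[\chi]=\chi$ by conditioning on $\mathcal F_{n}^k$, averaging uniformly over the $d$ children of $\sigma_n^1$, and then invoking the Gaussian transition rule for the field. The paper packages the child-sum via the identity \eqref{eq:L_h-BP1}, while you expand it directly from \eqref{eq:BP-representation} and the definition \eqref{eq:L_h}; these are the same computation. Your added remark on integrability is a harmless extra that the paper leaves implicit.
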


\begin{proof}
  By definition of $\zeta_n$, for $n\ge 1$,
  \begin{equation}
    d^{-(n-1)}  E_x^k [\zeta_n \mid \mathcal F_{n-1}^k]
    =  d E_x^k [ \chi (\varphi_{\sigma_n^1}) \mid \mathcal F_{n-1}^k],
  \end{equation}
  where $\sigma_n^1$ is the vertex carrying the first spine mark at level $n$.
  Conditionally on $\mathcal F_{n-1}^k$,
  $\sigma_n^1$ is uniformly distributed on $\desc(\sigma_{n-1}^1)$ and
  independent of $\varphi$. Therefore, this
  equals
  \begin{equation}
    \sum_{v\in \desc(\sigma_{n-1}^1)}
    E_x^k[\chi (\varphi_v) \mid \mathcal F_{n-1}^k] =
    L[\chi](\varphi_{\sigma^1_{n-1}}) = \chi (\varphi_{\sigma^1_{n-1}})
    = \chi (\xi^1_{n-1}).
  \end{equation}
  where in for the first equality we used \eqref{eq:L_h-BP1} and the fact
  that $\chi (x) = 0$ on $(-\infty,h^*)$, and where the second equality
  follows from \eqref{eq:chieigenfunction}. The martingale property of
  $\zeta_n$ then follows directly from this computation.
\end{proof}

Due to Lemma~\ref{lem:martingale_zeta}, Lemma~8 of \cite{HarRob17} can
directly be applied to our process. We restate it here for reader's
convenience, with very minor adaptations coming from the fact that in our
process every node has always $d$ descendants (some of them might be in
  the cemetery state).

\begin{lemma}[Lemma~8 in \cite{HarRob17}]
  \label{lem:many-to-few_general}
  For any $k\geq 1$, let $Y$ be a $\F_n^k$-measurable random variable
  which can be written as
  \begin{equation}
    Y = \sum_{v_1, \dots, v_k \in S_n^+} Y(v_1, \dots, v_k)
    1_{\set{\sigma_n^1=v_1, \dots, \sigma_n^k=v_k}},
  \end{equation}
  where, for every $v_1, \dots, v_k \in S_n^+$, $Y(v_1, \dots, v_k)$ is a
  $\F_n$-measurable random variable. Then
  \begin{equation}
    \label{eq:many-to-few_general}
    E_x\Big[\sum_{v_1, \dots, v_k \in N^+_n} Y(v_1, \dots, v_k)\Big]
    = \Q_x^k\Big[ Y
      \prod_{v \in \skel(n)\setminus\set{o}}
      \frac{\zeta(\varphi_{p(v)}, |v|-1)}{\zeta(\varphi_v, |v|)}d^{l_{p(v)}}
      \Big].
  \end{equation}
\end{lemma}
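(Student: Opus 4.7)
The plan is to derive the identity as a many-to-few change of measure between $P_x^k$ and $Q_x^k$, following the strategy of Lemma~8 in \cite{HarRob17}. The martingale $\zeta_n = d^n \chi(\xi_n^1)$ from Lemma~\ref{lem:martingale_zeta} sits at the heart of the construction, and the argument falls naturally into three steps.

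First, I would rewrite the left-hand side as an expectation under $P_x^k$. Since under $P_x^k$ the $k$ spine marks are iid uniform random walks independent of the field, with $P_x^k[\sigma_n^i = v_i \text{ for all } i] = d^{-nk}$, decomposing $Y$ according to the spine endpoints and summing yields
\begin{equation*}
E_x\Big[\sum_{v_1,\dots,v_k \in N_n^+} Y(v_1,\dots,v_k)\Big] = d^{nk}\, E_x^k\big[Y \cdot 1_{\{\sigma_n^i \in N_n^+ \text{ for all }i\}}\big],
\end{equation*}
using the convention that $Y(v_1,\dots,v_k)$ vanishes whenever some $v_i \notin N_n^+$ (which is natural since the underlying field variables are~$\dagger$ in that case).

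Second, I would perform the change of measure $P_x^k \to Q_x^k$ on $\mathcal{F}_n^k$. Because the mark dynamics agree under both measures and the field transitions at unmarked vertices also agree, the Radon--Nikodym derivative factorises along the skeleton edges. At each $v \in \skel(n)\setminus\{o\}$, the density ratio of the standard transition $\rho_Y(\cdot - \varphi_{p(v)}/d)$ under $P_x^k$ to the biased kernel $\mathcal{K}(\varphi_{p(v)},\cdot)$ under $Q_x^k$ equals $\chi(\varphi_{p(v)})/(d\chi(\varphi_v)) = \zeta(\varphi_{p(v)}, |v|-1)/\zeta(\varphi_v, |v|)$. The indicator $1_{\{\sigma_n^i \in N_n^+\}}$ is automatic under $Q_x^k$ (whose support forces $\varphi_v \ge h^*$ at marked vertices), so this step produces the core density product appearing in the formula.

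Third, the prefactor $d^{nk}$ has to be absorbed into the product as $\prod_{v \in \skel(n)\setminus\{o\}} d^{l_{p(v)}}$. This combinatorial reorganisation reflects the uniform-mark probability $d^{-l_{p(v)}}$ that $l_{p(v)}$ marks at $p(v)$ all traverse the edge $(p(v),v)$; inverting and grouping by parent vertex produces precisely $d^{l_{p(v)}}$. Carrying out this reshuffling rigorously is the main obstacle, and I would handle it by induction on $n$: the inductive step conditions on $\mathcal{F}_n^k$, uses the Markov description~\eqref{eq:Q-BP} of $Q_x^k$ to decompose the one-step transitions between generations $n$ and $n+1$, and separates spine-edge contributions via Fubini. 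This is the multi-spine bookkeeping carried out in Lemma~8 of \cite{HarRob17}, specialised here to the single-type martingale~$\zeta$.
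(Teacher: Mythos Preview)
The paper does not prove this lemma; it is cited directly as Lemma~8 of \cite{HarRob17}, which applies once the martingale hypothesis is supplied by Lemma~\ref{lem:martingale_zeta}. So there is no in-paper proof to compare against, and your outline is in fact the standard Harris--Roberts route.

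Steps~1 and~2 are fine. Step~3 has a genuine error: the identity $d^{nk}=\prod_{v\in\skel(n)\setminus\{o\}} d^{l_{p(v)}}$ is false. Take $k=2$, $n=1$, with the two marks going to distinct children of $o$: then $\skel(1)\setminus\{o\}$ consists of two vertices, each with $p(v)=o$ and hence $l_{p(v)}=l_o=2$, so the product is $d^4$, not $d^{nk}=d^2$. What \emph{is} always true is $\sum_{v\in\skel(n)\setminus\{o\}} l_v = nk$ (the $k$ marks are all present at each of the $n$ levels $1,\dots,n$), giving $\prod_v d^{l_v}=d^{nk}$. Your verbal justification is also off: not all $l_{p(v)}$ marks at $p(v)$ traverse the edge to $v$; only $l_v$ of them do. A direct check of both sides of \eqref{eq:many-to-few_general} in the $k=2$, $n=1$ case --- and the paper's own telescoping in the proof of Proposition~\ref{pro:many-to-few_model}, which produces the factor $d^s$ --- is consistent with exponent $l_v$ rather than $l_{p(v)}$; with that correction the combinatorics of Step~3 goes through and your sketch is complete.
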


We are now ready to give the proof of
Proposition~\ref{pro:many-to-few_model}.

\begin{proof}[Proof of Proposition~\ref{pro:many-to-few_model}]
  Statement \eqref{eq:many-to-few_model1} follows directly from
  Lemma~\ref{lem:many-to-few_general} with $k=1$ and
  $Y(v_1) = f(\varphi_{v_1})$. Indeed, since $k=1$, there is only one mark
  on every level and thus $Y = f(\xi_n^1)$,
  $\skel(n)\setminus \set{o} = \set{\sigma_1^1, \dots, \sigma_n^1}$, and
  $l_{p(v)} = 1$ for all $v \in \skel(n)\setminus\set{o}$. Therefore, by
  \eqref{eq:many-to-few_general}
  \begin{equation}
    \begin{split}
      E_x\Big[\sum_{v\in N_n^+} f(\varphi_v)\Big]
      &= \Q_x\bigg[f(\xi_n) \prod_{v\in \{\sigma_1,\dots,\sigma_n\}}
        \Big(\frac{\zeta(\varphi_{p(v)}, |v|-1)}{\zeta(\varphi_v, |v|)}
          d\Big)\bigg] \\
      &= \Q_x\bigg[f(\xi_n) \prod_{i=1}^n
        \Big(\frac{\chi(\xi_{i-1})d^{i-1}}{\chi(\xi_i)d^i} d\Big)\bigg]
      = \Q_x\bigg[f(\xi_n^1)
        \frac{\chi(x)}{\chi(\xi_n)}\bigg],
    \end{split}
  \end{equation}
  as claimed in \eqref{eq:many-to-few_model1}.

  Similarly, statement \eqref{eq:many-to-few_model2} is a consequence
  of Lemma~\ref{lem:many-to-few_general} with $k=2$. We set
  $Y(v_1,v_2)= f(\varphi_{v_1})g(\varphi_{v_2})$. Then
  $Y = f(\xi_n^1)g(\xi_n^2)$ and by \eqref{eq:many-to-few_general},
  \begin{equation}
    \label{eq:many-to-two_init}
    E_x\Big[\sum_{v,w\in N^+_n} f(\varphi_v)  g(\varphi_w)\Big]
    = \Q_x^2\bigg[f(\xi_{n}^1)g(\xi_n^2)
      \prod_{v \in \skel(n)\setminus\set{o}}
      \frac{\zeta(\varphi_{p(v)}, \abs{v}-1)}{\zeta(\varphi_v, \abs{v})}
      d^{l_{p(v)}}\bigg].
  \end{equation}
  To consider the possible structures of $\skel(n)\setminus\set{o}$,
  set $s = \max\set{k : \sigma_k^1 = \sigma_k^2}$ to be the last
  time where the two spines agree. Note that due to the dynamics of marks
  under~$Q^2_x$,
  \begin{equation}
    \Q_x^2[s = k] = \begin{cases}
      (d-1)d^{-(k+1)},\quad&\text{if } k \in \{0,\dots, n-1\},\\
      d^{-n},& \text{if } k = n,
    \end{cases}
  \end{equation}
  and, by a simple computation,
  \begin{equation}
    \begin{split}
      \prod_{v \in \skel(n)\setminus \set{o}}
      \frac{\zeta(\varphi_{p(v)}, \abs{v}-1)}
      {\zeta(\varphi_v, \abs{v})}d^{l_{p(v)}}
      &= \prod_{v \in \skel(n)\setminus \set{o}}
      \left(\frac{\chi(\varphi_{p(v)})}{\chi(\varphi_v)}
        \frac{1}{d}\right) d^{l_{p(v)}}
      = \frac{\chi(\xi^1_s)\chi(x)}{\chi(\xi_n^1)\chi(\xi_n^2)}d^s.
    \end{split}
  \end{equation}
  Therefore, \eqref{eq:many-to-two_init} can be written as
  \begin{equation}
    \begin{split}
      \label{eq:many-to-two_2}
      E_x&\bigg[\sum_{v\in N_n^+} f(\varphi_v) \sum_{v \in N_n^+}
        g(\varphi_v)\bigg]
      = \sum_{k=0}^n d^k \Q_x^2[s=k]\Q_x^2\bigg[
        f(\xi_n^1) g(\xi_n^2)
        \frac{\chi(\xi^1_k)\chi(x)}{\chi(\xi_n^1)\chi(\xi_n^2)}
        \bigg| s= k\bigg] \\
      &= \frac{d-1}{d} \sum_{k=0}^{n-1} \Q_x^2\bigg[
        \frac{f(\xi_n^1)}{\chi(\xi_n^1)}
        \frac{g(\xi_n^2)}{\chi(\xi_n^2)}\chi(\xi^1_s)\chi(x)
        \bigg| s= k\bigg]
      \\&\quad+ \Q_x^2\bigg[
        f(\xi_n^1)g(\xi_n^1)\frac{\chi(x)}{\chi(\xi^1_n)}
        \bigg| s= n\bigg].
    \end{split}
  \end{equation}
  By construction, under $Q^2_x$, conditional on $s=k$, for times
  $i=1, \dots, k$ the processes $\xi_i^1$ and $\xi_i^2$ are Markov chains
  which follow the same trajectory and have the same dynamics as $\xi_i$
  under $Q^1_x$. Further, for later times $i=k+1,\dots, n$, they are
  independent Markov chains distributed according to $Q_{\xi_k^1}$.
  Therefore,
  \begin{equation}
    \Q_x^2\Big[\frac{f(\xi_n^1)}{\chi(\xi_n^1)}
      \frac{g(\xi_n^2)}{\chi(\xi_n^2)}\chi(\xi^1_s)\chi(x)\Big| s= k\Big]
    = \chi(x) Q_x\bigg[\chi (\xi_k)
      \Q_{\xi_k^1}^2\Big[\frac{f(\xi_{n-k}^1)}{\chi(\xi_{n-k}^1)}\Big]
      \Q_{\xi_k^1}^2\Big[\frac{g(\xi_{n-k}^2)}{\chi(\xi_{n-k}^2)}\Big]
      \bigg],
  \end{equation}
  and similarly
  \begin{equation}
    \label{eq:end}
    \Q_x^2\bigg[
      f(\xi_n^1)g(\xi_n^1)\frac{\chi(x)}{\chi(\xi^1_n)}
      \bigg| s= n\bigg] =
    \chi(x) Q_x\Big[\frac{f(\xi_n)g(\xi_n)}{\chi
        (\xi_n)}\Big].
  \end{equation}
  Combining \eqref{eq:many-to-two_2}--\eqref{eq:end} directly implies
  \eqref{eq:many-to-few_model2}.
\end{proof}

\bibliographystyle{jcamsalpha}
\bibliography{gffinvprinc}

\def\arxiv#1{Preprint, available at \href{http://arxiv.org/abs/#1}{arXiv:#1}}
\providecommand{\bysame}{\leavevmode\hbox to3em{\hrulefill}\thinspace}
\providecommand\MR{}
\renewcommand\MR[1]{\relax\ifhmode\unskip\spacefactor3000
\space\fi \MRhref{#1}{#1}}
\providecommand\MRhref{}
\renewcommand{\MRhref}[2]%
{\href{http://www.ams.org/mathscinet-getitem?mr=#1}{MR#2}}
\providecommand{\href}[2]{#2}
\begin{thebibliography}{DCGRS23}

\bibitem[A{\v C}20]{AbaCer19}
Angelo Ab\"{a}cherli and Ji{\v r}{\'\i} {\v C}ern{\'y}, \emph{Level-set
  percolation of the {G}aussian free field on regular graphs {I}: regular
  trees}, Electron. J. Probab. \textbf{25} (2020), Paper No. 65, 24.
  \MR{4115734}

\bibitem[ADH13]{AbDeHo13}
Romain Abraham, Jean-Fran\c{c}ois Delmas, and Patrick Hoscheit, \emph{A note on
  the {G}romov-{H}ausdorff-{P}rokhorov distance between (locally) compact
  metric measure spaces}, Electron. J. Probab. \textbf{18} (2013), no. 14, 21.
  \MR{3035742}

\bibitem[Ald91]{Ald91}
David Aldous, \emph{The continuum random tree. {I}}, Ann. Probab. \textbf{19}
  (1991), no.~1, 1--28. \MR{1085326}

\bibitem[BDIM23]{BDIM23}
Dariusz Buraczewski, Congzao Dong, Alexander Iksanov, and Alexander Marynych,
  \emph{Critical branching processes in a sparse random environment}, Mod.
  Stoch. Theory Appl. \textbf{10} (2023), no.~4, 397--411. \MR{4655407}

\bibitem[BFRS24]{BFS24arXiv}
Florin Boenkost, Félix Foutel-Rodier, and Emmanuel Schertzer, \emph{The
  genealogy of nearly critical branching processes in varying environment},
  \arxiv{2207.11612}, 2024.

\bibitem[BLM87]{BriLebMae87}
Jean Bricmont, Joel~L. Lebowitz, and Christian Maes, \emph{Percolation in
  strongly correlated systems: the massless {G}aussian field}, J. Statist.
  Phys. \textbf{48} (1987), no.~5-6, 1249--1268. \MR{914444}

\bibitem[CD24]{CaiDin24arXiv}
Zhenhao Cai and Jian Ding, \emph{One-arm probabilities for metric graph
  gaussian free fields below and at the critical dimension},
  \arxiv{2406.02397}, 2024.

\bibitem[CD25]{CD25}
Zhenhao Cai and Jian Ding, \emph{One-arm exponent of critical level-set for
  metric graph {G}aussian free field in high dimensions}, Probability Theory
  and Related Fields \textbf{191} (2025), no.~3, 1035--1120. \MR{4898098}

\bibitem[CKKM24]{CKM24}
Guillaume Conchon-Kerjan, Daniel Kious, and C\'ecile Mailler, \emph{Scaling
  limit of critical random trees in random environment}, Electron. J. Probab.
  \textbf{29} (2024), Paper No. 112, 53. \MR{4779872}

\bibitem[{\v C}L25]{CerLoc23}
Ji{\v r}{\'i} {\v C}ern{\'y} and Ramon Locher, \emph{Critical and near-critical
  level-set percolation of the {G}aussian free field on regular trees}, 2025,
  pp.~746--767. \MR{4863059}

\bibitem[CN20]{ChiNit20}
Alberto Chiarini and Maximilian Nitzschner, \emph{Entropic repulsion for the
  {G}aussian free field conditioned on disconnection by level-sets}, Probab.
  Theory Related Fields \textbf{177} (2020), no.~1-2, 525--575. \MR{4095021}

\bibitem[CR90]{ChaRou90}
B.~Chauvin and A.~Rouault, \emph{Supercritical branching {B}rownian motion and
  {K}-{P}-{P} equation in the critical speed-area}, Math. Nachr. \textbf{149}
  (1990), 41--59. \MR{1124793}

\bibitem[CTJP24]{CJP24}
Natalia Cardona-Tob\'on, Arturo Jaramillo, and Sandra Palau, \emph{Rates on
  {Y}aglom's limit for {G}alton-{W}atson processes in a varying environment},
  ALEA Lat. Am. J. Probab. Math. Stat. \textbf{21} (2024), no.~1, 1--23.
  \MR{4703767}

\bibitem[DCGRS23]{DGRS23}
Hugo Duminil-Copin, Subhajit Goswami, Pierre-Fran\c{c}ois Rodriguez, and Franco
  Severo, \emph{Equality of critical parameters for percolation of {G}aussian
  free field level sets}, Duke Math. J. \textbf{172} (2023), no.~5, 839--913.
  \MR{4568695}

\bibitem[DLG02]{DLG02}
Thomas Duquesne and Jean-Fran\c{c}ois Le~Gall, \emph{Random trees, {L\'evy}
  processes and spatial branching processes}, Ast\'erisque, no. 281,
  Soci\'et\'e math\'ematique de France, 2002 (en). \MR{1954248}

\bibitem[DPR18]{DrePreRod18}
Alexander Drewitz, Alexis Pr\'{e}vost, and Pierre-Fran\c{c}cois Rodriguez,
  \emph{The sign clusters of the massless {G}aussian free field percolate on
  {$\mathbb{Z}^d, d \geqslant 3$} (and more)}, Comm. Math. Phys. \textbf{362}
  (2018), no.~2, 513--546. \MR{3843421}

\bibitem[DPR25]{DPR25}
Alexander Drewitz, Alexis Pr{\'e}vost, and Pierre-Fran{\c{c}}ois Rodriguez,
  \emph{Critical one-arm probability for the metric {G}aussian free field in
  low dimensions}, Probability Theory and Related Fields (2025).

\bibitem[dR17]{Rap17}
Lo\"{\i}c de~Raph\'{e}lis, \emph{Scaling limit of multitype {G}alton-{W}atson
  trees with infinitely many types}, Ann. Inst. Henri Poincar\'{e} Probab.
  Stat. \textbf{53} (2017), no.~1, 200--225. \MR{3606739}

\bibitem[DRS14]{DreRatSap14}
Alexander Drewitz, Bal\'{a}zs R\'{a}th, and Art\"{e}m Sapozhnikov, \emph{On
  chemical distances and shape theorems in percolation models with long-range
  correlations}, J. Math. Phys. \textbf{55} (2014), no.~8, 083307, 30.
  \MR{3390739}

\bibitem[Dur19]{Durrett19}
Rick Durrett, \emph{Probability---theory and examples}, fifth ed., Cambridge
  Series in Statistical and Probabilistic Mathematics, vol.~49, Cambridge
  University Press, Cambridge, 2019. \MR{3930614}

\bibitem[Fel71]{Fel71}
William Feller, \emph{An introduction to probability theory and its
  applications. {V}ol. {II}.}, Second edition, John Wiley \& Sons Inc., New
  York, 1971. \MR{0270403}

\bibitem[GLLP22]{GLL22}
Ion Grama, Ronan Lauvergnat, and \'Emile Le~Page, \emph{Limit theorems for
  critical branching processes in a finite-state-space {M}arkovian
  environment}, Adv. in Appl. Probab. \textbf{54} (2022), no.~1, 111--140.
  \MR{4397862}

\bibitem[GPW09]{GrePfaWin09}
Andreas Greven, Peter Pfaffelhuber, and Anita Winter, \emph{Convergence in
  distribution of random metric measure spaces ({$\Lambda$}-coalescent measure
  trees)}, Probab. Theory Related Fields \textbf{145} (2009), no.~1-2,
  285--322. \MR{2520129}

\bibitem[Gri99]{Gri99}
Geoffrey Grimmett, \emph{Percolation}, Grundlehren der Mathematischen
  Wissenschaften, vol. 321, Springer-Verlag, Berlin, 1999. \MR{1707339}

\bibitem[GRS22]{GosRodSev22}
Subhajit Goswami, Pierre-Fran\c{c}ois Rodriguez, and Franco Severo, \emph{On
  the radius of {G}aussian free field excursion clusters}, Ann. Probab.
  \textbf{50} (2022), no.~5, 1675--1724. \MR{4474499}

\bibitem[HH07]{HarHar07}
J.~W. Harris and S.~C. Harris, \emph{Survival probabilities for branching
  {B}rownian motion with absorption}, Electron. Comm. Probab. \textbf{12}
  (2007), 81--92. \MR{2300218}

\bibitem[HHKW22]{HHK22}
Simon~C. Harris, Emma Horton, Andreas~E. Kyprianou, and Minmin Wang,
  \emph{Yaglom limit for critical nonlocal branching {M}arkov processes}, Ann.
  Probab. \textbf{50} (2022), no.~6, 2373--2408. \MR{4499840}

\bibitem[HR17]{HarRob17}
Simon~C. Harris and Matthew~I. Roberts, \emph{The many-to-few lemma and
  multiple spines}, Ann. Inst. Henri Poincar\'{e} Probab. Stat. \textbf{53}
  (2017), no.~1, 226--242. \MR{3606740}

\bibitem[Kol38]{Kol38}
Andrey~Nikolaevich Kolmogorov, \emph{Zur {L}{\"o}sung einer biologischen
  {A}ufgabe}, Comm. Math. Mech. Chebyshev Univ. Tomsk \textbf{2} (1938), no.~1,
  1--12.

\bibitem[LG05]{Gal05}
Jean-Fran\c{c}ois Le~Gall, \emph{Random trees and applications}, Probab. Surv.
  \textbf{2} (2005), 245--311. \MR{2203728}

\bibitem[LS86]{LebSal86}
Joel~L. Lebowitz and H.~Saleur, \emph{Percolation in strongly correlated
  systems}, Phys. A \textbf{138} (1986), no.~1-2, 194--205. \MR{865243}

\bibitem[Mie08]{Mie08}
Gr\'{e}gory Miermont, \emph{Invariance principles for spatial multitype
  {G}alton-{W}atson trees}, Ann. Inst. Henri Poincar\'{e} Probab. Stat.
  \textbf{44} (2008), no.~6, 1128--1161. \MR{2469338}

\bibitem[Mod71]{Mode71}
Charles~J. Mode, \emph{Multitype branching processes. {T}heory and
  applications}, Modern Analytic and Computational Methods in Science and
  Mathematics, No. 34, American Elsevier Publishing Co., Inc., New York, 1971.
  \MR{0279901}

\bibitem[MS83]{MolSte83}
S.~A. Molchanov and A.~K. Stepanov, \emph{Percolation in random fields. {I}},
  Teoret. Mat. Fiz. \textbf{55} (1983), no.~2, 246--256. \MR{734878}

\bibitem[MS22]{MaiSch22}
Pascal Maillard and Jason Schweinsberg, \emph{Yaglom-type limit theorems for
  branching {B}rownian motion with absorption}, Ann. H. Lebesgue \textbf{5}
  (2022), 921--985. \MR{4526243}

\bibitem[Pow19]{Pow19}
Ellen Powell, \emph{An invariance principle for branching diffusions in bounded
  domains}, Probab. Theory Related Fields \textbf{173} (2019), no.~3-4,
  999--1062. \MR{3936150}

\bibitem[PR15]{PopRat15}
Serguei Popov and Bal\'{a}zs R\'{a}th, \emph{On decoupling inequalities and
  percolation of excursion sets of the {G}aussian free field}, J. Stat. Phys.
  \textbf{159} (2015), no.~2, 312--320. \MR{3325312}

\bibitem[PS22]{PanSev22}
Christoforos Panagiotis and Franco Severo, \emph{Analyticity of {G}aussian free
  field percolation observables}, Comm. Math. Phys. \textbf{396} (2022), no.~1,
  187--223. \MR{4499015}

\bibitem[RS13]{RodSzn13}
Pierre-Fran\c{c}ois Rodriguez and Alain-Sol Sznitman, \emph{Phase transition
  and level-set percolation for the {G}aussian free field}, Comm. Math. Phys.
  \textbf{320} (2013), no.~2, 571--601. \MR{3053773}

\bibitem[RY99]{RevYor99}
Daniel Revuz and Marc Yor, \emph{Continuous martingales and {B}rownian motion},
  third ed., Grundlehren der mathematischen Wissenschaften, vol. 293,
  Springer-Verlag, Berlin, 1999. \MR{1725357}

\bibitem[Szn16]{Szn15}
Alain-Sol Sznitman, \emph{Coupling and an application to level-set percolation
  of the {G}aussian free field}, Electron. J. Probab. \textbf{21} (2016),
  1--26. \MR{3492939}

\bibitem[Szn19]{Szn19.2}
Alain-Sol Sznitman, \emph{On macroscopic holes in some supercritical strongly
  dependent percolation models}, Ann. Probab. \textbf{47} (2019), no.~4,
  2459--2493. \MR{3980925}

\bibitem[Yag47]{Yag47}
A.~M. Yaglom, \emph{Certain limit theorems of the theory of branching random
  processes}, Doklady Akad. Nauk SSSR (N.S.) \textbf{56} (1947), 795--798.
  \MR{22045}

\end{thebibliography}

\end{document}